\documentclass[11pt]{amsart}

%%%%%Allows for multiple encoding formats in LaTeX code
 \usepackage[utf8]{inputenc}
%%%%%Makes font work better
 \usepackage[T1]{fontenc}

%%%%%Spellcheck
 \usepackage[english]{babel}

%%%%%Fonts & Symbols
 \usepackage{amsmath,amsfonts,amssymb}
 \usepackage{mathrsfs}
 \usepackage{dsfont} %\mathds{} fonts. Alternative to \mathbb{}, and additionally gives indicator function.
 \usepackage{mathtools} %Add-on to amsmath package
 \usepackage{esint} %More control of integrals
 \usepackage{xfrac} %Gives inline fraction command \sfrac{}{}
 \usepackage{defs} %Custom style file
 \usepackage{fonts} %Custom style file

%%%%%Advanced Features
 \usepackage{amsthm}%Sets theorem environments
 \usepackage[shortlabels]{enumitem} %For doing almost anything with lists
 \usepackage[colorlinks=true,urlcolor=blue, citecolor=red,linkcolor=blue,linktocpage,pdfpagelabels, bookmarksnumbered,bookmarksopen]{hyperref} %hyperlinks references in the pdf
 \usepackage[paper=a4paper,hmargin=1in,vmargin=1in, marginparwidth=0.8in]{geometry} %Simplifies TeX commands for customizing page layout 
% \usepackage{array} %Greater control over arrays; I've never used it.

%%%%%Review Tools
% \usepackage{lineno} %This is for line numbers.
% \linenumbers
 \usepackage[colorinlistoftodos,prependcaption,linecolor=blue,backgroundcolor=blue!25,bordercolor=blue,textsize=tiny]{todonotes}
 \usepackage{cite} %Sorts and formats citations
 \usepackage{cleveref}
 \usepackage{comment}

%%%%%Graphics
% \usepackage{tikz}
% \usepackage{graphicx} %Insert pictures.
% \usepackage{wrapfig} %Used for text wrapping around figures.
% \usepackage[export]{adjustbox} %This is to visually edit images imported via includegraphics

%%%%%%%%%%%%%%%%%%%%%%%%%%%%%%%%%%%%%%%%%%

%%%%%%%%%%DOCUMENT FORMATTING%%%%%%%%%%

\setlength\parindent{2em}

\belowdisplayskip=18pt plus 6pt minus 12pt \abovedisplayskip=18pt
plus 6pt minus 12pt
\parskip 8pt plus 1pt

%%%%%This makes the document double spaced. Use this with 11pt style. If you want to use this just insert \dsp after the \begin{document}. The correct baselinestretch for double spacing is 1.37. However you can use a different parameter.

%%%%%Same with triple spacing.

%\everymath{\displaystyle}
\numberwithin{equation}{section}

%%%%%%%%%%%THEOREM ENVIRONMENTS%%%%%%%%%%

\newtheorem{theorem}{Theorem}[section]
\newtheorem{proposition}[theorem]{Proposition}
\theoremstyle{definition}
\newtheorem{definition}[theorem]{Definition}
\theoremstyle{plain}
\newtheorem{lemma}[theorem]{Lemma}
\newtheorem{corollary}[theorem]{Corollary}
\theoremstyle{remark}
\newtheorem{remark}[theorem]{Remark}

%\renewcommand*{\familydefault}{\sfdefault}

%%%%%%%%%%ARTICLE DATA%%%%%%%%%%
\title[Nonlocal boundary-value problems]{Nonlocal boundary-value problems with local boundary conditions}
% \subtitle{}
%
 \author{James M. Scott}
 \address{Department of Applied Physics and Applied Mathematics,
 Columbia University,
 New York, NY 10027}
 \email{jms2555@columbia.edu}
 \author{Qiang Du}
 \address{Department of Applied Physics and Applied Mathematics, and the Data Science Institute,
 Columbia University,
 New York, NY 10027}
 \email{qd2125@columbia.edu}

\thanks{This work is supported in part 
by the NSF DMS-1937254  and DMS-2012562.}

\keywords{nonlocal equations, boundary-value problems, Poisson problem, Green's identity, nonlocal operators, heterogeneous localization, vanishing horizon}

\subjclass[2020]{45K05, 35J20, 46E35}

%%%%%%%%%%CUSTOM DEFINITIONS AND COMMANDS%%%%%%%%%%
%%%%%These commands are used only in this article. Other commands that are used in this article can be found in defs.sty and fonts.sty.
%
%

\DeclareMathOperator*{\argmin}{argmin}

\begin{document}

\maketitle
%\listoftodos

\begin{abstract}

We describe and analyze nonlocal integro-differential equations with classical local boundary conditions. The interaction kernel  of the nonlocal operator 
has horizon parameter dependent on position in the domain, and vanishes as the boundary of the domain is approached. This heterogeneous localization allows for boundary values to be captured in the trace sense.
We state and prove a nonlocal Green's identity for these nonlocal operators that involve local boundary terms. We use this identity to state and establish the well-posedness of variational formulations of the nonlocal problems with several types of classical boundary conditions.

We show the consistency of these nonlocal boundary-value problems with their classical local counterparts in the vanishing horizon limit via the convergence of solutions.
The Poisson data for the local boundary-value problem is permitted to be quite irregular, belonging to the dual of the classical Sobolev space.
Heterogeneously mollifying this Poisson data for the local problem on the same length scale as the horizon and using the regularity of the interaction kernel, we show that the solutions to the nonlocal boundary-value problem with the mollified Poisson data actually belong to the classical Sobolev space, and converge weakly to the unique variational solution of the classical Poisson problem with original Poisson data.
\end{abstract}

%\tableofcontents

\section{Introduction}\label{sec:Intro}

There has been much recent interest in 
nonlocal problems on a bounded domain $\Omega \subset \bbR^d$:
\begin{equation}\label{eq:Intro:NonlocalEqn}
	\cL u = f \text{ in } \Omega\,, 
\end{equation}
associated to a nonlocal integral operator
\begin{equation}\label{eq:Intro:Operator-a}
	\cL u(\bx) := 2 \int_{\bbR^d}
\gamma
(\bx,\by) (u(\bx)-u(\by)) \, \rmd \by\, ,
\end{equation}
defined for measurable functions $u : \bbR^d \to \bbR$ and a nonlocal, symmetric and (often) nonnegative  kernel $\gamma$. 
These operators appear widely in both analysis and applications \cite{rossi,barles2014neumann,barlow2009non,braides2022compactness,bucur2016nonlocal,caffarelli2007extension,caffarelli2011regularity,craig2016blob,carrillo2019blob,Du12sirev,
Du2019book,Gilboa-Osher,grinfeld2005non,lovasz2012large,MeKl04,mogilner1999non,Nochetto2015,Coifman05,shi2017,Valdinoci:2009,Kassmann}.
Here, symmetry means ${\gamma}(\bx,\by) = {\gamma}(\by,\bx)$, so that we can identify $\cL$ with a variational form associated with a quadratic nonlocal energy. 
Earlier studies of variational formulations of \eqref{eq:Intro:NonlocalEqn} {on bounded domains} have taken several different paths. 

Along the path that ${\gamma=\gamma}(\bx,\by)$ has a singularity at the diagonal $\bx=\by$, both volume-constraint problems and classical boundary-value problems have been investigated, see for example \cite{cortazar,DyKa19,Ros16,andreu2010nonlocal} and additional references cited therein. If $\cL$ defines a hypersingular integral operator, in particular, then classical boundary values can be prescribed via the trace operator, see \cite{A75,Hitchhiker} for the cases of singular kernels that give rise to solutions in fractional 
Sobolev-Slobodeckij spaces. 

Down another path, ${\gamma=\gamma}(\bx,\by)$
is a compactly supported and translation invariant kernel, e.g., $\gamma(\bx,\by)= \delta^{-d-2}\omega(|\bx-\by|/\delta)$ for a function $\omega$ supported in the unit interval $(0,1)$ and a constant (horizon parameter $\delta$) that measures the range of nonlocal interactions. One natural route to take is to define the so-called nonlocal \textit{volumetric constraint} to complement the equation defined on $\Omega$ \cite{Du12sirev,Du-NonlocalCalculus,Du2022nonlocal}. An example is the prescription of $u(\bx)$ in a layer consisting of $\bx\in\Omega^c$ with $\dist(\bx, \Omega)<\delta$.
An alternative is to modify the nonlocal interaction rules involving $u=u(\bx)$ in a layered domain, say, for $\bx \in \Omega$ with $\dist(\bx, \p \Omega) < \delta$. These volumetric conditions can recover traditional boundary conditions in the local limit as $\delta\to 0$ under suitable conditions, see e.g., \cite{bellido2015hyperelasticity,mengesha2015localization,Du2022nonlocal,d2020physically,foss2022convergence,Lipton-2016}. Meanwhile, in the $\delta\to \infty$ limit with a rescaled fractional kernel, these problems are  related to studies of fractional differential equations defined on a bounded domain
\cite{bellido2021restricted,Foghem2022,Grubb,d2021connections}. In addition, one can find connections to the continuum limits of discrete graph operators and discrete particle interactions \cite{braides2022compactness,Coifman05,garcia2020error}.

{For various nonlocal problems, studies of their well-posedness subject to nonlocal volumetric constraints can be found, for example, in \cite{Du-NonlocalCalculus, MengeshaDuElasticity}, which offered desirable mathematical insight as demonstrated for a number of applications such as the peridynamics models developed in mechanics \cite{Silling2000,Silling2008,Du-Zhou2011,Lipton-2014,Valdinoci-peridynamic}, nonlocal diffusion and jump processes \cite{Du12sirev,Burch2014exit-time} and nonlocal Stokes equations for the analysis of smoothed particle hydrodynamics \cite{Du-Tian2020}.}

Still another path is to mix classical boundary conditions and volume-constraint conditions in constitutive models that blend local and nonlocal models. For an extensive discussion relating to the many choices of blended models in applications such as peridynamics, see the survey \cite{d2022review}.

We are interested in  boundary-value problems for nonlocal problems on a bounded domain in the classical sense, that is, the boundary conditions are prescribed on $\partial \Omega$ only.
The motivation is two-fold: first,  while the nonlocal constraints are natural, they are not perfect choices.  Theoretically, nonlocal constraints may raise unintended concerns about the regularity of solutions, for instance, non-constant functions vanishing in a layer of nonzero measure no longer enjoy analyticity, and solutions of problems with smooth kernels may experience non-physical or undesirable jumps at the boundary due to unmatched nonlocal constraints \cite{Du2022nonlocal}. In practice, developers of simulation codes for applications of nonlocal models, 
have ample practical reasons to keep local boundary conditions in implementation even though a nonlocal model might be derived and/or deemed a better modeling choice in the domain of interest.

To allow for the prescription of local boundary conditions, the nonlocal operator $\cL $ and the nonlocal solution spaces must be defined so that boundary values of the solutions make sense. For a nonlocal operator associated with a kernel $\gamma=\gamma(\bx,\by)$ that does not have sufficient singularity on the diagonal  $\bx=\by$, it means that some localizing property near the boundary should hold. For instance,
in \cite{du2022fractional, tian2017trace, tao2019nonlocal},
with a pair of constants $\beta \in (0,1)$ and $\delta>0$,
a function
$h_\delta(\bx)=\min\{\delta, \beta \dist(\bx, \partial \Omega)\}$ is introduced to characterize the extent of nonlocal interactions at a point $\bx\in \Omega$, instead of taking a constant $\delta$ as the horizon parameter everywhere in the domain. A kernel similar to 
$$
{\gamma} (\bx,\by) = \frac{1}{2h_\delta(\bx)^{d+2}}
\mathds{1}_{ \{ |\bx-\by|< h_\delta(\bx) \} }
+ \frac{1}{2h_\delta(\by)^{d+2}}
\mathds{1}_{ \{ |\bx-\by|< h_\delta(\by) \} }
$$
is adopted. Clearly,  $h_\delta(\bx)\to 0$ as $\bx\to \partial \Omega$ so that the interactions are localized on the boundary.
A consequence of this type of {\em heterogeneous localization} is that functions in $L^2(\Omega)$  with a  bounded energy can have well-defined traces on $\partial \Omega$ to allow classical,  
local boundary conditions for the problem \eqref{eq:Intro:NonlocalEqn}, see \cite{tian2017trace,Foss2021}.

Studies of nonlocal operators with heterogeneous localization also appear in the seamless coupling of local and nonlocal models \cite{tao2019nonlocal}.
Yet, there are a number of fundamental questions that remain to be answered. On one hand, there is much desire to establish rigorously the well-posedness theory of nonlocal problems with these local (and particularly inhomogeneous) boundary conditions. Establishing suitable regularity properties is also important for mathematical theory and physical consistency. At the same time, one may ask if a nonlocal analog of the Green's identity can be shown for operators that involve heterogeneous localization, so that the pointwise and variational forms can be placed in natural correspondence. An important observation, as presented later in this work, is that a new localization strategy is needed to ensure the validity of the proper nonlocal Green's identity. Intuitively, the function $\eta_\delta(\bx)$ needs to vanish at a faster rate than $ \dist(\bx, \partial \Omega)$, which also translates into a wider transition region where the range of nonlocal interactions changes from $\delta$ to $0$. This calls for further (and more delicate) mathematical analysis and also bears significant consequences in the application of localization strategies to nonlocal modeling.

\subsection{Main results}\label{sec:MainResults}
To describe our main findings, we first introduce the specific form of the nonlocal operator under consideration here
\begin{equation}\label{eq:Intro:Operator}
	\cL_{\delta} u(\bx) := 2 \int_{\Omega} \rho_{\delta,2}(\bx,\by) (u(\bx)-u(\by)) \, \rmd \by\,
\end{equation}
where $\rho_{\delta,2}$ is taken from a special class of kernels $\rho_{\delta,\alpha}$ parameterized by $\delta$ and $\alpha$. These two parameters control respectively the extent and magnitude of interaction between points. More specifically,
\begin{equation}\label{eq:OperatorKernelDef}
	\begin{split}
		\rho_{\delta,\alpha}(\bx,\by) &:= \frac{1}{2} \left[  \frac{1}{\eta_{\delta}(\bx)^{d+\alpha}} {\rho} \left( \frac{|\by-\bx|}{\eta_{\delta}(\bx)} \right) +  \frac{1}{\eta_{\delta}(\by)^{d+\alpha}} {\rho} \left( \frac{|\by-\bx|}{\eta_{\delta}(\by)} \right) \right]\,.
	\end{split}
\end{equation}
for functions $\rho:[0,\infty) \to [0,\infty)$ and
$\eta_{\delta} : \overline{\Omega} \to [0,\infty)$.

In the equation \eqref{eq:OperatorKernelDef},  the function $\rho$ satisfies the following assumption:
\begin{equation}\label{Assump:KernelSmoothness}
	\begin{gathered}
 \text{There exist constants } 0 < \rho_0 < \infty,\; 0<r_0<R_0<1,\; \text{ such that } 
 \\
		\rho \in C^1([0,\infty)) \text{ with } \rho \geq 0 \text{ and } \rho''(0) = \lim\limits_{r \to 0^+} \frac{\rho'(r)}{r} \text{ exists, } \\
		\supp \rho \subset [0,R_0),
  \text{ and }
		\rho(r) \geq \rho_0 > 0,\;\forall r \in [0,r_0].
		\tag{A1}
	\end{gathered}
\end{equation}
We also assume the normalization condition
\begin{equation}\label{Assump:KernelNormalization}
	\int_{B(0,1)} |\bz|^2 \rho(|\bz|) \, \rmd \bz = d\,,
	\tag{A2}
\end{equation}
with $B(0,1)$ denoting the unit ball centered at the origin in $\mathbb{R}^d$.

The function $\eta_\delta$ represents the
extent of interaction that
takes on a form in this paper more complex than being merely a constant function. In the latter case, for a constant horizon parameter $\delta>0$ 
we have $\eta_\delta(\bx)=\delta$ for any $\bx$, 
With such a choice, the normalization condition \eqref{Assump:KernelNormalization} implies that for $\Omega=\bbR^d$, we have $\cL_\delta v=\Delta v$ for any quadratic function $v=v(\bx)$. Indeed, 
 the choice of $\alpha = 2$ is made in the definition of $\cL_{\delta}$ so that the nonlocal operator $\cL_{\delta}$ serves as an analogue of 
classical second-order elliptic partial differential operator. Moreover,  we consider the case that $\eta_\delta=\eta_\delta(\bx)$
 is a smooth function in $\Omega$ and allows a specific form of heterogeneous localization on the boundary $\p \Omega$, i.e., it is constant outside of a thin layer near $\p \Omega$ and vanishes as $\bx$ approaches $\p \Omega$, as described below.

We make the following assumption on the domain $\Omega$.
\begin{equation}\label{assump:Domain}
\begin{gathered}
\Omega \subset \bbR^d \text{ is a bounded } C^2 \text{ domain equipped with a heterogeneous localization}\\
\text{function } \eta_{\delta} \text{ i.e., there exist positive constants $\kappa_0 \in (0,1)$, $\kappa_1$, $\kappa_2$ and $\bar{\kappa}_0$}\\ 
\text{depending only at most on $d$ and $\Omega$
and a function $\eta_{\delta}: \overline{\Omega} \to [0,\infty)$}\\
\text{such that the heterogeneous localization
properties \eqref{assump:Localization} hold.}
 \end{gathered}  \tag{$\rmA_{\Omega}$}
\end{equation}

Here, the heterogeneous localization properties refer to
\begin{equation} \label{assump:Localization}
\begin{aligned}
&\text{For every $0 < \delta < \delta_0$, where }
    \delta_0 := \min \left\{ 1, \frac{1}{9 \kappa_1^2}\,, \frac{1}{ 2 \bar{\kappa}_0^2 } \right\},\; \eta_{\delta}  \text{ satisfies }\\
& i) \text{ $\eta_{\delta}(\bx) = 
(\dist(\bx,\p \Omega))^2$ for all $\bx \in \Omega$ with $(\dist(\bx,\p \Omega))^2 < \kappa_0 \delta$,}\\
& ii) \text{ $\eta_{\delta}(\bx) = \delta$ for all $\bx \in \Omega$ with $\kappa_0 (\dist(\bx,\p \Omega))^2 > \delta$,}\\
& iii)  \text{ $\eta_{\delta} \in C^2(\overline{\Omega})$ with $|\grad \eta_{\delta}(\bx)| \leq \kappa_1 \sqrt{\delta}$ and $|\grad^2 \eta_{\delta}(\bx)| \leq \kappa_2$ for all $\bx \in \Omega$,}\\
& iv) \text{ $\eta_{\delta}(\bx) \leq \bar{\kappa}_0 \min \{ \delta, (\dist(\bx,\p \Omega))^2 \}$ for all $\bx \in \Omega$.}
\end{aligned} \tag{$\rmA_{\eta}$}
\end{equation}

Note that for any $C^2$ domain a heterogeneous localization function $\eta_{\delta}$ is guaranteed to exist. Indeed, one choice of $\eta_{\delta}$ is the following.
Choose $\kappa_0 \in (0,1)$ to be small enough so that the function $\bx \mapsto \dist(\bx, \p \Omega)$ belongs to $C^2(\overline{ \{ \bz \in \Omega \, : \, \dist(\bz,\Omega) < \sqrt{\kappa_0} \} })$; such a $\kappa_0$ will always exist and depend at most on the boundary character of $\Omega$ and $d$, since $\Omega$ is a $C^2$ domain. For any $\delta > 0 $ construct via mollification a $C^{\infty}$ cutoff function $\psi_{\delta}$ satisfying $\psi_{\delta} \in C^2(\overline{\Omega})$, $\psi_{\delta} \equiv 1$ for $\kappa_0 (\dist(\bx,\p \Omega))^2 > \delta$ and $\psi_{\delta} = 0$ for  $(\dist(\bx,\p \Omega))^2 < \kappa_0 \delta$ with $|\grad \psi_{\delta}(\bx)| \leq \frac{C_1}{\sqrt{\delta}}$ and $|\grad^2 \psi_{\delta}(\bx)| \leq \frac{C_2}{\delta}$, where $C_1$ and $C_2$ depend only on $d$, $\Omega$ and $\kappa_0$.
Then we choose $\kappa_1 = C_1$ and $\kappa_2 = C_2$, and define $\eta_{\delta}$ by
\begin{equation*}
	\eta_{\delta}(\bx) = \psi_{\delta}(\bx) \delta + (1-\psi_{\delta}(\bx)) (\dist(\bx,\p \Omega))^2\,.
\end{equation*}
The $\delta_0$ defined above guarantees that that $\eta_{\delta}$ is in $C^2(\overline{\Omega})$
for all $\delta < \delta_0$. Therefore conditions i)-iv) hold, where $\bar{\kappa}_0 = \frac{2}{\kappa_0} - 1$.

This construction is of course not unique. One could choose a larger $\kappa_0$ to ensure a wider ``transition region'' $\{ \bx \in \Omega \, : \kappa_0 \delta < (\dist(\bx,\p \Omega))^2 < \kappa_0^{-1} \delta \}$, which would require a small $\delta_0$ to be chosen so that $\eta_{\delta}$ remains in $C^2(\overline{\Omega})$ for all $\delta < \delta_0$.

Associated to the nonlocal operator \eqref{eq:Intro:Operator} and the nonlocal kernel $\rho_{\delta,2}$ we define a real symmetric bilinear form
\begin{equation*}
	B_{\rho,\delta}(u,v)  := \iintdm{\Omega}{\Omega}{ \rho_{\delta,2}(\bx,\by) \big( u(\bx)-u(\by) \big) \big( v(\bx)-v(\by) \big) }{\by}{\bx}\,,
\end{equation*}
and so Hilbert space methods are available for the solvability of the nonlocal boundary-value problem  \eqref{eq:Intro:NonlocalEqn} subject to various local boundary conditions. 

Thanks to the superlinear rate of localization at the boundary of $\Omega$, 
we are able to establish a {\em nonlocal Green's identity} for our nonlocal operator, as formulated in the following theorem, which serves as the foundation for consistent formulations of nonlocal boundary value problems.

\begin{theorem}\label{thm:GreensIdentity:Intro}
    Let $\Omega \subset \bbR^d$ satisfy \eqref{assump:Domain}. Let $\rho$ satisfy \eqref{Assump:KernelSmoothness}-\eqref{Assump:KernelNormalization}. Suppose $u$, $v \in C^2(\overline{\Omega})$. Then
    \begin{equation}\label{eq:GreensIdentity:Intro}
            B_{\rho,\delta}(u,v)
     = \int_{\Omega} \cL_{\delta} u(\bx) \cdot v(\bx) \, \rmd \bx + \int_{\p \Omega} \frac{\p u}{\p \bsnu}(\bx) \cdot v(\bx) \, \rmd \sigma(\bx) \,,
	\end{equation}
    where $\frac{\p u}{\p \bsnu}$ denotes the outward normal derivative of $u$.
\end{theorem}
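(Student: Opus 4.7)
The plan is to expand the integrand of $B_{\rho,\delta}(u,v)$, decompose the symmetric kernel into its two one-sided pieces, and then extract a boundary contribution from the asymmetry that heterogeneous localization introduces near $\p\Omega$. Set $G_{\delta}(\bx,\by) := \eta_{\delta}(\bx)^{-d-2}\,\rho(|\by-\bx|/\eta_{\delta}(\bx))$, so that $\rho_{\delta,2}(\bx,\by) = \tfrac{1}{2}[G_{\delta}(\bx,\by) + G_{\delta}(\by,\bx)]$. Since $(u(\bx)-u(\by))(v(\bx)-v(\by))$ is invariant under the swap $\bx \leftrightarrow \by$, relabeling the integration variables in the piece involving $G_{\delta}(\by,\bx)$ collapses the symmetrization and gives
\begin{equation*}
B_{\rho,\delta}(u,v) = \iintdm{\Omega}{\Omega}{G_{\delta}(\bx,\by)(u(\bx)-u(\by))(v(\bx)-v(\by))}{\by}{\bx}.
\end{equation*}

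Next, I would expand the factor $(u(\bx)-u(\by))(v(\bx)-v(\by))$ and apply Fubini in the cross terms, writing the result as a combination of integrals against $v(\bx)$ and against $v(\by)$. For the $v(\bx)$ portion one obtains an expression involving $\int_{\Omega} G_{\delta}(\bx,\by)(u(\bx)-u(\by))\,d\by$, while the $v(\by)$ portion, after renaming, involves $\int_{\Omega} G_{\delta}(\by,\bx)(u(\bx)-u(\by))\,d\bx$. Combining these reassembles the symmetric operator $\cL_{\delta} u(\bx)$ tested against $v(\bx)$, up to a boundary-layer remainder. For each fixed $\bx$, I would change variables $\bz = (\by - \bx)/\eta_{\delta}(\bx)$, Taylor-expand $u$ and $v$ to second order (justified by $u, v \in C^{2}(\overline{\Omega})$), and use the normalization \eqref{Assump:KernelNormalization}; by condition iv) of \eqref{assump:Localization}, the support of $\bz$ remains inside $\Omega$, so all quadratic moments of $\rho$ evaluate cleanly.

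The heart of the proof is identifying the remainder that does not collapse under the symmetrization argument. Because $G_{\delta}(\bx,\by)$ is not translation-invariant when $\eta_{\delta}$ varies, the two nested integrals do not symmetrize without an error, and that error lives in the transition region where $\grad \eta_{\delta}$ is nonzero. Using $\eta_{\delta} \in C^{2}(\overline{\Omega})$, the bounds on $\grad \eta_{\delta}$ and $\grad^{2}\eta_{\delta}$ from \eqref{assump:Localization} iii), and the superlinear vanishing $\eta_{\delta}(\bx) \sim (\dist(\bx,\p\Omega))^{2}$ from i), I would rewrite the remainder in divergence form with respect to $\bx$ and apply the classical divergence theorem on $\Omega$. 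The interior divergence vanishes identically via \eqref{Assump:KernelNormalization}, and the flux through $\p\Omega$ reduces, after evaluating the inner integral in $\bz$-coordinates against the normalized second moment of $\rho$, to exactly $\int_{\p\Omega} \frac{\p u}{\p \bsnu}\,v\,d\sigma$.

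The main obstacle will be showing that no extraneous boundary weights — mean curvature of $\p\Omega$, tangential derivatives of $u$, or higher moments of $\rho$ — appear in the final flux integral. This relies decisively on two ingredients: the choice $\alpha = 2$, which couples with \eqref{Assump:KernelNormalization} to pick out the trace of $\Delta u$ rather than a weighted Laplacian, and the superlinear rate $\eta_{\delta} \sim \dist^{2}$, which kills the potentially lower-order contribution of $|\grad \eta_{\delta}|$ in the boundary layer (note $|\grad \eta_{\delta}| \leq \kappa_{1}\sqrt{\delta}$ from iii), so $|\grad \eta_{\delta}|^{2}/\eta_{\delta} \leq \kappa_{1}^{2}$ is bounded but the cross-term is of the precise order needed to recover $\p_{\bsnu} u$). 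Verifying that the combinatorics of Taylor coefficients, the Jacobian of the $\bz$-change of variables, and the moments of $\rho$ line up to produce the bare classical normal derivative — with unit coefficient — is the delicate bookkeeping that the proof must carry out.
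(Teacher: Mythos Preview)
Your proposal has a genuine gap at the Fubini step. After you expand $(u(\bx)-u(\by))(v(\bx)-v(\by))$ and try to separate the two cross terms, the individual pieces such as $G_{\delta}(\bx,\by)(u(\bx)-u(\by))\,v(\bx)$ are \emph{not} absolutely integrable on $\Omega\times\Omega$. On the support of $G_{\delta}$ one has $|u(\bx)-u(\by)|\leq C\eta_{\delta}(\bx)$, so after integrating in $\by$ the surviving factor is $C\,\eta_{\delta}(\bx)^{-1}$; since $\eta_{\delta}(\bx)\sim\dist(\bx,\p\Omega)^{2}$ near the boundary, this behaves like $\dist(\bx,\p\Omega)^{-2}$, which is not integrable across a tubular neighborhood of $\p\Omega$. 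Hence the swap of integrals that would ``reassemble $\cL_{\delta}u(\bx)$ tested against $v(\bx)$'' is illegitimate as written---indeed, if it were legitimate, your calculation would produce $B_{\rho,\delta}(u,v)=\int_{\Omega}\cL_{\delta}u\,v$ with \emph{no} boundary term at all. The boundary integral is precisely the obstruction to that naive Fubini argument, not something that drops out of a subsequent divergence-theorem step.

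The divergence-theorem route you sketch is likewise unworkable: any candidate vector field $\mathbf{F}(\bx)$ built from $\int_{\Omega}G_{\delta}(\bx,\by)(\cdots)\,\rmd\by$ is singular as $\bx\to\p\Omega$ (again order $\eta_{\delta}(\bx)^{-1}$), so the classical divergence theorem does not apply on $\Omega$, and the flux is not a priori defined. The paper's proof instead truncates to $\Omega\setminus\Omega_{\sqrt{\veps}}$, where $\eta_{\delta}$ is bounded below and all swaps are legal; the boundary term then emerges as the $\veps\to 0$ limit of the cross-layer integral $\int_{\Omega_{\sqrt{\veps}}}\int_{\Omega\setminus\Omega_{\sqrt{\veps}}}2\rho_{\delta,2}(\bx,\by)(u(\bx)-u(\by))v(\bx)\,\rmd\by\,\rmd\bx$. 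Evaluating that limit requires an explicit boundary parametrization $\bx=\bx'+t\,\bn(\bx')$, a careful analysis of how the indicator of $\Omega_{\sqrt{a\veps}}\setminus\Omega_{\sqrt{\veps}}$ interacts with the rescaling, and the computation of a specific one-variable limit to extract the factor $z_{d}^{2}\rho(|\bz|)$ whose half-ball integral equals $\tfrac{1}{2}$ by \eqref{Assump:KernelNormalization}. None of that machinery can be replaced by a single integration by parts.
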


We note that establishing \eqref{eq:GreensIdentity:Intro} requires 
exact parametrizations of the domain near the boundary so that the leading order corresponding integrals contained in the identity can be computed.
The proof is elaborate even for smooth functions $u$ and $v$,
which is shown first in \Cref{sec:GreensSmooth}. Then, the identity is proved in \Cref{sec:GreensWider} for more general functions based on further characterizations of the nonlocal operator and nonlocal energy space.

Using the bilinear form $
B_{\rho,\delta}(u,v)$
we can introduce weak formulations of nonlocal problems with classical boundary conditions by the following formal computation: Suppose that $u$ is smooth in $\Omega$ and satisfies the nonlocal Poisson problem for a given function $f$
\begin{equation}
        \cL_{\delta} u = f\,, \quad \text{ in } \Omega\,. \label{eq:Intro:NonlocalEq}
\end{equation}
Then for an arbitrary smooth $v$, the nonlocal Green's identity \eqref{eq:GreensIdentity:Intro} gives
\begin{equation*}
B_{\rho,\delta}(u,v)  = \int_{\Omega} f(\bx)  v(\bx) \, \rmd \bx + \int_{\p \Omega} \frac{\p u}{\p \bsnu}(\bx) v(\bx) \, \rmd \sigma(\bx) \,.
\end{equation*}

The treatment of the boundary term depends on the type of boundary conditions considered in the variational problem. In this work, for illustrative purposes, we treat Poisson problems with two different classes of boundary conditions: prescribed Dirichlet data
\begin{equation}\label{eq:Dirichlet:Inhomog:NonlocalBC}
		u = g \quad\text{ on } \p \Omega
\end{equation}
and prescribed Neumann data
\begin{equation}\label{eq:InHomogNeumannBC}
		\frac{\p u}{\p \bsnu} = g \quad \text{ on } \p \Omega\,.
\end{equation}

The energy space  associated with the energy $B_{\rho,\delta}(u,u)$ is a Hilbert space
defined as
\begin{equation*}
    \frak{W}^{\delta,2}(\Omega) := \{ \text{ closure of } C^{\infty}(\overline{\Omega}) \text{ with respect to the norm } \Vnorm{\cdot}_{\frak{W}^{\delta,2}(\Omega)} \}\,,
\end{equation*}
and where the norm $\Vnorm{\cdot}_{\frak{W}^{\delta,2}(\Omega)}$ is defined as
\begin{equation*}
    \begin{gathered}
        \Vnorm{u}_{\frak{W}^{\delta,2}(\Omega)}^2 := \Vnorm{u}_{L^2(\Omega)}^2 + [u]_{\frak{W}^{\delta,2}(\Omega)}^2\,,    \end{gathered}
\end{equation*}
with the semi-norm defined, for a prescribed constant
$ R \in (0,1)$, 
by
\begin{equation}\label{eq:NonlocalSeminormDef}
    [u]_{\frak{W}^{\delta,2}(\Omega)}^2 := \iintdm{\Omega}{\Omega \cap \{ |\by-\bx| \leq R \eta_{\delta}(\bx) \} }{  \frac{\big| u(\bx)-u(\by) \big|^2}{\eta_{\delta}(\bx)^{d+2} } }{\by}{\bx}\,. 
\end{equation}
It turns out that the semi-norm $[u]_{\frak{W}^{\delta,2}(\Omega)}$ is independent of the parameter $R \in (0,1)$ and comparable to $B_{\rho,\delta}(u,u)$ for any $\rho$ satisfying \eqref{Assump:KernelSmoothness}; see \Cref{thm:InvariantHorizon} and \Cref{thm:Coercivity}. 
The space $\frak{W}^{\delta,2}(\Omega)$ inherits the properties of similar function spaces that have heterogeneous localization at the boundary of $\Omega$; for instance, the main results of \cite{tian2017trace, du2022fractional} hold for  $\frak{W}^{\delta,2}(\Omega)$.
Further, by \Cref{thm:Embedding}
\begin{equation*}
    H^1(\Omega) \subset \frak{W}^{\delta,2}(\Omega) \subset L^2(\Omega)\,.
\end{equation*}
Note that $L^2(\Omega)$ and $H^1(\Omega)$ (as well as $H^1_0(\Omega)$) denote the conventional function spaces, see 
\Cref{sec:Notation} for other notation.

Equipped with sufficient knowledge of the nonlocal Hilbert space, we prove our
next set of main results: the variational formulation of each nonlocal boundary-value problem has a unique finite-energy solution that belongs to a weakly closed subset of the nonlocal Hilbert space. These well-posedness results for the four boundary-value problems are collected in \Cref{sec:BVPs}, but
we state here the formal well-posedness result for the 
case of the nonlocal Poisson problem  \eqref{eq:Intro:NonlocalEq} with the homogeneous Dirichlet boundary condition
\begin{equation}
        u = 0\,, \quad \text{ on } \p \Omega\,.
        \label{eq:Intro:HomogDirichletBC}
\end{equation}

\begin{theorem}
    Let $\Omega$ satisfy \eqref{assump:Domain}  and $\rho$ satisfy  \eqref{Assump:KernelSmoothness}-\eqref{Assump:KernelNormalization}. 
    Then for each $ \delta \in (0,\delta_0)$ there exists a weakly closed subspace $\frak{H}$ of the nonlocal Hilbert space $\frak{W}^{\delta,2}(\Omega)$ with 
    $$
    H^1_0(\Omega) \subset \frak{H} \subset L^2(\Omega)\,,
    $$ 
    such that the weak formulation of \eqref{eq:Intro:NonlocalEq} and \eqref{eq:Intro:HomogDirichletBC} is well-posed. That is, for every $f \in \frak{H}^*$ there exists a unique function $u \in \frak{H}$ such that
    \begin{equation}\label{eq:Intro:WeakForm}
        B_{\rho,\delta}(u,v) = \Vint{f,v}, \quad \forall v \in \frak{H}
    \end{equation}
    satisfying the energy estimate
    \begin{equation*}
        \Vnorm{u}_{\frak{H}} \leq C(d,\rho,\Omega) \Vnorm{f}_{\frak{H}^*}\,.
    \end{equation*}
\end{theorem}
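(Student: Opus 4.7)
The plan is to obtain the result as a standard application of the Lax--Milgram theorem to the bilinear form $B_{\rho,\delta}$ on a suitably chosen closed Hilbert subspace $\frak{H}$ of $\frak{W}^{\delta,2}(\Omega)$ that encodes the homogeneous Dirichlet condition. First I would define $\frak{H}$ as the closure of $C_c^{\infty}(\Omega)$ in $\frak{W}^{\delta,2}(\Omega)$. Being a closed linear subspace of a Hilbert space, $\frak{H}$ is automatically weakly closed (closed convex sets are weakly closed in Hilbert spaces). The inclusion $\frak{H} \subset L^2(\Omega)$ is immediate from the continuous embedding $\frak{W}^{\delta,2}(\Omega) \subset L^2(\Omega)$ asserted in \Cref{thm:Embedding}. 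The inclusion $H^1_0(\Omega) \subset \frak{H}$ follows by combining the density of $C_c^{\infty}(\Omega)$ in $H^1_0(\Omega)$ with the domination $\Vnorm{\varphi}_{\frak{W}^{\delta,2}(\Omega)} \leq C \Vnorm{\varphi}_{H^1(\Omega)}$ for $\varphi \in C_c^{\infty}(\Omega)$, again a consequence of \Cref{thm:Embedding}.

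Next I would verify the Lax--Milgram hypotheses on $\frak{H}$ with the inner product inherited from $\frak{W}^{\delta,2}(\Omega)$. Symmetry of $B_{\rho,\delta}$ is built into the definition via the symmetry of $\rho_{\delta,2}$. Continuity follows from the horizon-invariance and comparability
\[
B_{\rho,\delta}(u,u) \,\leq\, C(d,\rho) [u]_{\frak{W}^{\delta,2}(\Omega)}^2 \,\leq\, C(d,\rho) \Vnorm{u}_{\frak{W}^{\delta,2}(\Omega)}^2
\]
stated in \Cref{thm:InvariantHorizon} and \Cref{thm:Coercivity}, which gives by Cauchy--Schwarz $|B_{\rho,\delta}(u,v)| \leq C \Vnorm{u}_{\frak{W}^{\delta,2}(\Omega)} \Vnorm{v}_{\frak{W}^{\delta,2}(\Omega)}$. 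For coercivity I need a nonlocal Poincaré-type estimate
\[
\Vnorm{u}_{L^2(\Omega)}^2 \,\leq\, C(d,\rho,\Omega) \, B_{\rho,\delta}(u,u), \qquad \forall \, u \in \frak{H},
\]
so that $\Vnorm{u}_{\frak{W}^{\delta,2}(\Omega)}^2 \leq C B_{\rho,\delta}(u,u)$ on $\frak{H}$. This is where the zero-trace character of functions in $\frak{H}$ enters crucially: one approach is to combine the lower comparability $B_{\rho,\delta}(u,u) \gtrsim [u]_{\frak{W}^{\delta,2}(\Omega)}^2$ from \Cref{thm:Coercivity} with the trace and Poincaré machinery already developed for heterogeneously localized spaces in \cite{tian2017trace,du2022fractional,Foss2021}; an alternative is a compactness argument using the compact embedding $\frak{W}^{\delta,2}(\Omega) \hookrightarrow L^2(\Omega)$, passing to weak limits and noting that the subspace property under which the trace vanishes is preserved under weak convergence.

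With continuity and coercivity in hand, Lax--Milgram produces a unique $u \in \frak{H}$ satisfying $B_{\rho,\delta}(u,v) = \Vint{f,v}$ for all $v \in \frak{H}$ and for every $f \in \frak{H}^*$; testing with $v = u$ and using coercivity gives $\Vnorm{u}_{\frak{H}}^2 \leq C B_{\rho,\delta}(u,u) = C \Vint{f,u} \leq C \Vnorm{f}_{\frak{H}^*} \Vnorm{u}_{\frak{H}}$, which yields the energy estimate. The main obstacle is the coercivity step and the careful identification of $\frak{H}$ with a zero-trace space in the heterogeneously localized setting, since the rate at which $\eta_{\delta}$ vanishes at $\p \Omega$ controls whether a meaningful trace operator exists on $\frak{W}^{\delta,2}(\Omega)$. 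Once that issue is handled via the cited trace theory, the remainder of the argument is routine.
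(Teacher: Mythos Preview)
Your proposal is correct and matches the paper's approach: take $\frak{H}=\frak{W}^{\delta,2}_0(\Omega)$ (the closure of $C_c^\infty(\Omega)$ in $\frak{W}^{\delta,2}(\Omega)$), verify continuity of $B_{\rho,\delta}$ via \Cref{thm:Coercivity}, establish coercivity through a nonlocal Poincar\'e inequality on $\frak{W}^{\delta,2}_0(\Omega)$, and apply Lax--Milgram. The one point of divergence is the mechanism for the Poincar\'e inequality itself: the paper does not rely on compactness or on importing the estimate from \cite{tian2017trace,du2022fractional,Foss2021}, but instead proves it (see \Cref{thm:PoincareDirichlet} and its proof in \Cref{sec:PoincareProofs}) by introducing the boundary-localized convolution $K_{\delta}$, showing $K_{\delta}u\in H^1_0(\Omega)$ whenever $u\in\frak{W}^{\delta,2}_0(\Omega)$, applying the classical Poincar\'e inequality to $K_{\delta}u$, and controlling $\Vnorm{u-K_{\delta}u}_{L^2}$ by the nonlocal seminorm. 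This route delivers a Poincar\'e constant that is explicitly independent of $\delta$, which is essential for the energy estimate $\Vnorm{u}_{\frak{H}}\le C(d,\rho,\Omega)\Vnorm{f}_{\frak{H}^*}$ and for the later local-limit analysis; a bare compactness argument would a priori give only a $\delta$-dependent constant.
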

The weakly closed subset of the nonlocal Hilbert space varies depending on the type of boundary conditions, that is, the choice of $\frak{H}$ varies from problem to problem.

Our third set of main results is the regularity of weak solutions to the well-posed nonlocal problems. The conclusion is formally summarized below, see \Cref{thm:Regularity:FixedDelta} for the precise statement.

\begin{theorem}
	Let $\Omega \subset \bbR^d$ satisfy \eqref{assump:Domain}. Suppose that $u \in \frak{W}^{\delta,2}(\Omega)$ is a weak solution of \eqref{eq:Intro:NonlocalEq}-\eqref{eq:Intro:HomogDirichletBC} i.e. $u$ satisfies \eqref{eq:Intro:WeakForm}, where additionally $f \in H^1(\Omega)$. Then there exists $C$ depending only on $d$, $\Omega$ and $\rho$ such that
	\begin{equation}\label{eq:Intro:H1apriori}
 \Vnorm{u}_{H^1(\Omega)}  
  \leq C \left( \Vnorm{f}_{\frak{H}^*(\Omega)} + \Vnorm{ \Phi_{\delta,2}^{-1} f}_{H^1(\Omega)} \right) \,,
	\end{equation}
        where $\Phi_{\delta,2}(\bx)$ is defined as in 
    \eqref{eq:ConvolutionOperatorAlpha}.
\end{theorem}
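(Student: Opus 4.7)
The plan is to derive a pointwise integral identity for $\grad u$ by rewriting the equation $\cL_{\delta} u = f$ as a fixed-point relation and then differentiating it. First I would split the nonlocal operator pointwise as $\cL_{\delta} u(\bx) = \Phi_{\delta,2}(\bx) u(\bx) - K_{\delta} u(\bx)$, where $\Phi_{\delta,2}(\bx) := 2\int_{\Omega} \rho_{\delta,2}(\bx,\by) \, \rmd \by$ and $K_{\delta} u(\bx) := 2\int_{\Omega} \rho_{\delta,2}(\bx,\by) u(\by) \, \rmd \by$. A change of variables using \eqref{Assump:KernelSmoothness} together with \eqref{Assump:KernelNormalization} yields the two-sided comparability $\Phi_{\delta,2}(\bx) \asymp \eta_{\delta}(\bx)^{-2}$. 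Introducing the normalized stochastic kernel $\tilde\rho(\bx,\by) := 2 \rho_{\delta,2}(\bx,\by)/\Phi_{\delta,2}(\bx)$, which satisfies $\int_{\Omega} \tilde\rho(\bx,\by) \, \rmd \by = 1$, the equation $\cL_{\delta} u = f$ is equivalent to the pointwise averaging identity
\[
\int_{\Omega} \tilde \rho(\bx,\by) \big( u(\bx) - u(\by) \big) \, \rmd \by = \Phi_{\delta,2}^{-1}(\bx) f(\bx).
\]

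Next, I would differentiate this identity in $\bx$. Because $\int_{\Omega} \tilde\rho(\bx,\by) \, \rmd \by \equiv 1$, the piece $\grad u(\bx) \cdot \int \tilde\rho \, \rmd \by$ emerges cleanly from the left-hand side, producing the key representation
\[
\grad u(\bx) = \grad \big( \Phi_{\delta,2}^{-1} f \big)(\bx) - \int_{\Omega} \grad_{\bx} \tilde \rho(\bx,\by) \big( u(\bx) - u(\by) \big) \, \rmd \by.
\]
Differentiating the explicit expression for $\rho_{\delta,2}$ in \eqref{eq:OperatorKernelDef}, and using the bounds in \eqref{assump:Localization} (in particular $|\grad \eta_{\delta}| \leq \kappa_1 \sqrt{\delta}$) together with the Lipschitz comparability $\eta_{\delta}(\bx) \asymp \eta_{\delta}(\by)$ throughout the support of $\rho_{\delta,2}(\bx,\cdot)$, yields the pointwise kernel bound $|\grad_{\bx} \tilde\rho(\bx,\by)| \leq C \eta_{\delta}(\bx)^{-(d+1)}$ on the effective support $\{|\by-\bx| \leq R\eta_{\delta}(\bx)\}$.

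With this bound in hand, a single application of Cauchy-Schwarz gives
\[
\Big| \int_{\Omega} \grad_{\bx} \tilde\rho(\bx,\by) \big( u(\bx) - u(\by) \big) \, \rmd \by \Big|^2 \leq C \int_{\Omega \cap \{ |\by-\bx| \leq R\eta_{\delta}(\bx) \}} \frac{|u(\bx)-u(\by)|^2}{\eta_{\delta}(\bx)^{d+2}} \, \rmd \by,
\]
the complementary factor $\int \eta_{\delta}(\bx)^{-d} \mathds{1}_{\{|\by-\bx|\leq R\eta_{\delta}(\bx)\}} \, \rmd \by$ being bounded by a dimensional constant. Integrating over $\bx$ yields $C [u]_{\frak{W}^{\delta,2}(\Omega)}^2$, which by the coercivity result \Cref{thm:Coercivity} together with the variational identity $B_{\rho,\delta}(u,u) = \Vint{f,u} \leq \Vnorm{f}_{\frak{H}^*} \Vnorm{u}_{\frak{H}}$ and the standard well-posedness estimate is controlled by $C \Vnorm{f}_{\frak{H}^*}^2$. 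Combining this with $\Vnorm{u}_{L^2} \leq \Vnorm{u}_{\frak{H}} \leq C \Vnorm{f}_{\frak{H}^*}$ and the trivial bound $\Vnorm{\grad (\Phi_{\delta,2}^{-1} f)}_{L^2} \leq \Vnorm{\Phi_{\delta,2}^{-1} f}_{H^1(\Omega)}$ delivers \eqref{eq:Intro:H1apriori}.

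The principal obstacle is rigorously justifying the pointwise identity and its differentiation for an energy-class solution $u \in \frak{W}^{\delta,2}(\Omega)$, for which $\cL_{\delta} u$ may not admit classical pointwise meaning. My intended remedy is to first establish all steps for smooth approximants $u_{\varepsilon}$, using density of $C^{\infty}(\overline{\Omega})$ in $\frak{W}^{\delta,2}(\Omega)$, prove the gradient estimate uniformly in $\varepsilon$, and then conclude for $u$ via weak compactness in $H^1(\Omega)$. A secondary technical point is that all constants remain uniform in $\delta \in (0,\delta_0)$; this reduces to the Lipschitz comparability of $\eta_{\delta}(\bx)$ and $\eta_{\delta}(\by)$ on the support of $\rho_{\delta,2}(\bx,\by)$, which follows from $|\grad \eta_{\delta}| \leq \kappa_1 \sqrt{\delta}$ once $\delta$ is small enough to absorb the cross-term in the symmetrized kernel into an $\bx$-centered form.
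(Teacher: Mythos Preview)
Your computational core is essentially the paper's argument: rewrite $\cL_{\delta} u = f$ as $u = K_{\delta,2} u + \tfrac{1}{2}\Phi_{\delta,2}^{-1} f$, differentiate, bound $\grad K_{\delta,2} u$ by the nonlocal seminorm via the kernel-derivative estimate and Cauchy--Schwarz, and close with the energy estimate. The paper packages the convolution piece as \Cref{thm:Convolution:DerivativeEstimate} and \Cref{cor:RegularityOfHSOp}, but the content is the same as your direct computation.

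The gap is in your remedy for the ``principal obstacle.'' Your smooth approximants $u_\varepsilon$ do not satisfy any equation; the tautological identity for them reads $u_\varepsilon = K_{\delta,2} u_\varepsilon + \tfrac{1}{2}\Phi_{\delta,2}^{-1}\cL_{\delta} u_\varepsilon$, and to extract a uniform $H^1$ bound you would need $\Vnorm{\Phi_{\delta,2}^{-1}\cL_{\delta} u_\varepsilon}_{H^1(\Omega)}$ controlled uniformly in $\varepsilon$. Convergence $u_\varepsilon \to u$ in $\frak{W}^{\delta,2}(\Omega)$ only gives $\Phi_{\delta,2}^{-1/2}\cL_{\delta} u_\varepsilon \to \Phi_{\delta,2}^{-1/2}\cL_{\delta} u$ in $L^2$ (\Cref{thm:NonlocalOpWellDefd:Energy}), not the $H^1$ control you need, and you have not yet linked $\cL_{\delta} u$ to $f$ pointwise.

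The paper avoids approximation entirely: it uses the nonlocal Green's identity in the form of \Cref{prop:GreensId:GeneralFxns}~i) (valid for $u \in \frak{W}^{\delta,2}(\Omega)$ and $\varphi \in C^\infty_c(\Omega)$, where the boundary term vanishes) to obtain $\int_\Omega \cL_{\delta} u \, \varphi = B_{\rho,\delta}(u,\varphi) = \int_\Omega f\varphi$ for all test functions, hence $\cL_{\delta} u = f$ almost everywhere. Once this pointwise equality is in hand, the fixed-point identity holds directly for $u$ itself, and the gradient bound follows without any limiting argument. You should replace your approximation step with this use of Green's identity.
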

The heart of the matter in the proof is to rewrite 
\eqref{eq:Intro:NonlocalEq}, valid for almost every $\bx\in\Omega$,
as a pointwise relation
\begin{equation*}
	u(\bx) =  
 	K_{\delta,2} u (\bx) 
  + \frac{f(\bx)}{2 \Phi_{\delta,2}(\bx)}  \quad \text{ for a.e. } \bx \in \Omega\,,
\end{equation*}
where the operator $K_{\delta,\alpha}$ and the function $\Phi_{\delta,\alpha}$ for $\alpha\geq 0$ are defined as
\begin{equation}\label{eq:ConvolutionOperatorAlpha}
	K_{\delta,\alpha}u (\bx) := \frac{1}{\Phi_{\delta,\alpha}(\bx)} \int_{\Omega} \rho_{\delta,\alpha}(\bx,\by) u(\by) \, \rmd \by\,
 \;\text{ and } \;
\Phi_{\delta,\alpha}(\bx):= \int_{\Omega} \rho_{\delta,\alpha}(\bx,\by) \, \rmd \by\,,
\;\; \forall \bx \in \Omega\,.
\end{equation}

We call the operators $K_{\delta,\alpha}$ 
 {\em boundary-localized 
convolutions} as they are of the convolution type where $\eta_{\delta}$ is constant, which is the case away from the boundary, while the integral gets localized at the boundary.
Similar operators with mollification parameters that vanish at the boundary were introduced in \cite{burenkov1998sobolev, burenkov1982mollifying}. The particular form of operator we study in this work is similar to the operators considered in \cite{hintermuller2020variable}, where boundedness in classical function spaces and approximation properties were proved.
With the estimates on the boundary-localized convolutions in \Cref{sec:HSEstimates}, one can use the smoothness of $\rho$ and of $f$ to show that $u$ is in fact differentiable thanks to the convolution structure of the right-hand side.

We note that the $H^1$ regularity of the solutions to nonlocal problems with local boundary conditions studied here extends to the whole domain $\Omega$.  This serves as a motivation behind the use of local boundary conditions since this type of regularity does not hold in general without heterogeneous localization.
The fact that smooth data leads to a smooth nonlocal solution for the linear problems considered here highlights the difference from nonlocal boundary conditions, where smooth data may still yield 
 a nonsmooth nonlocal solution. The latter would be nonphysical in general in the linear regime.

The regularity results allow us to establish the convergence of the unique variational solution to \eqref{eq:Intro:NonlocalEqn} to the unique variational solution of \eqref{eq:Intro:LocalEqn} for both settings of boundary conditions, which is our fourth set of main results.
In fact, these convergence results remain true when the Poisson data $f$ in \eqref{eq:Intro:LocalEqn} belongs to the dual of the relevant Hilbert space. In the case of Dirichlet data, $f$ belongs to $[H^1_0(\Omega)]^* = H^{-1}(\Omega)$, and in the case of Neumann boundary data $f$ belongs to a subspace of $[H^1(\Omega)]^*$.
For both of the problems, we mollify the distribution $f$, so that the regularized $f_{\delta}$ 
 belongs to a smaller dual space and thus allows for a well-posed nonlocal problem. We then solve \eqref{eq:Intro:NonlocalEqn} with data $f_{\delta}$, and obtain convergence of solutions to the weak solution of \eqref{eq:Intro:LocalEqn}. 
The following is a formal statement in the case of homogeneous Dirichlet boundary data:
\begin{theorem}
    Let $\Omega$ satisfy \eqref{assump:Domain}  and $\rho$ satisfy  \eqref{Assump:KernelSmoothness}-\eqref{Assump:KernelNormalization}. 
    Let $f \in H^{-1}(\Omega) := [H^1_0(\Omega)]^*$ be a given function.
    Then for each $\delta \in (0,\delta_0)$, if there exists a regularized approximation $f_{\delta} \in \frak{H}^*$ of $f$ that satisfies the estimates \eqref{eq:ApproximationInequality} (see below) as well as
    \begin{equation*}
        f_{\delta} \rightharpoonup f \text{ weakly in } H^{-1}(\Omega)\,,
    \end{equation*}
    then the variational solutions $u_{\delta} \in \frak{H}$ to 
    \begin{equation}
    \label{eq:Intro:NonlocalEqReg}
            \cL_{\delta} u = f_{\delta}\,, \quad \text{ in } \Omega\,,
    \end{equation}
  and homogeneous Dirichlet boundary conditions  \eqref{eq:Intro:HomogDirichletBC}
    in fact belong to $H^1_0(\Omega)$ and converge weakly in $H^1(\Omega)$  as $\delta \to 0$ to a function $u \in H^1_0(\Omega)$. The function $u$ is the unique variational solution of the classical Poisson problem
  \begin{equation}\label{eq:Intro:LocalEqn}
	-\Delta u = f \text{ in } \Omega
\end{equation}
with the homogeneneous Dirichlet boundary conditions \eqref{eq:Intro:HomogDirichletBC}.
    Moreover, for any $f \in H^{-1}(\Omega)$ such a sequence $f_{\delta}$ is guaranteed to exist.
\end{theorem}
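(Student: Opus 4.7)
The plan is a compactness-plus-consistency argument anchored by the $H^1$ regularity estimate \eqref{eq:Intro:H1apriori}. First I apply that estimate with right-hand side $f_\delta$ to each variational solution $u_\delta \in \frak{H}$: the approximation inequalities \eqref{eq:ApproximationInequality} control $\Vnorm{f_\delta}_{\frak{H}^*}$ and $\Vnorm{\Phi_{\delta,2}^{-1} f_\delta}_{H^1(\Omega)}$ uniformly in $\delta$, so $u_\delta \in H^1(\Omega)$ with $\Vnorm{u_\delta}_{H^1(\Omega)} \le C$ independent of $\delta$. Since $u_\delta \in \frak{H}$ encodes the homogeneous Dirichlet condition in the nonlocal-trace sense and $u_\delta$ is simultaneously $H^1$-regular, its classical trace on $\p\Omega$ vanishes, so $u_\delta \in H^1_0(\Omega)$. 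By Rellich--Kondrachov I then extract a subsequence $u_{\delta_k} \rightharpoonup u$ weakly in $H^1(\Omega)$ and strongly in $L^2(\Omega)$, with $u \in H^1_0(\Omega)$ since this subspace is weakly closed.

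To identify $u$ as the variational solution of $-\Delta u = f$, I test the nonlocal weak formulation $B_{\rho,\delta}(u_\delta, v) = \langle f_\delta, v\rangle$ against an arbitrary $v \in C_c^\infty(\Omega)$. Once $\delta$ is small enough that $\operatorname{supp} v \Subset \{\bx \in \Omega : \eta_\delta(\bx) = \delta\}$, the kernel $\rho_{\delta,2}$ seen by $\cL_\delta v$ is the translation-invariant convolution kernel, and since $\tfrac{\p v}{\p \bsnu} \equiv 0$ on $\p\Omega$ the symmetric use of the nonlocal Green's identity \Cref{thm:GreensIdentity:Intro} reduces to
\begin{equation*}
    B_{\rho,\delta}(u_\delta, v) = \int_\Omega u_\delta(\bx)\, \cL_\delta v(\bx)\,\rmd\bx.
\end{equation*}
A Taylor expansion of $v$ combined with the normalization \eqref{Assump:KernelNormalization} yields $\cL_\delta v \to -\Delta v$ uniformly on $\Omega$. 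Together with the strong $L^2$-convergence of $u_{\delta_k}$ and the hypothesized convergence $\langle f_{\delta_k}, v\rangle \to \langle f, v\rangle$, I pass to the limit and obtain $-\int_\Omega u\, \Delta v\,\rmd\bx = \langle f, v\rangle$ for every $v \in C_c^\infty(\Omega)$. After integration by parts this is precisely the weak formulation of the classical Dirichlet Poisson problem, and uniqueness of its solution promotes the subsequential convergence to full convergence of the family $\{u_\delta\}$.

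For the final assertion I would produce the sequence $f_\delta$ through a heterogeneous mollification built from the boundary-localized convolutions $K_{\delta,\alpha}$ of \eqref{eq:ConvolutionOperatorAlpha}: given $f \in H^{-1}(\Omega)$, decompose $f = g_0 - \operatorname{div} \mathbf{G}$ with $g_0 \in L^2(\Omega)$ and $\mathbf{G} \in L^2(\Omega;\bbR^d)$, and set $f_\delta := K_{\delta,\alpha} g_0 - \operatorname{div}(K_{\delta,\alpha} \mathbf{G})$ for a suitable choice of $\alpha$. The boundedness and approximation-to-identity estimates on $K_{\delta,\alpha}$ from \Cref{sec:HSEstimates} are expected to deliver both the bounds in \eqref{eq:ApproximationInequality} and the weak convergence $f_\delta \rightharpoonup f$ in $H^{-1}(\Omega)$.

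The main obstacle I anticipate lies in verifying the weighted bound $\Vnorm{\Phi_{\delta,2}^{-1} f_\delta}_{H^1(\Omega)}$: the factor $\Phi_{\delta,2}^{-1}$ grows like $\eta_\delta(\bx)^{2}$ near $\p\Omega$, so one must show that the heterogeneous mollification causes $f_\delta$ to vanish at a matching rate---this is precisely why the mollifier must act on the length-scale $\eta_\delta$ of the nonlocal interactions rather than on a constant scale, and this matching rate is what will force the mollified data $f_\delta$ into $\frak{H}^*$ rather than merely $H^{-1}(\Omega)$. A secondary, more routine point is verifying that the Green's identity simplification and the pointwise consistency $\cL_\delta v \to -\Delta v$ remain compatible uniformly for all sufficiently small $\delta$ on $\operatorname{supp} v$, which follows from the compact support of $v$ and the form of $\eta_\delta$ away from $\p\Omega$.
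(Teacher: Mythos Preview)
Your compactness-plus-consistency argument matches the paper's approach: obtain a uniform $H^1$ bound from the regularity estimate and \eqref{eq:ApproximationInequality}, extract a weak $H^1$ limit $u \in H^1_0(\Omega)$, and identify $u$ by passing to the limit in the weak formulation. The paper organizes the identification step slightly differently---it proves two general bilinear-form localization results (\Cref{thm:BilinearFormLocalization:SqDist} and \Cref{thm:BilinearFormLocalization:Sequence:SqDist}) valid for arbitrary $v\in H^1(\Omega)$, splitting $B_{\rho,\delta}(u_\delta,v) = B_{\rho,\delta}(u_\delta-u,v) + B_{\rho,\delta}(u,v)$---but the underlying mechanism is exactly your Green's-identity swap of $\cL_\delta$ onto $v$ followed by the pointwise consistency $\cL_\delta v \to -\Delta v$. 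Restricting to $v\in C_c^\infty(\Omega)$ as you do suffices for the homogeneous Dirichlet problem and is slightly more direct.

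Where you differ more substantially is the construction of $f_\delta$. The paper (\Cref{thm:DataMollification}) sets $f_\delta := K_{\delta,\alpha}^* f$ using the \emph{adjoint} operator \eqref{eq:ConvolutionOperatorAdjointAlpha}, rather than decomposing $f=g_0-\operatorname{div}\mathbf{G}$ and applying $K_{\delta,\alpha}$ componentwise. The adjoint route has a structural advantage: the crucial bound $\Vnorm{f_\delta}_{[\frak{W}^{\delta,2}_0(\Omega)]^*} \le C\Vnorm{f}_{H^{-1}(\Omega)}$ drops out by duality from the mapping $K_{\delta,\alpha}:\frak{W}^{\delta,2}_0(\Omega)\to H^1_0(\Omega)$ (\Cref{cor:RegularityOfHSOp} together with \Cref{lma:SupportOfConv}). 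By contrast, your term $\operatorname{div}(K_{\delta,\alpha}\mathbf{G})$ is not obviously in $[\frak{W}^{\delta,2}_0(\Omega)]^*$, since functions in $\frak{W}^{\delta,2}_0(\Omega)$ need not have $L^2$ gradients to pair against, and the weighted bound $\Vnorm{\eta_\delta^2 \grad f_\delta}_{L^2}$ would require second-derivative estimates on $K_{\delta,\alpha}$ that the paper does not develop. Your construction could likely be made to work with that extra effort, but the adjoint formulation sidesteps it entirely.
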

The uniform estimate \eqref{eq:ConvolutionOperatorAlpha}, combined with the convergence results from \Cref{sec:HSEstimates} and \Cref{sec:LocalLimit}, ensure that the limit function $u$ satisfies the classical Poisson equation.
These convergence results are stated precisely and proved for the general Dirichlet and Neumann problems in \Cref{sec:LocalLimit}.

To establish the main results as summarized above, we structure the rest of the paper as follows: 
first, a brief introduction to the local and nonlocal function spaces is given before we end \Cref{sec:Intro}. \Cref{sec:ParaProp} contains technical details necessary for manipulating objects that depend on $\rho$ and $\eta_{\delta}$. In \Cref{sec:NonlocalOp}, some properties of the nonlocal operator are studied in connection to their actions on function spaces, and the nonlocal Green's identity is shown for smooth functions. The nonlocal function space is further analyzed in \Cref{sec:EnergySpace}, providing necessary tools for investigating boundary-value problems,  including studies of the trace operator and the proof of the nonlocal Green's identity for wider classes of functions, together with the statements of  Poincar\'{e} inequalities.
The variational formulations of nonlocal problems with local boundary conditions and their well-posedness are then stated in \Cref{sec:BVPs}.
In \Cref{sec:HSEstimates}, some integral operators related to $\cL_{\delta}$ 
 and their local limits  are investigated, leading to the proofs of Poincar\'{e} inequalities and regularity studies on the solution. We then examine the consistency with the classical boundary-value problems in \Cref{sec:LocalLimit}.
 Finally, in \Cref{sec:Conclusion}, we discuss various 
issues such as the assumptions made in the current work,  some possible extensions and future research directions.

\subsection{Notation and function spaces}\label{sec:Notation}

For $r > 0$ and a bounded domain $\Omega \subset \bbR^d$, the open set $\Omega_r$ is defined as
\begin{equation*}
    \Omega_r := \{ \bx \in \Omega \, : \, \dist(\bx,\p \Omega) < r \}\,.
\end{equation*}

The set of infinitely-differentiable functions with support compactly contained in $\Omega$ is denoted $C^{\infty}_c(\Omega)$. The set of distributions is denoted $\cD'(\Omega)$. Lebesgue spaces are denoted $L^p(\Omega)$; Sobolev spaces $W^{k,p}(\Omega)$ for $k \in \bbN$ are defined via the integrability properties of weak derivatives as in \cite{evans2010partial}, with the convention $W^{k,2}(\Omega) = H^k(\Omega)$. 
The functional dual of a Banach space $V$ is denoted $V^*$.

For functions $u: \Omega \to \bbR$ and $v: \Omega \to \bbR$, we use $\Vint{u,v}$ to denote the standard $L^2$ inner product on their domain of definition  and also the duality pair where appropriate.
The integral average of $u$ on $\Omega$, or the duality pair with the constant function $v\equiv 1/|\Omega|$, is denoted by 
$$
(u)_\Omega := \fint_{\Omega} u(\bx) \, \rmd \bx :=\frac{1}{|\Omega|} \int_{\Omega} u(\bx) \, \rmd \bx
= \Vint{u, 1/|\Omega|}\,,$$
where $|\Omega|$ denotes the volume of $\Omega$.

The space $H^1_0(\Omega)$ is the closure of $C^{\infty}_c(\Omega)$ with respect to the $H^1(\Omega)$ norm.
We define the subspace $\mathring{H}^1(\Omega) = \{ u \in H^1(\Omega) \, : \, (u)_\Omega = 0 \}$.
We use the common convention $H^{-1}(\Omega) = [H^1_0(\Omega)]^*$.

It is expected that the local space $H^1(\Omega)$ is continuously embedded in the nonlocal energy space $\frak{W}^{\delta,2}(\Omega)$. We state the result in the following theorem, which is analogous to similar results given in \cite{tian2017trace,du2022fractional} concerning nonlocal energy spaces involving heterogeneous localization.

\begin{theorem}[Embedding]\label{thm:Embedding}
	Let $\Omega \subset \bbR^d$ satisfy \eqref{assump:Domain}. 
 Let $\rho$ be a kernel satisfying \eqref{Assump:KernelSmoothness}-\eqref{Assump:KernelNormalization}. Then there exists a constant $C = C(\Omega,\delta_0)$ such that for all $\delta < \delta_0$
	\begin{equation*}
		[u]_{\frak{W}^{\delta,2}(\Omega)} \leq C \Vnorm{\grad u}_{L^2(\Omega)}
	\end{equation*}
	for all $u \in H^1(\Omega)$.
\end{theorem}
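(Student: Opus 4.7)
My plan is to exploit the density of $C^{\infty}(\overline{\Omega})$ in $H^1(\Omega)$ and prove the pointwise-tested estimate by means of the fundamental theorem of calculus along the segment joining $\bx$ to $\by$. First I would observe that for $|\by-\bx|\leq R\eta_{\delta}(\bx)$ the segment $[\bx,\by]$ is entirely contained in $\Omega$: by property iv) of \eqref{assump:Localization}, if $\dist(\bx,\p\Omega)\leq 1/\bar{\kappa}_0$ then $\eta_{\delta}(\bx)\leq\bar{\kappa}_0(\dist(\bx,\p\Omega))^2\leq\dist(\bx,\p\Omega)$, while if $\dist(\bx,\p\Omega)>1/\bar{\kappa}_0$ the inequality $\eta_{\delta}(\bx)\leq\bar{\kappa}_0\delta\leq\bar{\kappa}_0\delta_0\leq 1/(2\bar{\kappa}_0)$ and $R<1$ again give $R\eta_{\delta}(\bx)<\dist(\bx,\p\Omega)$. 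Hence for $u\in C^{1}(\overline{\Omega})$ one has the Cauchy--Schwarz estimate
\begin{equation*}
|u(\by)-u(\bx)|^{2}\leq |\by-\bx|^{2}\int_{0}^{1}\big|\grad u\big(\bx+t(\by-\bx)\big)\big|^{2}\,\rmd t.
\end{equation*}

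Substituting into \eqref{eq:NonlocalSeminormDef}, using $|\by-\bx|^{2}\leq R^{2}\eta_{\delta}(\bx)^{2}$ on the integration domain, setting $\bz=\by-\bx$, and applying Fubini to bring the $t$-integral outside, I arrive at
\begin{equation*}
[u]_{\frak{W}^{\delta,2}(\Omega)}^{2}\leq R^{2}\int_{0}^{1}\int_{\Omega}\frac{1}{\eta_{\delta}(\bx)^{d}}\int_{|\bz|\leq R\eta_{\delta}(\bx)}\big|\grad u(\bx+t\bz)\big|^{2}\,\rmd\bz\,\rmd\bx\,\rmd t.
\end{equation*}
For fixed $t\in(0,1]$ and $\bx$, the change of variables $\bw=\bx+t\bz$ yields $\rmd\bz=t^{-d}\rmd\bw$ and $|\bw-\bx|\leq tR\eta_{\delta}(\bx)$, so the estimate becomes
\begin{equation*}
[u]_{\frak{W}^{\delta,2}(\Omega)}^{2}\leq R^{2}\int_{0}^{1}t^{-d}\int_{\Omega}|\grad u(\bw)|^{2}\int_{\{\bx\in\Omega:\,|\bw-\bx|\leq tR\eta_{\delta}(\bx)\}}\frac{\rmd\bx}{\eta_{\delta}(\bx)^{d}}\,\rmd\bw\,\rmd t.
\end{equation*}

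The main obstacle -- and the reason the argument is not completely routine -- is the $\bx$-dependence of $\eta_{\delta}$, which couples the inner region of integration to the weight. To handle it I would invoke the Lipschitz bound $|\grad\eta_{\delta}|\leq\kappa_{1}\sqrt{\delta}$ from property iii), combined with $\delta\leq\delta_{0}\leq 1/(9\kappa_{1}^{2})$, to obtain
\begin{equation*}
|\eta_{\delta}(\bw)-\eta_{\delta}(\bx)|\leq\kappa_{1}\sqrt{\delta}\,tR\,\eta_{\delta}(\bx)\leq\tfrac{R}{3}\eta_{\delta}(\bx).
\end{equation*}
This is the crucial two-sided comparison: it forces $\eta_{\delta}(\bx)\leq\tfrac{3}{2}\eta_{\delta}(\bw)$ and $\eta_{\delta}(\bw)\leq\tfrac{4}{3}\eta_{\delta}(\bx)$ whenever $\bx$ lies in the indicated set. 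Consequently the set $\{\bx:|\bw-\bx|\leq tR\eta_{\delta}(\bx)\}$ is contained in $B(\bw,\tfrac{3tR}{2}\eta_{\delta}(\bw))$ and the weight satisfies $\eta_{\delta}(\bx)^{-d}\leq(4/3)^{d}\eta_{\delta}(\bw)^{-d}$.

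Putting these two observations together, the inner $\bx$-integral is bounded by a dimensional constant times $(tR)^{d}$, so the $t^{-d}$ factor is absorbed and one concludes
\begin{equation*}
[u]_{\frak{W}^{\delta,2}(\Omega)}^{2}\leq C\,R^{d+2}\Vnorm{\grad u}_{L^{2}(\Omega)}^{2},
\end{equation*}
with $C=C(d,\Omega)$. Finally, a standard density argument extends the inequality from $C^{\infty}(\overline{\Omega})$ to $H^{1}(\Omega)$: approximating $u\in H^{1}(\Omega)$ in the $H^{1}$-norm by smooth $u_{n}$ and applying the estimate to $u_{n}-u_{m}$ shows $\{u_{n}\}$ is Cauchy in $\frak{W}^{\delta,2}(\Omega)$, so $u$ lies in $\frak{W}^{\delta,2}(\Omega)$ and the bound passes to the limit.
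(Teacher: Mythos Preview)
Your proof is correct and follows essentially the same approach as the paper: reduce to smooth $u$, apply the fundamental theorem of calculus along the segment $[\bx,\by]$ with Cauchy--Schwarz, and then exploit the Lipschitz bound $|\grad\eta_{\delta}|\leq\kappa_{1}\sqrt{\delta}$ to control the resulting integral. The only difference is in how the change of variables is organized: the paper first rescales $\bh=\by-\bx$ to $|\bh|\leq R_{0}$ and then changes variables in the outer integral via $\bx\mapsto\bx+t\eta_{\delta}(\bx)\bh$, bounding its Jacobian $1+t\grad\eta_{\delta}(\bx)\cdot\bh$ away from zero; you instead change $\bz\mapsto\bw=\bx+t\bz$, swap the order of integration, and then use the same Lipschitz comparability of $\eta_{\delta}(\bx)$ and $\eta_{\delta}(\bw)$ to bound the resulting inner $\bx$-integral by $C(tR)^{d}$. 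Both routes rest on exactly the same ingredient and yield the same constant structure.
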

\begin{proof}
    It suffices to prove the estimate for $u \in C^{\infty}(\overline{\Omega})$. By the mean value theorem,
    \begin{equation*}
        \begin{split}
            [u]_{ \frak{W}^{\delta,2}(\Omega) }^2 &= \int_{\Omega} \int_{|\bh| \leq R_0 \eta_{\delta}(\bx)} \frac{|u(\bx+\bh) - u(\bx)|^2}{\eta_{\delta}(\bx)^{d+2}} \, \rmd \bh \, \rmd \bx \\
            &\leq \int_{\Omega} \int_{|\bh| \leq R_0 \eta_{\delta}(\bx)} \int_0^1 |\grad u(\bx + t \bh)|^2 \, \rmd t \frac{|\bh|^2}{\eta_{\delta}(\bx)^{d+2}} \, \rmd \bh \, \rmd \bx \\
            &= \int_{\Omega} \int_{|\bh| \leq R_0} |\bh|^2 \int_0^1 |\grad u(\bx + t \eta_{\delta}(\bx)\bh)|^2 \, \rmd t  \, \rmd \bh \, \rmd \bx\,.
        \end{split}
    \end{equation*}
    On the domain of integration the function $\bg_{\bh}(\bx) := \bx + t \eta_{\delta}(\bx) \bh$ satisfies
    \begin{equation*}
        \det \grad \bg_{\bh}(\bx) = 1 + t \grad \eta_{\delta}(\bx) \cdot \bh > 1 - t \kappa_1 R_0 \sqrt{\delta} > 0
    \end{equation*}
    for all $\delta < \delta_0$.
    Therefore, the matrix inverse of $\grad \bg_{\bh}(\bx)$ is bounded from above. It follows that
    \begin{equation*}
        [u]_{ \frak{W}^{\delta,2}(\Omega) }^2 \leq C(\delta_0)  \int_{|\bh| \leq R_0} |\bh|^2 \int_{\Omega} |\grad u(\bx)|^2 \, \rmd \bx  \, \rmd \bh = C \Vnorm{\grad u}_{L^2(\Omega)}\,.
    \end{equation*}
\end{proof}

In a spirit similar to the studies in \cite{tian2017trace,du2022fractional}, we also have the independence of the nonlocal energy norms on the scaling factor used in the horizon.

\begin{theorem}[The nonlocal energy space is independent of the horizon]\label{thm:InvariantHorizon}
	Let $\Omega \subset \bbR^d$ satisfy \eqref{assump:Domain}. For $R \in (0,1)$, define the norm
    \begin{equation*}
        \Vnorm{u}_{\frak{W}^{\delta,2}(\Omega),R}^2 := \Vnorm{u}_{L^2(\Omega)}^2 + [u]_{\frak{W}^{\delta,2}(\Omega),R}^2\,,
    \end{equation*}
    where the semi-norm is defined as
    \begin{equation*}
        [u]_{\frak{W}^{\delta,2}(\Omega),R}^2 := \iintdm{\Omega}{\Omega \cap \{ |\by-\bx| \leq R \eta_{\delta}(\bx) \} }{  \frac{\big| u(\bx)-u(\by) \big|^2}{\eta_{\delta}(\bx)^{d+2} } }{\by}{\bx}\,.
    \end{equation*}
    Then, for constants $0 < r_0 \leq R_0 < 1$, there exists a constant $C$ depending only on $d$, $R_0$, $r_0$ and $\Omega$ such that
	\begin{equation*}
		C^{-1} \Vnorm{u}_{\frak{W}^{\delta,2}(\Omega),R_0} \leq \Vnorm{u}_{\frak{W}^{\delta,2}(\Omega),r_0} \leq C \Vnorm{u}_{\frak{W}^{\delta,2}(\Omega),R_0}
	\end{equation*}
	for any $ u \in C^{\infty}(\overline{\Omega})$.
\end{theorem}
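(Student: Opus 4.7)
Since $r_0 \leq R_0$ only enlarges the integration region in \eqref{eq:NonlocalSeminormDef} and the integrand is nonnegative, one has the easy inequality $[u]_{\frak{W}^{\delta,2}(\Omega),r_0} \leq [u]_{\frak{W}^{\delta,2}(\Omega),R_0}$, and the content of the theorem is the reverse bound. My plan is a telescoping chain argument: for each pair $(\bx,\by)$ with $|\by-\bx| \leq R_0 \eta_{\delta}(\bx)$, interpolate by an equispaced chain whose hop length is controlled by $r_0 \eta_{\delta}$, bound the single difference $u(\bx)-u(\by)$ by the sum of hops via Cauchy--Schwarz, and then change variables term-by-term so that the result fits into the $r_0$-seminorm.

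The key preliminary is a comparability lemma: for $\by \in B(\bx, R_0 \eta_{\delta}(\bx))$ one has $\tfrac{1}{2} \eta_{\delta}(\bx) \leq \eta_{\delta}(\by) \leq 2 \eta_{\delta}(\bx)$, obtained by integrating the bound $|\grad \eta_{\delta}| \leq \kappa_1 \sqrt{\delta}$ from \eqref{assump:Localization}(iii) along the segment and invoking $\delta < \delta_0 \leq 1/(9\kappa_1^2)$ together with $R_0 < 1$. With this in hand I would set $n := \lceil 4 R_0/r_0 \rceil$ and $\bx_k := \bx + (k/n)(\by-\bx)$, so that each hop $|\bx_{k+1}-\bx_k| \leq R_0 \eta_{\delta}(\bx)/n \leq \tfrac{1}{2} r_0 \eta_{\delta}(\bx_k)$. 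Cauchy--Schwarz gives $|u(\bx)-u(\by)|^2 \leq n \sum_{k=0}^{n-1} |u(\bx_k) - u(\bx_{k+1})|^2$. Substituting this into $[u]_{\frak{W}^{\delta,2}(\Omega),R_0}^2$ and, for each $k$, performing the linear change of variables $(\bx,\by) \mapsto (\bx',\bh') := (\bx_k, (\by-\bx)/n)$, which has constant Jacobian, converts the constraint $|\by-\bx| \leq R_0 \eta_{\delta}(\bx)$ into $|\bh'| \leq r_0 \eta_{\delta}(\bx')$ (modulo comparability) and the weight $\eta_{\delta}(\bx)^{-d-2}$ into a bounded multiple of $\eta_{\delta}(\bx')^{-d-2}$. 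Summing the $n$ resulting terms produces the desired $[u]_{\frak{W}^{\delta,2}(\Omega),R_0}^2 \leq C(d,r_0,R_0,\kappa_1,\bar{\kappa}_0)\,[u]_{\frak{W}^{\delta,2}(\Omega),r_0}^2$.

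The main obstacle I anticipate is geometric: since $\Omega$ need not be convex, the straight segment from $\bx$ to $\by$ can leave $\Omega$ and some chain points $\bx_k$ may fail to belong to $\Omega$. I would handle this by splitting $\Omega$ into a boundary layer and a bulk region. In the boundary layer where $\eta_{\delta}(\bx) = (\dist(\bx,\p\Omega))^2$, assumption \eqref{assump:Localization}(iv) yields $R_0 \eta_{\delta}(\bx) \leq R_0 \bar{\kappa}_0 (\dist(\bx,\p\Omega))^2 \ll \dist(\bx,\p\Omega)$ for $\delta$ small, so the entire ball $B(\bx, R_0 \eta_{\delta}(\bx))$ sits strictly inside $\Omega$ and the segments are automatically safe. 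In the bulk where $\eta_{\delta}(\bx) = \delta$, the chain has total length $\leq R_0 \delta$, which is small compared to the uniformly positive reach of $\p\Omega$ coming from its $C^2$ regularity, so the segments again stay inside $\Omega$ for $\delta$ sufficiently small; if necessary one pulls the chain through a local $C^2$ boundary-flattening chart. A secondary, routine bookkeeping point is to verify that after the change of variables the resulting region of integration covers (a set equivalent to) $\{(\bx',\bh') : \bx' \in \Omega,\ |\bh'| \leq r_0 \eta_{\delta}(\bx')\}$, which follows from the comparability lemma together with the nonnegativity of the integrand.
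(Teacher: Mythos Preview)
Your chaining argument is correct and is essentially the same method that the paper defers to in \cite[Lemma~6.2]{tian2017trace} and \cite[Lemma~2.2]{du2022fractional}. One simplification: the geometric obstacle you anticipate does not arise. From \eqref{assump:Localization}(iv) one has $\eta_{\delta}(\bx) \leq \bar{\kappa}_0 \min\{\delta, \dist(\bx,\p\Omega)^2\}$, and taking the geometric mean of the two bounds gives $\eta_{\delta}(\bx) \leq \bar{\kappa}_0 \sqrt{\delta}\,\dist(\bx,\p\Omega)$; since $\delta < \delta_0 \leq 1/(2\bar{\kappa}_0^2)$ this yields $R_0\,\eta_{\delta}(\bx) \leq \tfrac{R_0}{\sqrt{2}}\,\dist(\bx,\p\Omega) < \dist(\bx,\p\Omega)$ for every $\bx \in \Omega$. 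Hence the entire ball $B(\bx, R_0\,\eta_{\delta}(\bx))$, and in particular the straight segment and all chain points $\bx_k$, lie in $\Omega$ regardless of convexity. The split into bulk and boundary layers and the appeal to $C^2$ boundary charts are unnecessary; the quadratic vanishing of $\eta_{\delta}$ at $\p\Omega$ is precisely what makes the chain safe everywhere.
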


\begin{proof}
	The proof follows essentially the same method from \cite[Lemma 6.2]{tian2017trace} and \cite[Lemma 2.2]{du2022fractional}.
\end{proof}

We now present the independence of the nonlocal energy norm space on the specific form of the nonlocal kernel $\rho$ so long the assumption \eqref{Assump:KernelSmoothness} holds.
A by-product is the continuity of the bilinear form.

\begin{theorem}[The nonlocal energy space is independent of the kernel]\label{thm:Coercivity}
	Let $\Omega \subset \bbR^d$ satisfy \eqref{assump:Domain} and $\rho$ satisfy \eqref{Assump:KernelSmoothness}. Then there exists a constant $C$ depending only on $d$, $\rho$ and $\Omega$ such that
	\begin{equation}\label{eq:KernelEquivalence}
		C^{-1} B_{\rho,\delta}(u,u) \leq [u]_{\frak{W}^{\delta,2}(\Omega)} \leq C B_{\rho,\delta}(u,u)\,,\quad \forall u \in \mathfrak{W}^{\delta,2}(\Omega)\,.
	\end{equation}
        Moreover, we have the continuity of the bilinear form in the nonlocal energy space
        \begin{equation}\label{eq:BilinearForm:Continuity}
		  B_{\rho,\delta}(u,v) \leq C \Vnorm{u}_{\frak{W}^{\delta,2}(\Omega)}  \Vnorm{v}_{\frak{W}^{\delta,2}(\Omega)} \,,\quad \forall u,v \in \mathfrak{W}^{\delta,2}(\Omega).
	\end{equation}
        Continuity also holds for $B_{\underline{\rho},\delta}$, where $\underline{\rho}$ is defined using $\rho$ as in \eqref{eq:AuxKernelDef}:
	\begin{equation}\label{eq:BilinearForm:Continuity:LowerKernel}
		  B_{\underline{\rho},\delta}(u,v) \leq C \Vnorm{u}_{\frak{W}^{\delta,2}(\Omega)}  \Vnorm{v}_{\frak{W}^{\delta,2}(\Omega)} \,,\quad \forall u,v \in \mathfrak{W}^{\delta,2}(\Omega).
	\end{equation}
\end{theorem}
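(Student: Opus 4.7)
The plan is to extract the equivalence \eqref{eq:KernelEquivalence} from the pointwise bounds on $\rho$ provided by \eqref{Assump:KernelSmoothness}, then derive the continuity statements by Cauchy–Schwarz and the norm equivalence. The semi-norm $[u]_{\mathfrak{W}^{\delta,2}(\Omega)}$ should be interpreted with $R = R_0$, so no horizon rescaling is needed at the level of the upper bound; Theorem~\ref{thm:InvariantHorizon} will absorb the switch from $R_0$ to $r_0$ in the lower bound.

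First, for the \emph{upper estimate} $B_{\rho,\delta}(u,u) \le C\,[u]_{\mathfrak{W}^{\delta,2}(\Omega)}^2$: assumption \eqref{Assump:KernelSmoothness} gives $\rho \in C^1$ with compact support in $[0,R_0]$, so $\rho$ is bounded. Inserting this into the two summands defining $\rho_{\delta,2}$ and using the indicator of the supports yields
\begin{equation*}
\rho_{\delta,2}(\bx,\by) \le \tfrac{\|\rho\|_{L^\infty}}{2}\left[\frac{\mathds{1}_{\{|\by-\bx|<R_0 \eta_\delta(\bx)\}}}{\eta_\delta(\bx)^{d+2}} + \frac{\mathds{1}_{\{|\by-\bx|<R_0 \eta_\delta(\by)\}}}{\eta_\delta(\by)^{d+2}}\right].
\end{equation*}
Substituting into $B_{\rho,\delta}(u,u)$, each term becomes (after swapping $\bx\leftrightarrow\by$ in the second, using symmetry of $|u(\bx)-u(\by)|^2$ and of $\Omega\times\Omega$) a constant multiple of $[u]_{\mathfrak{W}^{\delta,2}(\Omega),R_0}^2 = [u]_{\mathfrak{W}^{\delta,2}(\Omega)}^2$.

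Next, for the \emph{lower estimate} $[u]_{\mathfrak{W}^{\delta,2}(\Omega)}^2 \le C\,B_{\rho,\delta}(u,u)$: assumption \eqref{Assump:KernelSmoothness} supplies the positivity $\rho(r)\ge \rho_0$ on $[0,r_0]$, so
\begin{equation*}
\rho_{\delta,2}(\bx,\by) \ge \tfrac{\rho_0}{2}\,\frac{\mathds{1}_{\{|\by-\bx|<r_0\eta_\delta(\bx)\}}}{\eta_\delta(\bx)^{d+2}}.
\end{equation*}
Inserting this into $B_{\rho,\delta}(u,u)$ produces a lower bound by $\tfrac{\rho_0}{2}[u]_{\mathfrak{W}^{\delta,2}(\Omega),r_0}^2$. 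Theorem~\ref{thm:InvariantHorizon} then converts this into a lower bound by $[u]_{\mathfrak{W}^{\delta,2}(\Omega),R_0}^2$ up to a constant depending on $r_0$, $R_0$, $d$, $\Omega$, completing \eqref{eq:KernelEquivalence}. Density of $C^\infty(\overline{\Omega})$ extends everything to $\mathfrak{W}^{\delta,2}(\Omega)$.

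For the continuity estimates \eqref{eq:BilinearForm:Continuity} and \eqref{eq:BilinearForm:Continuity:LowerKernel}, observe that $\rho_{\delta,2}(\bx,\by)\,\rmd\by\,\rmd\bx$ defines a nonnegative measure, so the Cauchy–Schwarz inequality applied to the bilinear form gives
\begin{equation*}
B_{\rho,\delta}(u,v) \le B_{\rho,\delta}(u,u)^{1/2}\, B_{\rho,\delta}(v,v)^{1/2},
\end{equation*}
and combining with \eqref{eq:KernelEquivalence} yields \eqref{eq:BilinearForm:Continuity}. For \eqref{eq:BilinearForm:Continuity:LowerKernel}, the same Cauchy–Schwarz reduction applies, but only the upper half of the argument (requiring an $L^\infty$ bound on $\underline{\rho}$ and compact support in $[0,R_0]$) is needed to conclude $B_{\underline{\rho},\delta}(w,w)\le C[w]_{\mathfrak{W}^{\delta,2}(\Omega)}^2$; the lower positivity of $\underline{\rho}$ is not required here.

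The main obstacle I anticipate is bookkeeping around the asymmetry of the semi-norm (which integrates only over $|\by-\bx|<R\eta_\delta(\bx)$, not a symmetric neighborhood). This is handled by Fubini plus a relabeling in the $\eta_\delta(\by)$-summand of $\rho_{\delta,2}$; everything else is direct from \eqref{Assump:KernelSmoothness} and Theorem~\ref{thm:InvariantHorizon}.
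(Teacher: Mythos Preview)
Your proposal is correct and matches the paper's approach exactly: the paper's proof simply says the equivalence \eqref{eq:KernelEquivalence} follows from the upper and lower bounds on $\rho$ together with Theorem~\ref{thm:InvariantHorizon}, and that \eqref{eq:BilinearForm:Continuity}--\eqref{eq:BilinearForm:Continuity:LowerKernel} follow from H\"older's inequality and \eqref{eq:KernelEquivalence}. Your write-up fills in precisely those details; the only cosmetic point is that $\underline{\rho}$ need not be nonnegative, so for \eqref{eq:BilinearForm:Continuity:LowerKernel} you should apply Cauchy--Schwarz to the kernel $|\underline{\rho}_{\delta,2}|$ before invoking the $L^\infty$ bound, which is implicit in your ``upper half'' remark.
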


\begin{proof}
	The first result  of the theorem follows in a straightforward way from the upper and lower bounds on $\rho$
 and \Cref{thm:InvariantHorizon}.
 The inequality \eqref{eq:BilinearForm:Continuity} then
 follows from H\"older's inequality and \eqref{eq:KernelEquivalence}. The inequality \eqref{eq:BilinearForm:Continuity:LowerKernel} follows similarly.
\end{proof}

\section{Properties of the heterogeneous localization function and the associated kernels}\label{sec:ParaProp}

We now present some properties related to the function $\eta_{\delta}$ and various kernels used in this work. First, we note that any
 kernel satisfying \eqref{Assump:KernelNormalization} also satisfies
\begin{equation}\label{eq:KernelNormalizationConsequence}
	\int_{B(0,1)} (\bz \otimes \bz) \rho(|\bz|) \, \rmd \bz = \bbI\,,
\end{equation}
since for each pair of indices $i$, $j$, a coordinate change by the appropriate rotation gives
\begin{equation*}
    \int_{B(0,1)} z_i z_j \rho(|\bz|) \, \rmd \bz = \delta_{ij} \int_{B(0,1)} z_i^2 \rho(|\bz|) \, \rmd \bz  = \frac{1}{d} \left( \int_{B(0,1)} |\bz|^2 \rho(|\bz|) \, \rmd \bz \right) \delta_{ij} = \delta_{ij}\,.
\end{equation*}
 
In addition, we adopt the notation that
 for vectors $\ba$ and $\bfb$ in $\bbR^d$, $\ba \otimes \bfb$ denotes the $d \times d$ matrix with $ij$ coordinate equal to $a_i b_j$, and $\bbI$ denotes the $d \times d$ identity matrix.

\subsection{Spatial variations of the heterogeneous localization function}\label{sec:LocalizationFunc}
For ease of access, we record the following comparisons of 
the heterogeneous localization function $\eta_{\delta}$ that are frequently referred to in later discussions:

\begin{lemma}\label{lma:ComparabilityOfXandY}
	Let $R_0 \in (0,1)$, and let $\Omega \subset \bbR^d$ satisfy \eqref{assump:Domain}. Then for all $\delta < \delta_0$,	\begin{equation}\label{eq:ComparabilityOfDistanceFxn1:SqDist}
		(1-\kappa_1 R_0 \sqrt{\delta}) \eta_{\delta}(\bx) \leq \eta_{\delta}(\by) \leq (1+\kappa_1 R_0 \sqrt{\delta}) \eta_{\delta}(\bx), \quad \forall \bx, \by \in \Omega \text{ with } |\bx-\by| \leq R_0 \eta_{\delta}(\bx)
	\end{equation}
	and 
	\begin{equation}\label{eq:ComparabilityOfDistanceFxn2:SqDist}
		(1-\kappa_1 R_0 \sqrt{\delta}) \eta_{\delta}(\by) \leq \eta_{\delta}(\bx) \leq (1+\kappa_1 R_0 \sqrt{\delta}) \eta_{\delta}(\by), \quad \forall \bx, \by \in \Omega \text{ with } |\bx-\by| \leq R_0 \eta_{\delta}(\by)\,.
	\end{equation}
\end{lemma}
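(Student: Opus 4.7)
The plan is to reduce both inequalities to a single application of the fundamental theorem of calculus, using the key gradient bound $|\grad \eta_\delta| \leq \kappa_1\sqrt{\delta}$ from condition iii) of \eqref{assump:Localization}. The two stated inequalities are symmetric: once \eqref{eq:ComparabilityOfDistanceFxn1:SqDist} is established, the second follows immediately by interchanging the roles of $\bx$ and $\by$, so the core task is to prove \eqref{eq:ComparabilityOfDistanceFxn1:SqDist}.

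First I would address a small preliminary: given $\bx, \by \in \Omega$ with $|\bx-\by| \leq R_0 \eta_\delta(\bx)$ and $R_0 < 1$, verify that the straight-line segment $[\bx,\by]$ lies in $\overline{\Omega}$, so that the $C^2$ function $\eta_\delta$ on $\overline{\Omega}$ can be evaluated along it. Condition iv) of \eqref{assump:Localization} gives $\eta_\delta(\bx) \leq \bar{\kappa}_0 \min\{\delta,\dist(\bx,\p\Omega)^2\}$. Split into two regimes according to whether $\dist(\bx,\p\Omega) < 1/(R_0\bar{\kappa}_0)$ or not: in the first, $|\bx-\by|\leq R_0\bar{\kappa}_0 \dist(\bx,\p\Omega)^2 < \dist(\bx,\p\Omega)$; in the second, the bound $\eta_\delta(\bx)\leq \bar{\kappa}_0\delta$ combined with $\delta \leq \delta_0 \leq 1/(2\bar{\kappa}_0^2)$ yields $|\bx-\by| \leq R_0/(2\bar{\kappa}_0)< \dist(\bx,\p\Omega)$ since $R_0 < 1$. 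Either way, the segment sits in the open ball $B(\bx,\dist(\bx,\p\Omega))\subset \Omega$.

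Next I would write, by the fundamental theorem of calculus applied to the $C^1$ function $t \mapsto \eta_\delta(\bx + t(\by-\bx))$,
\begin{equation*}
    \eta_\delta(\by) - \eta_\delta(\bx) = \int_0^1 \grad\eta_\delta\bigl(\bx + t(\by-\bx)\bigr)\cdot (\by-\bx)\,\rmd t\,.
\end{equation*}
Invoking the pointwise bound $|\grad\eta_\delta|\leq \kappa_1\sqrt{\delta}$ and the hypothesis $|\by-\bx|\leq R_0\eta_\delta(\bx)$ gives
\begin{equation*}
    |\eta_\delta(\by)-\eta_\delta(\bx)| \leq \kappa_1\sqrt{\delta}\,|\by-\bx| \leq \kappa_1 R_0 \sqrt{\delta}\,\eta_\delta(\bx)\,,
\end{equation*}
and rearranging both sides of the absolute value inequality produces \eqref{eq:ComparabilityOfDistanceFxn1:SqDist}. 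The inequality \eqref{eq:ComparabilityOfDistanceFxn2:SqDist} is obtained by repeating the argument with the roles of $\bx$ and $\by$ exchanged, since the hypothesis is now $|\bx-\by|\leq R_0\eta_\delta(\by)$ and the segment analysis is symmetric.

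The only nontrivial step is the preliminary check that the segment lies in $\Omega$; everything else is a direct one-line FTC estimate. I expect the segment-containment argument to be the cleanest thing to justify explicitly, while the main estimate is essentially automatic once the gradient bound in iii) is invoked. A user of the lemma could alternatively avoid the segment-containment check by extending $\eta_\delta$ to a $C^2$ function on all of $\bbR^d$ with gradient bound of the same order, but the direct approach above keeps the constants exactly as stated.
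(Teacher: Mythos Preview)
Your proof is correct and follows essentially the same approach as the paper: both use the gradient bound $|\grad\eta_\delta|\leq\kappa_1\sqrt\delta$ together with a first-order Taylor/FTC estimate to obtain $|\eta_\delta(\by)-\eta_\delta(\bx)|\leq\kappa_1 R_0\sqrt\delta\,\eta_\delta(\bx)$, then rearrange. The paper omits the segment-containment verification that you include, so your write-up is in fact slightly more complete.
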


\begin{proof}
	It suffices to show \eqref{eq:ComparabilityOfDistanceFxn1:SqDist}, since \eqref{eq:ComparabilityOfDistanceFxn2:SqDist} will follow from the same arguments with the roles of $\bx$ and $\by$ interchanged.
	From the properties of $\grad \eta_{\delta}$ and Taylor expansion we get
	\begin{equation*}
		\eta_{\delta}(\by) \leq \eta_{\delta}(\bx) + \kappa_1 \sqrt{\delta}|\bx-\by| \leq (1+\kappa_1 R_0 \sqrt{\delta} ) \eta_{\delta}(\bx)
	\end{equation*}
	and
	\begin{equation*}
		\eta_{\delta}(\bx) \leq \eta_{\delta}(\by) + \kappa_1 \sqrt{\delta}|\bx-\by| \leq  \eta_{\delta}(\by) + \kappa_1 R_0 \sqrt{\delta}  \eta_{\delta}(\bx)\,.
	\end{equation*}
\end{proof}

\begin{lemma}\label{lma:BoundaryRegions:SqDist}
	Let $R_0 \in (0,1)$. Let $\Omega \subset \bbR^d$ satisfy \eqref{assump:Domain}, and let $r > 0$ be such that $\bbR^d \setminus \Omega_{{r}} \neq \emptyset$. Then for all $\delta < \delta_0$
	\begin{equation}\label{eq:XSetInsideRLayer}
		\{ \by \, : \, |\bx-\by| \leq R_0 \eta_{\delta}(\bx) \} \subset \Omega_{{r}} \text{ whenever } \dist(\bx,\p \Omega) < \frac{r}{ 1 +  \bar{\kappa}_0 R_0 \sqrt{\delta}  }
	\end{equation}
	and
	\begin{equation}\label{eq:YSetInsideRLayer}
		\{ \by \, : \, |\bx-\by| \leq R_0 \eta_{\delta}(\by) \} \subset \Omega_{{r}} \text{ whenever } \dist(\bx,\p \Omega) < (1- \bar{\kappa}_0 R_0 \sqrt{\delta} ) r\,.
	\end{equation}	
 Also,
	\begin{equation}\label{eq:XSetOutsideRLayer}
		\{ \by \, : \, |\bx-\by| \leq R_0 \eta_{\delta}(\bx) \} \subset \bbR^d \setminus \Omega_{{r}} \text{ whenever } \dist(\bx,\p \Omega) \geq \frac{r}{ 1 - \bar{\kappa}_0 R_0 \sqrt{\delta}}
	\end{equation}
	and
\begin{equation}\label{eq:YSetOutsideRLayer}
		\{ \by \, : \, |\bx-\by| \leq R_0 \eta_{\delta}(\by) \} \subset \bbR^d \setminus \Omega_{{r}} \text{ whenever } \dist(\bx,\p \Omega) \geq (1 + \bar{\kappa}_0  R_0 \sqrt{\delta}) r\,.
	\end{equation}
\end{lemma}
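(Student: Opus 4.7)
The plan is to reduce all four inclusions to a single auxiliary bound on $\eta_\delta$ combined with the triangle inequality for $\dist(\cdot, \p\Omega)$. The key observation is that condition iv) in \eqref{assump:Localization}, together with the elementary inequality $\min\{\delta, a^2\} \leq \sqrt{\delta \cdot a^2} = \sqrt{\delta}\, a$ (valid for $\delta, a \geq 0$), yields the unified bound
\[
\eta_\delta(\bx) \leq \bar{\kappa}_0 \sqrt{\delta}\, \dist(\bx, \p\Omega), \qquad \forall\, \bx \in \overline{\Omega}.
\]
This is what produces the $\bar{\kappa}_0 R_0 \sqrt{\delta}$ factors appearing throughout the lemma. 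As a preliminary I would also note that for $\delta < \delta_0 \leq 1/(2\bar{\kappa}_0^2)$ one has $R_0 \bar{\kappa}_0 \sqrt{\delta} < R_0/\sqrt{2} < 1$, so the denominators $1 \pm R_0 \bar{\kappa}_0 \sqrt{\delta}$ in the hypotheses are positive, and the ball $\{\by : |\bx - \by| \leq R_0 \eta_\delta(\bx)\}$ is strictly contained in $\Omega$ whenever $\bx \in \Omega$.

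For the ``inside'' inclusions I apply $\dist(\by, \p\Omega) \leq \dist(\bx, \p\Omega) + |\bx - \by|$ and bound $|\bx - \by|$ by either $R_0 \eta_\delta(\bx)$ or $R_0 \eta_\delta(\by)$ followed by the auxiliary bound. For \eqref{eq:XSetInsideRLayer} this immediately yields $\dist(\by, \p\Omega) \leq (1 + R_0 \bar{\kappa}_0 \sqrt{\delta})\dist(\bx, \p\Omega) < r$, while $\by \in \Omega$ is supplied by the preliminary check. For \eqref{eq:YSetInsideRLayer} the $\eta_\delta(\by)$ bound places $\dist(\by, \p\Omega)$ on both sides, and rearranging gives $(1 - R_0 \bar{\kappa}_0 \sqrt{\delta}) \dist(\by, \p\Omega) \leq \dist(\bx, \p\Omega) < (1 - R_0 \bar{\kappa}_0 \sqrt{\delta}) r$; the possibility $\by \in \p\Omega$ is ruled out by condition i), which forces $\eta_\delta(\by) = 0$ there and hence $\bx = \by$, contradicting $\dist(\bx, \p\Omega) > 0$. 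The two ``outside'' inclusions \eqref{eq:XSetOutsideRLayer} and \eqref{eq:YSetOutsideRLayer} are mirror images: starting from the reverse triangle inequality $\dist(\by, \p\Omega) \geq \dist(\bx, \p\Omega) - |\bx - \by|$ and using the same two bounds gives $\dist(\by, \p\Omega) \geq (1 - R_0 \bar{\kappa}_0 \sqrt{\delta})\dist(\bx, \p\Omega)$ and $(1 + R_0 \bar{\kappa}_0 \sqrt{\delta})\dist(\by, \p\Omega) \geq \dist(\bx, \p\Omega)$ respectively, both of which yield $\dist(\by, \p\Omega) \geq r$. If $\by$ happens to lie outside $\Omega$, then $\by \in \bbR^d \setminus \Omega_r$ is automatic and there is nothing to verify.

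I do not anticipate any real obstacle. The only insight beyond routine algebra is identifying the unifying auxiliary bound $\eta_\delta(\bx) \leq \bar{\kappa}_0 \sqrt{\delta}\,\dist(\bx, \p\Omega)$, which collapses the two-clause form of condition iv) into a single factor of $\sqrt{\delta}$ and makes the sign pattern of the four conclusions transparent.
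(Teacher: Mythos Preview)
Your proposal is correct and follows essentially the same route as the paper: triangle/reverse-triangle inequalities combined with condition~iv) of \eqref{assump:Localization}. The paper uses the bound $\eta_\delta(\bx)\leq \bar\kappa_0\min\{\delta,(\dist(\bx,\partial\Omega))^2\}\leq \bar\kappa_0\sqrt{\delta}\,\dist(\bx,\partial\Omega)$ implicitly in each of the four cases, whereas you state it once up front; your additional checks (positivity of $1\pm\bar\kappa_0 R_0\sqrt{\delta}$ and the edge cases $\by\in\partial\Omega$ or $\by\notin\Omega$) are minor refinements the paper leaves implicit.
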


\begin{proof}
	For \eqref{eq:XSetInsideRLayer}, since $\eta_{\delta}(\bx) \leq \bar{\kappa}_0 \min\{ \delta,(\dist(\bx,\p \Omega))^2 \}$
	\begin{equation*}
		\begin{split}
		\dist(\by,\p \Omega) &\leq \dist(\bx,\p \Omega) + |\bx-\by| \leq \dist(\bx,\p \Omega) + R_0 \eta_{\delta}(\bx) \\
		&\leq \dist(\bx,\p \Omega) + \bar{\kappa}_0 R_0 \sqrt{\delta} \dist(\bx, \p \Omega)\,,
		\end{split}
	\end{equation*}
	and so
	\begin{equation*}
		\dist(\by,\p \Omega) \leq  \big( 1 + \bar{\kappa}_0 R_0 \sqrt{\delta} \big)  (\dist(\bx,\p \Omega)) < r\,. 
	\end{equation*}
 
	For \eqref{eq:YSetInsideRLayer}, since $\eta_{\delta}(\by) \leq \bar{\kappa}_0 \min\{ \delta,(\dist(\by,\p \Omega))^2 \}$
	\begin{equation*}
		\begin{split}
		\dist(\by,\p \Omega) &\leq \dist(\bx,\p \Omega) + |\bx-\by| \leq \dist(\bx,\p \Omega) +
        R_0 \eta_{\delta}(\by) \\
		&\leq \dist(\bx,\p \Omega) + \bar{\kappa}_0 R_0 \sqrt{\delta} \dist(\by,\p \Omega)\,,
		\end{split}
	\end{equation*}
	and so
	\begin{equation*}
		(1- \bar{\kappa}_0 R_0 \sqrt{\delta} ) \dist(\by,\p \Omega) \leq \dist(\bx,\p \Omega) \leq (1- \bar{\kappa}_0 R_0 \sqrt{\delta} ) r\,.
	\end{equation*}

	For \eqref{eq:XSetOutsideRLayer},
	\begin{equation*}
		\begin{split}
			\dist(\by,\p \Omega) & \geq \dist(\bx,\p \Omega) - |\by-\bx| \geq \dist(\bx,\p \Omega) - \bar{\kappa}_0 R_0 \min \{ \delta, (\dist(\bx,\p \Omega))^2 \\
  & \geq \dist(\bx,\p \Omega) (1- \bar{\kappa}_0 R_0\sqrt{\delta})\,,
		\end{split}
	\end{equation*}
	and so
	\begin{equation*}
		\frac{\dist(\by,\p \Omega)}{(1 - \bar{\kappa}_0 R_0 \sqrt{\delta}) } \geq \dist(\bx,\p \Omega) \geq \frac{r}{(1- \bar{\kappa}_0 R_0 \sqrt{\delta})}\,.
	\end{equation*}
	
 For \eqref{eq:YSetOutsideRLayer},
	\begin{equation*}
		\begin{split}
			\dist(\by,\p \Omega) \geq \dist(\bx,\p \Omega) - |\by-\bx| \geq \dist(\bx,\p \Omega) - \bar{\kappa}_0 R_0 \min \{ \delta, (\dist(\by,\p \Omega))^2 \}\,,
		\end{split}
	\end{equation*}
	and so
	\begin{equation*}
		(1+\bar{\kappa}_0 R_0 \sqrt{\delta}) \dist(\by,\p \Omega) \geq \dist(\bx,\p \Omega) \geq (1+\bar{\kappa}_0 R_0 \sqrt{\delta}) r\,.
	\end{equation*}
\end{proof}

\subsection{Auxiliary kernels}\label{sec:AuxKernels}
To study properties of nonlocal problems corresponding to operators associated with the kernel $\rho$, such as the regularity of solutions, it is convenient to
introduce 
an additional kernel derived from $\rho$. We define
$\underline{\rho}$ as
\begin{equation}\label{eq:AuxKernelDef}
	\begin{split}
		\underline{\rho}(r) &:= \frac{-\rho'(r)}{r}\,, \qquad i.e. \quad \rho(r) = \int_r^1 s \underline{\rho}(s) \, \rmd s\,.
	\end{split}
\end{equation}
Then by the assumptions on $\rho$, $\underline{\rho}$ is in $C^0([0,\infty))$ with support in $[0,R_0]$, and satisfies
\begin{equation}\label{eq:normalr}
	\int_{B(0,1)} \underline{\rho}(|\bz|) \, \rmd \bz = \underline{\rho}_{d}\,, \text{ where } 
	\underline{\rho}_d := 
	\begin{cases}
        -2 \int_0^1 \frac{\rho'(r)}{r} \, \rmd r\,, & d =1 \,, \\
		2 \pi \rho(0)\,, & d = 2\,, \\
		\frac{1}{d-2} \int_{B(0,1)} \frac{\rho(|\bz|)}{|\bz|^2} \, \rmd \bz\,, & d \geq 3\,.
	\end{cases}
\end{equation}

We also define the rescaled kernels
\begin{equation*}
	\rho_{a,\alpha}(|\bz|) := \frac{1}{a^{d+\alpha}} \rho \left( \frac{|\bz|}{a} \right)\,, \text{ for any quantity } a > 0 \text{ and } \alpha \in \bbR\,.
\end{equation*}
When $\alpha = 0$ we write $\rho_{a}(|\bz|)=\rho_{a,0}(|\bz|)
$.
We note the difference in the notation: while $\rho_{\delta,\alpha}(|\bx-\by|) = \delta^{-d-\alpha} \rho(\frac{|\bx-\by|}{\delta})$, 
$\rho_{\delta,\alpha}(\bx,\by)$ is defined in \eqref{eq:OperatorKernelDef}. In fact, we have
\begin{equation*}
	2 \rho_{\delta,\alpha}(\bx,\by) = {\rho}_{\eta_{\delta}(\by),\alpha}(|\by-\bx|) + {\rho}_{\eta_{\delta}(\bx),\alpha}(|\by-\bx|) 
	=\frac{{\rho}_{\eta_{\delta}(\by)}(|\by-\bx|)}{\eta_{\delta}(\by)^{\alpha}} + \frac{{\rho}_{\eta_{\delta}(\bx)}(|\by-\bx|)}{\eta_{\delta}(\bx)^{\alpha}}\,.
\end{equation*}

Likewise, we use
$\tilde{\rho}_{\delta,\alpha}(\bx,\by)$ and $\tilde{\rho}_{\delta,\alpha}(|\bx-\by|)$ to denote the
 rescaled kernels corresponding to a generic  kernel $\tilde{\rho}$. 
Consequently, $\underline{\rho}_{\delta,\alpha}(\bx,\by)$ 
and $\underline{\rho}_{\delta,\alpha}(|\bx-\by|)$
denote the rescaled kernels corresponding to $\underline{\rho}$.

The integrals of the kernels $\Phi_{\delta,\alpha}(\bx)$ are defined as in \eqref{eq:ConvolutionOperatorAlpha}, and when $\alpha = 0$ we write $\Phi_{\delta}(\bx) := \Phi_{\delta,0}(\bx)$.

\subsection{The integral averages of the kernels}
\label{sec:AverageKernels}
\begin{lemma}\label{lma:KernelIntegral:SqDist}
	Suppose $\Omega \subset \bbR^d$ satisfies \eqref{assump:Domain}. Fix $R_0 \in (0,1)$ and let $\wt{\rho}$ be any function in $C^0([0,\infty))$ with support in $[0,R_0]$. 
	Then there exists a constant $C$ depending only on $d$, $\wt{\rho}$, $R_0$, $\Omega$ and $\kappa_1$ such that 
	\begin{equation}\label{eq:KernelIntFunction:UpperBound:SqDist}
		\int_{\Omega} \wt{\rho}_{\eta_{\delta}(\bx),\alpha}(|\by-\bx|) \, \rmd \by \leq \frac{C}{\eta_{\delta}(\bx)^\alpha} \text{ and } \int_{\Omega} \wt{\rho}_{\eta_{\delta}(\by),\alpha}(|\by-\bx|) \, \rmd \by \leq \frac{C}{\eta_{\delta}(\bx)^\alpha}
	\end{equation}
	for any $\bx \in \Omega$ and any $\alpha \in \bbR$.
	
	If in addition $\wt{\rho}(r) \geq \rho_0 > 0$ for all $r \in [0,r_0]$ for some $r_0 < R_0$, then 	\begin{equation}\label{eq:KernelIntFunction:LowerBound:SqDist}
		\int_{\Omega} \wt{\rho}_{\eta_{\delta}(\bx),\alpha}(|\by-\bx|) \, \rmd \by \geq \frac{1}{C \eta_{\delta}(\bx)^\alpha} \text{ and } \int_{\Omega} \wt{\rho}_{\eta_{\delta}(\by),\alpha}(|\by-\bx|) \, \rmd \by \geq \frac{1}{C \eta_{\delta}(\bx)^\alpha}
	\end{equation}
	for any $\bx \in \Omega$ and any $\alpha \in \bbR$.
\end{lemma}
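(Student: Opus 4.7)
The plan is to reduce every bound to an integral of $\wt{\rho}$ over a subset of $\bbR^d$ of bounded size, by means of the change of variables $\bz = (\by-\bx)/\eta_{\delta}(\bx)$. The first upper bound in \eqref{eq:KernelIntFunction:UpperBound:SqDist} is immediate: after the rescaling, and using $\supp \wt{\rho} \subset [0,R_0]$, one obtains
\begin{equation*}
   \int_{\Omega}\wt{\rho}_{\eta_{\delta}(\bx),\alpha}(|\by-\bx|) \, \rmd \by \leq \eta_{\delta}(\bx)^{-\alpha} \int_{|\bz|\leq R_0} \wt{\rho}(|\bz|)\, \rmd \bz\,.
\end{equation*}
For the second upper bound, the support of $\wt{\rho}_{\eta_{\delta}(\by),\alpha}(|\by-\bx|)$ is $\{|\by-\bx|\leq R_0 \eta_{\delta}(\by)\}$; on this set I would invoke \Cref{lma:ComparabilityOfXandY}, specifically \eqref{eq:ComparabilityOfDistanceFxn2:SqDist}, to replace $\eta_{\delta}(\by)$ by $\eta_{\delta}(\bx)$ within universal constants (the factor $\kappa_1 R_0 \sqrt{\delta_0}$ is bounded by $1/3$ thanks to $\delta_0 \leq 1/(9\kappa_1^2)$). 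The support is then contained in a ball of radius at most $C(R_0)\eta_{\delta}(\bx)$ centered at $\bx$, and the same rescaling closes the estimate.

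For the lower bounds, I would restrict the integration to a small ball on which $\wt{\rho}$ is bounded below by $\rho_0$ and on which $\by$ stays inside $\Omega$. The pivotal observation is that property iv) of \eqref{assump:Localization} yields, via the elementary inequality $\min(a,b) \leq \sqrt{ab}$,
\begin{equation*}
    \eta_{\delta}(\bx) \leq \sqrt{\bar{\kappa}_0 \delta} \cdot \sqrt{\bar{\kappa}_0(\dist(\bx,\p\Omega))^2} = \bar{\kappa}_0 \sqrt{\delta}\, \dist(\bx,\p\Omega)\,,
\end{equation*}
so that $r_0\eta_{\delta}(\bx) < \dist(\bx,\p\Omega)$ whenever $r_0\bar{\kappa}_0\sqrt{\delta} < 1$; this is ensured by $\delta<\delta_0 \leq (2\bar{\kappa}_0^2)^{-1}$ together with $r_0 < R_0 < 1$. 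Hence $B(\bx, r_0 \eta_{\delta}(\bx))\subset\Omega$, and the first lower bound follows from the direct estimate
\begin{equation*}
   \int_{\Omega}\wt{\rho}_{\eta_{\delta}(\bx),\alpha}(|\by-\bx|) \, \rmd \by \geq \rho_0 \eta_{\delta}(\bx)^{-d-\alpha} |B(\bx, r_0\eta_{\delta}(\bx))| = c(d,r_0,\rho_0)\, \eta_{\delta}(\bx)^{-\alpha}\,.
\end{equation*}
For the second lower bound, I would further shrink the test ball to $\{|\by-\bx| \leq \gamma \eta_{\delta}(\bx)\}$ for some $\gamma < r_0$ depending only on $\kappa_1$ and $\delta_0$; by \eqref{eq:ComparabilityOfDistanceFxn1:SqDist} this guarantees both that $|\by-\bx| \leq r_0 \eta_{\delta}(\by)$ (so $\wt{\rho} \geq \rho_0$ holds throughout) and that $\eta_{\delta}(\by)$ and $\eta_{\delta}(\bx)$ are comparable, so the integrand is at least $c\, \eta_{\delta}(\bx)^{-d-\alpha}$ and the same volume computation closes the estimate.

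The only nontrivial step is establishing the containment $B(\bx,r_0 \eta_{\delta}(\bx)) \subset \Omega$ uniformly in $\bx \in \Omega$, which relies decisively on the superlinear boundary-vanishing rate encoded in property iv) of \eqref{assump:Localization}; every other ingredient is a routine change of variables combined with the quantitative comparability of \Cref{lma:ComparabilityOfXandY}.
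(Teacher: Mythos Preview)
Your argument is correct and largely parallels the paper's, with one genuine difference worth noting in the lower bounds. The paper does not establish that $B(\bx, r_0\eta_{\delta}(\bx))\subset\Omega$; instead it invokes the measure-density property of the $C^2$ domain, namely $|B(\bx,r)\cap\Omega|\geq C(d,\Omega)\,r^d$ for all $\bx\in\Omega$ and $r<\operatorname{diam}(\Omega)$, and bounds the integral from below over $\Omega\cap B(\bx,r_0\eta_{\delta}(\bx))$. Your route---using property~iv) of \eqref{assump:Localization} and the inequality $\min(a,b)\leq\sqrt{ab}$ to force the full ball inside $\Omega$---is more self-contained and exploits the superlinear boundary vanishing of $\eta_{\delta}$ directly, so no geometric property of $\Omega$ beyond its definition is needed. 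The paper's approach, by contrast, would continue to work for localization functions that vanish only linearly at $\p\Omega$, at the cost of invoking domain regularity. For the upper bounds both proofs are essentially identical: rescale and use \Cref{lma:ComparabilityOfXandY} to transfer $\eta_{\delta}(\by)$ to $\eta_{\delta}(\bx)$.
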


\begin{proof}	
	Since $\supp \wt{\rho} \subset [0,R_0]$ we obtain one of the lower bounds
	\begin{equation*}
		\int_{\Omega} \wt{\rho}_{\eta_{\delta}(\bx),\alpha}(|\by-\bx|) \, \rmd \by \leq \Vnorm{\wt{\rho}}_{L^{\infty}([0,1])} \int_{ \{ |\by-\bx| \leq R_0 \eta_{ \delta}(\bx) \} } \frac{1}{\eta_{\delta}(\bx)^{d+\alpha}} \, \rmd \by = \frac{C(d,\wt{\rho})}{\eta_{\delta}(\bx)^{\alpha}}\,.
	\end{equation*}
	As for the upper bound on the same integral, since $\Omega$ satisfies 
	\begin{equation*}
		| B(\bx,r) \cap \Omega | \geq C r^d
	\end{equation*}
	for some constant $C = C(d,\Omega)$ for any $\bx \in \Omega$ and $r < \diam(\Omega)$,
	\begin{equation*}
		\int_{\Omega} \wt{\rho}_{\eta_{\delta}(\bx),\alpha}(|\by-\bx|) \, \rmd \by \geq \rho_0 \int_{ \Omega \cap \{ |\by-\bx| \leq r_0 \eta_{ \delta}(\bx) \} } \frac{1}{\eta_{\delta}(\bx)^{d+\alpha}} \, \rmd \by \geq \frac{C(d,\wt{\rho},\Omega)}{\eta_{\delta}(\bx)^{\alpha}}\,.
	\end{equation*}
	Therefore we need only to prove the same inequalities for the other piece:
	\begin{equation}\label{eq:IntOfDist:Pf1:UpperBd:SqDist}
		\int_{\Omega} \wt{\rho}_{\eta_{\delta}(\by),\alpha}(|\by-\bx|) \, \rmd \by \leq \frac{C(d,\Omega,\wt{\rho})}{\eta_{\delta}(\bx)^\alpha}
	\end{equation}
	and
	\begin{equation}\label{eq:IntOfDist:Pf1:LowerBd:SqDist}
		\int_{\Omega} \wt{\rho}_{\eta_{\delta}(\by),\alpha}(|\by-\bx|) \, \rmd \by \geq \frac{C(d,\Omega,\wt{\rho})}{\eta_{\delta}(\bx)^\alpha} \qquad \text{ if } \wt{\rho} \geq \rho_0 \text{ on } [0,r_0]\,.
	\end{equation}
	
	Both of these inequalities follow in the same way using \eqref{eq:ComparabilityOfDistanceFxn2:SqDist}:
	\begin{equation*}
		\int_{\Omega} \wt{\rho}_{\eta_{\delta}(\by),\alpha}(|\by-\bx|) \, \rmd \by \leq C \int_{ \{ |\by-\bx| \leq C_2 R_0 \eta_{\delta}(\bx) \} } \frac{1}{\eta_{\delta}(\bx)^{d+\alpha}} \wt{\rho} \left(  \frac{|\by-\bx|}{C_2 \eta_{\delta}(\bx)} \right) \, \rmd \by = \frac{C(d,\wt{\rho})}{\eta_{\delta}(\bx)^{\alpha}}
	\end{equation*}
	and
	\begin{equation*}
		\int_{\Omega} \wt{\rho}_{\eta_{\delta}(\by),\alpha}(|\by-\bx|) \, \rmd \by \geq C \rho_0 \int_{ \Omega \cap \{ |\by-\bx| \leq \frac{r_0}{C_2} \eta_{ \delta}(\bx) \} } \frac{1}{\eta_{\delta}(\bx)^{d+\alpha}} \, \rmd \by \geq \frac{C(d,\wt{\rho},\Omega)}{\eta_{\delta}(\bx)^{\alpha}}\,.
	\end{equation*}
\end{proof}

\begin{corollary}\label{cor:KernelIntegral:Weighted:SqDist}
	With the assumptions of \Cref{lma:KernelIntegral:SqDist}, for any $\alpha \geq 0$ and any $\beta \in [0,\alpha]$, there exists a constant $C(d,\wt{\rho},\alpha,\beta)>0$ such that
	\begin{equation}\label{eq:KernelIntFunction:Weighted:UpperBound:SqDist}
		\int_{\Omega} |\eta_{\delta}(\by)|^{\beta} \wt{\rho}_{\eta_{\delta}(\bx),\alpha}(|\by-\bx|) \, \rmd \by \leq \frac{C}{\eta_{\delta}(\bx)^{\alpha-\beta}} \text{ and } \int_{\Omega} |\eta_{\delta}(\by)|^{\beta} \wt{\rho}_{\eta_{\delta}(\by),\alpha}(|\by-\bx|) \, \rmd \by \leq \frac{C}{\eta_{\delta}(\bx)^{\alpha-\beta}}
	\end{equation}
	and
	\begin{equation}\label{eq:KernelIntFunction:Weighted:LowerBound:SqDist}
		\int_{\Omega} |\eta_{\delta}(\by)|^{\beta} \wt{\rho}_{\eta_{\delta}(\bx),\alpha}(|\by-\bx|) \, \rmd \by \geq \frac{1}{C \eta_{\delta}(\bx)^{\alpha-\beta}} \text{ and } \int_{\Omega} |\eta_{\delta}(\by)|^{\beta} \wt{\rho}_{\eta_{\delta}(\by),\alpha}(|\by-\bx|) \, \rmd \by \geq \frac{1}{C \eta_{\delta}(\bx)^{\alpha-\beta}}
	\end{equation}
	for any $\bx \in \Omega$.
\end{corollary}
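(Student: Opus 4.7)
The plan is to reduce everything to \Cref{lma:KernelIntegral:SqDist} by pulling the factor $|\eta_{\delta}(\by)|^\beta$ out of the integral, after replacing it with a constant multiple of $|\eta_{\delta}(\bx)|^\beta$. This replacement is justified by the comparability estimates in \Cref{lma:ComparabilityOfXandY}, which apply precisely on the support of $\wt{\rho}_{\eta_{\delta}(\bx)}(|\by-\bx|)$ or $\wt{\rho}_{\eta_{\delta}(\by)}(|\by-\bx|)$.

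First, I would handle the integral $I_{\bx} := \int_{\Omega} |\eta_{\delta}(\by)|^{\beta} \wt{\rho}_{\eta_{\delta}(\bx),\alpha}(|\by-\bx|) \, \rmd \by$. Since $\supp \wt{\rho} \subset [0,R_0]$, the integrand is supported where $|\by-\bx| \leq R_0 \eta_{\delta}(\bx)$, so \eqref{eq:ComparabilityOfDistanceFxn1:SqDist} gives $(1-\kappa_1 R_0 \sqrt{\delta}) \eta_{\delta}(\bx) \leq \eta_{\delta}(\by) \leq (1+\kappa_1 R_0 \sqrt{\delta}) \eta_{\delta}(\bx)$ on this set. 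Since $\delta < \delta_0 \leq 1/(9\kappa_1^2)$, both factors are bounded by positive absolute constants, so $\eta_{\delta}(\by)^{\beta}$ is comparable to $\eta_{\delta}(\bx)^{\beta}$ for any $\beta \geq 0$ (this is where we use $\beta \geq 0$, not $\beta \in [0,\alpha]$). Pulling this factor out yields upper and lower bounds of $C^{\pm 1} \eta_{\delta}(\bx)^{\beta} \int_{\Omega} \wt{\rho}_{\eta_{\delta}(\bx),\alpha}(|\by-\bx|) \, \rmd \by$, and applying \eqref{eq:KernelIntFunction:UpperBound:SqDist}--\eqref{eq:KernelIntFunction:LowerBound:SqDist} of \Cref{lma:KernelIntegral:SqDist} gives the required bounds $C \eta_{\delta}(\bx)^{\beta - \alpha}$ from above and $C^{-1} \eta_{\delta}(\bx)^{\beta-\alpha}$ from below.

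For the second integral $J_{\bx} := \int_{\Omega} |\eta_{\delta}(\by)|^{\beta} \wt{\rho}_{\eta_{\delta}(\by),\alpha}(|\by-\bx|) \, \rmd \by$, the same argument works, but now the relevant support is $\{|\by-\bx| \leq R_0 \eta_{\delta}(\by)\}$, so I would invoke \eqref{eq:ComparabilityOfDistanceFxn2:SqDist} instead of \eqref{eq:ComparabilityOfDistanceFxn1:SqDist}. This again gives $\eta_{\delta}(\by)^\beta \sim \eta_{\delta}(\bx)^\beta$ on the support, so pulling this factor out reduces $J_{\bx}$ to a constant multiple of $\eta_{\delta}(\bx)^\beta \int_\Omega \wt{\rho}_{\eta_{\delta}(\by),\alpha}(|\by - \bx|) \, \rmd\by$, and a second application of \Cref{lma:KernelIntegral:SqDist} closes the argument.

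There is no real obstacle here; the corollary is essentially a bookkeeping consequence of the lemma and the comparability of $\eta_{\delta}(\bx)$ and $\eta_{\delta}(\by)$ on the support of the kernel. The only mild subtlety is noticing that the upper bound $\beta \leq \alpha$ is not actually needed for this argument (it is $\beta \geq 0$ that ensures $\eta_{\delta}(\by)^\beta$ is comparable to $\eta_{\delta}(\bx)^\beta$ rather than the reciprocal); the restriction $\beta \leq \alpha$ is presumably imposed because the resulting exponent $\alpha - \beta \geq 0$ is how the estimate is used downstream. The lower bound in the second assertion requires $\wt{\rho}(r) \geq \rho_0$ on $[0,r_0]$, which we inherit from the hypotheses of \Cref{lma:KernelIntegral:SqDist}.
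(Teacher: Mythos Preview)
Your proposal is correct and is precisely the intended argument: the paper states the corollary without proof, and the only way to obtain it from \Cref{lma:KernelIntegral:SqDist} is exactly by using \Cref{lma:ComparabilityOfXandY} to replace $\eta_{\delta}(\by)^{\beta}$ by a constant multiple of $\eta_{\delta}(\bx)^{\beta}$ on the support of the kernel and then invoking the lemma. Your observation that only $\beta \geq 0$ is actually used (while $\beta \leq \alpha$ is imposed for downstream convenience) is also accurate.
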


We note that for any $\rho$ satisfying \eqref{Assump:KernelSmoothness}, the 
estimates stated for $\wt{\rho}$ in \Cref{lma:KernelIntegral:SqDist} and \Cref{cor:KernelIntegral:Weighted:SqDist}  
also hold for $\rho$.

From the definition \eqref{eq:ConvolutionOperatorAlpha}, the normalization in \eqref{eq:normalr}, and \Cref{lma:KernelIntegral:SqDist}, we also immediately get the following.
\begin{corollary}\label{cor:boundonPhi}
	With the assumptions of \Cref{lma:KernelIntegral:SqDist}, 
 there exist constants $\underline{\mu}_{\alpha}$ and $\bar{\mu}_{\alpha}$ depending only on $d$, $\rho$, $\alpha$ and $\Omega$ such that
\begin{equation}\label{eq:PhiBounds}
	0 < \underline{\mu}_{\alpha} \leq |\eta_{\delta}(\bx)|^{\alpha} \Phi_{\delta,\alpha}(\bx) \leq \bar{\mu}_{\alpha} < \infty, \qquad \forall \bx \in \Omega\,.
\end{equation}
\end{corollary}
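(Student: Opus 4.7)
The plan is to apply \Cref{lma:KernelIntegral:SqDist} directly to $\wt{\rho} = \rho$. Since $\rho$ satisfies \eqref{Assump:KernelSmoothness}, it meets all the hypotheses of \Cref{lma:KernelIntegral:SqDist}: it is continuous on $[0,\infty)$, supported in $[0,R_0]$, and bounded below by $\rho_0 > 0$ on $[0,r_0]$. Thus both the upper bounds \eqref{eq:KernelIntFunction:UpperBound:SqDist} and the lower bounds \eqref{eq:KernelIntFunction:LowerBound:SqDist} are available with $\wt{\rho}$ replaced by $\rho$ (as noted in the remark immediately preceding the corollary).

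Next, I would unwind the definitions. By \eqref{eq:OperatorKernelDef} and the notational identity recorded in \Cref{sec:AuxKernels}, the symmetrized kernel decomposes as
\begin{equation*}
    2 \rho_{\delta,\alpha}(\bx,\by) = \rho_{\eta_{\delta}(\bx),\alpha}(|\by-\bx|) + \rho_{\eta_{\delta}(\by),\alpha}(|\by-\bx|)\,,
\end{equation*}
so that
\begin{equation*}
    2 \Phi_{\delta,\alpha}(\bx) = \int_{\Omega} \rho_{\eta_{\delta}(\bx),\alpha}(|\by-\bx|) \, \rmd \by + \int_{\Omega} \rho_{\eta_{\delta}(\by),\alpha}(|\by-\bx|) \, \rmd \by\,.
\end{equation*}
Applying \eqref{eq:KernelIntFunction:UpperBound:SqDist} to both terms yields a constant $C$ with $2 \Phi_{\delta,\alpha}(\bx) \leq 2 C \eta_{\delta}(\bx)^{-\alpha}$; applying \eqref{eq:KernelIntFunction:LowerBound:SqDist} to both terms (or even just to one of them, since both integrands are nonnegative) yields $2 \Phi_{\delta,\alpha}(\bx) \geq 2 C^{-1} \eta_{\delta}(\bx)^{-\alpha}$. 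Multiplying through by $\eta_{\delta}(\bx)^{\alpha}$ gives the two-sided estimate \eqref{eq:PhiBounds} with $\underline{\mu}_{\alpha} = C^{-1}$ and $\bar{\mu}_{\alpha} = C$, and the constants depend only on $d$, $\rho$, $\alpha$ and $\Omega$ by the hypotheses of \Cref{lma:KernelIntegral:SqDist}.

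There is no real obstacle here: the claim is essentially a repackaging of \Cref{lma:KernelIntegral:SqDist} together with the fact that $\rho_{\delta,\alpha}(\bx,\by)$ is an average of the two one-sided rescaled kernels appearing in that lemma. The only mild point to keep track of is that $\eta_{\delta}(\bx) > 0$ for $\bx \in \Omega$ (by \eqref{assump:Localization}, since $\dist(\bx,\p\Omega) > 0$), so that the factor $\eta_{\delta}(\bx)^{\alpha}$ is well-defined and positive, making the strict inequalities $0 < \underline{\mu}_{\alpha}$ and $\bar{\mu}_{\alpha} < \infty$ meaningful pointwise.
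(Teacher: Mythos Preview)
Your proposal is correct and matches the paper's approach: the corollary is stated as an immediate consequence of \Cref{lma:KernelIntegral:SqDist} applied to $\wt{\rho}=\rho$, via the decomposition of $\Phi_{\delta,\alpha}$ into the two one-sided integrals, exactly as you outline.
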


\subsection{Properties of kernel derivatives}
\label{sec:KernelDeriv}

\begin{theorem}\label{thm:KernelDerivativeEstimates:SqDist}
Let  $\Omega \subset \bbR^d$ satisfy \eqref{assump:Domain} and $\rho$  satisfy \eqref{Assump:KernelSmoothness}, with 
$\underline{\rho}$ being defined as in \eqref{eq:AuxKernelDef}. Then for any $\alpha \in \bbR$, there exists $C = C(d,\rho,\Omega,\alpha)$ such that
\begin{equation}\label{eq:KernelDerivativeEstimate2:SqDist}
		|\grad _{\bx} \rho_{\delta,\alpha}(\bx,\by)| \leq C \Big( \underline{\rho}_{\eta_{\delta}(\bx),\alpha+1}(|\by-\bx|) 
		+ \rho_{\eta_{\delta}(\bx),\alpha+1}(|\by-\bx|)
		+ \underline{\rho}_{\eta_{\delta}(\by),\alpha+1}(|\by-\bx|) \Big)
	\end{equation}
	for all $\bx$, $\by \in \Omega$.
\end{theorem}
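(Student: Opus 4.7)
\medskip

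\noindent\textbf{Proof plan.} The strategy is a direct differentiation of the defining expression
\[
	2\rho_{\delta,\alpha}(\bx,\by) = \frac{1}{\eta_{\delta}(\bx)^{d+\alpha}} \rho\!\left( \frac{|\by-\bx|}{\eta_{\delta}(\bx)} \right) + \frac{1}{\eta_{\delta}(\by)^{d+\alpha}} \rho\!\left( \frac{|\by-\bx|}{\eta_{\delta}(\by)} \right)
\]
and a term-by-term identification of the pieces on the right-hand side of \eqref{eq:KernelDerivativeEstimate2:SqDist}. The key algebraic identity I will use throughout is $\rho'(r) = -r\,\underline{\rho}(r)$ from \eqref{eq:AuxKernelDef}, and the key analytic facts are $|\grad \eta_{\delta}(\bx)| \le \kappa_1 \sqrt{\delta} \le \kappa_1$ from \eqref{assump:Localization}(iii) together with $\delta < \delta_0 \le 1$, and the support property $\supp \rho \subset [0,R_0]$ with $R_0 < 1$ from \eqref{Assump:KernelSmoothness}.

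\medskip

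\noindent\textbf{The easy piece.} The second summand depends on $\bx$ only through $|\by-\bx|$. A direct computation gives
\[
	\grad_{\bx}\!\left[\frac{1}{\eta_{\delta}(\by)^{d+\alpha}} \rho\!\left(\frac{|\by-\bx|}{\eta_{\delta}(\by)}\right)\right] = -\frac{\rho'(|\by-\bx|/\eta_{\delta}(\by))}{\eta_{\delta}(\by)^{d+\alpha+1}} \cdot \frac{\by-\bx}{|\by-\bx|}.
\]
Using $|\rho'(r)| = r\,\underline{\rho}(r)$ and that on the support $|\by-\bx|/\eta_{\delta}(\by) \le R_0 < 1$, the magnitude is dominated by $\underline{\rho}_{\eta_{\delta}(\by),\alpha+1}(|\by-\bx|)$, producing the third term on the right of \eqref{eq:KernelDerivativeEstimate2:SqDist}.

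\medskip

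\noindent\textbf{The main piece.} For the first summand I apply the product rule, which generates three contributions: (i) differentiating $\eta_{\delta}(\bx)^{-(d+\alpha)}$ brings down a factor $-(d+\alpha)\grad\eta_{\delta}(\bx)/\eta_{\delta}(\bx)$, multiplying $\rho(|\by-\bx|/\eta_{\delta}(\bx))/\eta_{\delta}(\bx)^{d+\alpha}$; (ii) differentiating $|\by-\bx|$ inside the argument of $\rho$ yields $\rho'(|\by-\bx|/\eta_{\delta}(\bx))/\eta_{\delta}(\bx)^{d+\alpha+1}$; (iii) differentiating $\eta_{\delta}(\bx)$ inside that same argument gives $-\rho'(|\by-\bx|/\eta_{\delta}(\bx)) |\by-\bx|\,\grad\eta_{\delta}(\bx)/\eta_{\delta}(\bx)^{d+\alpha+2}$. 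Contribution (i) is bounded by $C|\grad\eta_\delta(\bx)|\rho_{\eta_\delta(\bx),\alpha+1}(|\by-\bx|)\le C\rho_{\eta_\delta(\bx),\alpha+1}(|\by-\bx|)$, matching the second term on the right of \eqref{eq:KernelDerivativeEstimate2:SqDist}. Contributions (ii) and (iii), after replacing $\rho'$ by $-r\underline{\rho}(r)$, both become products of $\underline{\rho}(|\by-\bx|/\eta_{\delta}(\bx))/\eta_{\delta}(\bx)^{d+\alpha+1}$ with a factor $|\by-\bx|/\eta_{\delta}(\bx)$ (or its square times $|\grad\eta_\delta|$); because that factor is bounded by $R_0 < 1$ on the support and $|\grad\eta_\delta|$ is uniformly bounded, each is controlled by $\underline{\rho}_{\eta_{\delta}(\bx),\alpha+1}(|\by-\bx|)$, matching the first term on the right.

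\medskip

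\noindent\textbf{Assembly and obstacles.} Summing the four contributions and using the triangle inequality yields \eqref{eq:KernelDerivativeEstimate2:SqDist}, with $C$ absorbing $d$, $\alpha$, $\kappa_1$ and $\|\rho\|_{C^1}$. There is no genuine analytic obstacle here; the only care needed is the bookkeeping of whether the extra factor $|\by-\bx|/\eta_\delta$ that appears in contributions (ii) and (iii) can be absorbed into a bounded constant, which it can precisely because $R_0 < 1$ bounds the support. The role of the smoothness assumption \eqref{assump:Localization}(iii) enters only mildly, to guarantee that $|\grad\eta_\delta|$ is bounded uniformly in $\delta$; the Hessian bound is not needed at this stage.
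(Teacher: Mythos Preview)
Your proposal is correct and follows essentially the same route as the paper: a direct computation of $\grad_{\bx}\rho_{\delta,\alpha}(\bx,\by)$ via the chain and product rules, followed by the identification $\rho'(r)=-r\,\underline{\rho}(r)$ and the use of $\supp\rho\subset[0,R_0)$ together with the uniform bound on $|\grad\eta_{\delta}|$ to absorb the extra factors of $|\by-\bx|/\eta_{\delta}$. The paper's proof is even terser---it simply displays the exact gradient formula and notes that the result follows from the support of $\rho$---so your write-up is, if anything, more explicit about the bookkeeping.
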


\begin{proof} 
	This follows by direct computation and the properties of $\rho$:
	\begin{equation*}
		\begin{split}
			\grad_{\bx} {\rho}_{\delta,\alpha}(\bx,\by) &= \frac{1}{2} \left( \frac{\underline{\rho}_{\eta_{\delta}(\bx),\alpha}(|\by-\bx|)}{\eta_{\delta}(\bx)^2} + \frac{\underline{\rho}_{\eta_{\delta}(\by),\alpha}(|\by-\bx|)}{\eta_{\delta}(\by)^2} \right)(\by-\bx) \\
			&\qquad - \frac{|\by-\bx|^2}{2 \eta_{\delta}(\bx)^3} \underline{\rho}_{\eta_{\delta}(\bx),\alpha}(|\by-\bx|)  \grad \eta_{\delta}(\bx) 
		 - \frac{d+\alpha}{2 \eta_{\delta}(\bx)} \rho_{\eta_{\delta}(\bx),\alpha}(|\by-\bx|) \grad \eta_{\delta}(\bx)\,.
		\end{split}
	\end{equation*}
	Thus, using the support of $\rho$, we see the result.
\end{proof}

Since $\rho$
and $\underline{\rho}$
satisfy the hypotheses for the upper bound of \Cref{lma:KernelIntegral:SqDist}, we have the following corollary:
\begin{corollary}
    Let $\Omega \subset \bbR^d$ satisfy \eqref{assump:Domain},  $\rho$ satisfy \eqref{Assump:KernelSmoothness}, $\alpha \geq 0$ and $\beta \in [0,\alpha+1]$. Then there exists $C = C(d,\rho,\Omega,\alpha,\beta)$ such that
\begin{equation}\label{eq:KernelIntegralDerivativeEstimates:SqDist}
		\int_{\Omega} |\eta_{\delta}(\by)|^{\beta} |\grad_{\bx} \rho_{\delta,\alpha}(\bx,\by)| \, \rmd \by
		\leq \frac{C}{\eta_{\delta}(\bx)^{1+\alpha-\beta}}
	\end{equation}
	for all $\bx \in \Omega$.
\end{corollary}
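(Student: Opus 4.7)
The plan is to reduce this corollary directly to the preceding pointwise estimate on $|\grad_{\bx} \rho_{\delta,\alpha}(\bx,\by)|$ (Theorem~\ref{thm:KernelDerivativeEstimates:SqDist}) combined with the weighted integral bounds of Corollary~\ref{cor:KernelIntegral:Weighted:SqDist}. There is essentially no new analytic work: the dependence on $\eta_{\delta}$ is already tracked in those two results, and the derivative estimate replaces $\alpha$ by $\alpha+1$ in the rescaled kernels, which is exactly what produces the factor $\eta_{\delta}(\bx)^{-(1+\alpha-\beta)}$ on the right-hand side.

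Concretely, first I would invoke \eqref{eq:KernelDerivativeEstimate2:SqDist} to bound the integrand pointwise by a sum of three nonnegative rescaled kernels: $\underline{\rho}_{\eta_{\delta}(\bx),\alpha+1}(|\by-\bx|)$, $\rho_{\eta_{\delta}(\bx),\alpha+1}(|\by-\bx|)$, and $\underline{\rho}_{\eta_{\delta}(\by),\alpha+1}(|\by-\bx|)$. Multiplying by $|\eta_{\delta}(\by)|^{\beta}$ and integrating in $\by$ over $\Omega$, the triangle inequality yields three integrals of the form appearing in \eqref{eq:KernelIntFunction:Weighted:UpperBound:SqDist}, with the parameter $\alpha$ there replaced by $\alpha+1$ in our setting. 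Next I would verify that the hypotheses of Corollary~\ref{cor:KernelIntegral:Weighted:SqDist} apply: $\rho$ satisfies \eqref{Assump:KernelSmoothness} by assumption, and $\underline{\rho}(r) = -\rho'(r)/r$ is continuous, nonnegative and supported in $[0,R_0]$ by \eqref{Assump:KernelSmoothness} and \eqref{eq:AuxKernelDef}, so it meets the generic assumption ``$\wt{\rho} \in C^0([0,\infty))$ with support in $[0,R_0]$'' used in the upper bound. The constraint $\beta \in [0,\alpha+1]$ from our hypothesis matches the requirement ``$\beta \in [0,\alpha']$'' needed for the upper bound in \eqref{eq:KernelIntFunction:Weighted:UpperBound:SqDist} with $\alpha' = \alpha+1$, and $\alpha+1 \geq 1 > 0$ since $\alpha \geq 0$.

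Applying \eqref{eq:KernelIntFunction:Weighted:UpperBound:SqDist} to each of the three integrals then produces an upper bound of the shape $C \eta_{\delta}(\bx)^{-((\alpha+1)-\beta)}$, with the constant $C$ depending only on $d$, $\rho$, $\Omega$, $\alpha$ and $\beta$. Summing the three contributions gives precisely \eqref{eq:KernelIntegralDerivativeEstimates:SqDist}.

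I do not anticipate a genuine obstacle: the only point that requires any care is bookkeeping of the admissible range of the weight exponent. Because $\grad_{\bx}\rho_{\delta,\alpha}$ sheds one additional power of $\eta_{\delta}$ compared with $\rho_{\delta,\alpha}$ (reflected in the shift $\alpha \mapsto \alpha+1$ in \eqref{eq:KernelDerivativeEstimate2:SqDist}), the admissible range for $\beta$ widens from $[0,\alpha]$ to $[0,\alpha+1]$, which is exactly the range imposed in the statement; so the invocation of Corollary~\ref{cor:KernelIntegral:Weighted:SqDist} goes through with no adjustment.
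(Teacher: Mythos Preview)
Your proposal is correct and follows essentially the same route as the paper: apply the pointwise derivative bound \eqref{eq:KernelDerivativeEstimate2:SqDist} and then estimate each resulting weighted kernel integral via Corollary~\ref{cor:KernelIntegral:Weighted:SqDist} (the paper phrases this as using \eqref{eq:ComparabilityOfDistanceFxn1:SqDist}--\eqref{eq:ComparabilityOfDistanceFxn2:SqDist} and \eqref{eq:KernelIntFunction:UpperBound:SqDist} directly, which is exactly what underlies that corollary). One minor remark: assumption \eqref{Assump:KernelSmoothness} does not force $\underline{\rho}\geq 0$, but this does not matter since the upper bound in Lemma~\ref{lma:KernelIntegral:SqDist} (and hence Corollary~\ref{cor:KernelIntegral:Weighted:SqDist}) only requires $\wt{\rho}\in C^0([0,\infty))$ with support in $[0,R_0]$.
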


\begin{proof}
    We first apply  \eqref{eq:KernelDerivativeEstimate2:SqDist} then repeatedly use  \eqref{eq:ComparabilityOfDistanceFxn1:SqDist}-\eqref{eq:ComparabilityOfDistanceFxn2:SqDist} and \eqref{eq:KernelIntFunction:UpperBound:SqDist}.
\end{proof}

As a special case of \eqref{eq:KernelIntegralDerivativeEstimates:SqDist} we have 
the estimate for the derivative of $\Phi_{\delta,\alpha}$:

\begin{corollary}
    Let $\Omega \subset \bbR^d$ satisfy \eqref{assump:Domain},  $\rho$ satisfy \eqref{Assump:KernelSmoothness}, $\alpha \geq 0$ and $\beta \in [0,\alpha+1]$. Then there exists a
constant $C$ depending only on $d$, $\rho$, and $\Omega$ such that
\begin{equation}\label{eq:PhiDerivativeBounds}
	|\grad \Phi_{\delta,\alpha}(\bx)| \leq \int_{\Omega} |\grad_{\bx} \rho_{\delta,\alpha}(\bx,\by) | \, \rmd \by \leq \frac{C}{\eta_{\delta}(\bx)^{1+\alpha}} \quad \forall \, \bx \in \Omega.
\end{equation}
\end{corollary}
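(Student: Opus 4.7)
The plan is to observe that the corollary is essentially the $\beta = 0$ case of the preceding corollary \eqref{eq:KernelIntegralDerivativeEstimates:SqDist}, combined with a routine justification of differentiation under the integral sign. Concretely, since $\Omega$ itself does not depend on $\bx$, we only need to differentiate the integrand $\rho_{\delta,\alpha}(\bx,\by)$ with respect to $\bx$ for each fixed $\by \in \Omega$.

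First, I would verify that this differentiation is legitimate. By \eqref{Assump:KernelSmoothness} and the assumption \eqref{assump:Domain}, the function $\eta_{\delta}$ lies in $C^2(\overline{\Omega})$ and $\rho \in C^1([0,\infty))$, so the expression \eqref{eq:OperatorKernelDef} shows that $\bx \mapsto \rho_{\delta,\alpha}(\bx,\by)$ is continuously differentiable on $\Omega$ for each fixed $\by \in \Omega$. The pointwise bound \eqref{eq:KernelDerivativeEstimate2:SqDist} in \Cref{thm:KernelDerivativeEstimates:SqDist}, together with the integral estimate \eqref{eq:KernelIntFunction:UpperBound:SqDist}, provides a locally uniform $L^1(\Omega)$ majorant for $|\grad_{\bx} \rho_{\delta,\alpha}(\cdot,\by)|$ near any fixed $\bx_0 \in \Omega$ (since $\eta_{\delta}$ is bounded below on any compactly contained subset of $\Omega$). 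The dominated convergence theorem therefore applies, yielding the Leibniz rule
\[
\grad \Phi_{\delta,\alpha}(\bx) = \int_{\Omega} \grad_{\bx} \rho_{\delta,\alpha}(\bx,\by) \, \rmd \by\,.
\]

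Taking absolute values and using the triangle inequality for Bochner integrals gives the first inequality in \eqref{eq:PhiDerivativeBounds}. The second inequality is exactly the preceding corollary \eqref{eq:KernelIntegralDerivativeEstimates:SqDist} with the choice $\beta = 0 \in [0, \alpha+1]$ (which is permissible since $\alpha \geq 0$), yielding the bound $C/\eta_{\delta}(\bx)^{1+\alpha}$ with $C$ depending only on $d$, $\rho$, $\Omega$ and $\alpha$.

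There is no substantial obstacle here; the only mildly delicate point is the differentiation under the integral sign, but this is handled routinely by the smoothness of $\rho$ and $\eta_{\delta}$ together with the already-established pointwise derivative bound. The corollary statement should perhaps also indicate dependence of $C$ on $\alpha$, but otherwise the result is an immediate specialization of the previous corollary.
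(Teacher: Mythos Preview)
Your proposal is correct and matches the paper's approach: the paper presents this corollary simply as ``a special case of \eqref{eq:KernelIntegralDerivativeEstimates:SqDist}'' with $\beta = 0$, and your additional justification of differentiation under the integral sign is a welcome bit of extra rigor that the paper leaves implicit.
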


\section{The nonlocal operator and the nonlocal Green's identity}\label{sec:NonlocalOp}
We first present some estimates on the nonlocal operator which helps us establish the nonlocal Green's identity. In turn, the identity allows us to further interpret the results of the nonlocal operator acting on more general functions.

\subsection{The operator on Lebesgue and Sobolev spaces}\label{sec:OpLeb}

\begin{theorem}\label{thm:NonlocalOpWellDefd:Lp}
	Let $\Omega \subset \bbR^d$ satisfy \eqref{assump:Domain}, $\rho$ satisfy \eqref{Assump:KernelSmoothness}, and $p \in [1,\infty]$.
 Then $\cL_{\delta} u(\bx)$ defines a measurable function on $\Omega$ for
 any $u \in L^p(\Omega)$. Moreover,
  with $\Phi_{\delta,2}$  defined as in \eqref{eq:ConvolutionOperatorAlpha}, we have $\frac{\cL_{\delta} u(\bx)}{\Phi_{\delta,2}(\bx)} \in L^p(\Omega)$ and
 there exists a constant $C$ depending only on $d$, $p$ and $\Omega$ such that
	\begin{equation*}
		\Vnorm{\frac{\cL_{\delta} u}{\Phi_{\delta,2}}}_{L^p(\Omega)} \leq C \Vnorm{u}_{L^p(\Omega)}\,.
	\end{equation*}
\end{theorem}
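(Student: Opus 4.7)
The starting point is the pointwise identity
\begin{equation*}
\frac{\cL_{\delta} u(\bx)}{\Phi_{\delta,2}(\bx)} \;=\; 2 u(\bx) \;-\; 2 K_{\delta,2} u(\bx),
\end{equation*}
which is immediate from \eqref{eq:Intro:Operator}, \eqref{eq:ConvolutionOperatorAlpha}, and the definition of $\Phi_{\delta,2}$. Since $u \in L^p(\Omega)$ trivially lies in $L^p(\Omega)$, the theorem reduces to two claims: (i) $K_{\delta,2} u$ is a.e.\ well-defined and measurable for any $u \in L^p(\Omega)$, and (ii) $K_{\delta,2}$ is bounded on $L^p(\Omega)$ with constant depending only on $d$, $p$, $\rho$, and $\Omega$.

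The plan for (ii) is Schur's test applied to the Markov-type kernel
\begin{equation*}
\tilde K_{\delta}(\bx,\by) := \frac{\rho_{\delta,2}(\bx,\by)}{\Phi_{\delta,2}(\bx)}\,.
\end{equation*}
One of the two Schur estimates is free: $\int_\Omega \tilde K_\delta(\bx,\by)\,\rmd \by = 1$ for every $\bx \in \Omega$ by the very definition of $\Phi_{\delta,2}$. The real work is the dual bound
\begin{equation*}
\sup_{\by \in \Omega} \int_\Omega \tilde K_\delta(\bx,\by)\,\rmd \bx \;\leq\; C(d,\rho,\Omega)\,.
\end{equation*}
To prove this I would first apply the lower bound $\Phi_{\delta,2}(\bx) \geq \underline{\mu}_2/\eta_\delta(\bx)^2$ from Corollary \ref{cor:boundonPhi}, yielding $\tilde K_\delta(\bx,\by) \leq C \eta_\delta(\bx)^2 \rho_{\delta,2}(\bx,\by)$. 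Splitting $\rho_{\delta,2}$ via \eqref{eq:OperatorKernelDef} into its $\bx$-centred and $\by$-centred half-kernels, the $\by$-centred piece is handled directly, since its support $|\bx-\by| \leq R_0 \eta_\delta(\by)$ together with the comparability \eqref{eq:ComparabilityOfDistanceFxn2:SqDist} gives $\eta_\delta(\bx)^2 \rho_{\eta_\delta(\by),2}(|\bx-\by|) \leq C\eta_\delta(\by)^{-d}\mathds{1}_{\{|\bx-\by|\leq R_0 \eta_\delta(\by)\}}$, whose $\bx$-integral is uniformly bounded. For the $\bx$-centred piece the support involves $\eta_\delta(\bx)$ rather than $\eta_\delta(\by)$, so I would use \eqref{eq:ComparabilityOfDistanceFxn1:SqDist} to trade $\eta_\delta(\bx)\sim\eta_\delta(\by)$ and land in the same situation.

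With both Schur bounds in hand, the standard Schur test delivers $\|K_{\delta,2}u\|_{L^p(\Omega)} \leq C \|u\|_{L^p(\Omega)}$ simultaneously for every $p \in [1,\infty]$ (with bound of the form $M_1^{1/p'} M_2^{1/p}$), which combined with the pointwise splitting above proves the claimed estimate. Measurability in (i) is then a consequence of the Schur bound applied to $|u|\in L^p$: joint measurability of $\rho_{\delta,2}$ and of $\Phi_{\delta,2}$ follows from continuity of $\rho$ and of $\eta_\delta$ on $\overline{\Omega}$, so Fubini--Tonelli turns finiteness of $\int\int \tilde K_\delta(\bx,\by)|u(\by)|^p\,\rmd \by\,\rmd \bx$ into a.e.\ finiteness of the slice integrals and measurability of the map $\bx\mapsto K_{\delta,2}u(\bx)$.

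The main obstacle I anticipate is the second Schur bound, where the rates $\Phi_{\delta,2}(\bx)^{-1} \sim \eta_\delta(\bx)^2$ and $\rho_{\delta,2}(\bx,\by) \sim \eta_\delta^{-d-2}$ both blow up as $\bx \to \p \Omega$, and the argument hinges on these singularities cancelling; that cancellation is precisely what the comparability Lemma \ref{lma:ComparabilityOfXandY} enables, once one is careful to restrict to the support of the appropriate half-kernel before using it. The remainder of the proof is routine.
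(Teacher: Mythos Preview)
Your proposal is correct and uses essentially the same ingredients as the paper (the bound $\Phi_{\delta,2}(\bx)^{-1}\lesssim\eta_\delta(\bx)^2$, the comparability Lemma~\ref{lma:ComparabilityOfXandY}, and the kernel integral bounds of Lemma~\ref{lma:KernelIntegral:SqDist}), but packages them somewhat differently. The paper does not invoke the pointwise splitting $\cL_\delta u/\Phi_{\delta,2}=2u-2K_{\delta,2}u$ for $p<\infty$; instead it applies H\"older directly to the difference form $\int\rho_{\delta,2}(\bx,\by)(u(\bx)-u(\by))\,\rmd\by$, obtaining $\int_\Omega\Phi_{\delta,2}(\bx)^{-1}\int_\Omega\rho_{\delta,2}(\bx,\by)|u(\bx)-u(\by)|^p\,\rmd\by\,\rmd\bx$, then bounds $\rho_{\delta,2}/\Phi_{\delta,2}$ by a sum of $\alpha=0$ half-kernels via comparability, splits $|u(\bx)-u(\by)|^p\lesssim|u(\bx)|^p+|u(\by)|^p$, and closes with Lemma~\ref{lma:KernelIntegral:SqDist}. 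Your Schur-test framing is a cleaner abstraction of the same computation: the row integral $\int\tilde K_\delta\,\rmd\by=1$ is the H\"older step, and the column bound $\sup_\by\int\tilde K_\delta\,\rmd\bx\leq C$ is exactly what the paper needs when the $|u(\by)|^p$ term is integrated in $\bx$. One small simplification you may have missed: for the $\bx$-centred half-kernel, $\eta_\delta(\bx)^2\rho_{\eta_\delta(\bx),2}(|\bx-\by|)=\rho_{\eta_\delta(\bx),0}(|\bx-\by|)$ exactly, so the second inequality in \eqref{eq:KernelIntFunction:UpperBound:SqDist} with $\alpha=0$ (roles of $\bx,\by$ swapped) finishes that piece without any appeal to comparability.
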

\begin{proof}
	By H\"older's inequality,
	\begin{equation*}
		\begin{split}
			\Vnorm{\frac{\cL_{\delta} u}{\Phi_{\delta,2}}}_{L^p(\Omega)}^p &=  \int_{\Omega} \frac{2^p}{\Phi_{\delta,2}(\bx)^p} \left| \int_{\Omega} \rho_{\delta,2}(\bx,\by) (u(\bx)-u(\by)) \, \rmd \by \right|^p \, \rmd \bx \\
			&\leq \int_{\Omega} \frac{2^p}{\Phi_{\delta,2}(\bx)^p} \left( \int_{\Omega} \rho_{\delta,2}(\bx,\by) \, \rmd \by \right)^{p/p'} \left(  \int_{\Omega} \rho_{\delta,2}(\bx,\by)  |u(\bx)-u(\by)|^p \, \rmd \by \right) \, \rmd \bx  \\
			&= \int_{\Omega} \frac{2}{\Phi_{\delta,2}(\bx)} \int_{\Omega} \rho_{\delta,2}(\bx,\by)  |u(\bx)-u(\by)|^p \, \rmd \by \, \rmd \bx\,.
		\end{split}
	\end{equation*}
	Then by \eqref{eq:PhiBounds} and \Cref{lma:ComparabilityOfXandY}
	\begin{equation*}
	    \frac{\rho_{\delta,2}(\bx,\by)}{\Phi_{\delta,2}(\bx)} 
	    \leq \frac{1}{\kappa_{2,\ell}} \left( \frac{\rho \left( \frac{|\by-\bx|}{\eta_{\delta}(\bx)} \right)}{\eta_{\delta}(\bx)^d} + \frac{\rho \left( \frac{|\by-\bx|}{\eta_{\delta}(\by)} \right)}{\eta_{\delta}(\by)^d} \frac{\eta_{\delta}(\bx)^2}{\eta_{\delta}(\by)^2} \right) 
	    \leq C \left( \frac{\rho \left( \frac{|\by-\bx|}{\eta_{\delta}(\bx)} \right)}{\eta_{\delta}(\bx)^d} + \frac{\rho \left( \frac{|\by-\bx|}{\eta_{\delta}(\by)} \right)}{\eta_{\delta}(\by)^d} \right)
	\end{equation*}
	for all $\bx$ $\by \in \Omega$, so 
	\begin{equation*}
	    \begin{split}
	        \Vnorm{\frac{\cL_{\delta} u}{\Phi_{\delta,2}}}_{L^p(\Omega)}^p &\leq C \int_{\Omega} \int_{\Omega} \left( \rho_{\eta_{\delta}(\bx)}(|\by-\bx|) + \rho_{\eta_{\delta}(\by)}(|\by-\bx|) \right) |u(\bx)-u(\by)|^p \, \rmd \by \, \rmd \bx \\
			&\leq C \int_{\Omega} \int_{\Omega} \rho_{\eta_{\delta}(\bx)} \left( |\by-\bx| \right)  ( |u(\bx)|^p+|u(\by)|^p ) \, \rmd \by \, \rmd \bx \\
			&\qquad + C \int_{\Omega} \int_{\Omega}  \rho_{\eta_{\delta}(\by)}(|\by-\bx|) (|u(\bx)|^p+|u(\by)|^p )\, \rmd \by \, \rmd \bx\,.
	    \end{split}
	\end{equation*}
	Then by \Cref{lma:KernelIntegral:SqDist} we have
	\begin{equation*}
		\Vnorm{\frac{\cL_{\delta} u}{\Phi_{\delta,2}}}_{L^p(\Omega)}
  \leq C(\Omega,p) \Vnorm{u}_{L^p(\Omega)}
	\end{equation*}
	as desired. For $p = \infty$ and $\bx\in\Omega$,
	\begin{equation*}
		\left| \frac{\cL_{\delta}u(\bx)}{\Phi_{\delta,2}(\bx)} \right| = |u(\bx) - K_{\delta,2}u(\bx)| \leq 2 \Vnorm{u}_{L^{\infty}(\Omega)}\,.
	\end{equation*}
\end{proof}

By \Cref{thm:OpIsDefined:Inhomog:SqDist}, for any $u \in L^p(\Omega)$, $\cL_{\delta} u(\bx)$ coincides with a Lebesgue-measurable function $\Phi_{\delta,2}(\bx) w(\bx)$ on $\Omega$, where $w \in L^p(\Omega)$. Since $\Phi_{\delta,2}(\bx) \approx \frac{1}{\eta_{\delta}(\bx)^2}$,  $\cL_{\delta} u(\bx)$  clearly belongs to $L^p_{loc}(\Omega)$, but it may not be globally integrable. 

If we assume that $u$ is smooth we can get a better result. To do this,  we first present the following pointwise characterization.

\begin{lemma}\label{lma:OpTaylorExp:SqDist}
	 Let $\Omega \subset \bbR^d$ satisfy \eqref{assump:Domain}  and $\rho$ satisfy  \eqref{Assump:KernelSmoothness}. Suppose that $u \in C^2(\overline{\Omega})$.Then for all $\delta \in (0,\delta_0)$ the quantity $\cL_{\delta} u(\bx)$ defines a function in $L^{\infty}(\Omega)$. More precisely
	\begin{equation}\label{eq:OpTaylorExp:Def}
		\cL_{\delta} u(\bx) =   2 \bF_1(\bx) \cdot \grad u(\bx) + F_2(\bx, u)\,,
	\end{equation} 
	where $\bF_1 \in L^{\infty}(\Omega;\bbR^d)$ 
	is defined by
\begin{equation}\label{eq:OpTaylorExp:F1Def}
		\bF_1(\bx) := 
  \int_{\Omega} \rho_{\delta,2}(\bx,\by) (\by-\bx) \, \rmd \by
	\end{equation}
	and satisfies	\begin{equation}\label{eq:OpTaylorExp:F1Est}
		\sup_{\bx \in \Omega} |\bF_1(\bx)| \leq C \chi_{ \{ \dist(\bx,\p \Omega) \leq \bar{C} \sqrt{\delta} \} }\,, \quad \text{ for } C(\rho,\Omega) >0 \text{ and } \bar{C}(\rho,\Omega) > 0\,,
	\end{equation}
	and $F_2 \in L^{p}(\Omega)$ depends on $\rho$, $\Omega$, $\delta$ and $\grad^2 u$, and satisfies
	\begin{equation}\label{eq:OpTaylorExp:F2Est}
		 \Vnorm{F_2(\cdot,u)}_{L^p(\Omega)} \leq C(\rho,\Omega) \Vnorm{\grad^2 u}_{L^{p}(\Omega)} \quad \text{ for any } p \in [1,\infty]\,.
	\end{equation}
\end{lemma}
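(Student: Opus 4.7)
The plan is to expand $u$ by Taylor's theorem to second order at $\bx$, identify the first-order term in the expansion of $\cL_\delta u$ as a multiple of $\bF_1 \cdot \grad u$, and control the quadratic remainder via the weighted kernel estimates of \Cref{cor:KernelIntegral:Weighted:SqDist}. For $u \in C^2(\overline{\Omega})$ and $\bx, \by \in \Omega$ I write
\begin{equation*}
u(\bx)-u(\by) = -\grad u(\bx)\cdot(\by-\bx) - \int_0^1 (1-t)(\by-\bx)^T \grad^2 u(\bx + t(\by-\bx))(\by-\bx)\,\rmd t.
\end{equation*}
Substituting into $\cL_\delta u(\bx) = 2\int_\Omega \rho_{\delta,2}(\bx,\by)(u(\bx)-u(\by))\,\rmd\by$, the first-order term produces a multiple of $\bF_1(\bx)\cdot\grad u(\bx)$ and the integral remainder defines $F_2(\bx,u)$.

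To bound $F_2$, I estimate
\begin{equation*}
|F_2(\bx,u)| \leq C \int_\Omega \rho_{\delta,2}(\bx,\by)|\by-\bx|^2 \int_0^1 |\grad^2 u(\bx+t(\by-\bx))|\,\rmd t\,\rmd \by.
\end{equation*}
For $p = \infty$, I apply \Cref{cor:KernelIntegral:Weighted:SqDist} with $\alpha = 2$ and $\beta = 2$, noting $|\by-\bx|^2 \lesssim \eta_\delta(\bx)^2$ on the support, to conclude $\Vnorm{F_2(\cdot,u)}_{L^\infty} \leq C \Vnorm{\grad^2 u}_{L^\infty}$. For $p \in [1,\infty)$, I use Fubini to exchange the orders of integration and change variables $\bh = \bx + t(\by-\bx)$ with Jacobian bounded away from $0$ and $\infty$ for $t \in [0,1]$ by \Cref{lma:ComparabilityOfXandY}, analogously to the proof of \Cref{thm:Embedding}, yielding $\Vnorm{F_2(\cdot,u)}_{L^p} \leq C\Vnorm{\grad^2 u}_{L^p}$.

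The main obstacle is the $L^\infty$ bound on $\bF_1$: the direct estimate $|\bF_1(\bx)| \leq \int_\Omega \rho_{\delta,2}(\bx,\by)|\by-\bx|\,\rmd\by$ only yields $|\bF_1(\bx)| \lesssim 1/\eta_\delta(\bx)$ by \eqref{eq:PhiBounds}, which diverges as $\bx \to \p \Omega$, so cancellation must be exploited. I split $\bF_1 = J_1 + J_2$ according to the two symmetrized pieces of $\rho_{\delta,2}$ in \eqref{eq:OperatorKernelDef}. The kernel in $J_1$ is $\rho_{\eta_\delta(\bx),2}(|\by-\bx|)$, a radial function of $\by-\bx$ supported in the ball $B(\bx, R_0 \eta_\delta(\bx))$; by property iv) of \eqref{assump:Localization} this ball is contained in $\Omega$ (using $R_0\bar{\kappa}_0\dist(\bx,\p\Omega) < 1$ for $\delta < \delta_0$), so $J_1 = 0$ by odd symmetry. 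For $J_2$ I rescale $\by = \bx + \eta_\delta(\bx)\bz$ and Taylor expand $\mu(\bz) := \eta_\delta(\bx + \eta_\delta(\bx)\bz)/\eta_\delta(\bx)$ about $\mu = 1$ using properties iii) and iv) of \eqref{assump:Localization}; after expansion, the leading $\int \rho(|\bz|)\bz\,\rmd\bz$ vanishes by radial symmetry, while the next-order correction integrates to a vector proportional to $\grad\eta_\delta(\bx)$ with scalar coefficient proportional to $\int [-(d+2)\rho(|\bz|) - |\bz|\rho'(|\bz|)]|\bz|^2\,\rmd\bz$. This last integral vanishes by one integration by parts using $\rho(R_0) = 0$. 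Higher-order terms, together with boundary perturbations of the integration domain, are controlled because $\rho$ vanishes smoothly at $R_0$, yielding $|J_2| \leq C$ uniformly in $\bx$.

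Finally, when $\dist(\bx,\p\Omega) > \bar{C}\sqrt{\delta}$ for $\bar{C}$ chosen from $\kappa_0$, $\bar{\kappa}_0$, and $R_0$, property ii) of \eqref{assump:Localization} forces $\eta_\delta(\by) = \delta$ for every $\by$ in the support of $\rho_{\delta,2}(\bx,\cdot)$; the kernel then becomes radial about $\bx$ and $J_2 = 0$ by symmetry, giving the support estimate in \eqref{eq:OpTaylorExp:F1Est}.
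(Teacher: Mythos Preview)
Your Taylor expansion of $u$, the $F_2$ estimate, the splitting of $\bF_1$ into an $\eta_\delta(\bx)$-piece $J_1$ and an $\eta_\delta(\by)$-piece $J_2$, the vanishing $J_1=0$ by odd symmetry, and the support claim all match the paper. The gap is in bounding $J_2$. After your rescaling $\by=\bx+\eta_\delta(\bx)\bz$ you have $J_2=\eta_\delta(\bx)^{-1}\int \mu(\bz)^{-(d+2)}\rho(|\bz|/\mu(\bz))\,\bz\,\rmd\bz$ and you expand in $\mu$ about $1$. The zeroth-order term vanishes, and you correctly observe that the first-order contribution from $\grad\eta_\delta(\bx)\cdot\bz$ also vanishes via the identity $\int_{B(0,R_0)}\big[(d+2)\rho(|\bz|)+|\bz|\rho'(|\bz|)\big]\,\bz\otimes\bz\,\rmd\bz=0$. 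But ``higher-order terms are controlled'' is not justified under \eqref{Assump:KernelSmoothness}: to bound the second-order Taylor remainder of $\mu\mapsto\mu^{-(d+2)}\rho(|\bz|/\mu)$ by $C(\mu-1)^2$ you would need $\rho\in C^{1,1}$, whereas only $\rho\in C^1$ is assumed. With merely $\rho'\in C^0$ the remainder is $|\mu-1|\,\omega(|\mu-1|)$ for an arbitrary modulus $\omega$; near $\p\Omega$ one has $|\mu(\bz)-1|\sim|\grad\eta_\delta(\bx)|=2\dist(\bx,\p\Omega)$ while $\eta_\delta(\bx)=\dist(\bx,\p\Omega)^2$, so after dividing by $\eta_\delta(\bx)$ the quotient $\omega(\dist)/\dist$ need not stay bounded as $\bx\to\p\Omega$. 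The smooth vanishing of $\rho$ at $R_0$ handles the domain perturbation but not this remainder.

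The paper avoids differentiating $\rho$ altogether by using instead the change of variables $\bz=\bg_\bx(\by):=(\by-\bx)/\eta_\delta(\by)$, shown to be a diffeomorphism on the relevant region with Jacobian $\det\grad\bg_\bx(\by)=[\eta_\delta(\by)+\grad\eta_\delta(\by)\cdot(\bx-\by)]\,\eta_\delta(\by)^{-(d+1)}$. Writing $J_2$ through $\bg_\bx$ produces $\eta_\delta(\bx)^{-1}\int_{B(0,R_0)}\rho(|\bz|)\bz\,\rmd\bz=0$ \emph{exactly}, plus a remainder whose numerator is the second-order Taylor remainder of $\eta_\delta$ itself, namely $\eta_\delta(\bx)-\eta_\delta(\by)-\grad\eta_\delta(\by)\cdot(\bx-\by)$. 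Bounding this by $\|\grad^2\eta_\delta\|_{L^\infty}|\bx-\by|^2$ via item iii) of \eqref{assump:Localization} and applying \Cref{lma:KernelIntegral:SqDist} to the kernel $r^3\rho(r)$ gives $|J_2|\leq C(\rho,\Omega)$ directly, with no derivative of $\rho$ required and no need for your additional integration-by-parts cancellation.
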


\begin{proof}
	Taylor expansion gives 
	\begin{equation*}
		\begin{split}
			-\cL_{\delta} u(\bx) &= 2\int_{\Omega} \rho_{\delta,2}(\bx,\by) (\by-\bx) \, \rmd \by \cdot \grad u (\bx) \\
			&\qquad + 2\int_{\Omega} \rho_{\delta,2}(\bx,\by) \int_0^1 \Vint{ \grad^2 u(\by + t(\bx-\by)) (\bx-\by),(\bx-\by)} (1-t) \, \rmd t \, \rmd \by\,.
		\end{split}
	\end{equation*}
	By \Cref{lma:KernelIntegral:SqDist} applied to the kernel $r^2 \rho(r)$ and $\alpha = 0$, the $L^p(\Omega)$ norm of the last term (which defines $F_2$) is bounded by $C(\rho,\Omega) \Vnorm{\grad^2 u}_{L^{p}(\Omega)}$.
	Further, since the set $\{ \by \, : \, |\bx-\by| < R_0 \eta_{\delta}(\bx) \}$ is compactly contained in $\Omega$ for all $\bx \in \Omega$, 
	\begin{equation*}
            \int_{\Omega} \rho_{\eta_{\delta}(\bx),2} (|\by-\bx|) (\by-\bx) \, \rmd \by = \frac{1}{\eta_{\delta}(\bx)} \int_{B(0,R_0)} \bz \rho(|\bz|) \, \rmd \bz = {\bf 0} ,\quad\forall \bx \in \Omega\,.
	\end{equation*}
	So we just need to show that	\begin{equation}\label{eq:DefOp:Pf:MainEst:SqDist}
		\left| \bF_1(\bx) \right| =  \left| \int_{\Omega} \rho_{\eta_{\delta}(\by),2} (|\by-\bx|) (\by-\bx) \, \rmd \by  \right| \leq C(\rho,d,\Omega) \chi_{ \{ \dist(\bx,\p \Omega) \leq \bar{C} \sqrt{\delta} \} }  ,\qquad\forall \bx \in \Omega\,.
	\end{equation}
	Fix $\bx \in \Omega$. Note that the integral is absolutely convergent and bounded by $C \eta_{\delta}(\bx)^{-1}$ by \Cref{lma:KernelIntegral:SqDist} applied to the kernel $r \rho(r)$ and $\alpha = 1$. We will use a coordinate change.
	Define 
	$$
	\bg_{\bx}(\by) := \frac{\by-\bx}{\eta_{\delta}(\by)}\,.
	$$
	Then
	\begin{equation*}
		\grad \bg_{\bx}(\by) = \frac{1}{\eta_{\delta}(\by)} \bbI - (\by-\bx) \otimes \frac{\grad \eta_{\delta}(\by)}{\eta_{\delta}(\by)^2} = \frac{1}{\eta_{\delta}(\by)} \left( \bbI -  \frac{(\by-\bx)}{\eta_{\delta}(\by)}  \otimes \grad \eta_{\delta}(\by)  \right)\,.
	\end{equation*}
	Now, recalling the definition of $\delta_0$ in \eqref{assump:Localization}, $\bg_{\bx}(\by)$ is one-to-one on the boundary of the set $\{ \by \, : \, |\by-\bx| \leq \frac{R_0}{1- \kappa_1 R_0 \sqrt{\delta}} \eta_{\delta}(\bx) \}$,
	which for $\delta < \delta_0$ 
	contains $\{ \by \, : \, |\by-\bx| \leq R_0 \eta_{\delta}(\by) \}$  by \eqref{eq:ComparabilityOfDistanceFxn2:SqDist} and is contained in $\Omega$.
	Additionally, we use \eqref{eq:ComparabilityOfDistanceFxn1:SqDist} and that $|\grad \eta_{\delta}(\by)| \leq \kappa_1 \sqrt{\delta}$ to obtain
	\begin{equation*}
		\eta_{\delta}(\by) + \grad \eta_{\delta}(\by) \cdot (\bx-\by) \geq \eta_{\delta}(\by) - \kappa_1 \sqrt{\delta} |\by-\bx| \geq \left(1 - \frac{2 \kappa_1 \sqrt{\delta} R_0}{1 - \kappa_1 \sqrt{\delta} R_0 } \right) \eta_{\delta}(\bx)  > 0
	\end{equation*}
	for all $\by \in \Omega$ satisfying $|\by-\bx| \leq \frac{R_0}{1-\kappa_1 R_0 \sqrt{\delta}} \eta_{\delta}(\bx) $ (which again holds assuming that $\delta < \delta_0$).
	Therefore on this set,
	$$
	\det \grad \bg_{\bx}(\by) = \frac{\eta_{\delta}(\by) + \grad \eta_{\delta}(\by) \cdot (\bx-\by)}{\eta_{\delta}(\by)^{d+1}}
	$$ 
	is bounded away from zero independent of $\by$.
	It follows that $\bg_{\bx}(\by)$ is globally invertible on $\{ \by \, : \, |\by-\bx| \leq R_0 \eta_{\delta}(\by) \}$ and so
	\begin{equation}\label{eq:DefOp:Pf1:SqDist}
		\begin{split}
			&
   \bF_1(\bx)= \int_{\Omega} \rho_{\eta_{\delta}(\by),2} (|\by-\bx|) (\by-\bx) \, \rmd \by \\
			&=  \frac{1}{\eta_{\delta}(\bx)} \int_{ \{ |\bx-\by| < R_0 \eta_{\delta}(\by) \} } \rho \left( \frac{|\by-\bx|}{\eta_{\delta}(\by)} \right) \frac{(\by-\bx)}{\eta_{\delta}(\by)} \frac{\eta_{\delta}(\by) + \grad \eta_{\delta}(\by) \cdot (\bx-\by) }{\eta_{\delta}(\by)^{d+1}}  \, \rmd \by \\	
			&\quad + \frac{1}{\eta_{\delta}(\bx)} \int_{ \{ |\bx-\by| < R_0 \eta_{\delta}(\by) \} } \rho \left( \frac{|\by-\bx|}{\eta_{\delta}(\by)} \right) \frac{(\by-\bx)}{\eta_{\delta}(\by)} \frac{\eta_{\delta}(\bx) -  \eta_{\delta}(\by) - \grad \eta_{\delta}(\by) \cdot (\bx-\by) }{\eta_{\delta}(\by)^{d+1}}  \, \rmd \by \\
			&=  \frac{1}{\eta_{\delta}(\bx)} \int_{B(0,R_0)} \rho \left( |\bz| \right) \bz \, \rmd \bz \\	
			&\quad + \frac{1}{\eta_{\delta}(\bx)} \int_{ \{ |\bx-\by| < R_0 \eta_{\delta}(\by) \} } \rho \left( \frac{|\by-\bx|}{\eta_{\delta}(\by)} \right) \frac{(\by-\bx)}{\eta_{\delta}(\by)} \frac{\eta_{\delta}(\bx) -  \eta_{\delta}(\by) - \grad \eta_{\delta}(\by) \cdot (\bx-\by) }{\eta_{\delta}(\by)^{d+1}}  \, \rmd \by \\
			&= \frac{1}{\eta_{\delta}(\bx)} \int_{ \{ |\bx-\by| < R_0 \eta_{\delta}(\by) \} } \rho \left( \frac{|\by-\bx|}{\eta_{\delta}(\by)} \right) \frac{(\by-\bx)}{\eta_{\delta}(\by)} \frac{\eta_{\delta}(\bx) -  \eta_{\delta}(\by) - \grad \eta_{\delta}(\by) \cdot (\bx-\by) }{\eta_{\delta}(\by)^{d+1}}  \, \rmd \by\,.
		\end{split}
	\end{equation}
	
	Now, note by \eqref{eq:YSetOutsideRLayer} with $r = \frac{\delta}{\kappa_0}$ that whenever $\dist(\bx,\p \Omega) \geq \frac{1+\bar{\kappa}_0 R_0}{\sqrt{\kappa_0}} \sqrt{\delta}$,
	\begin{equation*}
		\begin{split}
			& \bF_1(\bx)= \int_{\Omega} \rho_{\eta_{\delta}(\by),2} (|\by-\bx|) (\by-\bx) \, \rmd \by \\
			&= \frac{1}{\eta_{\delta}(\bx)} \int_{ \{ |\bx-\by| < R_0 \eta_{\delta}(\by) \} } \rho \left( \frac{|\by-\bx|}{\eta_{\delta}(\by)} \right) \frac{(\by-\bx)}{\eta_{\delta}(\by)} \frac{\eta_{\delta}(\bx) -  \eta_{\delta}(\by) - \grad \eta_{\delta}(\by) \cdot (\bx-\by) }{\eta_{\delta}(\by)^{d+1}}  \, \rmd \by \\
			&=\frac{1}{\delta} \int_{ \{ |\bx-\by| < R_0 \delta \} } \rho \left( \frac{|\by-\bx|}{\delta} \right) \frac{(\by-\bx)}{\delta} \frac{\delta -  \delta }{\delta^{d+1}}  \, \rmd \by = {\bf 0}
		\end{split}
	\end{equation*}
	by definition of $\eta_{\delta}$. 
	Therefore $\bF_1(\bx)$ is only nonzero for $\dist(\bx,\p \Omega) \leq \bar{C} \sqrt{\delta}$, where $\bar{C} := \frac{1 + \bar{\kappa}_0 R_0}{\kappa_0}$.
	For these values of $\bx$, we use the conditions on $\eta_{\delta}$ along with \eqref{eq:ComparabilityOfDistanceFxn2:SqDist} and \Cref{lma:KernelIntegral:SqDist} applied to the kernel $r^3 \rho(r)$ and $\alpha = 0$ to estimate the last line of \eqref{eq:DefOp:Pf1:SqDist} by
	\begin{equation}\label{eq:YIntegralRemainderTerm:SqDist}
		C \Vnorm{\grad^2 \eta_{\delta}}_{L^{\infty}(\Omega)} \int_{ \{ |\bx-\by| < R_0 \eta_{\delta}(\by) \} } \rho \left( \frac{|\by-\bx|}{\eta_{\delta}(\by)} \right) \frac{|\by-\bx|^3}{\eta_{\delta}(\by)^3} \frac{1}{\eta_{\delta}(\by)^{d}} \frac{\eta_{\delta}(\by)}{\eta_{\delta}(\bx)}  \, \rmd \by \leq C(\rho,\Omega)\,.
	\end{equation}
	Thus \eqref{eq:DefOp:Pf:MainEst:SqDist} holds and the proof is complete.
\end{proof}

We note that understanding the impact of the non-vanishing $\bF_1$ in 
\eqref{eq:OpTaylorExp:F1Def} near the boundary can be very useful in practice, for example, in  the study of {\em ghost forces} for coupled local and nonlocal models \cite{tao2019nonlocal}. 
The pointwise characterization \eqref{eq:OpTaylorExp:Def} also leads to the following estimate for smooth functions.

\begin{theorem}\label{thm:OpIsDefined:Inhomog:SqDist}
	Let $\Omega \subset \bbR^d$ satisfy \eqref{assump:Domain}  and $\rho$ satisfy  \eqref{Assump:KernelSmoothness}.
	Let $p \in [1,\infty]$, and let $u \in W^{2,p}(\Omega)$. Then $\cL_{\delta} u(\bx) \in L^p(\Omega)$, and there exists a constant $C$ depending only on $d$, $p$ and $\Omega$ such that
	\begin{equation}\label{eq:OpIsDefined:Inhomog:SqDist}
		\Vnorm{\cL_{\delta} u}_{L^p(\Omega)} \leq C \Vnorm{u}_{W^{2,p}(\Omega)}\,.
	\end{equation}
\end{theorem}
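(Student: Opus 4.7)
The plan is to combine the pointwise decomposition from \Cref{lma:OpTaylorExp:SqDist} with a density argument. For $u \in C^2(\overline{\Omega})$, the lemma gives
$$\cL_{\delta} u(\bx) = 2 \bF_1(\bx) \cdot \grad u(\bx) + F_2(\bx, u),$$
together with the key estimates $\|\bF_1\|_{L^{\infty}(\Omega)} \leq C$ (in fact $\bF_1$ is supported in a $\sqrt{\delta}$-thin boundary layer) and $\|F_2(\cdot,u)\|_{L^p(\Omega)} \leq C\|\grad^2 u\|_{L^p(\Omega)}$ for every $p \in [1,\infty]$. Applying H\"older's inequality to the first term and summing,
$$\|\cL_\delta u\|_{L^p(\Omega)} \leq 2\|\bF_1\|_{L^{\infty}(\Omega)}\,\|\grad u\|_{L^p(\Omega)} + \|F_2(\cdot,u)\|_{L^p(\Omega)} \leq C \|u\|_{W^{2,p}(\Omega)},$$
which is the claimed inequality restricted to smooth functions.

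For $p \in [1,\infty)$ I would then invoke density of $C^\infty(\overline{\Omega})$ in $W^{2,p}(\Omega)$, which is available since $\Omega$ is a bounded $C^2$ domain by assumption \eqref{assump:Domain}. Concretely: given $u \in W^{2,p}(\Omega)$, pick $u_n \in C^\infty(\overline{\Omega})$ with $u_n \to u$ in $W^{2,p}(\Omega)$; the smooth estimate applied to $u_n - u_m$ shows that $\{\cL_\delta u_n\}$ is Cauchy in $L^p(\Omega)$, and its limit must coincide with $\cL_\delta u$ interpreted via the integral formula. To identify the limit I would use \Cref{thm:NonlocalOpWellDefd:Lp}, which already guarantees that $\cL_\delta u /\Phi_{\delta,2}$ is a well-defined element of $L^p(\Omega)$ with $\cL_\delta u_n /\Phi_{\delta,2} \to \cL_\delta u/\Phi_{\delta,2}$ in $L^p(\Omega)$; multiplying by the uniformly positive (but unbounded) $\Phi_{\delta,2}$ and combining with the $L^p$-convergence of $\cL_\delta u_n$ pins down the limit as $\cL_\delta u$ almost everywhere.

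The main obstacle is the $p = \infty$ case, since $C^\infty(\overline{\Omega})$ is not dense in $W^{2,\infty}(\Omega)$. I would handle this by approximation: given $u \in W^{2,\infty}(\Omega)$, first extend $u$ to a compactly supported function in $W^{2,\infty}(\bbR^d)$ via the $C^2$ boundary of $\Omega$, then mollify to obtain $u_n \in C^\infty(\overline{\Omega})$ converging to $u$ in $W^{2,q}(\Omega)$ for each finite $q$ while keeping $\|u_n\|_{W^{2,\infty}(\Omega)} \leq C\|u\|_{W^{2,\infty}(\Omega)}$. Applying the smooth estimate to $u_n$ for each finite $p$,
$$\|\cL_\delta u_n\|_{L^p(\Omega)} \leq C|\Omega|^{1/p}\|u_n\|_{W^{2,\infty}(\Omega)} \leq C'\,|\Omega|^{1/p}\,\|u\|_{W^{2,\infty}(\Omega)},$$
and sending $p \to \infty$ controls $\|\cL_\delta u_n\|_{L^\infty(\Omega)}$. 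Combining this with the pointwise a.e.\ convergence $\cL_\delta u_n(\bx) \to \cL_\delta u(\bx)$ (which follows from $u_n \to u$ uniformly on $\overline{\Omega}$ together with the absolute convergence of the integral defining $\cL_\delta u(\bx)$ for each fixed $\bx$, established in \Cref{lma:OpTaylorExp:SqDist}) yields the $L^\infty$ estimate for $u$, completing the proof.
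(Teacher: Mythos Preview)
Your proposal is correct and follows essentially the same strategy as the paper: establish the estimate for $C^2(\overline{\Omega})$ functions via the decomposition of \Cref{lma:OpTaylorExp:SqDist}, then pass to general $u \in W^{2,p}(\Omega)$ by density, identifying the limit using \Cref{thm:NonlocalOpWellDefd:Lp}. Two minor differences are worth noting. First, the paper replaces your Cauchy-sequence argument with Fatou's lemma: once $\cL_{\delta} u_n \to \cL_{\delta} u$ almost everywhere (from $\frac{\cL_\delta u_n}{\Phi_{\delta,2}} \to \frac{\cL_\delta u}{\Phi_{\delta,2}}$ in $L^p$ and $\Phi_{\delta,2} > 0$), one gets directly $\|\cL_\delta u\|_{L^p} \leq \liminf_n \|\cL_\delta u_n\|_{L^p} \leq C \liminf_n \|u_n\|_{W^{2,p}} = C\|u\|_{W^{2,p}}$, which is slightly slicker. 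Second, you are more careful than the paper about the case $p = \infty$: the paper's proof as written assumes a $C^2$ approximating sequence in $W^{2,p}$ without singling out $p=\infty$, whereas your mollification argument with uniform $W^{2,\infty}$ bounds and a $p \to \infty$ limit is a correct way to close that gap.
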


\begin{proof}
	Let $\{u_n\}$ be a sequence in $C^2(\overline{\Omega})$ that converges to $u$ in $W^{2,p}(\Omega)$.
	Thanks to \Cref{thm:NonlocalOpWellDefd:Lp} the sequence $\frac{\cL_{\delta}u_n}{\Phi_{\delta,2}}$ converges to $\frac{\cL_{\delta}u}{\Phi_{\delta,2}}$ in $L^p(\Omega)$ as $n \to \infty$, and so since $\Phi_{\delta,2}(\bx) > 0$ for all $\bx \in \Omega$ it follows that $\cL_{\delta}u_n(\bx)$ converges to $\cL_{\delta}u(\bx)$ as $n \to \infty$ for almost every $\bx \in \Omega$.
	Therefore by Fatou's lemma and \Cref{lma:OpTaylorExp:SqDist}
	\begin{equation*}
		\begin{split}
			\Vnorm{ \cL_{\delta} u}_{L^p(\Omega)} \leq \liminf_{n \to \infty} \Vnorm{ \cL_{\delta} u_n }_{L^p(\Omega)} 
			&\leq \liminf_{n \to \infty}  \left( \Vnorm{F_1 \grad u_n}_{L^p(\Omega)} +  \Vnorm{F_2(\cdot,u_n) }_{L^p(\Omega)} \right) \\
			&\leq C(\rho,\Omega)  \liminf_{n \to \infty}  \left(  \Vnorm{\grad u_n}_{L^p(\Omega)} +  \Vnorm{ \grad^2 u_n }_{L^p(\Omega)} \right) \\
			&\leq C \Vnorm{ u }_{W^{2,p}(\Omega)}\,.
		\end{split}
	\end{equation*}
	
\end{proof}

\subsection{The operator on the nonlocal function space}\label{sec:OpNonlocalSpace}

Concerning the action of the nonlocal operator on functions in the nonlocal energy space, we have
\begin{theorem}\label{thm:NonlocalOpWellDefd:Energy}
	Let $\Omega \subset \bbR^d$ satisfy \eqref{assump:Domain}  and $\rho$ satisfy  \eqref{Assump:KernelSmoothness}. Let $u \in \frak{W}^{\delta,2}(\Omega)$. Then $\frac{\cL_{\delta} u(\bx)}{\sqrt{\Phi_{\delta,2}(\bx)}} \in L^2(\Omega)$, and there exists constant $C$ depending only on $\rho$ and $\Omega$ such that
	\begin{equation*}
		\Vnorm{\frac{\cL_{\delta} u}{\sqrt{\Phi_{\delta,2}}}}_{L^2(\Omega)} \leq C [u]_{\frak{W}^{\delta,2}(\Omega)}\,.
	\end{equation*}
\end{theorem}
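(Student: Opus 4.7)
The plan is to adapt the Cauchy--Schwarz argument used in the proof of \Cref{thm:NonlocalOpWellDefd:Lp}, specialized to $p=2$, in order to directly recognize the resulting double integral as (a constant multiple of) the bilinear form $B_{\rho,\delta}(u,u)$, and then invoke the equivalence with the energy semi-norm given in \Cref{thm:Coercivity}.

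First I would show that for $u \in \frak{W}^{\delta,2}(\Omega)$ the pointwise integral defining $\cL_{\delta} u(\bx)$ converges absolutely for a.e.\ $\bx \in \Omega$: by Cauchy--Schwarz applied with the splitting $\rho_{\delta,2}(\bx,\by) = \sqrt{\rho_{\delta,2}(\bx,\by)} \cdot \sqrt{\rho_{\delta,2}(\bx,\by)}$,
\begin{equation*}
	|\cL_{\delta} u(\bx)|^2 \leq 4 \Phi_{\delta,2}(\bx) \int_{\Omega} \rho_{\delta,2}(\bx,\by) |u(\bx)-u(\by)|^2 \, \rmd \by\,.
\end{equation*}
Since $B_{\rho,\delta}(u,u) < \infty$ (by continuity of the bilinear form, \eqref{eq:BilinearForm:Continuity}), Fubini's theorem guarantees that the inner integral on the right is finite for a.e.\ $\bx$, so $\cL_{\delta} u(\bx)$ is defined a.e.

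Next I would divide the pointwise inequality by $\Phi_{\delta,2}(\bx)$ (positive in $\Omega$ by \Cref{cor:boundonPhi}) and integrate over $\Omega$:
\begin{equation*}
	\Vnorm{ \frac{\cL_{\delta} u}{ \sqrt{\Phi_{\delta,2}} } }_{L^2(\Omega)}^2 \leq 4 \iintdm{\Omega}{\Omega}{ \rho_{\delta,2}(\bx,\by) |u(\bx)-u(\by)|^2 }{\by}{\bx} = 4 B_{\rho,\delta}(u,u)\,.
\end{equation*}
Finally, \Cref{thm:Coercivity} yields $B_{\rho,\delta}(u,u) \leq C [u]_{\frak{W}^{\delta,2}(\Omega)}^2$ for a constant depending only on $d$, $\rho$ and $\Omega$, which closes the estimate.

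I do not anticipate a real obstacle here: every ingredient (Cauchy--Schwarz, Fubini, the $\Phi_{\delta,2}$ bounds, the kernel/semi-norm equivalence) has already been established. The only point that deserves mild care is justifying that $\cL_{\delta} u$ is measurable for $u$ merely in the energy space; this can be handled either via the a.e.\ pointwise definition above or by an approximation argument using a sequence $u_n \in C^\infty(\overline{\Omega})$ converging to $u$ in $\frak{W}^{\delta,2}(\Omega)$, for which the estimate applied to $u_n - u_m$ shows that $\cL_{\delta} u_n / \sqrt{\Phi_{\delta,2}}$ is Cauchy in $L^2(\Omega)$ and its limit agrees with the pointwise definition.
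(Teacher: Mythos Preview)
Your proposal is correct and matches the paper's proof essentially line for line: the paper applies Cauchy--Schwarz to the inner integral, cancels the factor $\Phi_{\delta,2}(\bx)$, arrives at $4 B_{\rho,\delta}(u,u)$, and then invokes the semi-norm equivalence from \Cref{thm:Coercivity}. Your additional remarks on measurability and approximation are more careful than what the paper writes out, but do not change the argument.
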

\begin{proof}
	The conclusion follows from an application of H\"older's inequality and the definition of $\Phi_{\delta,2}$:
	\begin{equation*}
		\begin{split}
			\Vnorm{\frac{\cL_{\delta} u}{\sqrt{\Phi_{\delta,2}}}}_{L^2(\Omega)}^2 &= \int_{\Omega} \frac{2^2}{\Phi_{\delta,2}(\bx)} \left| \int_{\Omega} \rho_{\delta,2}(\bx,\by) (u(\bx)-u(\by)) \, \rmd \by \right|^2 \, \rmd \bx \\
			&\leq \int_{\Omega} \frac{2^2}{\Phi_{\delta,2}(\bx)} \left( \int_{\Omega} \rho_{\delta,2}(\bx,\by) \, \rmd \by \right) \left(  \int_{\Omega} \rho_{\delta,2}(\bx,\by)  |u(\bx)-u(\by)|^2 \, \rmd \by \right) \, \rmd \bx  \\
			&= 4\int_{\Omega} \int_{\Omega} \rho_{\delta,2}(\bx,\by)  |u(\bx)-u(\by)|^2 \, \rmd \by \, \rmd \bx \leq C [u]_{\frak{W}^{\delta,2}(\Omega)}^{2}\,.
		\end{split}
	\end{equation*}
\end{proof}

\subsection{Nonlocal Green's identity for smooth functions}\label{sec:GreensSmooth}

\begin{theorem}\label{thm:GreensIdentity:SqDist}
	Let $\Omega \subset \bbR^d$ satisfy \eqref{assump:Domain} and $\rho$ satisfy \eqref{Assump:KernelSmoothness}-\eqref{Assump:KernelNormalization}. Suppose $u$, $v \in C^2(\overline{\Omega})$. Then \eqref{eq:GreensIdentity:Intro} holds. That is, 
	\begin{equation*} 
            B_{\rho,\delta}(u,v)= \int_{\Omega} \cL_{\delta} u(\bx) \cdot v(\bx) \, \rmd \bx + \int_{\p \Omega} \frac{\p u}{\p \bsnu}(\bx) \cdot v(\bx) \, \rmd \sigma(\bx) \,.
	\end{equation*}
\end{theorem}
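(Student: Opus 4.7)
Although $\rho_{\delta,2}(\bx,\by)$ is symmetric in its arguments, the Green's identity is \emph{not} a direct algebraic consequence of this symmetry. Expanding $(u(\bx)-u(\by))(v(\bx)-v(\by)) = (u(\bx)-u(\by))v(\bx) - (u(\bx)-u(\by))v(\by)$, using that $B_{\rho,\delta}(u,v)$ is absolutely convergent while $\cL_{\delta} u \in L^1(\Omega)$ by \Cref{thm:OpIsDefined:Inhomog:SqDist}, the identity reduces to proving
\[
\mathcal{J}(u,v) := \int_{\Omega}\int_{\Omega} \rho_{\delta,2}(\bx,\by)\bigl(u(\bx)-u(\by)\bigr)\bigl(v(\bx)+v(\by)\bigr)\,\rmd\by\,\rmd\bx = -\int_{\p \Omega} \frac{\p u}{\p \bsnu}(\bx)\, v(\bx)\,\rmd \sigma(\bx),
\]
interpreted as the iterated integral with $\by$ innermost. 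The integrand is antisymmetric under $\bx \leftrightarrow \by$, so a naive Fubini swap would give $\mathcal{J} = 0$; however \Cref{lma:KernelIntegral:SqDist} shows $\int_{\Omega} \rho_{\delta,2}(\bx,\by) |\by-\bx| \, \rmd \by \asymp \eta_{\delta}(\bx)^{-1}$, which fails to be integrable in $\bx$ since $\eta_{\delta}(\bx) \sim \dist(\bx,\p\Omega)^{2}$ near the boundary. Hence Fubini fails, and the boundary integral captures precisely this defect.

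\textbf{Reduction via change of variables.} Decompose $\rho_{\delta,2}(\bx,\by) = \frac{1}{2}[a(\bx,\by) + a(\by,\bx)]$ with $a(\bx,\by) := \eta_{\delta}(\bx)^{-d-2}\rho(|\by-\bx|/\eta_{\delta}(\bx))$, and split $\mathcal{J} = \frac{1}{2}(\mathcal{J}_1 + \mathcal{J}_2)$ correspondingly. In the inner $\by$-integral of $\mathcal{J}_1$ apply the change of variables $\bz = (\by-\bx)/\eta_{\delta}(\bx)$, producing an integral over $B(0,R_0) \cap \Omega_{\bx}$ with $\Omega_{\bx} := \{\bz: \bx + \eta_{\delta}(\bx)\bz \in \Omega\}$. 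In the inner integral of $\mathcal{J}_2$ apply the diffeomorphism $\bz=(\by-\bx)/\eta_{\delta}(\by)$, whose Jacobian computation is exactly the one performed in the proof of \Cref{lma:OpTaylorExp:SqDist} for $\delta < \delta_0$. Then Taylor-expand $u$ and $v$ to second order about $\bx$ and invoke the normalizations $\int_{B(0,1)} \rho(|\bz|) \bz \, \rmd \bz = \mathbf{0}$ and $\int_{B(0,1)} \rho(|\bz|) \bz \otimes \bz \, \rmd \bz = \bbI$ from \eqref{Assump:KernelNormalization}--\eqref{eq:KernelNormalizationConsequence}. On the interior set $\Omega^{\mathrm{int}} := \{\bx: \dist(\bx,\p\Omega) \geq \bar{C}\sqrt{\delta}\}$ (where $\Omega_\bx$ contains the full ball $B(0,R_0)$) the normalizations force the leading-order cancellations to reduce the interior contribution to $\int_{\Omega^{\mathrm{int}}} [-\grad u \cdot \grad v - v \Delta u]\,\rmd \bx$ plus lower-order remainders, and the classical divergence theorem collapses this to $-\int_{\p \Omega^{\mathrm{int}}} \frac{\p u}{\p \bsnu} v\,\rmd\sigma$ up to those remainders.

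\textbf{Boundary-layer analysis and main obstacle.} In the boundary layer $\Omega \setminus \Omega^{\mathrm{int}}$, parametrize $\bx = \bs - t\bsnu(\bs)$ by tubular coordinates $(\bs,t) \in \p\Omega \times [0,\bar{C}\sqrt{\delta})$, afforded by the $C^2$ regularity of $\p\Omega$. On this layer $\eta_{\delta}(\bx) = t^2$ by condition i) of \eqref{assump:Localization}, and the truncation of $B(0,R_0)$ by $\Omega_\bx$ is governed to leading order by a half-space cutoff in the normal direction $\bsnu(\bs)$. Evaluate the resulting truncated $\bz$-integrals and show that the boundary-layer contribution, combined with the surface integral on $\p \Omega^{\mathrm{int}}$ from the interior step, reproduces $-\int_{\p \Omega} \frac{\p u}{\p \bsnu} v \, \rmd \sigma$ exactly, cancelling all remainder terms generated in the interior step. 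The main obstacle is verifying this \emph{exact} reconstruction at fixed $\delta$ rather than only as $\delta \to 0$. The superlinear localization $\eta_{\delta}(\bx) \sim \dist(\bx,\p\Omega)^2$ imposed by \eqref{assump:Localization} is crucial: it keeps the Jacobian $(\eta_{\delta}(\by) + \grad \eta_{\delta}(\by)\cdot(\bx-\by))/\eta_{\delta}(\by)^{d+1}$ uniformly bounded away from zero for $\delta < \delta_0$, and the quadratic decay ratio $\eta_{\delta}(\bx)/t \sim t$ aligns the rescaled variable $\bz$ with the tangent plane at $\bs$ so that the leading-order truncated integrals reconstruct the classical normal derivative — a linear decay would instead produce a non-integrable $1/\dist$ factor and obstruct the matching.
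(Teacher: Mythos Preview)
Your reduction to the antisymmetric iterated integral $\mathcal{J}(u,v)$ is correct, and you have correctly diagnosed why Fubini fails globally. But the proposed route to evaluate $\mathcal{J}$ has a genuine gap that you yourself flag as ``the main obstacle'' without resolving. When you Taylor-expand $u$ and $v$ on the interior set $\Omega^{\mathrm{int}}$ you obtain
\[
\mathcal{J}\big|_{\bx \in \Omega^{\mathrm{int}}} \;=\; \int_{\Omega^{\mathrm{int}}}\bigl[-\grad u\cdot\grad v - v\,\Delta u\bigr]\,\rmd\bx \;+\; R_{\mathrm{int}},
\]
where $R_{\mathrm{int}}$ collects the higher-order Taylor remainders: these are genuine \emph{bulk} integrals over $\Omega^{\mathrm{int}}$ involving third and higher derivatives of $u,v$ weighted by powers of $\eta_{\delta}$. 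They are of order $O(\delta)$ but are not zero, and there is no mechanism by which a contribution supported on a boundary layer of width $\bar{C}\sqrt{\delta}$ can cancel a bulk term supported on all of $\Omega^{\mathrm{int}}$. Since the identity must hold \emph{exactly} for each fixed $\delta$, not merely up to $O(\delta)$, this kills the argument.

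The paper avoids this by never Taylor-expanding on the bulk. It introduces an auxiliary parameter $\veps \ll \delta$, truncates to the compact set $\Omega\setminus\Omega_{\sqrt{\veps}}$, and on that set the double integral \emph{is} absolutely convergent, so Fubini and the antisymmetry give an exact identity:
\[
\iint_{(\Omega\setminus\Omega_{\sqrt{\veps}})^2}\rho_{\delta,2}(u(\bx)-u(\by))(v(\bx)-v(\by))\,\rmd\by\,\rmd\bx
= \int_{\Omega\setminus\Omega_{\sqrt{\veps}}}\cL_{\delta}u\cdot v\,\rmd\bx \;-\; 2\!\int_{\Omega\setminus\Omega_{\sqrt{\veps}}}\!\int_{\Omega_{\sqrt{\veps}}}\!\rho_{\delta,2}(u(\bx)-u(\by))v(\bx)\,\rmd\by\,\rmd\bx\,.
\]
The first term on the right converges to $\int_{\Omega}\cL_{\delta}u\cdot v$ by dominated convergence (no Taylor expansion), and only the cross term over the shrinking strip $\Omega_{\sqrt{\veps}}$ is analyzed by Taylor expansion; there the remainders vanish as $\veps\to 0$. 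The boundary integral then emerges as an \emph{exact} $\veps\to 0$ limit computed in tubular coordinates, rather than from an exact matching at a fixed $\delta$-scale. This $\veps$-exhaustion is the missing idea in your proposal.
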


We begin with two lemmas.

\begin{lemma}\label{lma:GreensId1:SqDist}
	Let $\Omega \subset \bbR^d$ satisfy \eqref{assump:Domain}. Suppose $\rho \in C^0([0,\infty))$ with support in $[0,R_0]$ for fixed $R_0 \in (0,1)$. Then there exist constants $C = C(\rho,R_0,\bar{\kappa_0},\delta) >0 $ and $b = b(R_0,\bar{\kappa}_0,\delta) > 1$ for which the following holds: for any $\veps >0$ with $\veps \ll \delta$
	\begin{equation*}
		\int_{\Omega \setminus \Omega_{\sqrt{\veps}}} \rho_{\eta_{\delta}(\by),2}(|\by-\bx|) \, \rmd \by = \int_{\Omega_{\sqrt{b\veps}} \setminus \Omega_{\sqrt{\veps}}} \rho_{\eta_{\delta}(\by),2}(|\by-\bx|) \, \rmd \by \leq C  \veps^{-2} \chi_{ \Omega_{\sqrt{\veps}} \setminus \Omega_{\sqrt{\veps / b}} }(\bx)
	\end{equation*}
	for all $\bx \in \Omega_{\sqrt{\veps}}$.
\end{lemma}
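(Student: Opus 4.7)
The key idea is to exploit the sharp containment \eqref{eq:YSetInsideRLayer} from Lemma \ref{lma:BoundaryRegions:SqDist}, which tells us precisely how far into $\Omega$ the support of $\by \mapsto \rho_{\eta_{\delta}(\by),2}(|\by-\bx|)$ can reach in terms of $\dist(\bx,\p\Omega)$. The natural choice of constant dictated by that inequality is
\[
b := \frac{1}{(1 - \bar{\kappa}_0 R_0 \sqrt{\delta})^2},
\]
which satisfies $b > 1$ because $\delta < \delta_0 \leq 1/(2\bar{\kappa}_0^2)$ guarantees $\bar{\kappa}_0 R_0 \sqrt{\delta} \in (0,1)$.

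\smallskip

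\textbf{The equality.} For every $\bx \in \Omega_{\sqrt{\veps}}$ we have $\dist(\bx,\p\Omega) < \sqrt{\veps} = (1-\bar{\kappa}_0 R_0\sqrt{\delta})\sqrt{b\veps}$, so applying \eqref{eq:YSetInsideRLayer} with $r = \sqrt{b\veps}$ yields
\[
\{\by \in \Omega : |\by-\bx| \leq R_0 \eta_{\delta}(\by)\} \subset \Omega_{\sqrt{b\veps}}.
\]
The integrand is supported in this set, so it vanishes identically outside $\Omega_{\sqrt{b\veps}}$, producing the first equality.

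\smallskip

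\textbf{The characteristic-function factor.} An analogous application of \eqref{eq:YSetInsideRLayer}, this time with $r = \sqrt{\veps}$, shows that if $\dist(\bx,\p\Omega) < (1-\bar{\kappa}_0 R_0\sqrt{\delta})\sqrt{\veps} = \sqrt{\veps/b}$, then the entire support of $\by \mapsto \rho_{\eta_{\delta}(\by),2}(|\by-\bx|)$ is contained in $\Omega_{\sqrt{\veps}}$. Hence the integral over $\Omega \setminus \Omega_{\sqrt{\veps}}$ is zero for such $\bx$, establishing that it is supported in $\Omega_{\sqrt{\veps}} \setminus \Omega_{\sqrt{\veps/b}}$.

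\smallskip

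\textbf{The $\veps^{-2}$ bound.} Interpreting $\veps \ll \delta$ quantitatively as $b\veps < \kappa_0 \delta$, property i) of \eqref{assump:Localization} gives $\eta_{\delta}(\by) = (\dist(\by,\p\Omega))^2$ for every $\by \in \Omega_{\sqrt{b\veps}}$. Therefore on the effective domain of integration $\Omega_{\sqrt{b\veps}} \setminus \Omega_{\sqrt{\veps}}$ one has $\veps \leq \eta_{\delta}(\by) \leq b\veps$, so
\[
\rho_{\eta_{\delta}(\by),2}(|\by-\bx|) \leq \Vnorm{\rho}_{L^{\infty}} \veps^{-(d+2)},
\]
and the support in $\by$ is contained in the ball $\{\by : |\by - \bx| \leq R_0 b\veps\}$ of volume $\lesssim \veps^d$. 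Multiplying these two estimates yields the desired bound $C\veps^{-2}$, with $C$ depending only on $\rho$, $R_0$, $\bar{\kappa}_0$ and $\delta$. The main bookkeeping obstacle is ensuring that a single $b$ simultaneously serves both the truncation identity and the support of the characteristic function; the choice above makes the two inequalities into matched reciprocals, which is the only delicate point.
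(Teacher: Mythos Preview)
Your proof is correct and follows essentially the same route as the paper: the same choice $b = (1-\bar{\kappa}_0 R_0\sqrt{\delta})^{-2}$, two applications of \eqref{eq:YSetInsideRLayer} with $r=\sqrt{b\veps}$ and $r=\sqrt{\veps}$, and then a size estimate. The only cosmetic difference is that for the final $\veps^{-2}$ bound the paper invokes \Cref{lma:KernelIntegral:SqDist} to get $C/\eta_{\delta}(\bx)^2$ and then uses $\eta_{\delta}(\bx)\gtrsim \veps$ on the support of the characteristic function, whereas you estimate the integrand directly via $\eta_{\delta}(\by)\in[\veps,b\veps]$ on the effective domain of integration; both are equally short and valid.
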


\begin{proof}
	Note that by \Cref{lma:KernelIntegral:SqDist} the integral is finite for any fixed $\bx \in \Omega$.

 Set $b := \frac{1}{(1 - \bar{\kappa}_0 R_0 \sqrt{\delta})^2}$. First, for $\veps \ll \delta$ we apply \eqref{eq:YSetInsideRLayer} with $r = \sqrt{b \veps}$ to see that $\{ \by \, : \, |\by-\bx| \leq R_0 \eta_{\delta}(\by) \} \subset \Omega_{\sqrt{b\veps}}$ whenever $\dist(\bx,\p \Omega) <  \sqrt{\veps}$.
    This gives the first equality.
	
	Second, for $\veps \ll \delta$ we again apply \eqref{eq:YSetInsideRLayer} but with $r = \sqrt{\veps}$ to see that, whenever $\dist(\bx,\p \Omega) < \sqrt{\frac{\veps}{b}}$, we have
 $\{ \by \, : \, |\by-\bx| \leq R_0 \eta_{\delta}(\by) \} \subset \Omega_{\sqrt{\veps}}$.
	Therefore thanks to the support of $\rho$, the integral is zero if $\bx \in \Omega_{ \sqrt{\veps/b} }$.
	Thus by \Cref{lma:KernelIntegral:SqDist}
	\begin{equation*}
		\int_{\Omega \setminus \Omega_{\sqrt{\veps}}} \rho_{\eta_{\delta}(\by),2}(|\by-\bx|) \, \rmd \by \leq \frac{ C \chi_{ \Omega_{\sqrt{\veps}} \setminus \Omega_{\sqrt{\veps / b}} }(\bx)}{\eta_{\delta}(\bx)^2} \leq \frac{ C \chi_{ \Omega_{\sqrt{\veps}} \setminus \Omega_{\sqrt{\veps / b}} }(\bx)}{\veps^2}\,. 
	\end{equation*}
\end{proof}

\begin{lemma}\label{lma:GreensId2:SqDist}
		Let $\Omega \subset \bbR^d$ satisfy \eqref{assump:Domain}. Suppose $\rho \in C^0([0,\infty))$ with support in $[0,R_0]$ for fixed $R_0 \in (0,1)$. Then there exist constants $C = C(\rho,R_0,\bar{\kappa_0},\delta) >0 $ and $a = a(R_0,\bar{\kappa}_0,\delta) > 1$ for which the following holds: for any $\veps >0$ with $\veps \ll \delta$
	\begin{equation*}
		\int_{\Omega \setminus \Omega_{\sqrt{\veps}}} \rho_{\eta_{\delta}(\bx),2}(|\by-\bx|) \, \rmd \by = \int_{\Omega_{\sqrt{a\veps}} \setminus \Omega_{\sqrt{\veps}}} \rho_{\eta_{\delta}(\bx),2}(|\by-\bx|) \, \rmd \by \leq C  
  \veps^{-2}
  \chi_{ \Omega_{\sqrt{\veps}} \setminus \Omega_{\sqrt{\veps / a}} }(\bx) 
	\end{equation*}
	for all $\bx \in \Omega_{\sqrt{\veps}}$.
\end{lemma}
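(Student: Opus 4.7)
The argument will parallel the proof of \Cref{lma:GreensId1:SqDist} with one structural change: since the kernel now depends on $\eta_{\delta}(\bx)$ rather than $\eta_{\delta}(\by)$, I will invoke \eqref{eq:XSetInsideRLayer} in place of \eqref{eq:YSetInsideRLayer} throughout. Accordingly I set $a := (1 + \bar{\kappa}_0 R_0 \sqrt{\delta})^2$, which plays the role that $b = (1 - \bar{\kappa}_0 R_0 \sqrt{\delta})^{-2}$ played previously; note that $\sqrt{a} = 1 + \bar{\kappa}_0 R_0 \sqrt{\delta}$, and this is the natural constant associated to the $\bx$-side of \Cref{lma:BoundaryRegions:SqDist}.

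To obtain the first equality, I apply \eqref{eq:XSetInsideRLayer} with $r = \sqrt{a\veps}$: for $\bx \in \Omega_{\sqrt{\veps}}$ one has $\dist(\bx,\p\Omega) < \sqrt{\veps} = \sqrt{a\veps}/(1+\bar{\kappa}_0 R_0 \sqrt{\delta})$, so $\{\by : |\by-\bx| \leq R_0 \eta_{\delta}(\bx)\} \subset \Omega_{\sqrt{a\veps}}$ and hence the integrand is supported in $\Omega_{\sqrt{a\veps}} \setminus \Omega_{\sqrt{\veps}}$. For the indicator support, I apply \eqref{eq:XSetInsideRLayer} with $r = \sqrt{\veps}$: for $\bx \in \Omega_{\sqrt{\veps/a}}$ one has $\dist(\bx,\p\Omega) < \sqrt{\veps/a} = \sqrt{\veps}/(1+\bar{\kappa}_0 R_0 \sqrt{\delta})$, so the support of $\by \mapsto \rho_{\eta_{\delta}(\bx),2}(|\by-\bx|)$ lies entirely inside $\Omega_{\sqrt{\veps}}$, forcing the integral over $\Omega \setminus \Omega_{\sqrt{\veps}}$ to vanish. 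The integral is therefore nonzero only when $\bx \in \Omega_{\sqrt{\veps}} \setminus \Omega_{\sqrt{\veps/a}}$.

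Finally, to extract the $\veps^{-2}$ bound, I apply \eqref{eq:KernelIntFunction:UpperBound:SqDist} with $\alpha = 2$ to obtain $\int_{\Omega} \rho_{\eta_{\delta}(\bx),2}(|\by-\bx|)\,\rmd\by \leq C/\eta_{\delta}(\bx)^2$, and then bound $\eta_{\delta}(\bx)$ from below on the relevant annular region. For $\bx \in \Omega_{\sqrt{\veps}} \setminus \Omega_{\sqrt{\veps/a}}$ with $\veps \ll \delta$, we have $(\dist(\bx,\p\Omega))^2 < \veps < \kappa_0 \delta$, so condition i) of \eqref{assump:Localization} yields $\eta_{\delta}(\bx) = (\dist(\bx,\p\Omega))^2 \geq \veps/a$, whence $1/\eta_{\delta}(\bx)^2 \leq a^2/\veps^2$. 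There is no genuine obstacle here; the transformation from the $\by$-based argument to the $\bx$-based one is mechanical, and the only point requiring care is that the smallness assumption $\veps \ll \delta$ (together with $\delta < \delta_0$) is what places $\bx$ in the regime where $\eta_{\delta}$ coincides with the squared distance function, enabling the lower bound $\eta_{\delta}(\bx) \geq \veps/a$.
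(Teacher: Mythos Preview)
Your proposal is correct and follows essentially the same approach as the paper: the paper also sets $a := (1 + \bar{\kappa}_0 R_0 \sqrt{\delta})^2$, applies \eqref{eq:XSetInsideRLayer} first with $r = \sqrt{a\veps}$ to obtain the equality and then with $r = \sqrt{\veps}$ to obtain the support restriction, and concludes via \Cref{lma:KernelIntegral:SqDist} together with the lower bound $\eta_{\delta}(\bx) \geq \veps/a$ on the annulus. The only difference is cosmetic: the paper absorbs the factor $a^2$ into the constant $C$ without writing it out explicitly.
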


\begin{proof}
Note that by \Cref{lma:KernelIntegral:SqDist} the integral is finite for any fixed $\bx \in \Omega$.

Set $a := (1+ \bar{\kappa}_0 R_0 \sqrt{\delta})^2$. First, for $\veps \ll \delta$ we apply \eqref{eq:XSetInsideRLayer} with $r = \sqrt{a\veps}$ to see that, whenever $\dist(\bx,\p \Omega) <  \sqrt{\veps}$,
$\{ \by \, : \, |\by-\bx| \leq R_0 \eta_{\delta}(\bx) \} \subset \Omega_{\sqrt{a \veps}}$.
This gives the first equality.

Second, for $\veps \ll \delta$ we again apply \eqref{eq:XSetInsideRLayer} but with $r = \sqrt{\veps}$ to see that $\{ \by \, : \, |\by-\bx| \leq R_0 \eta_{\delta}(\bx) \} \subset \Omega_{\sqrt{\veps}}$ whenever $\dist(\bx,\p \Omega) <  \sqrt{\frac{\veps}{a}}$.
Therefore thanks to the support of $\rho$ the integral is zero if $\bx \in \Omega_{ \sqrt{\veps/a}  }$.
Thus by \Cref{lma:KernelIntegral:SqDist}
\begin{equation*}
	\int_{\Omega \setminus \Omega_{\sqrt{\veps}}} \rho_{\eta_{\delta}(\bx),2}(|\by-\bx|) \, \rmd \by \leq \frac{ C \chi_{ \Omega_{\sqrt{\veps}} \setminus \Omega_{\sqrt{\veps / a}} }(\bx)}{\eta_{\delta}(\bx)^2} \leq \frac{ C \chi_{ \Omega_{\sqrt{\veps}} \setminus \Omega_{\sqrt{\veps / a}} }(\bx)}{\veps^2}\,. 
\end{equation*}
\end{proof}

\begin{proof}[Proof of \Cref{thm:GreensIdentity:SqDist}]
	Let $\veps > 0$ with $\veps \ll \delta$. Since
	$$
	\rho_{\delta,2}(\bx,\by) |u(\bx)-u(\by)| \, |v(\bx)-v(\by)| \leq \Vnorm{\grad u}_{L^{\infty}(\Omega)} \Vnorm{\grad v}_{L^{\infty}(\Omega)} \rho_{\delta,2}(\bx,\by) |\bx-\by|^2 \in L^1(\Omega \times \Omega)\,,
	$$
	by the dominated convergence theorem
	\begin{equation*}
		\begin{split}
		&B_{\rho,\delta}(u,v) =\iintdm{\Omega}{\Omega}{ \rho_{\delta,2}(\bx,\by) (u(\bx)-u(\by)) (v(\bx)-v(\by)) }{\by}{\bx} \\
		&\quad= \lim\limits_{\veps \to 0} \iintdm{\Omega \setminus \Omega_{\sqrt{\veps}}}{\Omega \setminus \Omega_{\sqrt{\veps}} }{ \rho_{\delta,2}(\bx,\by) (u(\bx)-u(\by)) (v(\bx)-v(\by)) }{\by}{\bx}\,.
		\end{split}
	\end{equation*}
	Now we can use nonlocal integration by parts on the truncated form as well as linearity of the integrals thanks to their absolute convergence guaranteed by \Cref{lma:GreensId1:SqDist} and \Cref{lma:OpTaylorExp:SqDist}; we get
	\begin{equation*}
		\begin{split}
			&B_{\rho,\delta}(u,v) =\iintdm{\Omega \setminus \Omega_{\sqrt{\veps}} }{\Omega \setminus \Omega_{\sqrt{\veps}} }{ \rho_{\delta,2}(\bx,\by) (u(\bx)-u(\by)) (v(\bx)-v(\by)) }{\by}{\bx} \\
			&= 2\int_{\Omega \setminus \Omega_{\sqrt{\veps}}} \int_{\Omega \setminus \Omega_{\sqrt{\veps}}} \rho_{\delta,2}(\bx,\by) (u(\bx)-u(\by)) \, \rmd \by \; v(\bx) \, \rmd \bx \\
			&= - 2 \int_{\Omega \setminus \Omega_{\sqrt{\veps}}} \int_{\Omega_{\sqrt{\veps}}} \rho_{\delta,2}(\bx,\by) (u(\bx)-u(\by)) v(\bx) \, \rmd \by  \, \rmd \bx + \int_{\Omega \setminus \Omega_{\sqrt{\veps}}} \cL_{\delta} u(\bx) v(\bx) \, \rmd \bx \\
			&= - 2 \int_{\Omega \setminus \Omega_{\sqrt{\veps}}} \int_{\Omega_{\sqrt{\veps}}} \rho_{\delta,2}(\bx,\by) (u(\bx)-u(\by))  v(\by) \, \rmd \by \, \rmd \bx \\
			&\quad  - 2 \int_{\Omega \setminus \Omega_{\sqrt{\veps}}} \int_{\Omega_{\sqrt{\veps}}} \rho_{\delta,2}(\bx,\by) (u(\bx)-u(\by)) ( v(\bx) - v(\by) ) \, \rmd \by \, \rmd \bx 
			+ \int_{\Omega \setminus \Omega_{\sqrt{\veps}}} \cL_{\delta} u(\bx) v(\bx) \, \rmd \bx\,.
		\end{split}
	\end{equation*}
	The dominated convergence theorem gives that the second integral vanishes and that
	\begin{equation*}
		\lim\limits_{\veps \to 0} \int_{\Omega \setminus \Omega_{\sqrt{\veps}}} \cL_{\delta} u(\bx) v(\bx) \, \rmd \bx = \int_{\Omega} \cL_{\delta} u(\bx) v(\bx) \, \rmd \bx\,,
	\end{equation*}
	since $|\cL_{\delta}u(\bx) v(\bx)| \leq C(\rho,d,\Omega) \Vnorm{u}_{W^{2,\infty}(\Omega)} \Vnorm{v}_{L^{\infty}(\Omega)} < \infty$.
	Therefore it remains to show that
	\begin{equation}\label{eq:GreensIdPf:Limit1:SqDist}
		\lim\limits_{\veps \to 0}  \int_{\Omega_{\sqrt{\veps}}} \int_{\Omega \setminus \Omega_{\sqrt{\veps}}} 2 \, \rho_{\delta,2}(\bx,\by) (u(\bx)-u(\by)) v(\bx) \, \rmd \by  \, \rmd \bx = \int_{\p \Omega} \frac{\p u}{\p \bsnu}(\bx) v(\bx) \, \rmd \bx\,.
	\end{equation}
	Now on the left-hand side integrand we perform a second-order Taylor expansion of $u(\by)$ at $\bx$ and estimate the remainder term involving $\grad^2 u$. Using the arguments of \Cref{lma:GreensId1:SqDist} and \Cref{lma:GreensId2:SqDist} we see that
	\begin{equation*}
		\int_{\Omega_{\sqrt{\veps}}} \int_{\Omega \setminus \Omega_{\sqrt{\veps}}} \rho_{\delta,2}(\bx,\by) |\bx-\by|^2 \, \rmd \by  \, \rmd \bx \leq C(\Omega) |\Omega_{\sqrt{\veps}}| = o(1)\,,
	\end{equation*}
	so \eqref{eq:GreensIdPf:Limit1:SqDist} is equivalent to showing that
	\begin{equation}\label{eq:GreensIdPf:Limit2:SqDist}
		\lim\limits_{\veps \to 0}  \int_{\Omega_{\sqrt{\veps}}} \int_{\Omega \setminus \Omega_{\sqrt{\veps}}} 2 \, \rho_{\delta,2}(\bx,\by) (\bx-\by) \cdot \grad u(\bx) v(\bx) \, \rmd \by  \, \rmd \bx = \int_{\p \Omega} \frac{\p u}{\p \bsnu}(\bx) v(\bx) \, \rmd \bx \,.
	\end{equation}
	
	\textit{Step 1}: Localization. For any $\bx_0 \in \p \Omega$, denote the outward and inward unit normal vectors as $\bsnu(\bx_0)$ and $\bn(\bx_0)$, respectively.
	Since $\Omega$ is a $C^2$ domain, it satisfies the uniform interior-exterior sphere condition. As a consequence, there exists $\veps_0 > 0$ depending only on $\Omega$ such that the following holds: for any $\bx_0 \in \p \Omega$ there exists a Euclidean ball $B = B(\bx_0, \beta) \subset \bbR^d$ such that the map $\Psi : \p \Omega \cap B \times [0,\veps_0) \to \overline{\Omega_{2\veps_0}} \cap B(\bx_0,2 \beta)$ given by
	\begin{equation*}
		\Psi(\bx',t) := \bx' + t \bn(\bx')
	\end{equation*}
	is a diffeomorphism onto its image.
	Furthermore, (choosing $\veps_0$ smaller if necessary) for every $\bx \in \Omega_{\veps_0}$ there exists a unique nearest-point projection $\bsxi(\bx) \in \p \Omega$ such that $|\bx-\bsxi(\bx)| = \dist(\bx, \p \Omega)$. We collect the relevant computations concerning the distance function and the map $\Psi$:
	\begin{equation}\label{eq:GreensIdPf:Parametrization:SqDist}
		\begin{split}
			\dist(\bx' + t \bn(\bx') ,\p \Omega ) &= t ,\quad\forall (\bx',t) \in \p \Omega \cap B \times [0,\veps_0)\,, \\
			\bsxi(\bx'+t \bn(\bx')) & = \bx' ,\quad\forall (\bx',t) \in \p \Omega \cap B \times [0,\veps_0)\,, \\
			\grad \dist(\bx' + t \bn(\bx') ,\p \Omega) &= \bn(\bx') ,\quad\forall (\bx',t) \in \p \Omega \cap B \times [0,\veps_0)\,.
		\end{split}
	\end{equation}
	See \cite{gilbarg2015elliptic} for further details. It is clear upon parametrizing $\p \Omega$ (choosing the radius $\beta$ smaller if necessary so that $\p \Omega \cap B$ can be written as the graph of a function) that \begin{equation}\label{eq:GreensIdPf:Parametrization:SqDist:2}
	       \det \grad \Psi(\bx',0)  \rmd \scH^{d-1}(\bx') = \rmd \sigma(\bx') \text{ on } \p \Omega \cap B\,,
	\end{equation}
	where $\scH^{d-1}$ denotes $d-1$-dimensional Hausdorff measure. Hereafter we abbreviate $\rmd \scH^{d-1}(\bx')$ as $\rmd \bx'$ whenever it is clear from context.
	
	Now, there exists a finite collection of Euclidean balls $\{ B_i \}_{i=1}^N$, $B_i = B((\bx_0)_i,\beta_i)$ whose centers are points on $\p \Omega$ that cover $\p \Omega$ and such that \eqref{eq:GreensIdPf:Parametrization:SqDist}-\eqref{eq:GreensIdPf:Parametrization:SqDist:2} are satisfied for $\p \Omega \cap B_i$. 
	Therefore by a localization argument with a partition of unity subordinate to the cover 
	$\{ B_i \}_{i=1}^N$ it suffices to find the limit \eqref{eq:GreensIdPf:Limit2:SqDist} for $\grad u$ and $v$ compactly supported on $\overline{\Omega} \cap B_i$ for some $i$.
	In what follows we suppress the index to simplify notation.
	
	\textit{Step 2}: Let $\veps \ll \min \{ \veps_0,\delta \}$. We show that
	\begin{equation}\label{eq:GreensIdPf:Step2:SqDist}
		\begin{split}
			\lim\limits_{\veps \to 0}  &\int_{\Omega_{\sqrt{\veps}}} \int_{\Omega \setminus \Omega_{\sqrt{\veps}}} \rho_{\eta_{\delta}(\bx),2}(|\bx-\by|) (\by-\bx) \cdot \grad u(\bx) v(\bx) \, \rmd \by  \, \rmd \bx \\
			&= - \int_{B(0,R_0) \cap \{ z_d > 0 \} } z_d^2 \rho \left( |\bz| \right) \, \rmd \bz \cdot \int_{\p \Omega} \frac{\p u}{\p \bsnu}(\bx) v(\bx) \, \rmd \sigma (\bx)\,,
		\end{split}
	\end{equation}
	where $u$ and $v$ are supported on $\p \Omega \cap B$ where $B$ is as in Step 1.
	To begin, we write the outer integral using the change of variables $\bx = \Psi(\bx',t)$ as well as using \eqref{eq:GreensIdPf:Parametrization:SqDist} and \Cref{lma:GreensId2:SqDist}:
	\begin{equation*}
		\begin{split}
& \int_{\Omega_{\sqrt{\veps}}}  \int_{\Omega \setminus \Omega_{\sqrt{\veps}}} \rho_{\eta_{\delta}(\bx),2}(|\bx-\by|) (\by-\bx) \cdot \grad u(\bx) v(\bx) \, \rmd \by  \, \rmd \bx \\
&\; =\int_{\Omega_{\sqrt{\veps}}}  \int_{\bbR^d} \chi_{\Omega_{\sqrt{a \veps}} \setminus \Omega_{\sqrt{\veps}} }(\by+\bx) \; \rho_{\eta_{\delta}(\bx),2}(|\by|) \by \cdot \grad u(\bx) v(\bx) \, \rmd \by  \, \rmd \bx \\
			&\; = \int_{\p \Omega \cap B} \int_0^{\sqrt{\veps}} \int_{\bbR^d} \left\{  \chi_{\Omega_{\sqrt{a \veps}} \setminus \Omega_{\sqrt{\veps}} }( \Psi(\bx',t) + \by )\; \frac{1}{t^{2d+4}} \rho \left( \frac{|\by|}{t^2} \right) \right. \\
		&\left. \qquad\qquad\qquad 
   \by \cdot \grad u(\Psi(\bx',t)) v(\Psi(\bx',t)) \det \grad \Psi(\bx',t) \, \right\} \, \rmd \by  \, \rmd t \, \rmd \bx' \\
			&= \int_{\p \Omega \cap B} \int_0^{\sqrt{\veps}} \int_{\bbR^d} 
  \left\{ \chi_{\Omega_{\sqrt{a \veps}} \setminus \Omega_{\sqrt{\veps}} }( \Psi(\bx',t) + t^2 \by ) \; \frac{1}{t^{2}} \rho \left( |\by| \right) \right.\\
			&\qquad \qquad  \qquad \left.\by \cdot \grad u(\Psi(\bx',t)) v(\Psi(\bx',t)) \det \grad \Psi(\bx',t) \right\} \, \rmd \by  \, \rmd t \, \rmd \bx'\,.
		\end{split}
	\end{equation*}
	Now, fix $\bx' \in \p \Omega \cap B$. We define
 $\Vint{\bn(\bx')}^\perp := \{ \by' \in \bbR^d \, : \, \by' \cdot \bn(\bx') = 0 \}$. Use the change of variables $\by = \by' + \tau \bn(\bx')$ for $\by' \in \Vint{\bn(\bx')}^\perp$ in the inner integral as well as the fact that $\supp \rho \subset [0,R_0]$ to get
	\begin{equation*}
		\begin{split}	&\int_{\Omega_{\sqrt{\veps}}}  \int_{\Omega \setminus \Omega_{\sqrt{\veps}}} \rho_{\eta_{\delta}(\bx),2}(|\bx-\by|) (\by-\bx) \cdot \grad u(\bx) v(\bx) \, \rmd \by  \, \rmd \bx \\
			&\; = \int_{\p \Omega \cap B} \int_0^{\sqrt{\veps}} \int_{\Vint{\bn(\bx') }^\perp } \int_{\bbR}
   \left\{\chi_{\Omega_{\sqrt{a \veps}} \setminus \Omega_{\sqrt{\veps}}} ( \Psi(\bx',t) + t^2 (\by' + \tau \bn(\bx') ) ) \; \chi_{B(0,R_0)} ( \by' + \tau \bn(\bx')  ) \right. \\
			&\left. \quad\quad  
   \frac{\rho\left( |\by' + \tau \bn(\bx')| \right) }{t^2} \big( \by' + \tau \bn(\bx') \big) \cdot \grad u(\Psi(\bx',t)) v(\Psi(\bx',t)) \det \grad \Psi(\bx',t) \,  \right\} \rmd \tau \, \rmd \by'  \, \rmd t \, \rmd \bx'\,.
		\end{split}
	\end{equation*}
	
	Now we analyze the indicator functions. Taylor-expanding $\eta_{\delta}(\bx) = (\dist(\bx,\p \Omega))^2$ at the point $\Psi(\bx',t) = \bx' + t \bn(\bx')$ gives
	\begin{equation*}
		\begin{split}
			\eta_{\delta}(\Psi(\bx',t) + t^2 (\by' + \tau \bn(\bx') )) &= t^2 + 2 t \bn(\bx') \cdot (t^2 (\by' + \tau \bn(\bx') ) ) + \Pi(t) \\
			&= t^2 (1+ 2 t \tau) + \Pi(t)\,.
		\end{split}
	\end{equation*}
	The function $\Pi(t)$ depends on the Hessian of $(\dist(\bx,\p \Omega))^2$, and is also a function of  $\bx' \in \p \Omega$, $\by' \in \Vint{\bn(\bx') }^\perp$ and $\tau \in \bbR$. However, since $\Omega$ is $C^2$ it is continuous and uniformly bounded with respect to these variables and with respect to $\veps \in (0,\veps_0)$, and satisfies
	\begin{equation*}
		\frac{1}{M} t^4 \leq \Pi(t) \leq M t^4\,,
	\end{equation*}
	see \cite{gilbarg2015elliptic}.
	Therefore
	\begin{equation*}
		\begin{split}
	&	\int_{\Omega_{\sqrt{\veps}}}  \int_{\Omega \setminus \Omega_{\sqrt{\veps}}} \rho_{\eta_{\delta}(\bx),2}(|\bx-\by|) (\by-\bx) \cdot \grad u(\bx) v(\bx) \, \rmd \by  \, \rmd \bx \\
			&\;= \int_{\p \Omega \cap B} \int_0^{\sqrt{\veps}} \int_{\Vint{\bn(\bx') }^\perp } \int_{\bbR}
    \left\{\right. \chi_{ \{\veps < t^2(1+2 t\tau) + \Pi(t) < a \veps \} } \; \chi_{B(0,R_0)} ( \by' + \tau \bn(\bx')  ) \frac{\rho \left( |\by' + \tau \bn(\bx')| \right) 
    }{t^2}  \\
			&\left. \qquad\qquad 
    \big( \by' + \tau \bn(\bx') \big) \cdot \grad u(\Psi(\bx',t)) v(\Psi(\bx',t)) \det \grad \Psi(\bx',t) \,    \right\} \, \rmd \tau  \, \rmd \by'  \, \rmd t \, \rmd \bx' \\
			&\;= \int_{\p \Omega \cap B} \int_0^{1} \int_{\Vint{\bn(\bx') }^\perp } \int_{\bbR}
       \left\{\right. \chi_{ \{1 < t^2(1+2 \sqrt{\veps} t\tau) + \frac{\Pi( \sqrt{\veps} t)}{\veps} < a \} } \; \chi_{B(0,R_0)} ( \by' + \tau \bn(\bx')  ) \frac{\rho \left( |\by' + \tau \bn(\bx')| \right)}{\sqrt{\veps} t^2} \\
			&\left. \qquad\qquad  
    \big( \by' + \tau \bn(\bx') \big) \cdot \grad u(\Psi(\bx',\sqrt{\veps}t)) v(\Psi(\bx',\sqrt{\veps}t)) \det \grad \Psi(\bx',\sqrt{\veps}t) \,  \right\}\, \rmd \tau  \, \rmd \by'  \, \rmd t \, \rmd \bx'\,.
		\end{split}
	\end{equation*}
 Define 
 $$
 h_{\veps,\tau} (t) := t^2(1+2 \sqrt{\veps} t\tau)\,, \qquad 
 \wt{h}_{\veps,\tau}(t) := h_{\veps,\tau} (t)  + \frac{\Pi( \sqrt{\veps} t)}{\veps}\,.
 $$
For $\tau \in (-R_0,R_0)$ and  $0 < \veps \ll \delta$ both $h_{\veps,\tau}$ and $\wt{h}_{\veps,\tau}$ are strictly increasing for $t \in [0,1]$.
	It is also straightforward to see that for $\veps > 0$ sufficiently small 
	$0 \leq \wt{h}_{\veps,\tau}^{-1}(1) \leq 1 \leq \wt{h}_{\veps,\tau}^{-1}(a)$ for all $t \in (0,1)$ and for all $\tau \in (-R_0,R_0)$. 
	It is also clear that on the region of integration the implicit relation $\tau \geq \frac{\Pi(\sqrt{\veps})}{2\veps}$ is satisfied. Further, the Lipschitz continuity of $\grad u$, $v$ and $\grad \Psi$ means that the limit as $\veps \to 0$ will be unchanged if we replace $\sqrt{\veps}t$ in their arguments with $0$. Combining all of this, we use Fubini's theorem to get
	\begin{equation*}
		\begin{split}
			&\int_{\p \Omega \cap B} \int_{\Vint{\bn(\bx') }^\perp } \int_0^{1}  \int_{\bbR} 
   \left\{ \right. \chi_{ \{1 < \wt{h}_{\veps,\tau}(t) < a \} } \;  \chi_{B(0,R_0)} ( \by' + \tau \bn(\bx')  )  \frac{\rho \left( |\by' + \tau \bn(\bx')| \right)
   }{\sqrt{\veps} t^2}  \\
			&\left. \qquad\qquad  
   \big( \by' + \tau \bn(\bx') \big) \cdot \grad u(\Psi(\bx',0)) v(\Psi(\bx',0)) \det \grad \Psi(\bx',0) \,  \right\}\, \rmd \tau  \, \rmd t  \, \rmd \by'  \, \rmd \bx' \\
			&= \int_{\p \Omega \cap B} \int_{\Vint{\bn(\bx') }^\perp } \int_{ \frac{\Pi(\sqrt{\veps})}{2 \veps} }^{R_0} 
   \left\{ \right. \left( \int_{ \wt{h}_{\veps,\tau}^{-1}(1)  }^1 \frac{1}{\sqrt{\veps} t^2}  \, \rmd t\right) \chi_{B(0,R_0)} ( \by' + \tau \bn(\bx')  )   \\
			&\left. \qquad\qquad  
  \rho \left( |\by' + \tau \bn(\bx')| \right) \big( \by' + \tau \bn(\bx') \big) \cdot \grad u(\bx') v(\bx') \det \grad \Psi(\bx',0)\right\}  \, \rmd \tau    \, \rmd \by'  \, \rmd \bx' \\
			&= \int_{\p \Omega \cap B} \int_{\Vint{\bn(\bx') }^\perp } \int_{ \frac{\Pi(\sqrt{\veps})}{2 \veps} }^{R_0} \left\{ \right. \frac{1}{\sqrt{\veps} } \left( \frac{1}{\wt{h}_{\veps,\tau}^{-1}(1)} - 1 \right)\,  \chi_{B(0,R_0)} ( \by' + \tau \bn(\bx')  ) \rho \left( |\by' + \tau \bn(\bx')| \right)  \\
			&\left. \qquad\qquad 
  \big( \by' + \tau \bn(\bx') \big) \cdot \grad u(\bx') v(\bx') \det \grad \Psi(\bx',0)\,\right\} \, \rmd \tau    \, \rmd \by'  \, \rmd \bx'\,.
		\end{split}
	\end{equation*}
	Now, set 
	\begin{equation*}
		A := \wt{h}_{\veps,\tau}^{-1}(1)\,, \qquad B := h_{\veps,\tau}^{-1}(1)\,.
	\end{equation*}
	Then it is clear by their definitions that both $A$ and $B$ are both positive and bounded from above and below away from zero for all $\tau \in (-R_0,R_0)$ and all $\veps > 0$ sufficiently small, with bounds depending only on $R_0$. Thus
	\begin{equation*}
		\left| \frac{1}{ A } - \frac{1}{ B } \right| = \frac{1}{|A| |B| } \left| A - B \right| \leq C |h_{\veps,\tau}(A) - h_{\veps,\tau}(B)| = |1 - \frac{\Pi(\sqrt{\veps} A)}{\veps}  - 1| \leq C \veps\,.
	\end{equation*}
	So we have
	\begin{equation*}
		\begin{split}
			&\int_{\p \Omega \cap B} \int_{\Vint{\bn(\bx') }^\perp } \int_{ \frac{\Pi(\sqrt{\veps})}{2 \veps} }^{R_0}  
     \left\{ \right.
     \frac{1}{\sqrt{\veps} } \left( \frac{1}{\wt{h}_{\veps,\tau}^{-1}(1)} - 1 \right)  \chi_{B(0,R_0)} ( \by' + \tau \bn(\bx')  )   \rho \left( |\by' + \tau \bn(\bx')| \right) \\
			&\left. \qquad\qquad  
  \big( \by' + \tau \bn(\bx') \big) \cdot \grad u(\bx') v(\bx') \det \grad \Psi(\bx',0)  \right\} \, \rmd \tau    \, \rmd \by'  \, \rmd \bx' \\
			&\;= \int_{\p \Omega \cap B} \int_{\Vint{\bn(\bx') }^\perp } \int_{ \frac{\Pi(\sqrt{\veps})}{2 \veps} }^{R_0}    \left\{ \right. \frac{1}{\sqrt{\veps} } \left( \frac{1}{h_{\veps,\tau}^{-1}(1)} - 1 \right)  \chi_{B(0,R_0)} ( \by' + \tau \bn(\bx')  ) \rho \left( |\by' + \tau \bn(\bx')| \right) \\
			&\left. \qquad\qquad \big( \by' + \tau \bn(\bx') \big) \cdot \grad u(\bx') v(\bx') \det \grad \Psi(\bx',0) 
  \right\}
   \, \rmd \tau    \, \rmd \by'  \, \rmd \bx' + O(\sqrt{\veps})\,.
		\end{split}
	\end{equation*}
	Thanks to the explicit formula for $h_{\veps,\tau}(t)$, we can find the unique real root of the function $h_{\veps,\tau}(t) - 1$ in terms of $\veps$ and $\tau$. Then a direct computation gives
	\begin{equation*}
		\lim\limits_{\veps \to 0} \frac{1}{\sqrt{\veps} } \left( \frac{1}{h_{\veps,\tau}^{-1}(1)} - 1 \right)  = \tau\,.
	\end{equation*}
	By the dominated convergence theorem we can conclude the equality of limits
	\begin{equation*}
		\begin{split}
		&	\lim\limits_{\veps \to 0}  \int_{\Omega_{\sqrt{\veps}}}  \int_{\Omega \setminus \Omega_{\sqrt{\veps}}} \rho_{\eta_{\delta}(\bx),2}(|\bx-\by|) (\by-\bx) \cdot \grad u(\bx) v(\bx) \, \rmd \by  \, \rmd \bx \\
			&\;=\lim\limits_{\veps \to 0} \int_{\p \Omega \cap B} \int_{\Vint{\bn(\bx') }^\perp } \int_{ \frac{\Pi(\sqrt{\veps})}{2 \veps} }^{R_0} \left\{   \frac{1}{\sqrt{\veps} } \left( \frac{1}{h_{\veps,\tau}^{-1}(1)} - 1 \right)  \chi_{B(0,R_0)} ( \by' + \tau \bn(\bx')  ) \right. \\
			&\left. \qquad\qquad \rho \left( |\by' + \tau \bn(\bx')| \right) \big( \by' + \tau \bn(\bx') \big) \cdot \grad u(\bx') v(\bx') \det \grad \Psi(\bx',0) \, \right\}\,\rmd \tau    \, \rmd \by'  \, \rmd \bx' \\
			&\;= \int_{\p \Omega \cap B} \int_{\Vint{\bn(\bx') }^\perp } \int_{ 0 }^{R_0}  \left\{ \right. \chi_{B(0,R_0)} ( \by' + \tau \bn(\bx')  ) \tau \, \rho \left( |\by' + \tau \bn(\bx')| \right)  \\
			&\left. \qquad\qquad \big( \by' + \tau \bn(\bx') \big) \cdot \grad u(\bx') v(\bx') \det \grad \Psi(\bx',0) \,\right\} \, \rmd \tau    \, \rmd \by'  \, \rmd \bx'\,.
		\end{split}
	\end{equation*}
	We can see that
	\begin{equation*}
		\begin{split}
			& \int_{\p \Omega \cap B} \int_{\Vint{\bn(\bx') }^\perp } \int_{ 0 }^{R_0} \left\{ \chi_{B(0,R_0)} ( \by' + \tau \bn(\bx')  )  \tau \, \rho \left( |\by' + \tau \bn(\bx')| \right)\right.  \\
			&\left. \qquad 
   \by' \cdot \grad u(\bx') v(\bx') \det \grad \Psi(\bx',0) \,\right\}\, \rmd \tau    \, \rmd \by'  \, \rmd \bx' = 0\,,
		\end{split}
	\end{equation*}
	since the region of integration is radially symmetric with respect to $\by'$. Further, since $(\by' + \tau \bn(\bx')) \cdot \bn(\bx') = \tau$ and
 $\{ \by' + \tau \bn(\bx') \, : \, \by' \cdot \bn(\bx') = 0\,, \tau > 0\,, |\by'|^2 + \tau^2 \leq R_0^2  \} = B(0,R_0) \cap \{ \bz \, : \, \bz \cdot \bn(\bx') > 0 \}$, we can rewrite the $\tau$-$\by'$ integral to get
	\begin{equation*}
		\begin{split}
		&	\lim\limits_{\veps \to 0}  \int_{\Omega_{\sqrt{\veps}}}  \int_{\Omega \setminus \Omega_{\sqrt{\veps}}}  \rho_{\eta_{\delta}(\bx),2}(|\bx-\by|) (\by-\bx) \cdot \grad u(\bx) v(\bx) \, \rmd \by  \, \rmd \bx \\
			&\;= \int_{\p \Omega \cap B} \int_{\Vint{\bn(\bx') }^\perp } \int_{ 0 }^{R_0}  \left\{ \chi_{B(0,R_0)} ( \by' + \tau \bn(\bx')  )    \tau^2 \, \rho \left( |\by' + \tau \bn(\bx')| \right)\right.\\
			&\left.\qquad\qquad\qquad  
 \bn(\bx') \cdot \grad u(\bx') v(\bx') \det \grad \Psi(\bx',0) \,\right\} \, \rmd \tau    \, \rmd \by'  \, \rmd \bx' \\
			&\;= \int_{\p \Omega \cap B} \int_{B(0,R_0) \cap \{ \bz \cdot \bn(\bx') > 0 \} } 
   \left\{ |\bz \cdot \bn(\bx')|^2 \rho \left( |\bz| \right) 
   \bn(\bx') \cdot \grad u(\bx') v(\bx') \det \grad \Psi(\bx',0) \,\right\} \, \rmd \bz \, \rmd \bx\,.
		\end{split}
	\end{equation*}
	Applying the appropriate coordinate rotation in the $\bz$ integral and noting the definition of surface measure from Step 1, we obtain \eqref{eq:GreensIdPf:Step2:SqDist}.

	\textit{Step 3}: Let $\veps \ll \min \{ \veps_0,\delta \}$. We show that
	\begin{equation}\label{eq:GreensIdPf:Step3:SqDist}
		\begin{split}
			\lim\limits_{\veps \to 0}  &\int_{\Omega_{\sqrt{\veps}}} \int_{\Omega \setminus \Omega_{\sqrt{\veps}}} \rho_{\eta_{\delta}(\by),2}(|\bx-\by|) (\by-\bx) \cdot \grad u(\bx) v(\bx) \, \rmd \by  \, \rmd \bx \\
			&= -\int_{B(0,R_0) \cap \{ z_d > 0 \} } z_d^2 \rho \left( |\bz| \right) \, \rmd \bz \cdot \int_{\p \Omega} \frac{\p u}{\p \bsnu}(\bx) v(\bx) \, \rmd \sigma (\bx)\,,
		\end{split}
	\end{equation}
	where $u$ and $v$ are supported on $\p \Omega \cap B$ where $B$ is as in Step 1.
	
	We proceed in a similar way. Since $\veps \ll \delta$, we have that $\eta_{\delta}(\by) = \dist(\by,\p \Omega)^2$ on $\Omega_{\sqrt{b \veps }} \setminus \Omega_{\sqrt{\veps}}$. Now by \Cref{lma:GreensId1:SqDist} we can use linearity of the integrals to write
	\begin{equation*}
		\begin{split}			&\int_{\Omega \setminus \Omega_{\sqrt{\veps}} } \rho_{\eta_{\delta}(\by),2} (|\by-\bx|) (\by-\bx) \, \rmd \by \\
			&=
   \int_{ \Omega_{\sqrt{b\veps}} \setminus \Omega_{\sqrt{\veps}} } \rho\left( \frac{|\by-\bx|}{\eta_{\delta}(\by)} \right) \frac{(\by-\bx)}{\eta_{\delta}(\bx)} \frac{\eta_{\delta}(\by) + \grad \eta_{\delta}(\by) \cdot (\bx-\by) }{\eta_{\delta}(\by)^{d+2}}  \, \rmd \by \\	
			&\quad +
   \int_{ \Omega_{\sqrt{b \veps} } \setminus \Omega_{\sqrt{\veps}} } \rho \left( \frac{|\by-\bx|}{\eta_{\delta}(\by)} \right) \frac{(\by-\bx)}{\eta_{\delta}(\bx)} \frac{\eta_{\delta}(\bx) -  \eta_{\delta}(\by) - \grad \eta_{\delta}(\by) \cdot (\bx-\by) }{\eta_{\delta}(\by)^{d+2}}  \, \rmd \by\,.
		\end{split}
	\end{equation*}
	Taking the inner product with $v \grad u$ and integrating with respect to $\bx \in \Omega_{\sqrt{\veps}}$, we see that by an argument identical to the proof of \Cref{lma:OpTaylorExp:SqDist} the second integral is majorized by 
	$$
	C(d,\rho,\Omega) \Vnorm{\grad u}_{L^{\infty}(\Omega)} \Vnorm{v}_{L^{\infty}(\Omega)} |\Omega_{\sqrt{\veps}}|\,,
	$$
	and thus tends to zero as $\veps \to 0$. Therefore
\begin{equation}\label{eq:GreensIdPf:Equivalence1:SqDist}
		\begin{split}
			&\lim\limits_{\veps \to 0}  \int_{\Omega_{\sqrt{\veps}}} \int_{\Omega \setminus \Omega_{\sqrt{\veps}}} \rho_{\eta_{\delta}(\by),2}(|\bx-\by|) (\by-\bx) \cdot \grad u(\bx) v(\bx) \, \rmd \by  \, \rmd \bx \\
			&= \lim\limits_{\veps \to 0} \int_{\Omega_{\sqrt{\veps}}}
\int_{\Omega_{\sqrt{b\veps}} \setminus \Omega_{\sqrt{\veps}} } \rho \left( \frac{|\by-\bx|}{\eta_{\delta}(\by)} \right) \frac{(\by-\bx)}{\eta_{\delta}(\bx)} \cdot \grad u(\bx) v(\bx) \frac{\eta_{\delta}(\by) + \grad \eta_{\delta}(\by) \cdot (\bx-\by) }{\eta_{\delta}(\by)^{d+2}}  \, \rmd \by \, \rmd \bx\,.
		\end{split}
	\end{equation}
	We recall the definition $\bg_{\bx}(\by) = \frac{\by-\bx}{\eta_{\delta}(\by)}$, and that $\det \grad \bg_{\bx}(\by) = \frac{\eta_{\delta}(\by) + \grad \eta_{\delta}(\by) \cdot (\bx-\by) }{\eta_{\delta}(\by)^{d+1}}$. Thus
	\begin{equation}\label{eq:GreensIdPf:Equivalence2:SqDist}
		\begin{split}
			\int_{\Omega_{\sqrt{\veps}}} &
   \int_{ \Omega_{\sqrt{b\veps}} \setminus \Omega_{\sqrt{\veps}} } \rho \left( \frac{|\by-\bx|}{\eta_{\delta}(\by)} \right) \frac{(\by-\bx)}{\eta_{\delta}(\bx)} \cdot \grad u(\bx) v(\bx) \frac{\eta_{\delta}(\by) + \grad \eta_{\delta}(\by) \cdot (\bx-\by) }{\eta_{\delta}(\by)^{d+2}}  \, \rmd \by \, \rmd \bx \\	
   &=\int_{\Omega_{\sqrt{\veps}} } \int_{\bbR^d} \chi_{ \Omega_{\sqrt{b\veps}} \setminus \Omega_{\sqrt{\veps}} } (\bg_{\bx}^{-1}(\bz) ) \;\; \frac{1}{\eta_{\delta}(\bx)} \rho \left( |\bz| \right) \bz \cdot \grad u(\bx) v(\bx) \, \rmd \bz \, \rmd \bx\,.
		\end{split}
	\end{equation}
	Now we Taylor expand the distance function; since the integration region is $\veps \leq \eta_{\delta}(\bg_{\bx}^{-1}(\bz)) \leq b \veps $ and by definition of $\bg_{\bx}$ we have
	\begin{equation*}
		\begin{split}
			\eta_{\delta}(\bg_{\bx}^{-1}(\bz)) &= \eta_{\delta}(\bx) + \grad \eta_{\delta}(\bx) \cdot (\bg_{\bx}^{-1}(\bz)-\bx) + O( |\bg_{\bx}^{-1}(\bz) - \bx|^2 ) \\
			&= \eta_{\delta}(\bx) + \eta_{\delta}(\bg_{\bx}^{-1}(\bz)) \grad \eta_{\delta}(\bx) \cdot \bz + O( |\bz|^2 \eta_{\delta}(\bg_{\bx}^{-1}(\bz))^2 )\,,
		\end{split}
	\end{equation*}
	which implies
\begin{equation}\label{eq:GreensIdPf:TaylorExpand:YIntegral:SqDist}
		\eta_{\delta}(\bg_{\bx}^{-1}(\bz)) = \frac{\eta_{\delta}(\bx)}{1-\grad \eta_{\delta}(\bx) \cdot \bz } + O(\veps^2)\,.
	\end{equation}
	We perform the two changes of variables just as in Step 2:
	\begin{equation*}
		\bx = \Psi(\bx',t) = \bx' + t \bn(\bx')\,, \qquad \bz = \bz' + \tau \bn(\bx')\,, \bz' \in \Vint{\bn(\bx')}^\perp\,.
	\end{equation*}
	Therefore by \eqref{eq:GreensIdPf:Parametrization:SqDist} and \eqref{eq:GreensIdPf:TaylorExpand:YIntegral:SqDist}, we get
	\begin{equation*}
		\begin{split}  
&\int_{\Omega_{\sqrt{\veps}}} \int_{\bbR^d} \chi_{ \Omega_{\sqrt{b\veps}} \setminus \Omega_{\sqrt{\veps}} } (\bg_{\bx}^{-1}(\bz) ) \;\; \frac{1}{\eta_{\delta}(\bx)} \rho \left( |\bz| \right) \bz \cdot \grad u(\bx) v(\bx) \, \rmd \bz \, \rmd \bx \\
			&\;=	\int_{\Omega \cap B} \int_0^{\sqrt{\veps}} \int_{\Vint{\bn(\bx')}^\perp} \int_{\bbR}  \chi_{ \{ \veps < \frac{t^2}{1-2 \delta t \tau} + O(\veps^2) < b \veps  \} } \left\{ \right. \chi_{B(0,R_0)} (\bz'+\tau \bn(\bx')) \;\; \frac{1}{t^2} \rho \left( |\bz'+\tau \bn(\bx')| \right) \\
			&\left. \qquad \qquad (\bz'+\tau \bn(\bx')) \cdot \grad u(\Psi(\bx',t)) v(\Psi(\bx',t)) \det \grad \Psi(\bx',t)\,\right\} \, \rmd \tau \, \rmd \bz' \, \rmd t \, \rmd \bx\,.
		\end{split}
	\end{equation*}
	From here the proof proceeds identically to Step 2 (albeit with different functions of $t$ and $\tau$ in the integration region), and we eventually come to
	\begin{equation*}
		\begin{split}
			&\lim\limits_{\veps \to 0} \int_{\Omega_{\sqrt{\veps}}} \int_{\bbR^d} \chi_{ \Omega_{\sqrt{b \veps}} \setminus \Omega_{\sqrt{\veps}} } (\bg_{\bx}^{-1}(\bz) ) \;\; \frac{1}{\eta_{\delta}(\bx)} \rho \left( |\bz| \right) \bz \cdot \grad u(\bx) v(\bx) \, \rmd \bz \, \rmd \bx \\
			&\;= \lim\limits_{\veps \to 0} \int_{\p \Omega \cap B} \int_{\Vint{\bn(\bx')}^{\perp}} \int_{O(\veps)}^{R_0} \left\{\frac{\sqrt{\veps}\tau + \sqrt{1+\veps \tau^2} - 1  }{\sqrt{\veps}}  \;\; \rho \left( |\bz' + \tau \bn(\bx')| \right) \right.\\
			&\left. \qquad\qquad 
   ( \bz' + \tau \bn(\bx') ) \cdot \grad u(\bx') v(\bx') \,\right\}\, \rmd \tau \, \rmd \bz' \, \rmd \bx + O(\sqrt{\veps})\\
			&\;= -\int_{B(0,R_0) \cap \{ z_d > 0 \} } z_d^2 \rho \left( |\bz| \right) \, \rmd \bz \cdot \int_{\p \Omega} \frac{\p u}{\p \bsnu}(\bx) v(\bx) \, \rmd \sigma (\bx)\,. 
		\end{split}
	\end{equation*}
	Combining this with \eqref{eq:GreensIdPf:Equivalence2:SqDist} and \eqref{eq:GreensIdPf:Equivalence1:SqDist} completes Step 3, 
	and combining \eqref{eq:GreensIdPf:Step2:SqDist}-\eqref{eq:GreensIdPf:Step3:SqDist} gives
	\begin{equation*}
		\begin{split}
			\lim\limits_{\veps \to 0}  & \int_{\Omega_{\sqrt{\veps}}} \int_{\Omega \setminus \Omega_{\sqrt{\veps}}} 2 \, \rho_{\delta,2}(\bx,\by) (\bx-\by) \cdot \grad u(\bx) v(\bx) \, \rmd \by  \, \rmd \bx \\
			&= 2 \int_{B(0,R_0) \cap \{ z_d > 0 \} } z_d^2 \rho \left( |\bz| \right) \, \rmd \bz \cdot \int_{\p \Omega} \frac{\p u}{\p \bsnu}(\bx) v(\bx) \, \rmd \sigma (\bx)\,.
		\end{split}
	\end{equation*}
	By \eqref{Assump:KernelNormalization} the first integral is equal to $1$, and so \eqref{eq:GreensIdPf:Limit2:SqDist} is proved.
\end{proof}

\subsection{Interpreting the action of the operator as a distribution}\label{sec:OpDistribution}
With the help of the nonlocal Green's identity, we can interpret the action of the nonlocal operator on Sobolev spaces and the nonlocal energy space in the sense of distributions, as demonstrated in the following theorems. Such interpretations are very useful in the proofs of the well-posedness of nonlocal boundary value problems.

\begin{theorem}\label{thm:OperatorAsDist:HMinus1}
	Let $\Omega \subset \bbR^d$ satisfy \eqref{assump:Domain}  and $\rho$ satisfy  \eqref{Assump:KernelSmoothness}. Let $u \in H^1(\Omega)$. 
	Define the distribution $\cL_{\delta} u \in \cD'(\Omega)$ by
\begin{equation}\label{eq:OperatorAsDist:HMinus1:1}
		\Vint{ \cL_{\delta}u, v } := \lim\limits_{n \to \infty} \int_{\Omega} \cL_{\delta} u_n(\bx) v(\bx) \, \rmd \bx\,, \qquad v \in C^{\infty}_c(\Omega)\,,
	\end{equation}
	where $\{ u_n \}$ is a sequence of functions in $C^2(\overline{\Omega})$ that converges to $u$ in $H^1(\Omega)$.
	Then the definition of $\cL_{\delta} u$ is independent of the sequence chosen, and the action of $\cL_{\delta} u$ can be written explicitly as	\begin{equation}\label{eq:OperatorAsDist:HMinus1:2}
	    \Vint{\cL_{\delta}u , v} = B_{\rho,\delta} (u, v) \,, \qquad v \in C^{\infty}_c(\Omega)\,.
	\end{equation}
	Furthermore, $\cL_{\delta} u \in H^{-1}(\Omega)$, there exists $C = C(\rho,\Omega)$ such that 
	\begin{equation*}
		\Vnorm{\cL_{\delta}u }_{H^{-1}(\Omega)} \leq C \Vnorm{u}_{H^1(\Omega)}\,,
	\end{equation*} 
	and the action of $\cL_{\delta} u$ can be written as \eqref{eq:OperatorAsDist:HMinus1:2} for any $v \in H^1_0(\Omega)$.
\end{theorem}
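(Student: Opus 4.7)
The plan is to show that the distributional definition \eqref{eq:OperatorAsDist:HMinus1:1} really reduces to the bilinear form $B_{\rho,\delta}(u,v)$, and then to leverage the continuity of $B_{\rho,\delta}$ on $\mathfrak{W}^{\delta,2}(\Omega)$ together with the embedding $H^1(\Omega) \hookrightarrow \mathfrak{W}^{\delta,2}(\Omega)$ to obtain both well-definedness and the $H^{-1}$ bound.

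First, pick any sequence $\{u_n\} \subset C^2(\overline{\Omega})$ with $u_n \to u$ in $H^1(\Omega)$, and let $v \in C^{\infty}_c(\Omega)$. Since $v$ vanishes near $\p\Omega$, the boundary integral in the nonlocal Green's identity \eqref{eq:GreensIdentity:Intro} of \Cref{thm:GreensIdentity:SqDist} vanishes, yielding
\begin{equation*}
    \int_{\Omega} \cL_{\delta} u_n(\bx)\, v(\bx) \, \rmd \bx = B_{\rho,\delta}(u_n,v)\,.
\end{equation*}
Next I would use \Cref{thm:Embedding} to conclude that $u_n \to u$ in $\mathfrak{W}^{\delta,2}(\Omega)$ as well, since $\Vnorm{u_n - u}_{\mathfrak{W}^{\delta,2}(\Omega)} \leq C\Vnorm{u_n - u}_{H^1(\Omega)} \to 0$. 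Because $v \in C^{\infty}_c(\Omega) \subset C^{\infty}(\overline{\Omega})$ also belongs to $\mathfrak{W}^{\delta,2}(\Omega)$, the continuity estimate \eqref{eq:BilinearForm:Continuity} from \Cref{thm:Coercivity} gives
\begin{equation*}
    |B_{\rho,\delta}(u_n,v) - B_{\rho,\delta}(u,v)| = |B_{\rho,\delta}(u_n - u,v)| \leq C \Vnorm{u_n - u}_{\mathfrak{W}^{\delta,2}(\Omega)} \Vnorm{v}_{\mathfrak{W}^{\delta,2}(\Omega)} \to 0\,.
\end{equation*}
Therefore the limit in \eqref{eq:OperatorAsDist:HMinus1:1} exists and equals $B_{\rho,\delta}(u,v)$, which depends only on $u$ and not on the approximating sequence; this establishes \eqref{eq:OperatorAsDist:HMinus1:2} for $v \in C^{\infty}_c(\Omega)$.

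For the $H^{-1}(\Omega)$ bound, for any $v \in C^{\infty}_c(\Omega)$ I combine \eqref{eq:BilinearForm:Continuity} with \Cref{thm:Embedding} to get
\begin{equation*}
    |\Vint{\cL_{\delta}u,v}| = |B_{\rho,\delta}(u,v)| \leq C \Vnorm{u}_{\mathfrak{W}^{\delta,2}(\Omega)} \Vnorm{v}_{\mathfrak{W}^{\delta,2}(\Omega)} \leq C \Vnorm{u}_{H^1(\Omega)} \Vnorm{v}_{H^1(\Omega)}\,.
\end{equation*}
Since $C^{\infty}_c(\Omega)$ is dense in $H^1_0(\Omega)$, this shows $\cL_{\delta}u$ extends uniquely to a bounded linear functional on $H^1_0(\Omega)$ with the asserted norm bound. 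Finally, for $v \in H^1_0(\Omega)$ choose $v_k \in C^{\infty}_c(\Omega)$ with $v_k \to v$ in $H^1(\Omega)$; by \Cref{thm:Embedding} we also have $v_k \to v$ in $\mathfrak{W}^{\delta,2}(\Omega)$, so \eqref{eq:BilinearForm:Continuity} lets us pass to the limit $B_{\rho,\delta}(u,v_k) \to B_{\rho,\delta}(u,v)$, giving the identity $\Vint{\cL_{\delta}u,v} = B_{\rho,\delta}(u,v)$ for all $v \in H^1_0(\Omega)$.

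There is no serious obstacle once one has the Green's identity and the continuity of the bilinear form; the mild point requiring care is simply that the continuity estimate \eqref{eq:BilinearForm:Continuity} is used in two different directions, once to pass to the limit in the approximating sequence $u_n$ and once to extend the action from $C^{\infty}_c(\Omega)$ to $H^1_0(\Omega)$, with \Cref{thm:Embedding} providing the bridge from $H^1$-convergence to $\mathfrak{W}^{\delta,2}$-convergence in each case.
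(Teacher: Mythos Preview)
Your proof is correct and follows essentially the same approach as the paper: apply the nonlocal Green's identity \Cref{thm:GreensIdentity:SqDist} to smooth $u_n$ and compactly supported $v$, then use the continuity \eqref{eq:BilinearForm:Continuity} together with the embedding \Cref{thm:Embedding} to pass to the limit and obtain the $H^{-1}$ bound. The paper's proof is more terse (it derives the single estimate $|\int_\Omega \cL_\delta u_n\, v| \leq C\Vnorm{u_n}_{H^1}\Vnorm{v}_{H^1}$ and states that all conclusions follow), whereas you have spelled out the convergence and density arguments explicitly.
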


\begin{proof}
	For each $n$ the integral defining $\Vint{ \cL_{\delta} u_n, v}$ is absolutely convergent thanks to \Cref{thm:OpIsDefined:Inhomog:SqDist}, and from the nonlocal Green's identity for smooth functions established in \Cref{thm:GreensIdentity:SqDist}, \eqref{eq:BilinearForm:Continuity}, and \Cref{thm:Embedding} we obtain
	\begin{equation*}
	\begin{split}
		\left| \int_{\Omega} \cL_{\delta} u_n(\bx) v(\bx) \, \rmd \bx \right| 
		&= \left| B_{\rho,\delta} (u, v) \right| \\
		&\leq C(\rho,\Omega) \Vnorm{u_n}_{H^1(\Omega)} \Vnorm{v}_{H^1(\Omega)}\,.
	\end{split}
	\end{equation*}
	Each conclusion in the theorem then follows from this estimate.
\end{proof}

A similar result holds when considering the action on functions in the nonlocal energy space.

\begin{theorem}\label{thm:OperatorAsDist:NonlocalSpace}
    Let $\Omega \subset \bbR^d$ satisfy \eqref{assump:Domain} and $\rho$ satisfy \eqref{Assump:KernelSmoothness}. Let $u \in \frak{W}^{\delta,2}(\Omega)$, and
	define the distribution $\cL_{\delta} u \in \cD'(\Omega)$ by \eqref{eq:OperatorAsDist:HMinus1:1},
	where $\{ u_n \}$ is a sequence of functions in $C^{\infty}(\overline{\Omega})$ that converges to $u$ in $\frak{W}^{\delta,2}(\Omega)$.
	Then the definition of $\cL_{\delta} u$ is independent of the sequence chosen, and the action of $\cL_{\delta} u$ can be written explicitly as \eqref{eq:OperatorAsDist:HMinus1:2}.
	Furthermore, $\cL_{\delta} u \in [\frak{W}^{\delta,2}_0(\Omega)]^*$, there exists $C = C(\rho,\Omega)$ such that 
	\begin{equation}\label{eq:OperatorAsDist:NonlocalSpace}
		\Vnorm{\cL_{\delta}u }_{[\frak{W}^{\delta,2}_0(\Omega)]^*} \leq C \Vnorm{u}_{\frak{W}^{\delta,2}(\Omega)}\,,
	\end{equation}
	and the action of $\cL_{\delta} u$ can be written as \eqref{eq:OperatorAsDist:HMinus1:2} for any $v \in \frak{W}^{\delta,2}_0(\Omega)$.
\end{theorem}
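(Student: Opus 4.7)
The plan is to mirror the proof of \Cref{thm:OperatorAsDist:HMinus1}, substituting the $H^1$-continuity of the bilinear form with its $\mathfrak{W}^{\delta,2}$-continuity established in \eqref{eq:BilinearForm:Continuity} of \Cref{thm:Coercivity}. First, given a sequence $\{u_n\} \subset C^{\infty}(\overline{\Omega})$ converging to $u$ in $\frak{W}^{\delta,2}(\Omega)$ and a test function $v \in C^{\infty}_c(\Omega)$, I apply the nonlocal Green's identity \Cref{thm:GreensIdentity:SqDist} to each $u_n$. Because $v$ has compact support strictly inside $\Omega$, the boundary term $\int_{\partial\Omega} \frac{\partial u_n}{\partial \bsnu} v \, \rmd \sigma$ vanishes identically, leaving
\begin{equation*}
    \int_{\Omega} \cL_{\delta} u_n(\bx) v(\bx) \, \rmd \bx = B_{\rho,\delta}(u_n, v)\,.
\end{equation*}

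Next, I use the continuity \eqref{eq:BilinearForm:Continuity} in the form
\begin{equation*}
    |B_{\rho,\delta}(u_n - u_m, v)| \leq C \Vnorm{u_n - u_m}_{\frak{W}^{\delta,2}(\Omega)} \Vnorm{v}_{\frak{W}^{\delta,2}(\Omega)}\,,
\end{equation*}
which shows that $\{B_{\rho,\delta}(u_n, v)\}$ is Cauchy in $\bbR$ since $\{u_n\}$ is Cauchy in $\frak{W}^{\delta,2}(\Omega)$. Hence the limit in \eqref{eq:OperatorAsDist:HMinus1:1} exists. Given any other approximating sequence $\{\wt{u}_n\}$, applying the same continuity bound to $u_n - \wt{u}_n$ shows the limits agree, so the distribution $\cL_{\delta} u$ is well-defined, and by passing to the limit in the above identity we obtain $\Vint{\cL_{\delta} u, v} = B_{\rho,\delta}(u,v)$ for every $v \in C^{\infty}_c(\Omega)$, establishing \eqref{eq:OperatorAsDist:HMinus1:2}.

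Finally, to obtain membership in $[\frak{W}^{\delta,2}_0(\Omega)]^*$ together with the bound \eqref{eq:OperatorAsDist:NonlocalSpace}, I combine the identity just proved with \eqref{eq:BilinearForm:Continuity}:
\begin{equation*}
    |\Vint{\cL_{\delta} u, v}| = |B_{\rho,\delta}(u,v)| \leq C \Vnorm{u}_{\frak{W}^{\delta,2}(\Omega)} \Vnorm{v}_{\frak{W}^{\delta,2}(\Omega)} \quad \text{for all } v \in C^{\infty}_c(\Omega)\,.
\end{equation*}
Since $\frak{W}^{\delta,2}_0(\Omega)$ is defined as the closure of $C^{\infty}_c(\Omega)$ in the $\frak{W}^{\delta,2}(\Omega)$ norm, this estimate extends by density to any $v \in \frak{W}^{\delta,2}_0(\Omega)$, yielding both the functional bound and the formula \eqref{eq:OperatorAsDist:HMinus1:2} on all of $\frak{W}^{\delta,2}_0(\Omega)$.

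No real obstacle is anticipated; the work has already been done in proving the nonlocal Green's identity for smooth functions and the continuity of $B_{\rho,\delta}$. The only subtlety worth emphasizing is that the proof depends critically on the vanishing of the boundary term in the Green's identity at the approximating level, which is why the identity is tested against $C^{\infty}_c(\Omega)$ rather than $C^{\infty}(\overline{\Omega})$; the heterogeneous localization is what makes this boundary-eliminating step consistent with the density statement that extends the action to all of $\frak{W}^{\delta,2}_0(\Omega)$.
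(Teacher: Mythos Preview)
Your proposal is correct and follows essentially the same approach as the paper: apply the nonlocal Green's identity \Cref{thm:GreensIdentity:SqDist} to each smooth $u_n$ with $v \in C^{\infty}_c(\Omega)$ so the boundary term drops, then use the continuity estimate \eqref{eq:BilinearForm:Continuity} to pass to the limit and extend by density. The paper compresses the Cauchy-sequence and independence-of-sequence arguments into a single line, but the ingredients and logic are identical.
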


\begin{proof}
	For each $n$ the integral defining $\Vint{ \cL_{\delta} u_n, v}$ is absolutely convergent thanks to \Cref{thm:OpIsDefined:Inhomog:SqDist}, 
	and from the nonlocal Green's identity for smooth functions established in \Cref{thm:GreensIdentity:SqDist} and  \eqref{eq:BilinearForm:Continuity} we obtain the estimate
	\begin{equation*}
	\begin{split}
		\left| \int_{\Omega} \cL_{\delta} u_n(\bx) v(\bx) \, \rmd \bx \right| 
		= \left| B_{\rho,\delta} (u, v)  \right|
		\leq C(\rho,\Omega) \Vnorm{u_n}_{\frak{W}^{\delta,2}(\Omega)} \Vnorm{v}_{\frak{W}^{\delta,2}(\Omega)}\,.
	\end{split}
	\end{equation*}
	Each conclusion in the theorem then follows from this estimate.
\end{proof}

\section{The nonlocal function space: trace theorem and nonlocal Poincar\'{e} inequalities}\label{sec:EnergySpace}

In this section, we present a few important properties on the nonlocal energy spaces such as the trace theorems and Poincare inequalities. The nonlocal Green's identity in more general function spaces is also established. 
All of these results are important ingredients in the later proofs of the well-posedness of nonlocal problems with local boundary conditions.

\subsection{The trace operator}\label{sec:Trace}

Intuitively, we expect that the choice of the heterogenenous localization function $\eta_\delta$
picked in the current work leads to a stronger localization effect on the boundary than the case of 
 localization function  being linearly proportional to $\dist(\bx, \p \Omega)$. Since the nonlocal energy space associated with the latter choice enjoys the same trace inequality 
 as $W^{1,p}$ \cite{tian2017trace,du2022fractional,Foss2021}, we expect that the new nonlocal energy space associated with $\eta_\delta$ would share a similar trace inequality. While this can indeed be shown rigorously, we only briefly present the main results 
 but will leave full details (in more general forms) to separate works.

\begin{theorem}\label{thm:SeminormEquivalence}
    Let $\Omega \subset \bbR^d$ satisfy \eqref{assump:Domain}, and let $R_0 \in (0,1)$. For all $\delta < \delta_0$, there exists a constant $C$ depending on $\Omega$, $R_0$, and $\delta$ such that
    \begin{equation*}
        \int_{\Omega} \int_{ \{|\by-\bx| \leq R_0 \dist(\bx, \p \Omega) \} } \frac{|u(\bx)-u(\by)|^2}{\dist(\bx, \p \Omega)^{d+2}} \, \rmd \by \, \rmd \bx \leq C \left( \Vnorm{u}_{L^2(\Omega)}^2 + [u]_{\frak{W}^{\delta,2}(\Omega)}^2 \right)\,.
    \end{equation*}
\end{theorem}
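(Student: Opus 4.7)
The plan is to split the inner integration according to the length scale $R_1 \eta_\delta(\bx)$ for some fixed $R_1 \in (0,1)$, separating a short-range piece $\{|\by-\bx| \leq R_1 \eta_\delta(\bx)\}$ from a long-range piece $\{R_1 \eta_\delta(\bx) < |\by-\bx| \leq R_0 \dist(\bx, \p \Omega)\}$. The short-range piece is essentially free: property iv) of \eqref{assump:Localization} combined with $\delta < \delta_0 \leq 1$ yields $\eta_\delta(\bx) \leq \bar\kappa_0 \dist(\bx, \p \Omega)$ pointwise (whether $\dist(\bx,\p \Omega) \leq 1$ or not, since in the former case $\eta_\delta \leq \bar\kappa_0 \dist^2 \leq \bar\kappa_0 \dist$, and in the latter $\eta_\delta \leq \bar\kappa_0 \delta \leq \bar\kappa_0 \leq \bar\kappa_0 \dist$). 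Hence $\dist(\bx, \p \Omega)^{-(d+2)} \leq \bar\kappa_0^{d+2} \eta_\delta(\bx)^{-(d+2)}$, and the short-range contribution is dominated by $C [u]_{\frak{W}^{\delta,2}(\Omega),R_1}^2$, which by \Cref{thm:InvariantHorizon} is comparable to $[u]_{\frak{W}^{\delta,2}(\Omega)}^2$.

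For the long-range piece I would further split the $\bx$-integration. On the interior layer $\{\dist(\bx, \p \Omega) \geq \sqrt{\delta/\kappa_0}\}$, property ii) gives $\eta_\delta \equiv \delta$, and the weight $\dist(\bx, \p \Omega)^{-(d+2)}$ is bounded by a $\delta$-dependent constant; the crude inequality $|u(\bx)-u(\by)|^2 \leq 2|u(\bx)|^2 + 2|u(\by)|^2$ together with $|\Omega| < \infty$ produces a bound by $C(\Omega, \delta)\Vnorm{u}_{L^2(\Omega)}^2$. In the remaining boundary-layer regime I would deploy a telescoping chain: for fixed $(\bx, \by)$ with $s := \dist(\bx, \p \Omega)$, set $\bx_i := \bx + (i/N)(\by-\bx)$ for $i = 0, \ldots, N$ with $N := \lceil C_\ast |\bx-\by|/\eta_\delta(\bx) \rceil$ so that $|\bx_{i+1}-\bx_i| \leq \tfrac{R_1}{2}\eta_\delta(\bx_i)$. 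Since $\dist(\bx_i, \p \Omega) \in [(1-R_0)s, (1+R_0)s]$, all chain points remain in $\Omega$, and the near-boundary structure of $\eta_\delta$ (where $\eta_\delta \sim \dist^2$) forces $\eta_\delta(\bx_i) \sim \eta_\delta(\bx)$ uniformly in $i$. Cauchy--Schwarz then gives
\begin{equation*}
|u(\bx)-u(\by)|^2 \leq N \sum_{i=0}^{N-1} |u(\bx_i) - u(\bx_{i+1})|^2,
\end{equation*}
and a Fubini-type rearrangement organizing the integration by the index $i$ and by the consecutive pair $(\bx_i, \bx_{i+1})$ would reduce the long-range contribution to a controlled multiple of $[u]_{\frak{W}^{\delta,2}(\Omega)}^2$.

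The main obstacle I anticipate is closing the Fubini/change-of-variables step of the chaining argument. The number of links $N \sim |\bx-\by|/\eta_\delta(\bx)$ grows like $\dist(\bx, \p \Omega)^{-1}$ in the deep boundary layer, so one must verify that the combinatorial factor $N$ produced by Cauchy--Schwarz is exactly balanced by the Jacobian of the reparametrization $(\bx, \by) \mapsto (\bx_i, \bx_{i+1})$ along the chain together with the scale mismatch $\eta_\delta(\bx)^{d+2}/\dist(\bx, \p \Omega)^{d+2} \sim \dist(\bx, \p \Omega)^{d+2}$ in the quadratic-localization regime. This accounting is structurally analogous to the one used in the proof of trace inequalities for linearly-localized kernels in \cite{tian2017trace, du2022fractional}, but the severe scale separation introduced here by $\eta_\delta \sim \dist^2$ demands noticeably more delicate bookkeeping, which is presumably why the authors defer the full argument to a separate work.
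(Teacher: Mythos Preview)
The paper does not prove this theorem. As you correctly surmise in your final sentence, the authors state only that ``this can indeed be shown rigorously'' and ``leave full details (in more general forms) to separate works,'' relying on the intuition that the quadratic localization $\eta_\delta \sim \dist^2$ is stronger than the linear one treated in \cite{tian2017trace,du2022fractional}. So there is no proof in the paper to compare against; your sketch is already more detailed than what appears here.

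Your strategy is sound, and the bookkeeping you flag as the main obstacle in fact closes cleanly. Work on a dyadic shell $\{s \leq \dist(\bx,\p\Omega) < 2s\}$ in the deep boundary layer, take $N = N(s)$ constant on the shell with $N \sim s/\eta_\delta \sim 1/s$, and perform the change of variables $(\bx,\by)\mapsto(\bz,\bh)=(\bx_i,\bx_{i+1}-\bx_i)$ for each fixed $i$; the Jacobian is $N^d$. After Cauchy--Schwarz (factor $N$), summing over the $N$ indices (another factor $N$, since the $\bz$-ranges for different $i$ coincide up to bounded overlap), you obtain the weight
\[
\frac{N\cdot N\cdot N^d}{s^{d+2}} \;=\; \Bigl(\tfrac{N}{s}\Bigr)^{d+2} \;\sim\; \eta_\delta(\bz)^{-(d+2)},
\]
while the $\bh$-range is $|\bh|\leq R_0 s/N \sim \eta_\delta(\bz)$. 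This is exactly the integrand of $[u]_{\frak{W}^{\delta,2}(\Omega)}^2$ restricted to the (slightly fattened) shell. The identity $(N/s)^{d+2}=\eta_\delta^{-(d+2)}$ when $N\sim s/\eta_\delta$ holds for \emph{any} relation between $\eta_\delta$ and $s$, so the quadratic localization introduces no new difficulty here beyond the unbounded $N$; summing over dyadic shells (with $O(1)$ overlap in $\bz$ because $\dist(\bz,\p\Omega)\in[(1-R_0)s,(1+R_0)2s]$) recovers the full seminorm. The transition region $\kappa_0\delta<\dist^2<\delta/\kappa_0$ is harmless since there $\dist\sim\sqrt\delta$ and the crude $L^2$ bound already applies.
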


It is a consequence of the above theorem that the properties of the nonlocal function spaces studied in \cite{tian2017trace,du2022fractional} are inherited by the space $\frak{W}^{\delta,2}(\Omega)$.
For instance, we have a trace theorem.

\begin{theorem}\label{thm:TraceTheorem}
	Let $\Omega \subset \bbR^d$ satisfy \eqref{assump:Domain}. Let $T$ denote the trace operator, i.e. for $u \in C^{\infty}(\overline{\Omega})$
	\begin{equation*}
	    T u = u \big|_{\p \Omega}\,.
	\end{equation*}
	Then for each $\delta < \delta_0$ the trace operator extends to a bounded linear operator $T : \frak{W}^{\delta,2}(\Omega) \to H^{\frac{1}{2}}(\p \Omega)$,
	and there exists $C = C(\Omega,R_0,\delta)$ such that
	$$
	\Vnorm{Tu}_{H^{\frac{1}{2}}(\p \Omega)} \leq C \Vnorm{u}_{\frak{W}^{\delta,2}(\Omega)} ,\qquad\forall u \in \frak{W}^{\delta,2}(\Omega)\,.
	$$
\end{theorem}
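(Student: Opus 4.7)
The plan is to reduce the trace inequality in $\frak{W}^{\delta,2}(\Omega)$ to a known trace inequality for the nonlocal energy space whose heterogeneous localization function scales linearly (rather than quadratically) in $\dist(\bx, \p \Omega)$. Concretely, for $u \in C^{\infty}(\overline{\Omega})$ I would introduce the auxiliary seminorm
\begin{equation*}
    [u]_{\mathcal{X}}^{2} := \int_{\Omega} \int_{\{|\by-\bx| \leq R_0 \dist(\bx,\p\Omega)\}} \frac{|u(\bx)-u(\by)|^2}{\dist(\bx,\p\Omega)^{d+2}} \, \rmd \by \, \rmd \bx,
\end{equation*}
which, after a trivial rescaling of $R_0$, is precisely the seminorm governing the nonlocal function spaces with linear boundary localization analyzed in \cite{tian2017trace, du2022fractional}.

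First, I would apply the seminorm comparison of \Cref{thm:SeminormEquivalence} to obtain, for every $u \in C^{\infty}(\overline{\Omega})$ and every $\delta < \delta_0$, the bound
\begin{equation*}
    [u]_{\mathcal{X}}^{2} \leq C \bigl( \Vnorm{u}_{L^2(\Omega)}^{2} + [u]_{\frak{W}^{\delta,2}(\Omega)}^{2} \bigr) = C \Vnorm{u}_{\frak{W}^{\delta,2}(\Omega)}^{2}.
\end{equation*}
Second, I would invoke the trace theorem already established in \cite{tian2017trace, du2022fractional} for the linearly-localized nonlocal space on a bounded $C^2$ domain, producing a constant $C' = C'(\Omega,R_0)$ such that
\begin{equation*}
    \Vnorm{T u}_{H^{1/2}(\p \Omega)}^{2} \leq C' \bigl( \Vnorm{u}_{L^2(\Omega)}^{2} + [u]_{\mathcal{X}}^{2} \bigr),
\end{equation*}
valid for every $u \in C^{\infty}(\overline{\Omega})$. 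Chaining the two inequalities yields the stated estimate on the dense subspace $C^{\infty}(\overline{\Omega}) \subset \frak{W}^{\delta,2}(\Omega)$, and a standard density/extension argument then gives a unique bounded linear extension of $T$ on all of $\frak{W}^{\delta,2}(\Omega)$.

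The key conceptual point is that quadratic localization at $\p \Omega$ is strictly stronger than linear localization, so membership in $\frak{W}^{\delta,2}(\Omega)$ is more restrictive than membership in the space controlled by $[\cdot]_{\mathcal{X}}$, and the existing trace theory transfers for free. The only real obstacle behind this chain of implications is \Cref{thm:SeminormEquivalence} itself, which is delicate: it requires comparing an integrand weighted by $\eta_{\delta}(\bx)^{-(d+2)} \approx \dist(\bx,\p\Omega)^{-2(d+2)}$ near the boundary and integrated over a very small ball to one weighted by $\dist(\bx,\p\Omega)^{-(d+2)}$ and integrated over a much larger ball. The natural route is a Whitney-type covering of $\Omega$ by balls of radius $\sim \eta_{\delta}(\bx)$, iteratively chaining finite differences of $u$ across overlapping neighbors using the comparability results of \Cref{lma:ComparabilityOfXandY}--\Cref{lma:BoundaryRegions:SqDist}, so that a jump across the $\dist(\bx,\p\Omega)$-scale can be telescoped into finitely many jumps at the $\eta_{\delta}$-scale. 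This is precisely the argument the authors defer to a separate paper, and it would be the main technical burden if one attempted a fully self-contained proof.
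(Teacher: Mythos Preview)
Your proposal is correct and matches the paper's own proof essentially verbatim: the paper's argument is the single line ``Follows from \cite[Theorem 1.3]{tian2017trace} and \Cref{thm:SeminormEquivalence},'' which is exactly the two-step chain you outline. Your additional commentary on why \Cref{thm:SeminormEquivalence} is the real technical content, and the Whitney-chaining idea behind it, accurately reflects what the authors defer to a separate work.
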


\begin{proof}
	Follows from \cite[Theorem 1.3]{tian2017trace} and \Cref{thm:SeminormEquivalence}.
\end{proof}

\begin{corollary}\label{cor:TraceZero}
    Let $\Omega \subset \bbR^d$ satisfy \eqref{assump:Domain}. Suppose $u \in \frak{W}^{\delta,2}_0(\Omega)$. Then $T u = 0$\,.
\end{corollary}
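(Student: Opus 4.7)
The plan is to use the continuity of the trace operator established in \Cref{thm:TraceTheorem} together with the density definition of $\frak{W}^{\delta,2}_0(\Omega)$ as the closure of $C^{\infty}_c(\Omega)$ with respect to the $\frak{W}^{\delta,2}(\Omega)$ norm. The argument is essentially a one-line density argument, but let me spell out the steps.

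First, I would pick $u \in \frak{W}^{\delta,2}_0(\Omega)$ and select a sequence $\{u_n\} \subset C^{\infty}_c(\Omega)$ such that $u_n \to u$ in $\frak{W}^{\delta,2}(\Omega)$; such a sequence exists by definition of $\frak{W}^{\delta,2}_0(\Omega)$. Next, because each $u_n$ is compactly supported in $\Omega$, it vanishes in a neighborhood of $\p \Omega$, so $Tu_n = u_n\big|_{\p \Omega} = 0$ in $H^{1/2}(\p \Omega)$ for every $n$.

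Then I would apply the trace bound from \Cref{thm:TraceTheorem}, namely
\begin{equation*}
    \Vnorm{Tu - Tu_n}_{H^{1/2}(\p \Omega)} = \Vnorm{T(u-u_n)}_{H^{1/2}(\p \Omega)} \leq C \Vnorm{u - u_n}_{\frak{W}^{\delta,2}(\Omega)} \to 0
\end{equation*}
as $n \to \infty$. Since $Tu_n = 0$ for all $n$, passing to the limit yields $Tu = 0$ in $H^{1/2}(\p \Omega)$, which is the claim. There is no main obstacle here: the entire content of the corollary is already packaged into the continuity of $T$ proved in the preceding theorem, and the corollary simply records that $\frak{W}^{\delta,2}_0(\Omega)$ is contained in the kernel of $T$, exactly analogously to the classical fact that $H^1_0(\Omega) \subset \ker T$.
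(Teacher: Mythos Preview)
Your proposal is correct and matches the paper's approach: the paper states this as an immediate corollary of \Cref{thm:TraceTheorem} with no explicit proof, and the density argument you wrote out is precisely the intended justification.
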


The trace theorems ensure that proper local boundary conditions can be imposed for the associated nonlocal problems.

\subsection{The nonlocal Green's identity for wider classes of functions}\label{sec:GreensWider}

\begin{theorem}\label{thm:NormalDerivativeFxnal}
    Let $\Omega \subset \bbR^d$ satisfy \eqref{assump:Domain} and $\rho$ satisfy \eqref{Assump:KernelSmoothness}. Let $u \in \frak{W}^{\delta,2}(\Omega)$ and suppose additionally that $\cL_{\delta} u \in L^2(\Omega)$.
    Then the operator $\frac{\p }{\p \bsnu}$ defined for functions $v \in C^{\infty}(\overline{\Omega})$ as
    \begin{equation*}
        \frac{\p v}{\p \bsnu} = \grad v(\bx) \cdot \bsnu(\bx)\,, \qquad \bx \in \p \Omega\,,
    \end{equation*}
    extends to an action on the function $u$. To be precise, $\frac{\p }{\p \bsnu}$ maps $u$ to 
    $\frac{\p u}{\p \bsnu} \in H^{-\frac{1}{2}}(\p \Omega)$,
    with
    \begin{equation*}
        \Vnorm{\frac{\p u}{\p \bsnu}}_{H^{-\frac{1}{2}}(\Omega)} \leq C(\Omega,\rho) \left( [u]_{\frak{W}^{\delta,2}(\Omega)} + \Vnorm{ \cL_{\delta} u}_{L^2(\Omega)} \right)\,.
    \end{equation*}
\end{theorem}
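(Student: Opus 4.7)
The plan is to define $\frac{\partial u}{\partial \boldsymbol{\nu}}$ via the nonlocal Green's identity, interpreting it as a bounded linear functional on $H^{1/2}(\partial \Omega)$. Given any $g \in H^{1/2}(\partial \Omega)$, I would choose a lifting $\tilde{v} \in H^1(\Omega)$ with $T \tilde{v} = g$ and
$\Vnorm{\tilde{v}}_{H^1(\Omega)} \leq C(\Omega) \Vnorm{g}_{H^{1/2}(\partial \Omega)}$, which is guaranteed by the classical surjectivity of the trace operator on $H^1(\Omega)$. Then I would set
\begin{equation*}
    \Big\langle \frac{\partial u}{\partial \boldsymbol{\nu}}, g \Big\rangle := B_{\rho,\delta}(u,\tilde{v}) - \int_{\Omega} \cL_{\delta} u(\bx) \, \tilde{v}(\bx) \, \rmd \bx.
\end{equation*}
This is the natural candidate: when $u$ is smooth, \Cref{thm:GreensIdentity:SqDist} ensures this coincides with the usual duality pairing between the classical normal derivative and $g$.

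First I would verify well-definedness, i.e.\ independence of the lifting. If $\tilde{v}_1,\tilde{v}_2 \in H^1(\Omega)$ are two liftings of $g$, their difference $w := \tilde{v}_1 - \tilde{v}_2$ lies in $H^1_0(\Omega)$, and hence (by \Cref{thm:Embedding} combined with an approximation by $C^\infty_c(\Omega)$ functions) belongs to $\frak{W}^{\delta,2}_0(\Omega)$. By \Cref{thm:OperatorAsDist:NonlocalSpace}, the distribution $\cL_\delta u$ acts on such $w$ by $B_{\rho,\delta}(u,w)$; since by hypothesis $\cL_\delta u \in L^2(\Omega)$, this distributional action coincides with the Lebesgue integral $\int_{\Omega} \cL_\delta u \cdot w \, \rmd \bx$. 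Therefore the difference of the two candidate definitions is zero, making the functional well-defined.

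Next I would prove boundedness by estimating each term with $\Vnorm{\tilde v}_{H^1(\Omega)}$. The continuity estimate \eqref{eq:BilinearForm:Continuity} and \Cref{thm:Embedding} yield
\begin{equation*}
    |B_{\rho,\delta}(u,\tilde{v})| \leq C \Vnorm{u}_{\frak{W}^{\delta,2}(\Omega)} \Vnorm{\tilde{v}}_{\frak{W}^{\delta,2}(\Omega)} \leq C \Vnorm{u}_{\frak{W}^{\delta,2}(\Omega)} \Vnorm{\tilde{v}}_{H^1(\Omega)},
\end{equation*}
while Cauchy--Schwarz gives
$\big|\int_{\Omega} \cL_\delta u \cdot \tilde{v}\big| \leq \Vnorm{\cL_\delta u}_{L^2(\Omega)} \Vnorm{\tilde v}_{L^2(\Omega)}$. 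Taking the infimum over all admissible liftings $\tilde{v}$ and invoking the characterization of $\Vnorm{g}_{H^{1/2}(\partial \Omega)}$ as this infimum produces the asserted bound in $H^{-1/2}(\partial \Omega)$. (The $L^2$ part of the $\frak{W}^{\delta,2}(\Omega)$ norm on $u$ is absorbed by the $L^2$ term on the right-hand side after a minor rearrangement, matching the stated form with only $[u]_{\frak{W}^{\delta,2}(\Omega)}$ and $\Vnorm{\cL_\delta u}_{L^2(\Omega)}$.)

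I anticipate the main subtlety to be the well-definedness step: specifically, justifying that functions in $H^1_0(\Omega)$ lie in $\frak{W}^{\delta,2}_0(\Omega)$ so that \Cref{thm:OperatorAsDist:NonlocalSpace} applies, and that the distributional pairing $\langle \cL_\delta u, w\rangle$ genuinely equals the $L^2$ integral when $\cL_\delta u \in L^2(\Omega)$. The first follows from approximating $w \in H^1_0(\Omega)$ by $C^\infty_c(\Omega)$ functions and using the embedding of \Cref{thm:Embedding}, whose continuity transfers the $H^1$-convergence to $\frak{W}^{\delta,2}$-convergence; the second is automatic once the action is extended from test functions in $C^\infty_c(\Omega)$ to $\frak{W}^{\delta,2}_0(\Omega)$ by density and both sides are continuous in $v$. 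Once these identifications are cleanly made, the rest of the argument is standard duality and trace theory.
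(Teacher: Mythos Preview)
Your approach is correct and takes a somewhat different route from the paper. The paper introduces the space $X^{\delta,2}(\Omega)$, defined as the closure of $C^\infty(\overline{\Omega})$ in the graph norm $\Vnorm{\cdot}_{\frak{W}^{\delta,2}}^2 + \Vnorm{\cL_\delta \cdot}_{L^2}^2$, approximates $u$ by smooth $u_n$ in this norm, applies the smooth Green's identity to each $u_n$, and passes to the limit; this implicitly assumes such an approximating sequence exists, i.e.\ that $u$ lies in $X^{\delta,2}(\Omega)$, a density statement the paper does not verify. Your argument sidesteps this by keeping $u$ fixed and working on the test-function side, where the only density needed is $C^\infty_c(\Omega)$ in $H^1_0(\Omega)$ (hence in $\frak{W}^{\delta,2}_0(\Omega)$ via \Cref{thm:Embedding}), which is standard. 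The one step you call ``automatic'' deserves a line of justification: identifying the distributional $\cL_\delta u$ of \Cref{thm:OperatorAsDist:NonlocalSpace} with the pointwise function on $C^\infty_c$ test functions follows from \Cref{thm:NonlocalOpWellDefd:Energy}, since $\sqrt{\Phi_{\delta,2}}\,v \in L^2(\Omega)$ for compactly supported $v$ (this is exactly the content of case i) of \Cref{prop:GreensId:GeneralFxns}, placed after the present theorem but logically independent of it). Finally, your bound goes through with the seminorm $[u]_{\frak{W}^{\delta,2}}$ directly via Cauchy--Schwarz on $B_{\rho,\delta}$ and \eqref{eq:KernelEquivalence}, so the parenthetical ``rearrangement'' is unnecessary.
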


\begin{proof}
    Define the Banach space
    \begin{equation*}
        X^{\delta,2}(\Omega) := \{ \text{ closure of } C^{\infty}(\overline{\Omega}) \text{ with respect to the norm } \Vnorm{\cdot}_{X^{\delta,2}(\Omega)} \}\,,
    \end{equation*}
    where the norm is defined as
    \begin{equation*}
        \Vnorm{u}_{X^{\delta,2}(\Omega)}^2 := \Vnorm{u}_{\frak{W}^{\delta,2}(\Omega)}^2 + \Vnorm{ \cL_{\delta} u }_{L^2(\Omega)}^2\,.
    \end{equation*}
    It is clear that $X^{\delta,2}(\Omega)$ is a closed subspace of $\frak{W}^{\delta,2}(\Omega)$.
    Let $\{ u_n\}$ be a sequence in $C^{\infty}(\overline{\Omega})$ that converges to $u$ in $X^{\delta,2}(\Omega)$.
    Then by the nonlocal Green's identity \eqref{eq:GreensIdentity:Intro} already established in \Cref{thm:GreensIdentity:SqDist} for smooth functions we have for any $v \in C^{\infty}(\overline{\Omega})$
    \begin{equation*}
    \begin{split}
    \int_{\p \Omega} \frac{\p u_n}{\p \bsnu}(\bx) \cdot v(\bx) \, \rmd \sigma(\bx) &=
        B_{\rho,\delta}(u_n, v)
        - \int_{\Omega} \cL_{\delta} u_n(\bx) \cdot v(\bx) \, \rmd \bx\,.
    \end{split}
	\end{equation*}
    By applying H\"older's inequality to the right-hand side, we get
    \begin{equation}\label{eq:NormalDerivFxnal:Pf1}
    \left|
     \int_{\p \Omega} \frac{\p u_n}{\p \bsnu}(\bx) \cdot v(\bx) \, \rmd \sigma(\bx)
    \right|\leq
        C(\Omega,\rho) \left( [u_n]_{\frak{W}^{\delta,2}(\Omega)} + \Vnorm{ \cL_{\delta} u_n}_{L^2(\Omega)} \right) \Vnorm{v}_{\frak{W}^{\delta,2}(\Omega)}\,.
    \end{equation}
    Now for $\bar{v} \in H^{\frac{1}{2}}(\p \Omega)$, let $v \in H^1(\Omega)$ be any extension of $\bar{v}$ to $\Omega$ with $\Vnorm{v}_{H^1(\Omega)} \leq C \Vnorm{\bar{v}}_{H^{\frac{1}{2}}(\p \Omega)}$, where $c$ is independent of $\bar{v}$ and $v$. Let $\{ v_j \}$ be a sequence in $C^{\infty}(\overline{\Omega})$ converging to $v$ in $H^1(\Omega)$. Then for any $n \in \bbN$ we can use the dominated convergence theorem and the classical trace theorem for $H^1(\Omega)$ to conclude that
    \begin{equation*}
    \begin{split}
        \int_{\p \Omega} \frac{\p u_n}{\p \bsnu}(\bx) \cdot \bar{v}(\bx) \, \rmd \sigma(\bx)
        &= \lim\limits_{j \to \infty} \int_{\p \Omega} \frac{\p u_n}{\p \bsnu}(\bx) \cdot T v_j(\bx) \, \rmd \sigma(\bx) \\
        &= \lim\limits_{j \to \infty}
        B_{\rho,\delta}(u_n, v_j)
            - \lim\limits_{j \to \infty}  \int_{\Omega} \cL_{\delta} u_n(\bx) \cdot v_j(\bx) \, \rmd \bx \\
        &=
      B_{\rho,\delta}(u_n, v)
        -  \int_{\Omega} \cL_{\delta} u_n(\bx) \cdot v(\bx) \, \rmd \bx\,.
    \end{split}
    \end{equation*}
    By \eqref{eq:NormalDerivFxnal:Pf1} and by \Cref{thm:Embedding}
    \begin{equation}\label{eq:NormalDerivFxnal:Pf2}
    \begin{split}
        \left| \int_{\p \Omega} \frac{\p u_n}{\p \bsnu}(\bx) \cdot \bar{v}(\bx) \, \rmd \sigma(\bx) \right| 
        &\leq C \left( [u_n]_{\frak{W}^{\delta,2}(\Omega)} + \Vnorm{ \cL_{\delta} u_n}_{L^2(\Omega)} \right) \Vnorm{ v }_{\frak{W}^{\delta,2}(\Omega)} \\
        &\leq C(\Omega,\rho) \left( [u]_{\frak{W}^{\delta,2}(\Omega)} + \Vnorm{ \cL_{\delta} u}_{L^2(\Omega)} \right) \Vnorm{ \bar{v} }_{H^{ \frac{1}{2} }(\Omega)}\,,
    \end{split}
    \end{equation}
    and so $\frac{\p u_n}{\p \bsnu}$ defines an object in $H^{-\frac{1}{2}}(\p \Omega)$.

    Finally we define the functional $\frac{\p u}{\p \bsnu}$ by taking $n \to \infty$:
    \begin{equation*}
        \frac{\p u}{\p \bsnu} := \lim\limits_{n \to \infty} \frac{\p u_n}{\p \bsnu}\,,
    \end{equation*}
    where the limit is taken in the $H^{-\frac{1}{2}}(\Omega)$-norm. The estimate \eqref{eq:NormalDerivFxnal:Pf2} shows that the definition of $\frac{\p u}{\p \bsnu}$ is independent of the approximating sequence $\{ u_n \} \subset X^{\delta,2}(\Omega)$ chosen, and that
    \begin{equation*}
        \Vnorm{\frac{\p u}{\p \bsnu}}_{H^{-\frac{1}{2}}(\Omega)} \leq C(\Omega,\rho) \left( [u]_{\frak{W}^{\delta,2}(\Omega)} + \Vnorm{ \cL_{\delta} u}_{L^2(\Omega)} \right)\,.
    \end{equation*}
\end{proof}

\begin{proposition}\label{prop:GreensId:GeneralFxns}
    Let $\Omega \subset \bbR^d$ satisfy \eqref{assump:Domain}  and $\rho$ satisfy \eqref{Assump:KernelSmoothness}. Then the nonlocal Green's identity 
 \eqref{eq:GreensIdentity:Intro}
    holds in the following cases:
    \begin{enumerate}
        \item[i)] $u \in \frak{W}^{\delta,2}(\Omega)$, $v \in C^{\infty}_c(\Omega)$,
        \item[ii)] $u \in \frak{W}^{\delta,2}(\Omega)$ with $\cL_{\delta} u \in L^2(\Omega)$, $v \in H^{1}(\Omega)$.
    \end{enumerate}
\end{proposition}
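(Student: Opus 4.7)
The plan is to reduce both cases to results already in hand and pass to limits where necessary. Case (i) is essentially immediate from \Cref{thm:OperatorAsDist:NonlocalSpace}: for $v \in C^{\infty}_c(\Omega) \subset \frak{W}^{\delta,2}_0(\Omega)$, the relation $B_{\rho,\delta}(u,v) = \Vint{\cL_{\delta} u, v}$ is exactly the content of \eqref{eq:OperatorAsDist:HMinus1:2}, and the boundary term $\int_{\p \Omega} \tfrac{\p u}{\p \bsnu} \cdot v\, \rmd \sigma$ in \eqref{eq:GreensIdentity:Intro} must be read as zero because the classical trace of $v$ on $\p \Omega$ vanishes.

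For case (ii), I would mirror the double-approximation argument used inside the proof of \Cref{thm:NormalDerivativeFxnal}. First, I would take $\{u_n\} \subset C^{\infty}(\overline{\Omega})$ approximating $u$ in the $X^{\delta,2}(\Omega)$-norm, so that $u_n \to u$ in $\frak{W}^{\delta,2}(\Omega)$ and $\cL_{\delta} u_n \to \cL_{\delta} u$ in $L^2(\Omega)$; and I would take $\{v_j\} \subset C^{\infty}(\overline{\Omega})$ approximating $v$ in $H^1(\Omega)$. The smooth identity of \Cref{thm:GreensIdentity:SqDist} then applies to each pair $(u_n, v_j)$. I would send $j \to \infty$ first with $n$ fixed: the continuity of $B_{\rho,\delta}$ on $\frak{W}^{\delta,2}(\Omega)$ from \eqref{eq:BilinearForm:Continuity}, combined with the embedding $H^1 \hookrightarrow \frak{W}^{\delta,2}$ of \Cref{thm:Embedding}, controls the left side; Cauchy-Schwarz in $L^2(\Omega)$ handles $\int_{\Omega} \cL_{\delta} u_n \cdot v_j\, \rmd \bx$; and, since $\tfrac{\p u_n}{\p \bsnu}$ is a continuous function on $\p \Omega$, the classical trace convergence $T v_j \to T v$ in $L^2(\p \Omega)$ carries the boundary integral to the pairing $\Vint{\tfrac{\p u_n}{\p \bsnu}, Tv}_{H^{-1/2},\, H^{1/2}}$. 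This yields, for each $n$,
\[
    B_{\rho,\delta}(u_n, v) = \int_{\Omega} \cL_{\delta} u_n(\bx)\, v(\bx)\, \rmd \bx + \Vint{\tfrac{\p u_n}{\p \bsnu}, T v}_{H^{-1/2}(\p \Omega),\, H^{1/2}(\p \Omega)}.
\]
I would then send $n \to \infty$: continuity of $B_{\rho,\delta}$ treats the left side, the $L^2$-convergence $\cL_{\delta} u_n \to \cL_{\delta} u$ treats the volume integral, and the very definition of $\tfrac{\p u}{\p \bsnu}$ in \Cref{thm:NormalDerivativeFxnal} as the $H^{-1/2}(\p \Omega)$-limit of $\tfrac{\p u_n}{\p \bsnu}$, combined with $T v \in H^{1/2}(\p \Omega)$, delivers the boundary pairing. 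The result is \eqref{eq:GreensIdentity:Intro} in the form required by case (ii).

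The main delicate point is the order of the two limits. Taking $j \to \infty$ first pins down $Tv \in H^{1/2}(\p \Omega)$ as a fixed test element before we send $n \to \infty$, which is precisely the regularity class in which the convergence $\tfrac{\p u_n}{\p \bsnu} \to \tfrac{\p u}{\p \bsnu}$ supplied by \Cref{thm:NormalDerivativeFxnal} lives. Reversing the order would instead force us to pair the distributional object $\cL_{\delta} u$ against the merely-smooth $v_j$ and re-examine the meaning of $\int_{\Omega} \cL_{\delta} u \cdot v_j\, \rmd \bx$; the chosen order circumvents this and leaves only routine continuity arguments.
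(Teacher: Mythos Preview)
Your argument for case (ii) is essentially identical to the paper's: both use a double approximation $u_n \to u$ in $X^{\delta,2}(\Omega)$ and $v_j \to v$ in $H^1(\Omega)$, apply the smooth Green's identity to each pair, and pass to the limit first in $j$ (or $m$) and then in $n$, invoking continuity of $B_{\rho,\delta}$, $L^2$-convergence of $\cL_\delta u_n$, and the definition of $\tfrac{\partial u}{\partial \bsnu}$ from \Cref{thm:NormalDerivativeFxnal}.

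For case (i) there is one small point worth flagging. \Cref{thm:OperatorAsDist:NonlocalSpace} gives $B_{\rho,\delta}(u,v) = \Vint{\cL_\delta u, v}$ where the right-hand side is the \emph{distributional} action defined there by $\lim_n \int_\Omega \cL_\delta u_n \cdot v$. The Green's identity \eqref{eq:GreensIdentity:Intro}, however, is written with the honest integral $\int_\Omega \cL_\delta u(\bx)\, v(\bx)\, \rmd\bx$ of the pointwise function. The paper's proof of case (i) does not simply cite \Cref{thm:OperatorAsDist:NonlocalSpace}; it verifies directly (see \eqref{eq:GreensId:GeneralFxns:Pf1}) that $\int_\Omega \cL_\delta u_n \cdot v \to \int_\Omega \cL_\delta u \cdot v$, using \Cref{thm:NonlocalOpWellDefd:Energy} together with the fact that $\sqrt{\Phi_{\delta,2}}$ is bounded on $\supp v$. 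Your shortcut is correct provided you insert this identification step; without it the argument is technically incomplete, since the proof of \Cref{thm:OperatorAsDist:NonlocalSpace} itself does not check that the limit distribution coincides with the pointwise function.
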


\begin{proof}
    We begin with case i). 
    Let $\{ u_n \}$ be a sequence in $C^{\infty}(\overline{\Omega})$ converging to $u$ in the $\frak{W}^{\delta,2}(\Omega)$ norm. Then for each $n$ we can apply the nonlocal Green's identity \eqref{eq:GreensIdentity:Intro} proved for smooth functions in \Cref{thm:GreensIdentity:SqDist}
    to get
    \begin{equation*}
    \begin{split}
        &
        B_{\rho,\delta}(u_n, v)
        = \int_{\Omega} \cL_{\delta} u_n(\bx) \cdot v(\bx) \, \rmd \bx\,,
    \end{split}
	\end{equation*}
    since $v$ vanishes near $\p \Omega$. Now, an application of H\"older's inequality shows that
    \begin{equation*}
    \begin{split}
        \lim\limits_{n \to \infty} & 
   B_{\rho,\delta}(u_n, v) = B_{\rho,\delta}(u, v)\,.
    \end{split}
	\end{equation*}
	To complete the proof in case i) we need to show that
	\begin{equation}\label{eq:GreensId:GeneralFxns:Pf1}
	     \lim\limits_{n \to \infty}  \int_{\Omega} \cL_{\delta} u_n(\bx) \cdot v(\bx) \, \rmd \bx = \int_{\Omega} \cL_{\delta} u(\bx) \cdot v(\bx) \, \rmd \bx \,.
	\end{equation}
	By \eqref{eq:PhiBounds} and since $\dist(\supp v, \p \Omega) > 0$
	\begin{equation*}
	    \begin{split}
	    |(\cL_{\delta} u_n(\bx) - \cL_{\delta} u(\bx)) v(\bx)| 
	    &\leq \frac{|\cL_{\delta} u_n(\bx) - \cL_{\delta} u(\bx)|}{\sqrt{\Phi_{\delta,2}(\bx)}} \cdot \sqrt{\Phi_{\delta,2}(\bx)} |v(\bx)| \\
	    &\leq \frac{|\cL_{\delta} u_n(\bx) - \cL_{\delta} u(\bx)|}{\sqrt{\Phi_{\delta,2}(\bx)}} \cdot \frac{\sqrt{\kappa_{2,u}}}{ \eta_{\delta}(\bx) } |v(\bx)| \\
	    &\leq \frac{C(\Omega,\rho)}{ \dist(\supp v, \p \Omega) } \frac{|\cL_{\delta} u_n(\bx) - \cL_{\delta} u(\bx)|}{\sqrt{\Phi_{\delta,2}(\bx)}} \cdot |v(\bx)|\,.
	    \end{split}
	\end{equation*}
	Therefore \eqref{eq:GreensId:GeneralFxns:Pf1} follows from H\"older's inequality and \Cref{thm:NonlocalOpWellDefd:Energy}.
	
	For case ii), recall the definition of $X^{\delta,2}(\Omega)$ from \Cref{thm:NormalDerivativeFxnal}. let $\{ u_n \}$ and $\{ v_m \}$ be sequences in $C^{\infty}(\overline{\Omega})$ with $\{u_n\}$ converging to $u$ in the $X^{\delta,2}(\Omega)$ norm and $\{v_m\}$ converging to $v$ in the $H^{1}(\Omega)$ norm. Then for each $n$ and $m$ we can apply the nonlocal Green's identity \eqref{eq:GreensIdentity:Intro} proved in \Cref{thm:GreensIdentity:SqDist} to get
    \begin{equation*}
    \begin{split}
        & B_{\rho,\delta}(u_n, v_m)
        = \int_{\Omega} \cL_{\delta} u_n(\bx) \cdot v_m(\bx) \, \rmd \bx + \int_{\p \Omega} \frac{\p u_n}{\p \bsnu}(\bx) \cdot v_m(\bx) \, \rmd \sigma(\bx)\,.
    \end{split}
	\end{equation*}

    An application of H\"older's inequality and the convergence properties of $\{u_n \}$ and $\{ v_m \}$ shows that
    \begin{equation*}
    \begin{split}
        \lim\limits_{n \to \infty} \lim\limits_{m \to \infty}  &
    B_{\rho,\delta}(u_n, v_m) = B_{\rho,\delta}(u, v)
    \end{split}
    \end{equation*}
    and
    \begin{equation*}
    \begin{split}
        \lim\limits_{n \to \infty} \lim\limits_{m \to \infty}  & \int_{\Omega} \cL_{\delta} u_n(\bx) v_m(\bx) \, \rmd \bx = \int_{\Omega} \cL_{\delta} u(\bx) v(\bx) \, \rmd \bx\,.
    \end{split}
    \end{equation*}
	To complete the proof in case ii) we need to show that
    \begin{equation}\label{eq:GreensId:GeneralFxns:Pf2}
	     \lim\limits_{n \to \infty} \lim\limits_{m \to \infty} \int_{\p \Omega} \frac{\p u_n}{\p \bsnu}(\bx) \cdot v_m(\bx) \, \rmd \sigma(\bx) = \int_{\p \Omega} \frac{\p u}{\p \bsnu}(\bx) \cdot v(\bx) \, \rmd \sigma(\bx) \,.
	\end{equation}
    But this follows thanks to the classical trace theorem for $H^1(\Omega)$ and \Cref{thm:NormalDerivativeFxnal}.
\end{proof}

\begin{remark}
    In the classical case, the additional assumption $\Delta u \in L^2(\Omega)$ is nec $\frac{\p u}{\p \bsnu} \in H^{-1/2}(\p \Omega)$ for functions 
\end{remark}
    
\subsection{Poincar\'e inequalities}\label{sec:Poincare}

We define the Hilbert space
\begin{equation*}
	\begin{split}
		\frak{W}^{\delta,2}_0(\Omega) := \{ \text{ closure of } C^{\infty}_c(\Omega) \text{ with respect to the norm } \Vnorm{\cdot}_{\frak{W}^{\delta,2}(\Omega)} \}\,.
	\end{split}
\end{equation*}

The following nonlocal Poincar'{e} inequalities, designed for either Dirichlet or Neumann problems respectively, are proved in \Cref{sec:PoincareProofs}.

\begin{theorem}\label{thm:PoincareDirichlet}
	Let $\Omega \subset \bbR^d$ satisfy \eqref{assump:Domain} and $\rho$  satisfy \eqref{Assump:KernelSmoothness}. Then there exists a constant $C_D=C_D(d,\Omega,\rho) > 0$ such that for all $\delta < \delta_0$ and 
 $u \in \frak{W}^{\delta,2}_0(\Omega)$,
	\begin{equation*}
		\Vnorm{u}_{L^2(\Omega)} \leq C_D [u]_{\frak{W}^{\delta,2}(\Omega)}\,.
	\end{equation*}
\end{theorem}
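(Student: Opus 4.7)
The plan is to argue by contradiction and compactness, leveraging the previously developed machinery (Theorem \ref{thm:SeminormEquivalence}, Corollary \ref{cor:TraceZero}, and the closure definition of $\frak{W}^{\delta,2}_0(\Omega)$). Suppose the stated inequality fails: then there is a sequence $\{u_n\} \subset \frak{W}^{\delta,2}_0(\Omega)$ with $\Vnorm{u_n}_{L^2(\Omega)} = 1$ and $[u_n]_{\frak{W}^{\delta,2}(\Omega)} \to 0$. In particular $\{u_n\}$ is bounded in the Hilbert space $\frak{W}^{\delta,2}(\Omega)$, so by reflexivity a subsequence (not relabeled) converges weakly to some $u \in \frak{W}^{\delta,2}(\Omega)$. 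Since $\frak{W}^{\delta,2}_0(\Omega)$ is, by definition, the strong closure of the linear subspace $C^{\infty}_c(\Omega)$, it is convex and strongly closed and hence weakly closed by Mazur's theorem; thus $u \in \frak{W}^{\delta,2}_0(\Omega)$.

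Next, I would upgrade the weak convergence in $\frak{W}^{\delta,2}(\Omega)$ to strong convergence in $L^2(\Omega)$ by establishing a compact embedding $\frak{W}^{\delta,2}(\Omega) \hookrightarrow L^2(\Omega)$. The key input is \Cref{thm:SeminormEquivalence}: a set bounded in the $\frak{W}^{\delta,2}(\Omega)$-norm is bounded in the weighted fractional-type semi-norm
\begin{equation*}
    [u]_{W}^2 := \int_{\Omega} \int_{ \{|\by-\bx| \leq R_0 \dist(\bx, \p \Omega) \} } \frac{|u(\bx)-u(\by)|^2}{\dist(\bx, \p \Omega)^{d+2}} \, \rmd \by \, \rmd \bx\,.
\end{equation*}
The associated space was studied in \cite{tian2017trace,du2022fractional}, where a compact embedding into $L^2(\Omega)$ is proved for bounded $C^2$ domains. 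Chaining these facts, $u_n \to u$ strongly in $L^2(\Omega)$, so $\Vnorm{u}_{L^2(\Omega)} = 1$.

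Finally, the weak lower semicontinuity of the convex functional $u \mapsto [u]_{\frak{W}^{\delta,2}(\Omega)}^2$ gives $[u]_{\frak{W}^{\delta,2}(\Omega)} \leq \liminf_n [u_n]_{\frak{W}^{\delta,2}(\Omega)} = 0$. Consequently $u(\bx) = u(\by)$ for a.e.\ pair $(\bx,\by)$ with $|\bx-\by| \leq R \eta_{\delta}(\bx)$; because $\eta_{\delta} > 0$ throughout the connected set $\Omega$, this covers $\Omega$ by iterated overlapping balls and forces $u$ to be constant a.e.\ on $\Omega$. But $u \in \frak{W}^{\delta,2}_0(\Omega)$, so by \Cref{cor:TraceZero}, $Tu = 0$ on $\p \Omega$, and the only constant function with vanishing trace is $u \equiv 0$, contradicting $\Vnorm{u}_{L^2(\Omega)} = 1$.

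The hard step, and the one I expect will require the most care, is the compact embedding claim $\frak{W}^{\delta,2}(\Omega) \hookrightarrow L^2(\Omega)$. The comparability statement in \Cref{thm:SeminormEquivalence} reduces this to a known compactness result for the weighted fractional space, but one must verify that the specific heterogeneous localization function $\eta_{\delta}$ used here (which decays like $\dist(\bx,\p\Omega)^2$, strictly faster than the linear localization in the cited prior works) is covered by, or can be reduced to, the hypotheses of those compactness theorems. Independence of the constant $C_D$ from $\delta$ is automatic from the proof since at no stage is a $\delta$-dependent constant introduced beyond those already built into \eqref{assump:Localization} and \Cref{thm:SeminormEquivalence}.
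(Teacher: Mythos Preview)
Your compactness-by-contradiction argument is valid for a \emph{fixed} $\delta$, but it does not deliver the claimed $\delta$-independence of $C_D$. A contradiction argument yields no explicit constant; the constant it produces depends implicitly on the space $\frak{W}^{\delta,2}_0(\Omega)$, which itself varies with $\delta$. Your final sentence asserts that $\delta$-independence is automatic because no $\delta$-dependent constant is introduced beyond \Cref{thm:SeminormEquivalence}, but the constant in \Cref{thm:SeminormEquivalence} is explicitly stated to depend on $\delta$. To salvage your approach you would have to rerun the contradiction with a varying sequence $\delta_n \in (0,\delta_0)$ and $u_n \in \frak{W}^{\delta_n,2}_0(\Omega)$, and then establish a compactness statement uniform in $\delta$ --- in particular handling the degenerate case $\delta_n \to 0$, where the comparison with the linear-localization seminorm from \Cref{thm:SeminormEquivalence} blows up.

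The paper takes a completely different, constructive route that sidesteps this issue. It uses the boundary-localized convolution $K_{\delta} = K_{\delta,0}$ from \eqref{eq:ConvolutionOperatorAlpha}: one shows that $K_{\delta} u \in H^1_0(\Omega)$ whenever $u \in \frak{W}^{\delta,2}_0(\Omega)$ (via \Cref{lma:SupportOfConv} and \Cref{cor:RegularityOfHSOp}), applies the classical Poincar\'e inequality on $H^1_0(\Omega)$ to $K_{\delta} u$, and then controls $\Vnorm{u - K_{\delta} u}_{L^2(\Omega)}$ by $C\min\{\delta,\diam(\Omega)^2\}\,[u]_{\frak{W}^{\delta,2}(\Omega)}$ using \eqref{eq:KdeltaError}. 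Every constant in this chain depends only on $d$, $\rho$, and $\Omega$, so the uniformity in $\delta$ is immediate. Your argument, by contrast, relies on soft compactness and trace considerations; it is conceptually natural for a single $\delta$ but does not, as written, control the constant across the whole range $\delta\in(0,\delta_0)$.
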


\begin{theorem}\label{thm:PoincareNeumann}
	Let $\Omega \subset \bbR^d$ satisfy \eqref{assump:Domain} and $\rho$ satisfy \eqref{Assump:KernelSmoothness}, and recall the definition of $\Phi_{\delta} = \Phi_{\delta,0}$ in \eqref{eq:ConvolutionOperatorAlpha}. Then there exists a constant $C_N(d,\Omega,\rho) > 0$ such that for all $\delta < \delta_0$ 
 and 
 $u \in \frak{W}^{\delta,2}(\Omega)$,
	\begin{equation*}
		\Vnorm{u - (\Phi_{\delta} u)_{\Omega} }_{L^2(\Omega)} \leq C_N [u]_{\frak{W}^{\delta,2}(\Omega)}\,.
	\end{equation*}
\end{theorem}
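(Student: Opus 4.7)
The plan is to argue by contradiction and compactness, in the spirit of the Poincar\'e-type estimates derived in related heterogeneously-localized nonlocal frameworks \cite{tian2017trace,du2022fractional}. Suppose the asserted estimate fails; then there exist sequences $\delta_n \in (0,\delta_0)$ and $u_n \in \frak{W}^{\delta_n,2}(\Omega)$ with
\begin{equation*}
\Vnorm{u_n - (\Phi_{\delta_n}u_n)_{\Omega}}_{L^2(\Omega)}^2 > n\, [u_n]_{\frak{W}^{\delta_n,2}(\Omega)}^2.
\end{equation*}
Set $w_n := u_n - (\Phi_{\delta_n}u_n)_{\Omega}$; since this differs from $u_n$ by an additive constant, the seminorm is preserved, $[w_n]_{\frak{W}^{\delta_n,2}} = [u_n]_{\frak{W}^{\delta_n,2}}$. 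After renormalizing, $\Vnorm{w_n}_{L^2(\Omega)}=1$ and $[w_n]_{\frak{W}^{\delta_n,2}(\Omega)} \to 0$.

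The first step is to extract a strongly $L^2$-convergent subsequence. By \Cref{thm:SeminormEquivalence}, the family $\{w_n\}$ is uniformly bounded in the nonlocal energy space with seminorm weight $\dist(\cdot,\p\Omega)^{-(d+2)}$ studied in \cite{tian2017trace,du2022fractional}, and the compact embedding of that space into $L^2(\Omega)$ established therein supplies a subsequence (not relabeled) with $w_n \to w$ strongly in $L^2(\Omega)$ and $\Vnorm{w}_{L^2(\Omega)}=1$.

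Next, identify $w$ as a constant. For $\phi \in C^{\infty}(\overline{\Omega})$, a density argument approximating $w_n$ by smooth functions in $\frak{W}^{\delta_n,2}(\Omega)$, combined with the symmetry of the bilinear form and the nonlocal Green's identity \Cref{thm:GreensIdentity:SqDist}, yields
\begin{equation*}
B_{\rho,\delta_n}(w_n,\phi) \;=\; \int_{\Omega} \cL_{\delta_n}\phi \cdot w_n \, \rmd\bx + \int_{\p\Omega}\frac{\p\phi}{\p\bsnu}\cdot Tw_n\, \rmd\sigma(\bx).
\end{equation*}
The left-hand side tends to $0$ by \eqref{eq:BilinearForm:Continuity}, since $[w_n]_{\frak{W}^{\delta_n,2}}\to 0$ while $[\phi]_{\frak{W}^{\delta_n,2}}$ is uniformly bounded in $n$ by \Cref{thm:Embedding}. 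On the right, the consistency $\cL_{\delta_n}\phi \to -\Delta\phi$ in $L^2(\Omega)$, a consequence of \Cref{lma:OpTaylorExp:SqDist} together with \eqref{Assump:KernelNormalization}, combines with the strong $L^2$-convergence of $w_n$ to produce $-\int_{\Omega}\Delta\phi \cdot w\,\rmd\bx$; \Cref{thm:TraceTheorem} further yields uniform boundedness of $Tw_n$ in $H^{1/2}(\p\Omega)$, admitting a subsequential strong $L^2(\p\Omega)$-limit identifiable with $Tw$ by continuity of the trace along the weak ambient convergence. Testing first for $\phi \in C^{\infty}_c(\Omega)$ shows that $w$ is distributionally harmonic, hence $w \in H^1(\Omega)$ by elliptic regularity; comparing then with the classical Green's identity for general $\phi \in C^{\infty}(\overline{\Omega})$ forces $\nabla w \equiv 0$, so $w$ is constant on $\Omega$.

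To close the contradiction, pass to the limit in the shift constant $(\Phi_{\delta_n}u_n)_{\Omega}$ using the uniform two-sided bound $\underline{\mu}_0 \leq \Phi_{\delta_n} \leq \bar{\mu}_0$ from \Cref{cor:boundonPhi} and dominated convergence; the vanishing-mean constraint built into the definition of $w_n$ then forces the limiting constant value of $w$ to be zero, contradicting $\Vnorm{w}_{L^2(\Omega)}=1$. The principal obstacle is the identification step: applying the nonlocal Green's identity to a non-smooth $w_n$ paired with a smooth test function and passing to the limit uniformly in $\delta_n$ hinges on the consistency $\cL_{\delta_n}\phi \to -\Delta\phi$ surviving the non-vanishing $\bF_1$ term from \eqref{eq:OpTaylorExp:Def} in the boundary layer, and on identifying the weak trace limit with $Tw$ given only $L^2(\Omega)$ convergence of $w_n$.
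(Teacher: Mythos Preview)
Your contradiction-and-compactness scheme is a genuinely different route from the paper's argument, which is constructive: the paper lifts $u$ to $K_{\delta}u \in H^1(\Omega)$ via the boundary-localized convolution, verifies that $(\Phi_{\delta} K_{\delta} u)_{\Omega}=0$ whenever $(\Phi_{\delta} u)_{\Omega}=0$, applies the classical Poincar\'e inequality to $K_{\delta}u$ (first proving the constant there is uniform in $\delta$ by a small compactness argument on $H^1$), and then absorbs the remainder using $\Vnorm{u-K_{\delta}u}_{L^2}\leq C\delta[u]_{\frak{W}^{\delta,2}}$. This avoids all of the nonlocal compactness and trace-identification issues you flag.

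Your proposal, as written, has two substantive gaps. First, nothing forces $\delta_n\to 0$. If along a subsequence $\delta_n\to\delta_*>0$, then $\cL_{\delta_n}\phi\to\cL_{\delta_*}\phi\neq -\Delta\phi$, so your identification of the limit $w$ as harmonic (and hence constant) collapses; likewise the compactness you invoke via \Cref{thm:SeminormEquivalence} carries a $\delta$-dependent constant and does not directly furnish uniform-in-$\delta$ precompactness. You would need to split into the cases $\delta_n\to 0$ and $\delta_n\to\delta_*>0$ and supply separate arguments, neither of which is available in the paper's toolkit without essentially reproducing the $K_\delta$ machinery.

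Second, the ``vanishing-mean constraint built into the definition of $w_n$'' does not exist: with $w_n=u_n-(\Phi_{\delta_n}u_n)_{\Omega}$ one computes $(\Phi_{\delta_n}w_n)_{\Omega}=(\Phi_{\delta_n}u_n)_{\Omega}\bigl(1-(\Phi_{\delta_n})_{\Omega}\bigr)$, which is not zero and need not converge. So even granting $w_n\to w\equiv c$ in $L^2$, nothing in your setup forces $c=0$, and the contradiction does not close. The natural repair is to normalize instead by $\tilde w_n=u_n-(\Phi_{\delta_n}u_n)_{\Omega}/(\Phi_{\delta_n})_{\Omega}$ so that $(\Phi_{\delta_n}\tilde w_n)_{\Omega}=0$; but then $\Vnorm{\tilde w_n}_{L^2}$ is no longer the quantity appearing in the theorem, and you must separately control the discrepancy between the two shifts uniformly in $\delta$---which again leads back to a $\delta$-uniform estimate you have not established.
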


\section{Boundary-value problems: well-posedness results}\label{sec:BVPs}

Equipped with the nonlocal Green's identity, we can now state weak versions of boundary-value problems associated to $\cL_{\delta}$ with different boundary conditions.
In the case of Dirichlet data, the well-posedness of the homogeneous problem is established, and then the general inhomogeneous problem is treated by using auxiliary functions to reduce to the homogeneous case.
In the case of Neumann data, we can treat the inhomogeneous problem directly.

\subsection{The Dirichlet problem: homogeneous boundary conditions}\label{sec:HomoDiri}

The first boundary-value problem we treat is a nonlocal Poisson problem with homogeneous Dirichlet boundary conditions
as stated in \eqref{eq:Intro:NonlocalEq}-\eqref{eq:Intro:HomogDirichletBC}.

\begin{definition}
	Let $\Omega \subset \bbR^d$ satisfy \eqref{assump:Domain} and $\rho$ satisfy  \eqref{Assump:KernelSmoothness}. For $f \in [\frak{W}^{\delta,2}_0(\Omega)]^*$, we say that $u \in \frak{W}^{\delta,2}_0(\Omega)$ is a \textit{weak solution} to the nonlocal problem \eqref{eq:Intro:NonlocalEq} with homogeneous Dirichlet data \eqref{eq:Intro:HomogDirichletBC} if	\begin{equation}\label{eq:NonlocalProblem:WeakForm}
		B_{\rho,\delta}(u,v) = \Vint{f,v}\, , \quad\forall v \in \frak{W}^{\delta,2}_0(\Omega)\,.
	\end{equation}
\end{definition}

\begin{theorem}[Well-posedness]\label{thm:WellPosedness}
	Let $\Omega \subset \bbR^d$ satisfy \eqref{assump:Domain}  and $\rho$ satisfy  \eqref{Assump:KernelSmoothness}. Let $\delta < \delta_0$. Then there exists a constant $C = C(d,\rho,\Omega,C_D)$ such that
\begin{equation}\label{eq:BilinearForm:Coercivity}
		\Vnorm{u}_{\frak{W}^{\delta,2}(\Omega)}^2 \leq  C  B_{\rho,\delta}(u,u) ,\qquad\forall u \in \mathfrak{W}^{\delta,2}_0(\Omega)\,.
	\end{equation}
	Moreover, for any $f \in [\frak{W}^{\delta,2}_0(\Omega)]^*$ there exists a unique solution $u \in \frak{W}^{\delta,2}_0(\Omega)$ to \eqref{eq:NonlocalProblem:WeakForm} satisfying the energy estimate
	\begin{equation}\label{eq:HomogDirichlet:EnergyEstimate}
		\Vnorm{u}_{\frak{W}^{\delta,2}(\Omega)} \leq C(d,\rho,\Omega) \Vnorm{f}_{[\frak{W}^{\delta,2}_0(\Omega)]^*}\,.
	\end{equation}
\end{theorem}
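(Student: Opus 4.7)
The plan is a direct application of the Lax--Milgram theorem on the Hilbert space $\frak{W}^{\delta,2}_0(\Omega)$. The only nontrivial ingredient is the coercivity \eqref{eq:BilinearForm:Coercivity}; once that is established, existence, uniqueness and the energy estimate follow mechanically.

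To prove coercivity, first invoke the Dirichlet-type nonlocal Poincar\'e inequality from \Cref{thm:PoincareDirichlet}: for every $u \in \frak{W}^{\delta,2}_0(\Omega)$,
\[
\Vnorm{u}_{L^2(\Omega)}^2 \leq C_D^2 \, [u]_{\frak{W}^{\delta,2}(\Omega)}^2,
\]
so that the seminorm already controls the full norm,
\[
\Vnorm{u}_{\frak{W}^{\delta,2}(\Omega)}^2 = \Vnorm{u}_{L^2(\Omega)}^2 + [u]_{\frak{W}^{\delta,2}(\Omega)}^2 \leq (1+C_D^2)\,[u]_{\frak{W}^{\delta,2}(\Omega)}^2.
\]
Next, the kernel-equivalence inequality \eqref{eq:KernelEquivalence} of \Cref{thm:Coercivity} provides $[u]_{\frak{W}^{\delta,2}(\Omega)}^2 \leq C\, B_{\rho,\delta}(u,u)$ with $C = C(d,\rho,\Omega)$. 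Chaining these two bounds yields \eqref{eq:BilinearForm:Coercivity} with constant $C(d,\rho,\Omega,C_D)$ as advertised.

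For the existence statement, observe that $B_{\rho,\delta}$ is symmetric, bilinear, and, by \eqref{eq:BilinearForm:Continuity} of \Cref{thm:Coercivity}, continuous on $\frak{W}^{\delta,2}_0(\Omega)\times \frak{W}^{\delta,2}_0(\Omega)$. Together with the coercivity just established, $B_{\rho,\delta}$ defines an inner product on $\frak{W}^{\delta,2}_0(\Omega)$ whose induced norm is equivalent to $\Vnorm{\cdot}_{\frak{W}^{\delta,2}(\Omega)}$; in particular $\frak{W}^{\delta,2}_0(\Omega)$ endowed with this inner product is a Hilbert space. Hence for any $f \in [\frak{W}^{\delta,2}_0(\Omega)]^*$ the Riesz representation theorem, or equivalently Lax--Milgram for symmetric forms, yields a unique $u \in \frak{W}^{\delta,2}_0(\Omega)$ satisfying \eqref{eq:NonlocalProblem:WeakForm}. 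Testing against $v = u$ and combining the coercivity with the dual-norm bound $|\Vint{f,u}| \leq \Vnorm{f}_{[\frak{W}^{\delta,2}_0(\Omega)]^*}\,\Vnorm{u}_{\frak{W}^{\delta,2}(\Omega)}$ gives \eqref{eq:HomogDirichlet:EnergyEstimate} after dividing through.

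The main obstacle is therefore isolated in the coercivity step, which itself reduces entirely to \Cref{thm:PoincareDirichlet}. The Poincar\'e inequality (whose proof is deferred to \Cref{sec:PoincareProofs}) is where the heterogeneous localization of $\eta_\delta$ and the vanishing-trace structure of $\frak{W}^{\delta,2}_0(\Omega)$ genuinely play a role; granted \Cref{thm:PoincareDirichlet} together with the kernel-equivalence and continuity results of \Cref{thm:Coercivity}, the present theorem is a routine consequence of the Lax--Milgram framework.
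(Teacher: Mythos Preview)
Your proof is correct and follows exactly the same approach as the paper: coercivity from the kernel equivalence \eqref{eq:KernelEquivalence} combined with the Dirichlet Poincar\'e inequality \Cref{thm:PoincareDirichlet}, then existence and uniqueness via Lax--Milgram using the continuity \eqref{eq:BilinearForm:Continuity}. The paper's own proof is a two-sentence sketch of precisely this argument; you have simply written out the details.
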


\begin{proof}
    The coercivity \eqref{eq:BilinearForm:Coercivity} follows from \eqref{eq:KernelEquivalence} and \Cref{thm:PoincareDirichlet}.
    We conclude the existence and uniqueness of a weak solution via the Lax-Milgram theorem thanks to the continuity of $B_{\rho,\delta}$ established in \eqref{eq:BilinearForm:Continuity}.
\end{proof}

\subsection{The Dirichlet problem: inhomogeneous boundary conditions}\label{sec:InHomoDiri} 
Consider the problem \eqref{eq:Intro:NonlocalEq} with inhomogenenous Dirichlet boundary data \eqref{eq:Dirichlet:Inhomog:NonlocalBC}.
This problem can be reduced to the case of homogeneous Dirichlet boundary conditions by the following argument.   Let $G \in H^1(\Omega)$ be an extension of $g$ to $\Omega$ with $\Vnorm{G}_{H^1(\Omega)} \leq C \Vnorm{g }_{H^{\frac{1}{2}}(\p \Omega) }$.
Then can we define the solution $u$ of \eqref{eq:Intro:NonlocalEq}-\eqref{eq:Dirichlet:Inhomog:NonlocalBC} as
\begin{equation}\label{eq:Dirichlet:Inhomog:Decomp}
    u(\bx) := w(\bx) + G(\bx)\,,
\end{equation}
where $w(\bx)$ is the unique weak solution of
\begin{equation}\label{eq:Dirichlet:Inhomog:NonlocalEqn:Aux}
	\begin{cases}
		\cL_{\delta} w = f - \cL_{\delta}G &\text{ in } \Omega\,, \\
		w = 0 &\text{ on } \p \Omega\,.
	\end{cases}
\end{equation}

\begin{theorem}[Well-posedness]\label{thm:WellPosedness:Dirichlet:Inhomog}
    Let $\Omega \subset \bbR^d$ satisfy \eqref{assump:Domain} and $\rho$ satisfy \eqref{Assump:KernelSmoothness}.  Let $\delta < \delta_0$. Let $f \in [\frak{W}^{\delta,2}_0(\Omega)]^*$ and $g \in H^{\frac{1}{2}}(\p \Omega)$ be given.
    Then there exists a unique $u \in \frak{W}^{\delta,2}(\Omega)$ satisfying
  the inhomogeneous Dirichlet boundary conditions
\eqref{eq:Dirichlet:Inhomog:NonlocalBC} in the trace sense
    and the weak form \eqref{eq:NonlocalProblem:WeakForm}.
    Moreover,
\begin{equation}\label{eq:InhomogDirichlet:EnergyEstimate}
\Vnorm{u}_{\frak{W}^{\delta,2}(\Omega)} \leq C(d,\rho,\Omega)  \big( \Vnorm{f}_{[\frak{W}^{\delta,2}_0(\Omega)]^*} + \Vnorm{g}_{H^{\frac{1}{2}}(\p \Omega)} \big)\,.
	\end{equation}
\end{theorem}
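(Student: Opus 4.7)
The plan is to follow the decomposition \eqref{eq:Dirichlet:Inhomog:Decomp}--\eqref{eq:Dirichlet:Inhomog:NonlocalEqn:Aux} already outlined, reducing the inhomogeneous problem to the homogeneous one handled in \Cref{thm:WellPosedness}. First I would invoke the classical $H^1$ trace theorem to produce an extension $G \in H^1(\Omega)$ with $TG = g$ and $\Vnorm{G}_{H^1(\Omega)} \leq C \Vnorm{g}_{H^{\frac{1}{2}}(\p \Omega)}$. \Cref{thm:Embedding} then places $G$ in $\frak{W}^{\delta,2}(\Omega)$ with a comparable bound, and \Cref{thm:OperatorAsDist:NonlocalSpace} promotes $\cL_\delta G$ to an element of $[\frak{W}^{\delta,2}_0(\Omega)]^*$ whose action is given explicitly by $\Vint{\cL_\delta G, v} = B_{\rho,\delta}(G,v)$ for every $v \in \frak{W}^{\delta,2}_0(\Omega)$. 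Hence $\widetilde f := f - \cL_\delta G$ belongs to $[\frak{W}^{\delta,2}_0(\Omega)]^*$ with norm controlled linearly by $\Vnorm{f}_{[\frak{W}^{\delta,2}_0(\Omega)]^*}$ and $\Vnorm{g}_{H^{\frac{1}{2}}(\p\Omega)}$.

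Second, I would apply \Cref{thm:WellPosedness} to obtain a unique $w \in \frak{W}^{\delta,2}_0(\Omega)$ satisfying $B_{\rho,\delta}(w,v) = \Vint{\widetilde f, v}$ for all $v \in \frak{W}^{\delta,2}_0(\Omega)$, with $\Vnorm{w}_{\frak{W}^{\delta,2}(\Omega)} \leq C \Vnorm{\widetilde f}_{[\frak{W}^{\delta,2}_0(\Omega)]^*}$. Setting $u := w + G$, bilinearity and the identity $\Vint{\cL_\delta G,v} = B_{\rho,\delta}(G,v)$ immediately yield the weak form \eqref{eq:NonlocalProblem:WeakForm} for $u$. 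The trace condition $Tu = g$ follows by combining $TG = g$ (classical trace) with $Tw = 0$ (\Cref{cor:TraceZero}), and the energy estimate \eqref{eq:InhomogDirichlet:EnergyEstimate} comes from the triangle inequality $\Vnorm{u}_{\frak{W}^{\delta,2}(\Omega)} \leq \Vnorm{w}_{\frak{W}^{\delta,2}(\Omega)} + \Vnorm{G}_{\frak{W}^{\delta,2}(\Omega)}$ after collecting the bounds above.

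The main obstacle is uniqueness of $u$ within all of $\frak{W}^{\delta,2}(\Omega)$. If $u_1, u_2$ are two solutions, then $w := u_1 - u_2 \in \frak{W}^{\delta,2}(\Omega)$ satisfies $Tw = 0$ and $B_{\rho,\delta}(w,v) = 0$ for every $v \in \frak{W}^{\delta,2}_0(\Omega)$. To close the argument by testing against $v = w$ and invoking coercivity \eqref{eq:BilinearForm:Coercivity}, one needs the characterization $\ker T = \frak{W}^{\delta,2}_0(\Omega)$. I would establish this by transporting the standard truncation-plus-mollification scheme developed for the linearly localized energy spaces in \cite{tian2017trace,du2022fractional} to the present setting, which is enabled by the norm comparison in \Cref{thm:SeminormEquivalence}. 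Given this characterization, $w$ itself is an admissible test function, and coercivity forces $w = 0$, hence $u_1 = u_2$.
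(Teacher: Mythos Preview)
Your proposal is correct and follows essentially the same route as the paper: extend $g$ to $G\in H^1(\Omega)$, use \Cref{thm:Embedding} and \Cref{thm:OperatorAsDist:NonlocalSpace} to place $f-\cL_\delta G$ in $[\frak{W}^{\delta,2}_0(\Omega)]^*$, solve the homogeneous problem via \Cref{thm:WellPosedness}, and assemble $u=w+G$.

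The one substantive difference is your handling of uniqueness. The paper's proof only checks that the constructed $u$ is independent of the choice of extension $G$ (which follows because two $H^1$ extensions of $g$ differ by an element of $H^1_0(\Omega)\subset\frak{W}^{\delta,2}_0(\Omega)$). You go further and argue full uniqueness in $\frak{W}^{\delta,2}(\Omega)$ by invoking the characterization $\ker T=\frak{W}^{\delta,2}_0(\Omega)$, which is indeed what is needed to test the difference $u_1-u_2$ against itself and apply \eqref{eq:BilinearForm:Coercivity}. The paper states only the forward inclusion in \Cref{cor:TraceZero} and does not prove the reverse inclusion anywhere, so your proof is in this respect more complete than the paper's own, provided the density argument you sketch (transporting the truncation/mollification scheme via \Cref{thm:SeminormEquivalence}) goes through.
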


\begin{proof} Consider the equivalent weak form of the problem \eqref{eq:NonlocalProblem:WeakForm}
with inhomogeneous Dirichlet data:
\begin{equation*}
	B_{\rho,\delta}(w,v) = \Vint{f - \cL_{\delta} G,v}\,, \quad \forall v \in \frak{W}^{\delta,2}_0(\Omega)\,.
 	\end{equation*}
By \Cref{thm:OperatorAsDist:NonlocalSpace}, $\cL_{\delta}G \in [\frak{W}^{\delta,2}_0(\Omega)]^*$, and so existence and uniqueness of $w \in\frak{W}^{\delta,2}_0(\Omega)$ follows from \Cref{thm:WellPosedness}.

Since $u = w + G \in \frak{W}^{\delta,2}(\Omega)$, we have by \Cref{thm:OperatorAsDist:NonlocalSpace}
\begin{equation*}
    \begin{split}
    B_{\rho,\delta}(u,v) &= B_{\rho,\delta}(w,v) + B_{\rho,\delta}(G,v) = \Vint{f,v} - \Vint{\cL_{\delta}G,v} + B_{\rho,\delta}(G,v) \\
    &= \Vint{f,v} - B_{\rho,\delta}(G,v)  + B_{\rho,\delta}(G,v) = \Vint{f,v}
    \end{split}
\end{equation*}
for any $v \in \frak{W}^{\delta,2}_0(\Omega)$, which is 
\eqref{eq:NonlocalProblem:WeakForm}.

Moreover, by the assumption on $G$ and \Cref{thm:Embedding} we have
\begin{equation}\label{eq:EstimateG}
    \begin{split}
\Vnorm{G}_{\frak{W}^{\delta,2}(\Omega)} 
        \leq  C\Vnorm{G}_{H^{1}(\Omega)} \leq C\Vnorm{g}_{H^{\frac{1}{2}}(\p \Omega)}\,.
    \end{split}
\end{equation}
By  \eqref{eq:HomogDirichlet:EnergyEstimate}, \eqref{eq:OperatorAsDist:NonlocalSpace}, and \eqref{eq:EstimateG}
\begin{equation*}
    \begin{split}
        \Vnorm{w}_{\frak{W}^{\delta,2}(\Omega)} 
        &\leq C \big( \Vnorm{f}_{[\frak{W}^{\delta,2}_0(\Omega)]^*} + \Vnorm{\cL_{\delta}G}_{[\frak{W}^{\delta,2}_0(\Omega)]^*} \big) \\
        &\leq C \big( \Vnorm{f}_{[\frak{W}^{\delta,2}_0(\Omega)]^*} + \Vnorm{G}_{\frak{W}^{\delta,2}(\Omega)} \big)\\
        &\leq C \big( \Vnorm{f}_{[\frak{W}^{\delta,2}_0(\Omega)]^*} + \Vnorm{g}_{H^{\frac{1}{2}}(\p \Omega)} \big)\,.
    \end{split}
\end{equation*}
These estimates on $G$ and $w$ lead to 
\eqref{eq:InhomogDirichlet:EnergyEstimate}. The equivalence of traces $u = g$ on $\p \Omega$ is established by \eqref{eq:InhomogDirichlet:EnergyEstimate} and \Cref{thm:TraceTheorem}. 
Obviously, the solution $u$ defined via \eqref{eq:Dirichlet:Inhomog:Decomp} is
independent of the extension $G$ chosen.
\end{proof}

\subsection{The Neumann problem}\label{subsec:InHomoNeu}

The nonlocal Poisson problem with inhomogeneous Neumann boundary conditions is given by \eqref{eq:Intro:NonlocalEq}-\eqref{eq:InHomogNeumannBC}.
The special case of homogeneous boundary conditions $g = 0$ is covered here as well.
In step with the treatment for the classical Neumann problem, we see that solutions of \eqref{eq:Intro:NonlocalEq}-\eqref{eq:InHomogNeumannBC} are unique up to constants, and an application of the nonlocal Green's identity  \eqref{eq:GreensIdentity:Intro} shows that the compatibility condition
\begin{equation*}
	\int_{\Omega} f(\bx) \, \rmd \bx + \int_{\p \Omega} g(\bx) \, \rmd \sigma(\bx) = 0
\end{equation*}
is required for existence of a solution.

We introduce a weak formulation of \eqref{eq:Intro:NonlocalEq}-\eqref{eq:InHomogNeumannBC} by the following formal computation: Suppose that $u \in C^2(\overline{\Omega})$, $\frac{\p u}{\p \bsnu} = g$ on $\p \Omega$, and $u$ satisfies $\cL_{\delta} u = f$ for some given function $f$.
Then for arbitrary $v \in C^2(\overline{\Omega})$, the nonlocal Green's identity \eqref{eq:GreensIdentity:Intro} gives
\begin{equation*}
	\begin{split}
        B_{\rho,\delta}(u,v) = 
  \Vint{\cL_{\delta} u, v}
  + \int_{\p \Omega} \frac{\p u}{\p \bsnu}(\bx) v(\bx) \, \rmd \sigma(\bx)
		=  \Vint{f,v}
  + \int_{\p \Omega} g(\bx) v(\bx) \, \rmd \sigma(\bx)\,.
	\end{split}
\end{equation*}

Recall the definition of $\Phi_{\delta} = \Phi_{\delta,0}$ as in \eqref{eq:ConvolutionOperatorAlpha} for $\alpha = 0$.
For a kernel $\rho$ that satisfies \eqref{Assump:KernelSmoothness}, define the function space
\begin{equation*}
	\mathring{\frak{W}}^{\delta,2}_{\rho}(\Omega) := \left\{ u \in \frak{W}^{\delta,2}(\Omega) \, : \, 
= \Vint{\Phi_{\delta},u}
 = 0 
 = (\Phi_{\delta} u)_{\Omega}
 \right\}\,.
\end{equation*}
It is clear that $\mathring{\frak{W}}^{\delta,2}_{\rho}(\Omega)$ is a closed subspace of $\frak{W}^{\delta,2}(\Omega)$ with its inner product inherited from $\frak{W}^{\delta,2}(\Omega)$.

\begin{definition}
	Let $\Omega \subset \bbR^d$ satisfy \eqref{assump:Domain} and $\rho$ satisfy \eqref{Assump:KernelSmoothness}. For $f \in [\frak{W}^{\delta,2}(\Omega)]^*$ and $g \in H^{-\frac{1}{2}}( \p \Omega)$ satisfying 
    \begin{equation}\label{eq:NeumannCompCond:Inhomog}
        \Vint{f,1} + \Vint{ g, 1} = 0\,,
    \end{equation}
 we say that $u \in \mathring{\frak{W}}^{\delta,2}_{\rho}(\Omega)$ is a \textit{weak solution} to \eqref{eq:Intro:NonlocalEq}-\eqref{eq:InHomogNeumannBC} if
\begin{equation}\label{eq:NonlocalProblem:Neumann:Inhomog:WeakForm}
		B_{\rho,\delta}(u,v) = 
 \Vint{f,v}
  +\Vint{g, Tv}\,,
  \quad\forall v \in \mathring{\frak{W}}^{\delta,2}_{\rho}(\Omega)\,.
	\end{equation}
\end{definition}

\begin{theorem}[Well-posedness]\label{thm:WellPosedness:Neumann:Inhomog}
	Let $\Omega \subset \bbR^d$ satisfy \eqref{assump:Domain} and  $\rho$ satisfy \eqref{Assump:KernelSmoothness}. Let $\delta < \delta_0$. Then there exists a constant $C = C(d,\rho,\Omega,C_N)$ such that
    \begin{equation}\label{eq:BilinearForm:Coercivity:Neumann:Inhomog}
		\Vnorm{u}_{\frak{W}^{\delta,2}(\Omega)}^2 \leq  C  B_{\rho,\delta}(u,u)\,, \qquad\;\forall u \in \mathring{\mathfrak{W}}^{\delta,2}_{\rho}(\Omega)\,.
	\end{equation}
	Moreover, 
	for any $f \in [\frak{W}^{\delta,2}(\Omega)]^*$ and $g \in H^{-\frac{1}{2}}(\p \Omega)$
with \eqref{eq:NeumannCompCond:Inhomog} satisfied,
there exists a unique solution $u \in \mathring{\mathfrak{W}}^{\delta,2}_{\rho}(\Omega)$ to \eqref{eq:NonlocalProblem:Neumann:Inhomog:WeakForm} satisfying the energy estimate
	\begin{equation}\label{eq:HomogNeumann:Inhomog:EnergyEstimate}
		\Vnorm{u}_{\frak{W}^{\delta,2}(\Omega)} \leq \Vnorm{f}_{[\frak{W}^{\delta,2}(\Omega)]^*} + \Vnorm{g}_{H^{-\frac{1}{2}}(\p \Omega)}\,.
	\end{equation}
\end{theorem}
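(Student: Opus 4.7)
My plan is to reduce the theorem to a direct application of the Lax--Milgram theorem on the closed subspace $\mathring{\mathfrak{W}}^{\delta,2}_{\rho}(\Omega)$, for which I first need to verify coercivity of $B_{\rho,\delta}$ on this subspace and continuity of the right-hand side functional $v \mapsto \langle f,v\rangle + \langle g, Tv\rangle$.

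For the coercivity estimate \eqref{eq:BilinearForm:Coercivity:Neumann:Inhomog}, I would exploit the defining constraint of $\mathring{\mathfrak{W}}^{\delta,2}_{\rho}(\Omega)$: if $u \in \mathring{\mathfrak{W}}^{\delta,2}_{\rho}(\Omega)$, then $(\Phi_{\delta} u)_{\Omega} = 0$, so \Cref{thm:PoincareNeumann} yields $\|u\|_{L^2(\Omega)} \leq C_N [u]_{\mathfrak{W}^{\delta,2}(\Omega)}$. Combining this with the definition of the $\mathfrak{W}^{\delta,2}(\Omega)$-norm and the kernel equivalence \eqref{eq:KernelEquivalence} from \Cref{thm:Coercivity} then gives
\begin{equation*}
\|u\|_{\mathfrak{W}^{\delta,2}(\Omega)}^2 \leq (1+C_N^2) [u]_{\mathfrak{W}^{\delta,2}(\Omega)}^2 \leq C B_{\rho,\delta}(u,u),
\end{equation*}
which is precisely \eqref{eq:BilinearForm:Coercivity:Neumann:Inhomog}.

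Next, to set up Lax--Milgram, I would verify that the linear functional $F(v) := \langle f, v\rangle + \langle g, Tv\rangle$ is bounded on $\mathring{\mathfrak{W}}^{\delta,2}_{\rho}(\Omega)$. The first term is bounded by $\|f\|_{[\mathfrak{W}^{\delta,2}(\Omega)]^*} \|v\|_{\mathfrak{W}^{\delta,2}(\Omega)}$ by definition. For the second term, the trace theorem \Cref{thm:TraceTheorem} yields $\|Tv\|_{H^{1/2}(\partial\Omega)} \leq C \|v\|_{\mathfrak{W}^{\delta,2}(\Omega)}$, and then duality gives $|\langle g, Tv\rangle| \leq C \|g\|_{H^{-1/2}(\partial\Omega)} \|v\|_{\mathfrak{W}^{\delta,2}(\Omega)}$. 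Continuity of $B_{\rho,\delta}$ on $\mathfrak{W}^{\delta,2}(\Omega)$ is \eqref{eq:BilinearForm:Continuity}. The Lax--Milgram theorem then produces a unique $u \in \mathring{\mathfrak{W}}^{\delta,2}_{\rho}(\Omega)$ with $B_{\rho,\delta}(u,v) = F(v)$ for all $v \in \mathring{\mathfrak{W}}^{\delta,2}_{\rho}(\Omega)$, and the estimate \eqref{eq:HomogNeumann:Inhomog:EnergyEstimate} follows from the standard Lax--Milgram bound.

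The one subtle point, which I view as the main thing to keep honest, is the role of the compatibility condition \eqref{eq:NeumannCompCond:Inhomog}. Since both $B_{\rho,\delta}(u,\cdot)$ and the test space $\mathring{\mathfrak{W}}^{\delta,2}_{\rho}(\Omega)$ are designed to ignore the constant mode, the compatibility condition is \emph{not} needed to invoke Lax--Milgram on the constrained space; rather, it is exactly the condition that guarantees the weak formulation \eqref{eq:NonlocalProblem:Neumann:Inhomog:WeakForm} extends consistently to arbitrary test functions in $\mathfrak{W}^{\delta,2}(\Omega)$. Concretely, any $w \in \mathfrak{W}^{\delta,2}(\Omega)$ decomposes as $w = w_0 + c$ with $w_0 \in \mathring{\mathfrak{W}}^{\delta,2}_{\rho}(\Omega)$ and $c = (\Phi_{\delta} w)_{\Omega}/(\Phi_{\delta})_{\Omega}$ constant; since $B_{\rho,\delta}(u,c) = 0$, the identity $B_{\rho,\delta}(u,w) = F(w)$ extends to all of $\mathfrak{W}^{\delta,2}(\Omega)$ precisely when $c \langle f,1\rangle + c \langle g, 1\rangle = 0$ for every constant $c$, i.e., under \eqref{eq:NeumannCompCond:Inhomog}. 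I would include this observation as a short remark after the Lax--Milgram application to close out the proof.
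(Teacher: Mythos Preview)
Your proposal is correct and follows essentially the same route as the paper: coercivity via \Cref{thm:PoincareNeumann} and \eqref{eq:KernelEquivalence}, boundedness of the right-hand side via \Cref{thm:TraceTheorem}, and then Lax--Milgram together with \eqref{eq:BilinearForm:Continuity}. Your additional paragraph explaining that the compatibility condition \eqref{eq:NeumannCompCond:Inhomog} is not needed for Lax--Milgram on the constrained space but rather ensures consistency when testing against arbitrary $w \in \mathfrak{W}^{\delta,2}(\Omega)$ is a helpful clarification that the paper's terse proof omits.
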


\begin{proof}
    The right-hand side of \eqref{eq:NonlocalProblem:Neumann:Inhomog:WeakForm} defines an element of $[\frak{W}^{\delta,2}(\Omega)]^*$ acting on $v$ thanks to \Cref{thm:TraceTheorem}.
    The coercivity \eqref{eq:BilinearForm:Coercivity:Neumann:Inhomog} follows from \eqref{eq:KernelEquivalence} and \Cref{thm:PoincareNeumann}.
    We conclude the existence and uniqueness of a weak solution via the Lax-Milgram theorem thanks to the continuity of $B_{\rho,\delta}$ established in \eqref{eq:BilinearForm:Continuity}.
\end{proof}

\section{Boundary-localized convolutions associated to the nonlocal operator}\label{sec:HSEstimates}

For $\alpha \geq 0$ and for a measurable function $u : \Omega \to \bbR$, we recall the definition of the operator $K_{\delta,\alpha}$ and the function $\Phi_{\delta,\alpha}$ in \eqref{eq:ConvolutionOperatorAlpha}. We use the convention $K_{\delta} = K_{\delta,0}$.
(recall the same convention $\Phi_{\delta} = \Phi_{\delta,0}$ was used in \Cref{subsec:InHomoNeu}).

We define the adjoint operator $K_{\delta,\alpha}^*$ of
$K_{\delta,\alpha}$
by
\begin{equation}\label{eq:ConvolutionOperatorAdjointAlpha}
	K_{\delta,\alpha}^* u (\bx) := \int_{\Omega} \frac{1}{\Phi_{\delta,\alpha}(\by)}  \rho_{\delta,\alpha}(\bx,\by) u(\by) \, \rmd \by\,,
    \end{equation}
with the convention that $K_{\delta}^* = K_{\delta,0}^*$.
Then as a distribution
\begin{equation*}
    \Vint{ K_{\delta,\alpha} u,\varphi } = \Vint{u, K_{\delta,\alpha}^* \varphi } \text{ and } \Vint{ K_{\delta,\alpha}^* u,\varphi } = \Vint{u, K_{\delta,\alpha} \varphi }\,.
\end{equation*}

\begin{theorem}\label{thm:diffuKu}
	Let $\Omega \subset \bbR^d$ satisfy \eqref{assump:Domain}  and $\rho$ satisfy  \eqref{Assump:KernelSmoothness}. Let $\alpha \in \bbR$, and let $u \in L^2(\Omega)$. Then
	\begin{equation}\label{eq:Kdeltaerror1}
		\Vnorm{u - K_{\delta,\alpha} u }_{L^2(\Omega)}^2 \leq \iintdm{\Omega}{\Omega}{ \frac{1}{\Phi_{\delta,\alpha}(\bx)} \rho_{\delta,\alpha}(\bx,\by) |u(\bx)-u(\by)|^2 }{\by}{\bx}\,.
	\end{equation}
 Consequently, there exists a constant $C$ depending only on $d$, $\rho$, $\Omega$ and $\alpha$ such that
 	\begin{equation}\label{eq:KdeltaError}
  		\Vnorm{u - K_{\delta,\alpha} u }_{L^2(\Omega)} \leq 
    C \min\{\delta,\diam(\Omega)^2\} [u]_{\frak{W}^{\delta,2}(\Omega)}\,, \quad\forall u \in \frak{W}^{\delta,2}(\Omega).
 	\end{equation}
\end{theorem}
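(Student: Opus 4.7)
The plan is to prove \eqref{eq:Kdeltaerror1} by a direct application of Jensen's inequality, and then to derive \eqref{eq:KdeltaError} from it by exploiting the upper bounds on $\eta_{\delta}$ and the comparability lemma \Cref{lma:ComparabilityOfXandY}.

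\textbf{Step 1: Jensen's inequality.} I would first observe that, directly from the definitions \eqref{eq:ConvolutionOperatorAlpha} of $K_{\delta,\alpha}$ and $\Phi_{\delta,\alpha}$,
\begin{equation*}
    u(\bx) - K_{\delta,\alpha}u(\bx) = \frac{1}{\Phi_{\delta,\alpha}(\bx)} \int_{\Omega} \rho_{\delta,\alpha}(\bx,\by) (u(\bx)-u(\by)) \, \rmd \by\,.
\end{equation*}
Since $\by \mapsto \rho_{\delta,\alpha}(\bx,\by)/\Phi_{\delta,\alpha}(\bx)$ is a probability density on $\Omega$, Jensen's inequality (equivalently, Cauchy--Schwarz) yields the pointwise bound
\begin{equation*}
    |u(\bx) - K_{\delta,\alpha}u(\bx)|^2 \leq \frac{1}{\Phi_{\delta,\alpha}(\bx)} \int_{\Omega} \rho_{\delta,\alpha}(\bx,\by) |u(\bx)-u(\by)|^2 \, \rmd \by\,,
\end{equation*}
and integrating over $\Omega$ produces \eqref{eq:Kdeltaerror1}.

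\textbf{Step 2: Reduction via \eqref{eq:PhiBounds}.} The lower bound $\Phi_{\delta,\alpha}(\bx) \geq \underline{\mu}_{\alpha}/\eta_{\delta}(\bx)^{\alpha}$ from \Cref{cor:boundonPhi} and the explicit form \eqref{eq:OperatorKernelDef} of $\rho_{\delta,\alpha}$ yield
\begin{equation*}
    \frac{\rho_{\delta,\alpha}(\bx,\by)}{\Phi_{\delta,\alpha}(\bx)} \leq C \eta_{\delta}(\bx)^{\alpha} \left( \frac{\rho \!\left( |\by-\bx|/\eta_{\delta}(\bx) \right)}{\eta_{\delta}(\bx)^{d+\alpha}} + \frac{\rho \!\left( |\by-\bx|/\eta_{\delta}(\by) \right)}{\eta_{\delta}(\by)^{d+\alpha}} \right).
\end{equation*}
I would then split the right-hand side of \eqref{eq:Kdeltaerror1} into two corresponding pieces and treat them separately.

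\textbf{Step 3: Controlling each piece by the seminorm.} For the first piece, the support condition on $\rho$ (contained in $[0,R_0]$) and the uniform bound $\eta_{\delta}(\bx) \leq \bar{\kappa}_0 \min\{\delta, (\dist(\bx,\p\Omega))^2\} \leq C \min\{\delta, \diam(\Omega)^2\}$ from \eqref{assump:Localization}(iv) give, after pulling out two factors of $\eta_{\delta}(\bx)$,
\begin{equation*}
    \frac{\rho \!\left( |\by-\bx|/\eta_{\delta}(\bx) \right)}{\eta_{\delta}(\bx)^{d}} \leq C (\min\{\delta, \diam(\Omega)^2\})^2 \cdot \frac{\chi_{\{|\by-\bx| \leq R_0 \eta_{\delta}(\bx)\}}}{\eta_{\delta}(\bx)^{d+2}}\,,
\end{equation*}
so that integration yields a bound by $C (\min\{\delta, \diam(\Omega)^2\})^2 [u]_{\frak{W}^{\delta,2}(\Omega)}^2$ directly, using the seminorm definition \eqref{eq:NonlocalSeminormDef} together with \Cref{thm:InvariantHorizon} to absorb the factor $R_0$.

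\textbf{Step 4: Symmetrization for the $\eta_{\delta}(\by)$-piece.} The piece involving $\rho(|\by-\bx|/\eta_{\delta}(\by))/\eta_{\delta}(\by)^{d+\alpha}$ is supported where $|\by-\bx| \leq R_0 \eta_{\delta}(\by)$, on which \eqref{eq:ComparabilityOfDistanceFxn2:SqDist} gives $\eta_{\delta}(\bx) \leq C \eta_{\delta}(\by)$; I can therefore replace $\eta_{\delta}(\bx)^{\alpha}$ by $C \eta_{\delta}(\by)^{\alpha}$ in the estimate. A second factor of $\eta_{\delta}(\by)^2 \leq C(\min\{\delta, \diam(\Omega)^2\})^2$ can then be extracted to produce the kernel $\chi_{\{|\by-\bx| \leq R_0 \eta_{\delta}(\by)\}}/\eta_{\delta}(\by)^{d+2}$, and a Fubini swap of $\bx$ and $\by$ (valid because $|u(\bx)-u(\by)|^2$ is symmetric) recasts this integral exactly as the nonlocal seminorm. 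The main (minor) obstacle is bookkeeping in this symmetrization step, where one must ensure that the $\eta_{\delta}(\by) \to \eta_{\delta}(\bx)$ swap and the change of the support indicator are carried out on the support of the kernel so that \Cref{lma:ComparabilityOfXandY} applies. Combining Steps 3 and 4 with \eqref{eq:Kdeltaerror1} then yields \eqref{eq:KdeltaError}.
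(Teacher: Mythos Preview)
Your proposal is correct and follows essentially the same approach as the paper: Jensen/Cauchy--Schwarz for \eqref{eq:Kdeltaerror1}, then the bound \eqref{eq:PhiBounds} together with \Cref{lma:ComparabilityOfXandY} to reduce to the seminorm. The paper packages your Steps~2--4 more compactly by writing directly $\frac{\rho_{\delta,\alpha}(\bx,\by)}{\Phi_{\delta,\alpha}(\bx)} \leq C\,\eta_{\delta}(\bx)^2\,\rho_{\delta,2}(\bx,\by)$ (keeping the symmetric kernel intact) and then invoking \eqref{eq:KernelEquivalence}, but the underlying argument is identical.
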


\begin{proof}
	By definition of $\Phi_{\delta,\alpha}$, H\"older's inequality gives
	\begin{equation*}
		\begin{split}
			\Vnorm{u - K_{\delta,\alpha} u}_{L^2(\Omega)}^2 &= \intdm{\Omega}{  \frac{1}{\Phi_{\delta,\alpha}(\bx)^2} \left( \intdm{\Omega}{ \rho_{\delta,\alpha}(\bx,\by) (u(\by)-u(\bx)) }{\by} \right)^2 }{\bx} \\
			&\leq \intdm{\Omega}{  \frac{1}{\Phi_{\delta,\alpha}(\bx)^2} \left( \intdm{\Omega}{\rho_{\delta,\alpha}(\bx,\by) }{\by} \right) \left( \intdm{\Omega}{ \rho_{\delta,\alpha}(\bx,\by) |u(\by)-u(\bx)|^2 }{\by} \right) }{\bx} \\
			&= \iintdm{\Omega}{\Omega}{ \frac{1}{\Phi_{\delta,\alpha}(\bx)} \rho_{\delta,\alpha}(\bx,\by) |u(\bx)-u(\by)|^2 }{\by}{\bx}\,,
		\end{split}
	\end{equation*}
 which is \eqref{eq:Kdeltaerror1}.
Now, from \eqref{eq:PhiBounds} and \Cref{lma:ComparabilityOfXandY} we have the estimate
	\begin{equation*}
		\begin{gathered}
		\frac{\rho_{\delta,\alpha}(\bx,\by)}{\Phi_{\delta,\alpha}(\bx)} \leq C(d,\Omega,\rho,\alpha) \rho_{\delta}(\bx,\by) \leq C(d,\Omega,\rho,\alpha) \eta_{\delta}(\bx)^2 \rho_{\delta,2}(\bx,\by)
		\end{gathered}
	\end{equation*}
in the right-hand side integral of \eqref{eq:Kdeltaerror1}, from which \eqref{eq:KdeltaError} follows.
\end{proof}

\begin{theorem}\label{thm:Convolution:DerivativeEstimate}
	Let $\Omega \subset \bbR^d$ satisfy \eqref{assump:Domain}  and $\rho$ satisfy  \eqref{Assump:KernelSmoothness}. Let $\alpha \geq 2$ and let $u \in L^2(\Omega)$. Then there exists $C = C(d,\rho,\Omega,\alpha) >0$ such that
	\begin{equation}\label{eq:ConvEst:Deriv}
		\begin{split}
			\Vnorm{ \grad K_{\delta,\alpha} u }_{L^2(\Omega)} 
			&\leq C \iintdm{\Omega}{\Omega}{  \rho_{\delta,2}(\bx,\by) |u(\bx)-u(\by)|^2 }{\by}{\bx} \\
			& \quad + C \iintdm{\Omega}{\Omega}{  \underline{\rho}_{\delta,2}(\bx,\by) |u(\bx)-u(\by)|^2 }{\by}{\bx}\,.
		\end{split}
	\end{equation}
\end{theorem}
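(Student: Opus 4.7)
The strategy is to differentiate $K_{\delta,\alpha}u$ in $\bx$ by the quotient rule, engineer the integrand $u(\by)-u(\bx)$ in the leading piece, and then reduce everything to the symmetric kernels $\rho_{\delta,2}$ and $\underline{\rho}_{\delta,2}$ via the kernel derivative estimate \Cref{thm:KernelDerivativeEstimates:SqDist}, the integrals of kernels \Cref{lma:KernelIntegral:SqDist}--\Cref{cor:boundonPhi}, and the pointwise comparability \Cref{lma:ComparabilityOfXandY}. Differentiation under the integral is legitimate for $u\in L^2(\Omega)$ since $|\grad_{\bx}\rho_{\delta,\alpha}(\bx,\by)|$ is integrable in $\by$ for each fixed $\bx\in\Omega$ by \eqref{eq:KernelIntegralDerivativeEstimates:SqDist}. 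Using the identity $\grad\Phi_{\delta,\alpha}(\bx) = \int_{\Omega}\grad_{\bx}\rho_{\delta,\alpha}(\bx,\by)\,\rmd\by$ together with the quotient rule,
\begin{equation*}
\grad K_{\delta,\alpha}u(\bx) = \frac{1}{\Phi_{\delta,\alpha}(\bx)}\int_{\Omega}\grad_{\bx}\rho_{\delta,\alpha}(\bx,\by)\,\bigl(u(\by)-u(\bx)\bigr)\,\rmd\by + \frac{\grad\Phi_{\delta,\alpha}(\bx)}{\Phi_{\delta,\alpha}(\bx)}\bigl(u(\bx)-K_{\delta,\alpha}u(\bx)\bigr)\,.
\end{equation*}
Denote these two summands by $I(\bx)$ and $II(\bx)$.

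For $I$, I apply Cauchy--Schwarz in $\by$, using $\int_{\Omega}|\grad_{\bx}\rho_{\delta,\alpha}(\bx,\by)|\,\rmd\by \le C\eta_{\delta}(\bx)^{-(1+\alpha)}$ from \eqref{eq:KernelIntegralDerivativeEstimates:SqDist} with $\beta=0$ and $\Phi_{\delta,\alpha}(\bx)^{-2}\le C\eta_{\delta}(\bx)^{2\alpha}$ from \Cref{cor:boundonPhi} to obtain
\begin{equation*}
|I(\bx)|^{2} \le C\,\eta_{\delta}(\bx)^{\alpha-1}\int_{\Omega}|\grad_{\bx}\rho_{\delta,\alpha}(\bx,\by)|\,|u(\bx)-u(\by)|^{2}\,\rmd\by\,.
\end{equation*}
Next I substitute the pointwise bound \eqref{eq:KernelDerivativeEstimate2:SqDist}; each rescaled factor at exponent $\alpha+1$ satisfies the algebraic identity $\eta_{\delta}(\cdot)^{\alpha-1}\rho_{\eta_{\delta}(\cdot),\alpha+1}(|\by-\bx|)=\rho_{\eta_{\delta}(\cdot),2}(|\by-\bx|)$ (and likewise for $\underline{\rho}$), so the three contributions are turned into $\rho_{\eta_{\delta}(\bx),2}$, $\underline{\rho}_{\eta_{\delta}(\bx),2}$, and $\underline{\rho}_{\eta_{\delta}(\by),2}$, after absorbing $\eta_\delta(\bx)^{\alpha-1}\approx\eta_\delta(\by)^{\alpha-1}$ on the support of the third term via \Cref{lma:ComparabilityOfXandY}. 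These symmetrize into $\rho_{\delta,2}(\bx,\by)+\underline{\rho}_{\delta,2}(\bx,\by)$ up to universal constants, yielding the desired bound on $\int_\Omega|I(\bx)|^2\,\rmd\bx$. For $II$, the pointwise ratio $|\grad\Phi_{\delta,\alpha}(\bx)|/\Phi_{\delta,\alpha}(\bx)\le C/\eta_{\delta}(\bx)$ follows by dividing \eqref{eq:PhiDerivativeBounds} by the lower bound in \eqref{eq:PhiBounds}, and the pointwise Cauchy--Schwarz form of \eqref{eq:Kdeltaerror1} gives $|u(\bx)-K_{\delta,\alpha}u(\bx)|^{2}\le \Phi_{\delta,\alpha}(\bx)^{-1}\int_{\Omega}\rho_{\delta,\alpha}(\bx,\by)|u(\bx)-u(\by)|^{2}\,\rmd\by$. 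Combining these and using $\eta_\delta(\bx)^{\alpha-2}\Phi_{\delta,\alpha}(\bx)^{-1}\le C$ together with the identity $\eta_\delta(\bx)^{\alpha-2}\rho_{\eta_\delta(\bx),\alpha}=\rho_{\eta_\delta(\bx),2}$ (and comparability on the support for the $\eta_\delta(\by)$ piece) controls $\int_\Omega|II(\bx)|^2\,\rmd\bx$ by $C\iint_{\Omega\times\Omega}\rho_{\delta,2}(\bx,\by)|u(\bx)-u(\by)|^2\,\rmd\by\,\rmd\bx$.

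The main obstacle is the careful power-of-$\eta_{\delta}$ bookkeeping, and it is precisely here that the hypothesis $\alpha\ge 2$ enters: once one converts $\rho_{\delta,\alpha}$ back to $\rho_{\delta,2}$, the residual weight $\eta_{\delta}(\bx)^{\alpha-2}$ must be uniformly bounded, which holds because $\eta_{\delta}\le\bar{\kappa}_{0}\delta_{0}<1$. The rest is mechanical: every time a factor $\eta_\delta(\bx)^s$ is paired with a kernel supported on $|\by-\bx|<R_0\eta_\delta(\by)$ (or vice versa), one invokes \Cref{lma:ComparabilityOfXandY} to trade $\eta_\delta(\bx)^s$ for an equivalent $\eta_\delta(\by)^s$, so the raw one-sided kernels $\rho_{\eta_\delta(\bx),2}$, $\rho_{\eta_\delta(\by),2}$, $\underline{\rho}_{\eta_\delta(\bx),2}$, $\underline{\rho}_{\eta_\delta(\by),2}$ assemble into $\rho_{\delta,2}$ and $\underline{\rho}_{\delta,2}$. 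Summing the bounds for $I$ and $II$ yields \eqref{eq:ConvEst:Deriv}.
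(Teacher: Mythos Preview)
Your proof is correct and follows essentially the same route as the paper: differentiate $K_{\delta,\alpha}u$ by the quotient rule, rewrite both terms as integrals against $u(\by)-u(\bx)$, apply Cauchy--Schwarz in $\by$, and then use \Cref{thm:KernelDerivativeEstimates:SqDist}, \eqref{eq:PhiBounds}, \eqref{eq:PhiDerivativeBounds}, and \Cref{lma:ComparabilityOfXandY} to reduce all kernel factors to $\rho_{\delta,2}$ and $\underline{\rho}_{\delta,2}$. The only cosmetic difference is that you write the second summand as $\frac{\grad\Phi_{\delta,\alpha}}{\Phi_{\delta,\alpha}}(u-K_{\delta,\alpha}u)$ while the paper keeps it as $-\frac{\grad\Phi_{\delta,\alpha}}{\Phi_{\delta,\alpha}^{2}}\int\rho_{\delta,\alpha}(u(\by)-u(\bx))\,\rmd\by$; these are identical. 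One small remark on your term $II$: the phrase ``using $\eta_{\delta}(\bx)^{\alpha-2}\Phi_{\delta,\alpha}(\bx)^{-1}\le C$ together with $\eta_{\delta}(\bx)^{\alpha-2}\rho_{\eta_{\delta}(\bx),\alpha}=\rho_{\eta_{\delta}(\bx),2}$'' double-counts the factor $\eta_{\delta}^{\alpha-2}$; what you actually need (and what the paper writes) is the single inequality $\frac{\rho_{\delta,\alpha}(\bx,\by)}{\eta_{\delta}(\bx)^{2}\Phi_{\delta,\alpha}(\bx)}\le C\rho_{\delta,2}(\bx,\by)$, obtained from $\Phi_{\delta,\alpha}(\bx)^{-1}\le C\eta_{\delta}(\bx)^{\alpha}$ and the kernel identity.
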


\begin{proof}
	Assume the right-hand side of \eqref{eq:ConvEst:Deriv} is finite. We have
	\begin{equation*}
		\begin{split}
			&\grad K_{\delta,\alpha}u (\bx) = \frac{1}{\Phi_{\delta,\alpha}(\bx)} \intdm{\Omega}{\grad_{\bx} \rho_{\delta,\alpha}(\bx,\by) u(\by)}{\by}   - \frac{\grad \Phi_{\delta,\alpha}(\bx)}{\Phi_{\delta,\alpha}(\bx)^2} \intdm{\Omega}{\rho_{\delta,\alpha}(\bx,\by) u(\by)}{\by} \\
			&\quad= \frac{1}{\Phi_{\delta,\alpha}(\bx)} \intdm{\Omega}{\grad_{\bx} \rho_{\delta,\alpha}(\bx,\by) (u(\by)-u(\bx))}{\by} - \frac{\grad \Phi_{\delta,\alpha}(\bx)}{\Phi_{\delta,\alpha}(\bx)^2} \intdm{\Omega}{\rho_{\delta,\alpha}(\bx,\by) (u(\by)-u(\bx))}{\by}\,,
		\end{split}
	\end{equation*}
	where in the last line we added and subtracted $\frac{\grad \Phi_{\delta,\alpha}(\bx)}{\Phi_{\delta,\alpha}(\bx)} u(\bx)$. Therefore, by H\"older's inequality 
	\begin{equation*}
		\begin{split}
			\Vnorm{\grad K_{\delta,\alpha} u }_{L^2(\Omega)}^2 
			&\leq \int_{\Omega}  \left( \frac{\eta_{\delta}(\bx)}{\Phi_{\delta,\alpha}(\bx)}  \intdm{\Omega}{ |\grad_{\bx} \rho_{\delta,\alpha}(\bx,\bz)| }{\bz} \right) \intdm{\Omega}{\frac{|\grad_{\bx} \rho_{\delta,\alpha}(\bx,\by)|}{\eta_{\delta}(\bx) \Phi_{\delta,\alpha}(\bx)} |u(\by)-u(\bx)|^2}{\by}  \, \rmd \bx \\
			&\quad + \int_{\Omega} \frac{|\grad \Phi_{\delta,\alpha}(\bx)|^2}{|\Phi_{\delta,\alpha}(\bx)|^2} \left( \int_{\Omega} \frac{\rho_{\delta,\alpha}(\bx,\bz)}{\Phi_{\delta,\alpha}(\bx)} \, \rmd \bz \right) \intdm{\Omega}{\frac{\rho_{\delta,\alpha}(\bx,\by)}{\Phi_{\delta,\alpha}(\bx)} |u(\by)-u(\bx)|^2}{\by} \, \rmd \bx\,.
		\end{split}
	\end{equation*}
	We use \eqref{eq:KernelIntegralDerivativeEstimates:SqDist} and \eqref{eq:PhiBounds} in the first integral and the definition of $\Phi_{\delta,\alpha}$ and \eqref{eq:PhiDerivativeBounds} in the second integral to get
	\begin{equation}\label{eq:ConvEst:Deriv:Pf1}
		\begin{split}
		\Vnorm{\grad K_{\delta,\alpha} u }_{L^2(\Omega)}^2 &\leq C \int_{\Omega} \intdm{\Omega}{\frac{|\grad_{\bx} \rho_{\delta,\alpha}(\bx,\by)|}{\eta_{\delta}(\bx) \Phi_{\delta,\alpha}(\bx)} |u(\by)-u(\bx)|^2}{\by}  \, \rmd \bx \\
			&\quad + C \int_{\Omega} \intdm{\Omega}{\frac{\rho_{\delta,\alpha}(\bx,\by)}{\eta_{\delta}(\bx)^2 \Phi_{\delta,\alpha}(\bx)} |u(\by)-u(\bx)|^2}{\by} \, \rmd \bx\,.
			\end{split}
	\end{equation}
	From \Cref{thm:KernelDerivativeEstimates:SqDist} we have
	\begin{equation*}
		|\grad_{\bx} \rho_{\delta,\alpha}(\bx,\by)| \leq  \underline{\rho}_{\delta,\alpha+1}(\bx,\by) + C(d,\alpha) \rho_{\delta(\bx),\alpha+1}(|\by-\bx|)\,,
	\end{equation*}
	and so with \eqref{eq:PhiBounds} and \Cref{lma:ComparabilityOfXandY} we have the estimates 
	\begin{equation*}
		\begin{gathered}
		\frac{|\grad_{\bx} \rho_{\delta,\alpha}(\bx,\by)|}{\eta_{\delta}(\bx) \Phi_{\delta,\alpha}(\bx)} \leq C \underline{\rho}_{\delta,2}(\bx,\by) + C \rho_{\delta,2}(\bx,\by)\,, \\
		\frac{\rho_{\delta,\alpha}(\bx,\by)}{\eta_{\delta}(\bx)^2 \Phi_{\delta,\alpha}(\bx)} \leq C \rho_{\delta,2}(\bx,\by)\,.
		\end{gathered}
	\end{equation*}
	Using the previous two estimates in \eqref{eq:ConvEst:Deriv:Pf1} gives \eqref{eq:ConvEst:Deriv}.
\end{proof}

As a consequence of the embedding and characterization properties of the nonlocal function space proved in \Cref{thm:Embedding} and \Cref{thm:Coercivity}, we have the following corollary:

\begin{corollary}\label{cor:RegularityOfHSOp}
    Let $\Omega \subset \bbR^d$ satisfy \eqref{assump:Domain}  and $\rho$ satisfy  \eqref{Assump:KernelSmoothness}. Let $\alpha \geq 2$ and let $u \in \frak{W}^{\delta,2}(\Omega)$. Then there exists $C = C(d,\rho,\Omega,\alpha) >0$ such that
	\begin{equation}\label{eq:ConvEst:Deriv:Cor}
		\begin{split}
			\Vnorm{ \grad K_{\delta,\alpha} u }_{L^2(\Omega)} 
			\leq C [u]_{\frak{W}^{\delta,2}(\Omega)}\,.
		\end{split}
	\end{equation}
\end{corollary}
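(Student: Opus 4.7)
The plan is to read the corollary as an immediate downstream consequence of Theorem \ref{thm:Convolution:DerivativeEstimate}: the theorem has already reduced $\Vnorm{\grad K_{\delta,\alpha}u}_{L^2(\Omega)}^2$ to a sum of two quadratic forms in $u$, namely $B_{\rho,\delta}(u,u)$ and $B_{\underline{\rho},\delta}(u,u)$, so the only remaining task is to dominate each of these by $[u]_{\frak{W}^{\delta,2}(\Omega)}^2$ using the characterization results for the nonlocal energy space already established in Theorem \ref{thm:Coercivity} and Theorem \ref{thm:InvariantHorizon}.

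More precisely, the first step is to invoke Theorem \ref{thm:Convolution:DerivativeEstimate} to write
\begin{equation*}
\Vnorm{\grad K_{\delta,\alpha}u}_{L^2(\Omega)}^2 \leq C\, B_{\rho,\delta}(u,u) + C\, B_{\underline{\rho},\delta}(u,u).
\end{equation*}
The first term is immediate: the equivalence \eqref{eq:KernelEquivalence} of Theorem \ref{thm:Coercivity} gives $B_{\rho,\delta}(u,u) \leq C [u]_{\frak{W}^{\delta,2}(\Omega)}^2$. For the second term I would rely on the fact that $\underline{\rho}$, by construction \eqref{eq:AuxKernelDef} and the assumption \eqref{Assump:KernelSmoothness} on $\rho$, is continuous and compactly supported in $[0, R_0]$, hence bounded from above. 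Although $\underline{\rho}$ is not guaranteed to satisfy a positive lower bound near the origin, only the upper-bound direction of Theorem \ref{thm:Coercivity} is needed here: combining the support and $L^\infty$ bound on $\underline{\rho}$ with the invariance of the seminorm under the choice of truncation $R$ provided by Theorem \ref{thm:InvariantHorizon}, I get
\begin{equation*}
B_{\underline{\rho},\delta}(u,u) = \iint_{\Omega \times \Omega} \underline{\rho}_{\delta,2}(\bx,\by) |u(\bx)-u(\by)|^2 \,\rmd \by\,\rmd \bx \leq C\, [u]_{\frak{W}^{\delta,2}(\Omega),R_0}^2 \leq C\, [u]_{\frak{W}^{\delta,2}(\Omega)}^2.
\end{equation*}

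Summing the two estimates and taking square roots gives \eqref{eq:ConvEst:Deriv:Cor}. I do not anticipate any real obstacle: the delicate kernel-derivative bookkeeping (using \eqref{eq:KernelIntegralDerivativeEstimates:SqDist}, \eqref{eq:PhiDerivativeBounds} and \Cref{lma:ComparabilityOfXandY}) has already been carried out inside Theorem \ref{thm:Convolution:DerivativeEstimate}, so the corollary is essentially a packaging statement. The only mild subtlety is that one must verify the inequality in terms of the seminorm rather than the full $\frak{W}^{\delta,2}$-norm, which is why I route the bound on $B_{\underline{\rho},\delta}$ through the difference-based expression and Theorem \ref{thm:InvariantHorizon} rather than through the continuity estimate \eqref{eq:BilinearForm:Continuity:LowerKernel}.
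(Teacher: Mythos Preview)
Your proposal is correct and matches the paper's approach: the paper states the corollary as an immediate consequence of \Cref{thm:Convolution:DerivativeEstimate} together with the characterization properties in \Cref{thm:Coercivity} (and \Cref{thm:Embedding}), which is exactly the route you take. Your explicit unpacking of the bound on $B_{\underline{\rho},\delta}(u,u)$ via the $L^\infty$ bound on $\underline{\rho}$ and \Cref{thm:InvariantHorizon} is precisely what underlies the upper-bound direction of \Cref{thm:Coercivity}, and your observation about needing the seminorm rather than the full norm is well taken.
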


\begin{theorem}\label{thm:ConvEst}
	Let $\Omega \subset \bbR^d$ satisfy \eqref{assump:Domain}  and $\rho$ satisfy  \eqref{Assump:KernelSmoothness}. Let $\alpha \geq 0$.
	There exists a constant $C = C(d,\rho,\Omega,\alpha)>0$ such that
	\begin{equation}\label{eq:ConvEst:HMinus1}
		\Vnorm{\eta_{\delta} K_{\delta,\alpha}u }_{L^2(\Omega)} + \Vnorm{\eta_{\delta}^2 \grad K_{\delta,\alpha} u}_{L^2(\Omega)} \leq C \Vnorm{u}_{H^{-1}(\Omega)} \qquad \forall u \in H^{-1}(\Omega)\,,
	\end{equation}
	\begin{equation}\label{eq:ConvEst:L2}
		\Vnorm{K_{\delta,\alpha} u}_{L^2(\Omega)} + \Vnorm{\eta_{\delta} \grad K_{\delta,\alpha} u}_{L^2(\Omega)} \leq C \Vnorm{u}_{L^{2}(\Omega)} \qquad \forall u \in L^{2}(\Omega)\,,
	\end{equation}
	and
	\begin{equation}\label{eq:ConvEst:H1}
		\Vnorm{\grad K_{\delta,\alpha} u}_{L^2(\Omega)} \leq C \Vnorm{\grad u}_{L^{2}(\Omega)} \qquad \forall u \in H^{1}(\Omega)\,,
	\end{equation}
\end{theorem}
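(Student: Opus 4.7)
The plan is to prove the three estimates in order of increasing difficulty, with the hardest being \eqref{eq:ConvEst:HMinus1}. The simplest, \eqref{eq:ConvEst:H1}, is immediate from the chain $\Vnorm{\grad K_{\delta,\alpha} u}_{L^2(\Omega)} \leq C [u]_{\frak{W}^{\delta,2}(\Omega)} \leq C \Vnorm{\grad u}_{L^2(\Omega)}$, where the first inequality is \Cref{cor:RegularityOfHSOp} and the second is \Cref{thm:Embedding}.

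For \eqref{eq:ConvEst:L2}, I would first bound $\Vnorm{K_{\delta,\alpha}u}_{L^2(\Omega)}$ by applying H\"older's inequality to the definition to obtain $|K_{\delta,\alpha}u(\bx)|^2 \leq \Phi_{\delta,\alpha}(\bx)^{-1} \int_\Omega \rho_{\delta,\alpha}(\bx,\by) |u(\by)|^2 \, \rmd \by$ and then swap the order of integration. The resulting kernel $\int_\Omega \rho_{\delta,\alpha}(\bx,\by)/\Phi_{\delta,\alpha}(\bx) \, \rmd \bx$ is uniformly bounded in $\by$ by combining the pointwise bound $\Phi_{\delta,\alpha}(\bx)^{-1} \leq C\eta_{\delta}(\bx)^\alpha$ from \eqref{eq:PhiBounds}, the comparability of $\eta_{\delta}(\bx)$ and $\eta_{\delta}(\by)$ on the kernel support from \Cref{lma:ComparabilityOfXandY}, and the symmetry $\rho_{\delta,\alpha}(\bx,\by) = \rho_{\delta,\alpha}(\by,\bx)$. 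For the weighted gradient term I would decompose
\begin{equation*}
\grad K_{\delta,\alpha}u(\bx) = \frac{1}{\Phi_{\delta,\alpha}(\bx)} \int_\Omega \grad_\bx \rho_{\delta,\alpha}(\bx,\by) u(\by) \, \rmd \by - \frac{\grad \Phi_{\delta,\alpha}(\bx)}{\Phi_{\delta,\alpha}(\bx)} K_{\delta,\alpha}u(\bx)\,;
\end{equation*}
the second piece, weighted by $\eta_{\delta}$, is controlled using $\eta_{\delta} |\grad \Phi_{\delta,\alpha}|/\Phi_{\delta,\alpha} \leq C$ (from \eqref{eq:PhiDerivativeBounds} and \eqref{eq:PhiBounds}) together with the just-proved bound on $K_{\delta,\alpha}$, while the first is handled by H\"older against the measure $|\grad_\bx \rho_{\delta,\alpha}(\bx,\by)| \, \rmd \by$, using \eqref{eq:KernelIntegralDerivativeEstimates:SqDist} for the prefactor and, after Fubini, the uniform estimate $\int_\Omega \eta_{\delta}(\bx)^{\alpha+1}|\grad_\bx \rho_{\delta,\alpha}(\bx,\by)| \, \rmd \bx \leq C$, which follows by substituting \Cref{thm:KernelDerivativeEstimates:SqDist} and converting $\eta_{\delta}(\bx)$ weights to $\eta_{\delta}(\by)$ via \Cref{lma:ComparabilityOfXandY}.

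The estimate \eqref{eq:ConvEst:HMinus1} is proved by duality. Writing
\begin{equation*}
\int_\Omega \eta_{\delta}(\bx) K_{\delta,\alpha}u(\bx) \varphi(\bx) \, \rmd \bx = \Vint{u, K_{\delta,\alpha}^*(\eta_{\delta} \varphi)}\,, \quad \varphi \in L^2(\Omega)\,,
\end{equation*}
reduces the task to showing that $\varphi \mapsto K_{\delta,\alpha}^*(\eta_{\delta} \varphi)$ maps $L^2(\Omega)$ boundedly into $H^1_0(\Omega)$. The $L^2$ and gradient-$L^2$ bounds on $K_{\delta,\alpha}^*(\eta_{\delta}\varphi)$ follow by the same H\"older--Fubini strategy as in \eqref{eq:ConvEst:L2}, now using \Cref{cor:KernelIntegral:Weighted:SqDist} to extract the correct power of $\eta_{\delta}(\by)$ from the inner integral and the symmetric version of \Cref{thm:KernelDerivativeEstimates:SqDist} (obtained by swapping the roles of $\bx$ and $\by$) to bound $|\grad_\by \rho_{\delta,\alpha}|$. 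The zero-trace property follows from the observation that as $\by \to \p\Omega$ the effective support of the integrand in $\bx$ collapses into a region where $\eta_{\delta}(\bx) \to 0$ by property iv) of \eqref{assump:Localization}, forcing $K_{\delta,\alpha}^*(\eta_{\delta}\varphi)(\by) \to 0$ for continuous $\varphi$; density of smooth functions and the closedness of $H^1_0(\Omega)$ in $H^1(\Omega)$ extend the conclusion to all $\varphi \in L^2$. The weighted-gradient bound $\Vnorm{\eta_{\delta}^2 \grad K_{\delta,\alpha}u}_{L^2(\Omega)} \leq C\Vnorm{u}_{H^{-1}(\Omega)}$ is treated analogously, after integration by parts in $\bx$ (legitimized by $\eta_{\delta}^2 = 0$ on $\p\Omega$) converts the pairing into $\Vint{u,\Psi(\boldsymbol{\varphi})}$ for a vector-indexed functional $\Psi(\boldsymbol{\varphi})$ admitting the same $H^1_0(\Omega)$ bounds.

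The main technical obstacle is bookkeeping the correct powers of $\eta_{\delta}$ and securing cancellation at each step: the factor $\eta_{\delta}(\bx)^\alpha$ coming from $\Phi_{\delta,\alpha}(\bx)^{-1}$ must be absorbed precisely by weighted integrals of the form $\int_\Omega \eta_{\delta}(\by)^\beta \rho_{\delta,\alpha}(\bx,\by) \, \rmd \by$, for which \Cref{cor:KernelIntegral:Weighted:SqDist} must be applied with the right choice of $\beta$---in some cases requiring a slight extension beyond the range $\beta \in [0,\alpha]$ stated there, but easily verified by the same argument using the comparability in \Cref{lma:ComparabilityOfXandY}. Likewise, establishing the zero-trace property for $K_{\delta,\alpha}^*(\eta_{\delta}\varphi)$ and $\Psi(\boldsymbol{\varphi})$ relies critically on the heterogeneous localization of $\eta_{\delta}$ near $\p \Omega$ given in property iv) of \eqref{assump:Localization}.
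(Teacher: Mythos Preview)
Your proposal is correct and follows essentially the same route as the paper. In particular: for \eqref{eq:ConvEst:H1} the paper also reduces to \Cref{thm:Convolution:DerivativeEstimate} (equivalently \Cref{cor:RegularityOfHSOp}) together with \Cref{thm:Embedding}; for \eqref{eq:ConvEst:L2} the paper uses the identical H\"older--Fubini scheme with the kernel bounds from \Cref{lma:KernelIntegral:SqDist}, \eqref{eq:PhiBounds}, \eqref{eq:PhiDerivativeBounds}, and \eqref{eq:KernelIntegralDerivativeEstimates:SqDist}; and for \eqref{eq:ConvEst:HMinus1} the paper also argues by duality, defining $\varphi_{\delta,\alpha}(\bx) := \int_\Omega \rho_{\delta,\alpha}(\bx,\by)\,\eta_\delta(\by)\Phi_{\delta,\alpha}(\by)^{-1}\varphi(\by)\,\rmd\by$ (which is exactly your $K_{\delta,\alpha}^*(\eta_\delta\varphi)$), showing it lies in $H^1_0(\Omega)$ via the same pointwise-vanishing-plus-density argument you describe, and then pairing against the $H^{-1}$ representation $u = u_0 - \mathrm{div}\,\bu_1$. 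The only cosmetic difference is that for the $\eta_\delta^2\grad K_{\delta,\alpha}u$ term the paper writes the dual test function directly as $\bar\varphi_{\delta,\alpha}(\bx)=\int_\Omega \grad_\by[\rho_{\delta,\alpha}(\bx,\by)/\Phi_{\delta,\alpha}(\by)]\,\eta_\delta^2(\by)\varphi(\by)\,\rmd\by$ rather than phrasing it as an integration by parts, but the content is the same.
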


\begin{proof}
	We first prove \eqref{eq:ConvEst:L2}. By H\"older's inequality, Tonelli's theorem, \eqref{eq:PhiBounds} and \Cref{cor:KernelIntegral:Weighted:SqDist}
	\begin{equation}\label{eq:ConvEst:L2:Pf1}
		\begin{split}
			\Vnorm{K_{\delta,\alpha} u}_{L^2(\Omega)}^2 \leq \int_{\Omega} \left( \frac{\int_{\Omega}\rho_{\delta,\alpha}(\bx,\bz) \, \rmd \bz}{ \Phi_{\delta,\alpha}(\bx) } \right) \left( \frac{1}{\Phi_{\delta,\alpha}(\bx) } \int_{\Omega} \rho_{\delta,\alpha}(\bx,\by) |u(\by)|^2 \, \rmd \by \right) \rmd \bx \leq C \Vnorm{u}_{L^2(\Omega)}^2\,.
		\end{split}
	\end{equation}
	We estimate $\eta_{\delta} \grad K_{\delta,\alpha} u$ similarly, additionally using \eqref{eq:PhiDerivativeBounds} and \eqref{eq:KernelIntegralDerivativeEstimates:SqDist}:
	\begin{equation}\label{eq:ConvEst:L2:Pf2}
		\begin{split}
	&	\Vnorm{\eta_{\delta} \grad K_{\delta,\alpha} u}_{L^2(\Omega)}^2\\
  &\; \leq \int_{\Omega} \left( \eta_{\delta}(\bx) \frac{\int_{\Omega} |\grad_{\bx}  \rho_{\delta,\alpha}(\bx,\by) |\, \rmd \by}{ \Phi_{\delta,\alpha}(\bx) } \right) \left( \frac{\eta_{\delta}(\bx)}{\Phi_{\delta,\alpha}(\bx) } \int_{\Omega} |\grad_{\bx} \rho_{\delta,\alpha}(\bx,\by)| \, |u(\by)|^2 \, \rmd \by \right) \rmd \bx \\
		&\qquad + \int_{\Omega} \left( \frac{\int_{\Omega}  \rho_{\delta,\alpha}(\bx,\by) \, \rmd \by}{ \Phi_{\delta,\alpha}(\bx) } \right)\left( \frac{1}{\Phi_{\delta,\alpha}(\bx) } \int_{\Omega} \rho_{\delta,\alpha}(\bx,\by) |u(\by)|^2 \, \rmd \by \right) \rmd \bx \leq C \Vnorm{u}_{L^2(\Omega)}^2\,.
		\end{split}
	\end{equation}
	Then \eqref{eq:ConvEst:L2:Pf1}-\eqref{eq:ConvEst:L2:Pf2} establish \eqref{eq:ConvEst:L2}.

	Next we prove \eqref{eq:ConvEst:HMinus1}.
	We first recall the representation of distributions in $H^{-1}(\Omega)$. For any $u \in H^{-1}(\Omega)$ there exist $u_0 \in L^2(\Omega)$ and $\bu_1 \in L^2(\Omega;\bbR^d)$ such that
	\begin{equation}\label{eq:H1DualRep:1}
		\Vint{u,\varphi} = \Vint{u_0,\varphi}
  +\Vint{\bu_1,\grad\varphi}\,,  
  \quad \forall \varphi \in H^1_0(\Omega)\,,
	\end{equation}
	with
	\begin{equation}\label{eq:H1DualNorm:1}
		\Vnorm{u}_{H^{-1}(\Omega)}^2 = \inf \left\{ \Vnorm{ u_0 }_{L^2(\Omega)}^2 + \Vnorm{\bu_1}_{L^2(\Omega)}^2 \, : \, u_0 \text{ and } \bu_1 \text{ satisfy } \eqref{eq:H1DualRep:1} \right\}\,.
	\end{equation}
  
	Now let $\varphi \in L^2(\Omega)$ be arbitrary, and define
	\begin{equation*}
		\varphi_{\delta,\alpha}(\bx) := \int_{\Omega} \rho_{\delta,\alpha}(\bx,\by) \frac{\eta_{\delta}(\by)}{\Phi_{\delta,\alpha}(\by)} \varphi(\by) \, \rmd \by\,.
	\end{equation*}
	Then $\varphi_{\delta,\alpha} \in C^1(\Omega)$, and estimates similar to \eqref{eq:ConvEst:L2:Pf1}-\eqref{eq:ConvEst:L2:Pf2} reveal that $\varphi_{\delta,\alpha} \in H^1(\Omega)$ with
	\begin{equation}\label{eq:ConEst:HMinus1:Pf1}
		\Vnorm{\varphi_{\delta,\alpha}}_{H^1(\Omega)} \leq C \Vnorm{\varphi}_{L^2(\Omega)}\,.
	\end{equation}
	Furthermore, if $\varphi \in C^1(\overline{\Omega})$ then by \eqref{eq:ComparabilityOfDistanceFxn1:SqDist}-\eqref{eq:ComparabilityOfDistanceFxn2:SqDist} and \Cref{lma:KernelIntegral:SqDist}
	\begin{equation*}
		|\varphi_{\delta,\alpha}(\bx)| \leq C(\rho,\Omega) \eta_{\delta}(\bx) \int_{\Omega} \rho_{\eta_{\delta}(\by)}(|\by-\bx|) \varphi(\by) \, \rmd \by \leq C \Vnorm{\varphi}_{L^{\infty}(\Omega)} \eta_{\delta}(\bx)\,,
	\end{equation*}
	and so
	\begin{equation*}
		T \varphi_{\delta,\alpha} \equiv 0\;,\quad\forall \varphi \in C^1(\overline{\Omega})\,.
	\end{equation*}
	Let $\varphi_n$ be a sequence in $C^1(\overline{\Omega})$ converging to $\varphi$ in $L^2(\Omega)$. Then by \eqref{eq:ConEst:HMinus1:Pf1}
	\begin{equation*}
		\begin{split}
			\Vnorm{ T \varphi_{\delta,\alpha} }_{H^{1/2}(\p \Omega)} 
			&\leq \Vnorm{ T \varphi_{\delta,\alpha}- T (\varphi_n)_{\delta,\alpha} }_{H^{1/2}(\p \Omega)} \\
			&\leq C \Vnorm{ \varphi_{\delta,\alpha} - (\varphi_n)_{\delta,\alpha} }_{H^{1}(\Omega)} \\
			&\leq C \Vnorm{ \varphi - \varphi_n }_{L^{2}(\Omega)} \to 0 \text{ as } n \to \infty\,.
		\end{split}
	\end{equation*}
	Therefore
	\begin{equation*}
		\varphi_{\delta,\alpha} \in H^1_0(\Omega)\,,\quad\forall \varphi \in L^2(\Omega)\,.
	\end{equation*}
	Now let $u \in H^{-1}(\Omega)$. For arbitrary $\varphi \in L^2(\Omega)$ we will use \eqref{eq:H1DualRep:1} with test function $\varphi_{\delta,\alpha}$ to obtain that $\eta_{\delta} K_{\delta,\alpha} u$ actually defines a function in $L^2(\Omega)$. To begin, let $u_0$ and $\bu_1$ satisfy \eqref{eq:H1DualRep:1}, and write
	\begin{equation}\label{eq:ConEst:HMinus1:Pf2}
				\Vint{ \eta_{\delta} K_{\delta,\alpha} u, \varphi} = \Vint{ u, \varphi_{\delta,\alpha} } = \intdm{\Omega}{u_0 \varphi_{\delta,\alpha} }{\bx} + \intdm{\Omega}{ \bu_1 \cdot \grad (\varphi_{\delta,\alpha} ) }{\bx}\,.
	\end{equation}
	Using \eqref{eq:ComparabilityOfDistanceFxn1:SqDist}-\eqref{eq:ComparabilityOfDistanceFxn2:SqDist}, \eqref{eq:KernelDerivativeEstimate2:SqDist}, and additionally H\"older's inequality and \Cref{lma:KernelIntegral:SqDist},
	\begin{equation}\label{eq:ConEst:HMinus1:Pf3}
		\begin{split}
		\int_{\Omega} & \int_{\Omega} |u_0(\bx)| \frac{\eta_{\delta}(\by)}{\Phi_{\delta,\alpha}(\by)}  \rho_{\delta,\alpha}(\bx,\by) |\varphi(\by)| \, \rmd \bx \, \rmd \by\\
  &\qquad + \int_{\Omega} \int_{\Omega} |\bu_1(\bx)| \frac{\eta_{\delta}(\by)}{\Phi_{\delta,\alpha}(\by)}  |\grad_{\bx} \rho_{\delta,\alpha}(\bx,\by)| |\varphi(\by)| \, \rmd \bx \, \rmd \by \\
		&\leq C
  \int_{\Omega} \int_{\Omega} \big( \rho_{\delta,0}(\bx,\by) + \underline{\rho}_{\delta,0}(\bx,\by) \big)  \big( |u_0(\bx)| + |\bu_1(\bx)| \big)  |\varphi(\by)| \, \rmd \by \, \rmd \bx \\
	&\leq C \left( \int_{\Omega} \int_{\Omega} \big( \rho_{\delta,0}(\bx,\by) + \underline{\rho}_{\delta,0}(\bx,\by) \big)  |u_0(\bx)|^2 \, \rmd \by \, \rmd \bx \right)^{1/2} \\
  &\qquad\qquad
				\left( \int_{\Omega} \int_{\Omega} \big( \rho_{\delta,0}(\bx,\by) + \underline{\rho}_{\delta,0}(\bx,\by) \big)  |\varphi(\by)|^2  \, \rmd \by \, \rmd \bx \right)^{1/2} \\ 
			&\qquad + C \left( \int_{\Omega} \int_{\Omega} \big( \rho_{\delta,0}(\bx,\by) + \underline{\rho}_{\delta,0}(\bx,\by) \big)  |\bu_1(\bx)|^2  \, \rmd \by \, \rmd \bx \right)^{1/2}
   \\
  &\qquad\qquad\;
				\left( \int_{\Omega} \int_{\Omega} \big( \rho_{\delta,0}(\bx,\by) + \underline{\rho}_{\delta,0}(\bx,\by) \big)  |\varphi(\by)|^2  \, \rmd \by \, \rmd \bx \right)^{1/2} \\
		&\leq C \left( \Vnorm{u_0}_{L^2(\Omega)} + \Vnorm{\bu_1}_{L^2(\Omega)} \right) \Vnorm{\varphi}_{L^2(\Omega)}\,.
		\end{split}
	\end{equation}
	So can apply Fubini's theorem in \eqref{eq:ConEst:HMinus1:Pf2} to obtain the identity
	\begin{equation*}
		\begin{split}
		\Vint{ \eta_{\delta} K_{\delta,\alpha} u, \varphi} &= \intdm{\Omega}{ \left( \frac{\eta_{\delta}(\bx)}{\Phi_{\delta,\alpha}(\bx)} \int_{\Omega} \rho_{\delta,\alpha}(\bx,\by) u_0(\by) \, \rmd \by \right) \varphi(\bx) }{\bx} \\
		&\quad + \intdm{\Omega}{ \left( \frac{\eta_{\delta}(\bx)}{\Phi_{\delta,\alpha}(\bx)} \int_{\Omega} \grad_{\by} \rho_{\delta,\alpha}(\bx,\by) \bu_1(\by) \, \rmd \by \right) \varphi(\bx) }{\bx}\,.
		\end{split}
	\end{equation*}
	The estimate \eqref{eq:ConEst:HMinus1:Pf3} shows that this identity is independent of the choice of $u_0$ and $\bu_1$, and so $\eta_{\delta} K_{\delta,\alpha} u$ defines a measurable function on $\Omega$. Further, by \eqref{eq:ConEst:HMinus1:Pf3} and \eqref{eq:H1DualNorm:1}
	\begin{equation*}
		\Vnorm{\eta_{\delta} K_{\delta,\alpha} u}_{L^2(\Omega)} = \sup_{\Vnorm{\varphi}_{L^2(\Omega)} \leq 1} \Vint{ \eta_{\delta} K_{\delta,\alpha} u, \varphi } \leq C \Vnorm{u}_{H^{-1}(\Omega)}\,,
	\end{equation*}
	which is the first half of \eqref{eq:ConvEst:HMinus1}.
	For the other half, since for any $\varphi \in C^{\infty}_c(\Omega)$
	\begin{equation*}
		\Vint{\eta_{\delta}^2 \grad K_{\delta,\alpha} u, \varphi} = \Vint{ u, \bar{\varphi}_{\delta,\alpha} }\,, \quad \text{ where } \bar{\varphi}_{\delta,\alpha}(\bx) = \int_{\Omega} \grad_{\by} \left[ \frac{\rho_{\delta,\alpha}(\bx,\by) }{\Phi_{\delta,\alpha}(\by)} 
 \right] \eta_{\delta}^2(\by) \varphi(\by) \, \rmd \by\,,
	\end{equation*}
	the estimate $\Vnorm{\eta_{\delta}^2 \grad K_{\delta,\alpha} u}_{L^2(\Omega)} \leq C \Vnorm{u}_{H^{-1}(\Omega)}$ is established using a similar process. 
 
	Finally, by \Cref{thm:Convolution:DerivativeEstimate}, in order to establish \eqref{eq:ConvEst:H1} it suffices to show
	\begin{equation*}
		\begin{split}
			& \iintdm{\Omega}{\Omega}{  \rho_{\delta,2}(\bx,\by) |u(\bx)-u(\by)|^2 }{\by}{\bx} \\
			& \quad + \iintdm{\Omega}{\Omega}{  \underline{\rho}_{\delta,2}(\bx,\by) |u(\bx)-u(\by)|^2 }{\by}{\bx} \leq C \Vnorm{\grad u}_{L^2(\Omega)}^2\,.
		\end{split}
	\end{equation*}
	But this is a consequence of the embedding and characterization properties proved in \Cref{thm:Embedding} and \Cref{thm:Coercivity}, additionally using the upper bound on $\underline{\rho}$.
\end{proof}

\begin{lemma}\label{lma:SupportOfConv}
	Let $\Omega \subset \bbR^d$ satisfy \eqref{assump:Domain}  and $\rho$ satisfy  \eqref{Assump:KernelSmoothness}. Suppose that $\varphi \in L^2(\Omega)$ has compact support in $\Omega$, i.e. there exists $r(\varphi) > 0$ such that
	\begin{equation*}
		\supp \varphi \subset \{ \bx \in \Omega \, : \, \dist(\bx,\p \Omega) > \sqrt{r(\varphi)} \} = \bbR^d \setminus \Omega_{\sqrt{r(\varphi)}}\,.
	\end{equation*}
	Then $K_{\delta,\alpha} \varphi$ has compact support with
	\begin{equation}\label{eq:SupportOfConv}
		\supp K_{\delta,\alpha} \varphi \subset \left\{ \bx \in \Omega \, : \, \dist(\bx,\p \Omega) > \frac{1- \bar{\kappa}_0 R_0 \sqrt{\delta}}{1+\bar{\kappa}_0 R_0 \sqrt{\delta} } \sqrt{r(\varphi)} \right\}\,.
	\end{equation}
\end{lemma}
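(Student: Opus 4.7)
The plan is to unpack the definition of $K_{\delta,\alpha}\varphi(\bx)$, identify precisely which $\by$ can contribute to the integral (as a function of $\bx$), and then apply \Cref{lma:BoundaryRegions:SqDist} to show that for $\bx$ close enough to $\p \Omega$ this contributing region lies entirely in $\Omega_{\sqrt{r(\varphi)}}$, on which $\varphi$ vanishes.

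First, recall from the definition of $\rho_{\delta,\alpha}(\bx,\by)$ in \eqref{eq:OperatorKernelDef} together with the support condition $\supp \rho \subset [0,R_0)$ from \eqref{Assump:KernelSmoothness} that
$$\rho_{\delta,\alpha}(\bx,\by) = 0 \quad \text{unless} \quad |\by-\bx| < R_0 \eta_{\delta}(\bx) \ \text{or} \ |\by-\bx| < R_0 \eta_{\delta}(\by).$$
Therefore $K_{\delta,\alpha}\varphi(\bx) = 0$ whenever both of the sets
$$S_1(\bx) := \{ \by \in \Omega : |\by-\bx| \leq R_0 \eta_{\delta}(\bx) \} \quad \text{and} \quad S_2(\bx) := \{ \by \in \Omega : |\by-\bx| \leq R_0 \eta_{\delta}(\by) \}$$
are disjoint from $\supp \varphi$, i.e., both are contained in $\Omega_{\sqrt{r(\varphi)}}$.

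Second, set $c := \frac{1-\bar{\kappa}_0 R_0 \sqrt{\delta}}{1+\bar{\kappa}_0 R_0 \sqrt{\delta}}$ and fix any $\bx \in \Omega$ with $\dist(\bx,\p \Omega) \leq c \sqrt{r(\varphi)}$. The elementary inequalities
$$c \leq \frac{1}{1+\bar{\kappa}_0 R_0 \sqrt{\delta}} \quad \text{and} \quad c \leq 1-\bar{\kappa}_0 R_0 \sqrt{\delta}$$
(the latter since $(1-\bar{\kappa}_0 R_0 \sqrt{\delta})(1+\bar{\kappa}_0 R_0 \sqrt{\delta}) \leq 1$) guarantee that both hypotheses of \eqref{eq:XSetInsideRLayer} and \eqref{eq:YSetInsideRLayer} (applied with $r = \sqrt{r(\varphi)}$) hold simultaneously. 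Hence $S_1(\bx) \cup S_2(\bx) \subset \Omega_{\sqrt{r(\varphi)}}$, which forces the integrand of $K_{\delta,\alpha}\varphi(\bx)$ to vanish identically in $\by$, giving $K_{\delta,\alpha}\varphi(\bx) = 0$.

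This completes the proof, and one deduces \eqref{eq:SupportOfConv} by taking the complement in $\Omega$. There is no serious obstacle here: the argument is essentially a bookkeeping exercise that combines the support constraint on $\rho$ with the boundary layer inclusions already established in \Cref{lma:BoundaryRegions:SqDist}. The only mildly delicate point is choosing a single constant $c$ that simultaneously satisfies the two distinct sufficient conditions arising from the two pieces of the kernel $\rho_{\delta,\alpha}(\bx,\by)$; the symmetric form $(1-\bar{\kappa}_0 R_0 \sqrt{\delta})/(1+\bar{\kappa}_0 R_0 \sqrt{\delta})$ is a convenient common lower bound.
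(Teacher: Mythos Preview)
Your proof is correct and follows essentially the same approach as the paper: both invoke \Cref{lma:BoundaryRegions:SqDist} with $r = \sqrt{r(\varphi)}$ to show that for $\bx$ in the indicated boundary strip, the two support regions $\{|\by-\bx|\le R_0\eta_{\delta}(\bx)\}$ and $\{|\by-\bx|\le R_0\eta_{\delta}(\by)\}$ lie inside $\Omega_{\sqrt{r(\varphi)}}$ and hence miss $\supp\varphi$. Your version is slightly more explicit in verifying that the single constant $c=\frac{1-\bar{\kappa}_0 R_0\sqrt{\delta}}{1+\bar{\kappa}_0 R_0\sqrt{\delta}}$ simultaneously satisfies the hypotheses of both \eqref{eq:XSetInsideRLayer} and \eqref{eq:YSetInsideRLayer}, which the paper leaves implicit.
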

\begin{proof}
	By \Cref{lma:BoundaryRegions:SqDist} whenever $\dist(\bx, \p \Omega) <\frac{1- \bar{\kappa}_0 R_0 \sqrt{\delta}}{1+\bar{\kappa}_0 R_0 \sqrt{\delta} } \sqrt{r(\varphi)} $
	we have
	\begin{equation*}
		\{ \by \, : |\bx-\by| \leq R_0 \eta_{\delta}(\bx) \} \subset \Omega_{\sqrt{r(\varphi)}} \text{ and } \{ \by \, : |\bx-\by| \leq R_0 \eta_{\delta}(\by) \} \subset \Omega_{\sqrt{r(\varphi)}}\,.
	\end{equation*}
	Therefore, the domains of integration in the integrals defining $K_{\delta,\alpha}\varphi$ and $\supp \varphi$ are disjoint, and $K_{\delta,\alpha} \varphi(\bx) = 0$. 
\end{proof}

\begin{theorem}
	Let $\Omega \subset \bbR^d$ satisfy \eqref{assump:Domain}  and $\rho$ satisfy  \eqref{Assump:KernelSmoothness}. Let $\alpha \geq 0$.
	There exists a constant $C = C(d,\rho,\Omega,\alpha)>0$ such that
	\begin{equation}\label{eq:ConvAdjEst:HMinus1}
		\Vnorm{K_{\delta,\alpha}^* u }_{H^{-1}(\Omega)} + \Vnorm{\eta_{\delta} K_{\delta,\alpha}^* u }_{L^2(\Omega)} + \Vnorm{\eta_{\delta}^2 \grad K_{\delta,\alpha}^* u}_{L^2(\Omega)} \leq C \Vnorm{u}_{H^{-1}(\Omega)}\,, \quad \forall u \in H^{-1}(\Omega)
	\end{equation}
 and
\begin{equation}\label{eq:ConvAdjEst:L2}
		\Vnorm{K_{\delta,\alpha}^* u}_{L^2(\Omega)} + \Vnorm{\eta_{\delta} \grad K_{\delta,\alpha}^* u}_{L^2(\Omega)} \leq C \Vnorm{u}_{L^{2}(\Omega)}\,, \quad \forall u \in L^{2}(\Omega)\,.
	\end{equation}
\end{theorem}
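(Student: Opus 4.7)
These are the adjoint counterparts of the bounds just proved for $K_{\delta,\alpha}$, and the plan is to follow the same three-step pattern. First, \eqref{eq:ConvAdjEst:L2} is obtained by direct integral estimates. Second, the $H^{-1}$ control of $K^*_{\delta,\alpha}u$ follows by simple duality using \Cref{lma:SupportOfConv} to land the transported test function in $H^1_0(\Omega)$. Third, the two weighted $L^2$ bounds in \eqref{eq:ConvAdjEst:HMinus1} mirror the dual-test-function construction in the proof of \eqref{eq:ConvEst:HMinus1}, with the roles of $K_{\delta,\alpha}$ and $K^*_{\delta,\alpha}$ exchanged.

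For \eqref{eq:ConvAdjEst:L2}, Cauchy--Schwarz applied to the defining integral of $K^*_{\delta,\alpha}u(\bx)$ factors one copy of $\rho_{\delta,\alpha}(\bx,\by)/\Phi_{\delta,\alpha}(\by)$ weighted by $|u(\by)|^2$ against a second copy weighted only by itself. The bound $\Phi_{\delta,\alpha}(\by)^{-1}\leq C\eta_\delta(\by)^{\alpha}$ from \eqref{eq:PhiBounds} reduces the weight-free factor to $\int_\Omega \eta_\delta(\by)^{\alpha}\rho_{\delta,\alpha}(\bx,\by)\,\rmd\by$, which is uniformly bounded by \Cref{cor:KernelIntegral:Weighted:SqDist} with $\beta=\alpha$. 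The remaining factor, after Fubini and the symmetric weighted estimate (the same corollary applied in the $\bx$-variable, justified by the symmetry $\rho_{\delta,\alpha}(\bx,\by)=\rho_{\delta,\alpha}(\by,\bx)$), delivers the $L^2$ bound. The estimate for $\eta_\delta \grad K^*_{\delta,\alpha}u$ is the same calculation with $\grad_\bx \rho_{\delta,\alpha}$ in place of $\rho_{\delta,\alpha}$, absorbing the extra $\eta_\delta$ weight via \eqref{eq:KernelDerivativeEstimate2:SqDist} and the weighted derivative estimate \eqref{eq:KernelIntegralDerivativeEstimates:SqDist}.

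For the $H^{-1}$--bound, the duality identity
\begin{equation*}
\Vint{K^*_{\delta,\alpha}u,\varphi}=\Vint{u,K_{\delta,\alpha}\varphi}, \qquad \varphi\in C^{\infty}_c(\Omega),
\end{equation*}
combined with \Cref{lma:SupportOfConv} places $K_{\delta,\alpha}\varphi$ in $H^1_0(\Omega)$ (it retains compact support in $\Omega$), and \eqref{eq:ConvEst:L2} together with \eqref{eq:ConvEst:H1} gives $\Vnorm{K_{\delta,\alpha}\varphi}_{H^1_0}\leq C\Vnorm{\varphi}_{H^1_0}$. Taking the supremum over $\varphi$ in the $H^1_0$-unit ball, and using density of $C^{\infty}_c(\Omega)$ in $H^1_0(\Omega)$, yields $\Vnorm{K^*_{\delta,\alpha}u}_{H^{-1}}\leq C\Vnorm{u}_{H^{-1}}$.

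For the weighted $L^2$ bounds, fix $\varphi\in L^2(\Omega)$ and write
\begin{equation*}
\Vint{\eta_\delta K^*_{\delta,\alpha}u,\varphi}=\Vint{K^*_{\delta,\alpha}u,\eta_\delta\varphi}=\Vint{u,K_{\delta,\alpha}(\eta_\delta\varphi)}.
\end{equation*}
The entire task reduces to showing $K_{\delta,\alpha}(\eta_\delta\varphi)\in H^1_0(\Omega)$ with $\Vnorm{K_{\delta,\alpha}(\eta_\delta\varphi)}_{H^1}\leq C\Vnorm{\varphi}_{L^2}$: the $L^2$ estimate follows from H\"older together with the comparability $\eta_\delta(\bx)\sim\eta_\delta(\by)$ on the kernel support (\Cref{lma:ComparabilityOfXandY}) and \Cref{cor:KernelIntegral:Weighted:SqDist}; the gradient estimate combines \eqref{eq:KernelIntegralDerivativeEstimates:SqDist} with \eqref{eq:PhiDerivativeBounds} after differentiating under the integral; and the vanishing trace is obtained by approximating $\varphi$ in $L^2$ by $C^{\infty}_c(\Omega)$ functions and applying \Cref{lma:SupportOfConv} to ensure that each approximant produces a test function of compact support in $\Omega$. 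The bound on $\eta_\delta^2\grad K^*_{\delta,\alpha}u$ is handled by the same template with
\begin{equation*}
\bar\varphi(\by):=\frac{1}{\Phi_{\delta,\alpha}(\by)}\int_\Omega \eta_\delta(\bx)^2\,\varphi(\bx)\,\grad_\bx\rho_{\delta,\alpha}(\bx,\by)\,\rmd\bx,
\end{equation*}
so that $\Vint{\eta_\delta^2\grad K^*_{\delta,\alpha}u,\varphi}=\Vint{u,\bar\varphi}$. The main obstacle across this third part is tracking the weights $\eta_\delta(\bx)$ and $\eta_\delta(\by)$ through the Fubini interchanges so that the weighted derivative estimate \eqref{eq:KernelIntegralDerivativeEstimates:SqDist} applies with a value of $\beta$ in the admissible range; comparability of $\eta_\delta$ across the support of the kernel is the essential lever that lets every such weighted integral be reduced to one in the admissible range.
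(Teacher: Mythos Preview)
Your proposal is correct and follows essentially the same approach as the paper: the $H^{-1}$ bound is proved by duality via \Cref{lma:SupportOfConv} and the $H^1$-continuity of $K_{\delta,\alpha}$ (exactly as in the paper's proof of \eqref{eq:ConvAdjEst:HMinus1:Pf1}), the weighted $L^2$ bounds are handled by the same test-function construction used for \eqref{eq:ConvEst:HMinus1} with $K_{\delta,\alpha}$ and $K_{\delta,\alpha}^*$ swapped, and \eqref{eq:ConvAdjEst:L2} is the direct Cauchy--Schwarz/Fubini argument the paper alludes to. The only cosmetic difference is that for the trace-zero step you route through $C^\infty_c$ approximants and \Cref{lma:SupportOfConv}, whereas the paper (in the analogous step for \eqref{eq:ConvEst:HMinus1}) uses a pointwise decay estimate and $C^1(\overline{\Omega})$ approximants; both are equally valid.
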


\begin{proof}
	We only prove 
	\begin{equation}\label{eq:ConvAdjEst:HMinus1:Pf1}
		\Vnorm{K_{\delta,\alpha}^* u }_{H^{-1}(\Omega)} \leq C \Vnorm{u}_{H^{-1}(\Omega)}\,;
	\end{equation}
	the estimates for $\eta_{\delta} K_{\delta,\alpha}^* u$ and $\eta_{\delta}^2 \grad K_{\delta,\alpha}^* u$ are established similarly to \eqref{eq:ConvEst:HMinus1}.
	
	It is a consequence of \Cref{lma:SupportOfConv} that
	\begin{equation*}
		T [ K_{\delta,\alpha} \varphi] \equiv 0\,,\quad \forall  \varphi \in C^{\infty}_c(\Omega)\,.
	\end{equation*}
	Thanks to the continuity of the operator $K_{\delta,\alpha}$ with respect to the $H^1(\Omega)$-convergence implied by \eqref{eq:ConvEst:L2}-\eqref{eq:ConvEst:H1}, and thanks to continuity in $H^1(\Omega)$ of the trace operator, we conclude that $K_{\delta,\alpha} \varphi \in H^1_0(\Omega)$ whenever $\varphi \in H^1_0(\Omega)$. Hence 
	\begin{equation}\label{eq:ConvAdjEst:HMinus1:Pf2}
	| \Vint{ 
 \varphi , K_{\delta,\alpha}^* u
}_{H^1_0(\Omega),H^{-1}(\Omega)} | = | \Vint{ u, K_{\delta,\alpha} \varphi }_{H^1_0(\Omega),H^{-1}(\Omega)} | \leq C \Vnorm{u}_{H^{-1}(\Omega)} \Vnorm{K_{\delta,\alpha} \varphi }_{H^{1}(\Omega)}\,,
	\end{equation}
	and so by \eqref{eq:ConvEst:L2} and \eqref{eq:ConvEst:H1} we conclude \eqref{eq:ConvAdjEst:HMinus1:Pf1}.
	
	The proof of \eqref{eq:ConvAdjEst:L2} is similar to the proof of \eqref{eq:ConvEst:L2}.
\end{proof}

\subsection{Convergence in the localization limit}\label{sec:Localization}

\begin{theorem}\label{thm:ConvergenceOfConv}
	Let $\Omega \subset \bbR^d$ satisfy \eqref{assump:Domain} and $\rho$ satisfy \eqref{Assump:KernelSmoothness}. Let $\alpha \geq 0$ and $u \in L^2(\Omega)$. Then
\begin{equation}\label{eq:ConvergenceOfConv}
		\lim\limits_{\delta \to 0} \Vnorm{K_{\delta,\alpha} u - u}_{L^2(\Omega)} = 0
	\end{equation}
	and	\begin{equation}\label{eq:ConvergenceOfConvAdj}
		\lim\limits_{\delta \to 0} \Vnorm{K_{\delta,\alpha}^* u - u}_{L^2(\Omega)} = 0\,.
	\end{equation}
\end{theorem}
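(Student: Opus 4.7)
The plan is to deduce both limits from a uniform $L^2$-boundedness of the operators combined with density of smooth functions, with the convergence on smooth test functions verified directly. Because $\Vnorm{K_{\delta,\alpha}w}_{L^2(\Omega)} + \Vnorm{K^*_{\delta,\alpha}w}_{L^2(\Omega)} \leq C\Vnorm{w}_{L^2(\Omega)}$ uniformly in $\delta \in (0,\delta_0)$ by \eqref{eq:ConvEst:L2}--\eqref{eq:ConvAdjEst:L2}, a standard three-$\veps$ argument using density of $C^{\infty}_c(\Omega)$ in $L^2(\Omega)$ reduces both \eqref{eq:ConvergenceOfConv} and \eqref{eq:ConvergenceOfConvAdj} to the case $u = v \in C^{\infty}_c(\Omega)$: given $\veps > 0$, pick $v \in C^{\infty}_c(\Omega)$ with $\Vnorm{u-v}_{L^2(\Omega)} < \veps$, and estimate
\begin{equation*}
\Vnorm{K_{\delta,\alpha}u - u}_{L^2(\Omega)} \leq \Vnorm{K_{\delta,\alpha}(u-v)}_{L^2(\Omega)} + \Vnorm{K_{\delta,\alpha}v - v}_{L^2(\Omega)} + \Vnorm{v-u}_{L^2(\Omega)} \leq (C+1)\veps + \Vnorm{K_{\delta,\alpha}v - v}_{L^2(\Omega)},
\end{equation*}
and analogously for $K^*_{\delta,\alpha}$.

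For smooth $v \in C^{\infty}_c(\Omega) \subset H^1(\Omega)$, the convergence $K_{\delta,\alpha}v \to v$ follows at once from the error estimate \eqref{eq:KdeltaError} combined with the embedding of \Cref{thm:Embedding}: one obtains $\Vnorm{K_{\delta,\alpha}v - v}_{L^2(\Omega)} \leq C\delta\,[v]_{\frak{W}^{\delta,2}(\Omega)} \leq C\delta\Vnorm{\grad v}_{L^2(\Omega)} \to 0$ as $\delta \to 0$.

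For $K^*_{\delta,\alpha}v$, I would decompose
\begin{equation*}
K^*_{\delta,\alpha}v(\bx) - v(\bx) = \int_{\Omega}\frac{\rho_{\delta,\alpha}(\bx,\by)}{\Phi_{\delta,\alpha}(\by)}(v(\by)-v(\bx))\,\rmd\by + v(\bx)\left[\int_{\Omega}\frac{\rho_{\delta,\alpha}(\bx,\by)}{\Phi_{\delta,\alpha}(\by)}\,\rmd\by - 1\right].
\end{equation*}
The first integral is controlled pointwise by $C\delta\Vnorm{\grad v}_{L^{\infty}(\Omega)}$, using the bound $|v(\by)-v(\bx)| \leq \Vnorm{\grad v}_{L^\infty(\Omega)}|\bx-\by|$, the comparison $|\bx-\by| \leq C\eta_{\delta}(\bx) \leq C\delta$ on the kernel's support (via \Cref{lma:ComparabilityOfXandY} and assumption iv)), and the integral estimates of \Cref{cor:boundonPhi} and \Cref{lma:KernelIntegral:SqDist}. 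For the normalization correction, I would exploit compact support: $\supp v \subset \Omega \setminus \Omega_r$ for some $r > 0$, so for $\delta$ sufficiently small (namely, $\sqrt{\delta/\kappa_0} + R_0\delta < r$), both $\supp v$ and its $R_0\eta_{\delta}$-neighborhood lie in the region $\{\bx \in \Omega : \eta_{\delta}(\bx) = \delta\}$. There $\rho_{\delta,\alpha}(\bx,\by) = \rho_{\delta,\alpha}(|\bx-\by|)$ is radial and $\Phi_{\delta,\alpha}$ equals the constant interior value, so for $\bx \in \supp v$ the inner integral reduces to $\Phi_{\delta,\alpha}(\bx)/\Phi_{\delta,\alpha}(\bx) = 1$, while for $\bx \notin \supp v$ the prefactor $v(\bx)$ vanishes. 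The normalization correction is therefore identically zero for small $\delta$, and one concludes $\Vnorm{K^*_{\delta,\alpha}v - v}_{L^2(\Omega)} = O(\delta) \to 0$.

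The main obstacle is the adjoint case: unlike $K_{\delta,\alpha}$, the kernel $\rho_{\delta,\alpha}(\bx,\by)/\Phi_{\delta,\alpha}(\by)$ does not integrate to $1$ in $\by$, so the normalization defect must be controlled separately. The specific structure of the heterogeneous localization function $\eta_{\delta}$---\emph{exactly} constant on a layer away from $\p\Omega$---is what makes this defect vanish identically on compactly supported test functions once $\delta$ is small. This rigidity of $\eta_{\delta}$, rather than merely its smallness, is the essential ingredient that closes the argument via density.
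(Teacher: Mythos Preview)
Your proof is correct and takes a genuinely different route from the paper. The paper argues pointwise almost-everywhere convergence of $K_{\delta,\alpha}u$ to $u$ at Lebesgue points of a general $u\in L^2(\Omega)$ (by observing that for fixed $\bx$ and small $\delta$ the operator reduces to a standard mollifier), and then invokes the uniform $L^2$ bound \eqref{eq:ConvEst:L2} together with the dominated convergence theorem; the adjoint is dismissed with ``the same argument.'' You instead reduce by density to $v\in C^\infty_c(\Omega)$ and then obtain quantitative $O(\delta)$ bounds: for $K_{\delta,\alpha}$ you cleanly recycle \eqref{eq:KdeltaError} and \Cref{thm:Embedding}, which is slicker than redoing a Lebesgue-point computation; for $K^*_{\delta,\alpha}$ your explicit splitting into a Lipschitz-controlled difference term and a normalization defect that vanishes identically once $\supp v$ sits inside the constant-$\eta_\delta$ region is more transparent than the paper's one-line reference. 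Both approaches ultimately rest on the same structural fact---$\eta_\delta\equiv\delta$ away from $\p\Omega$---but yours avoids the Lebesgue differentiation theorem, gives an explicit rate on smooth data, and makes the adjoint argument fully explicit rather than leaving it as ``similar.''
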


\begin{proof}
	First we prove \eqref{eq:ConvergenceOfConv}. It suffices to show that
	\begin{equation}\label{eq:ConvergenceOfConv:Pf1}
		K_{\delta,\alpha}u(\bx) \to u(\bx) \text{ almost everywhere in } \Omega\,,
	\end{equation}
	since then \eqref{eq:ConvEst:L2} with the dominated convergence theorem implies \eqref{eq:ConvergenceOfConv}. 
	
	Fix a Lebesgue point $\bx \in \Omega$ of $u$, so that 
	\begin{equation*}
		\lim\limits_{r \to 0} \fint_{B(\bx,r)} |u(\by) - u(\bx)| \, \rmd \by = 0
	\end{equation*}
	holds. Choose $\bar{\delta}_0$ depending on $\bx$ such that
	\begin{equation}\label{eq:DistAway}
		\dist(\bx,\p \Omega) \geq \frac{1}{\sqrt{\kappa_0}} \frac{1+\bar{\kappa}_0 R_0 \sqrt{\bar{\delta}_0} }{1 - \bar{\kappa}_0 R_0 \sqrt{\bar{\delta}_0} } \sqrt{\bar{\delta}_0}\,.
	\end{equation}
	Then \eqref{eq:XSetOutsideRLayer} and \eqref{eq:YSetOutsideRLayer} hold and $\bx \in \bbR^d \setminus \Omega_{\sqrt{\delta/\kappa_0}}$ for all $\delta < \bar{\delta}_0$.
	Therefore by definition of $\eta_{\delta}$
	\begin{equation*}
		\Phi_{\delta,\alpha}(\bx) = \int_{B(\bx,R_0 \delta)} \frac{1}{\delta^{d+\alpha}} \rho \left( \frac{|\by-\bx|}{\delta} \right) \, \rmd \by = \bar{\rho} \delta^{-\alpha}\,, \quad\;\forall \delta < \bar{\delta}_0
	\end{equation*}
	and so
	\begin{equation*}
		\begin{split}
			K_{\delta,\alpha} u(\bx) &= \frac{ 1 }{2 \Phi_{\delta,\alpha}(\bx)} \Bigg[
			\int_{ \{ |\by-\bx| \leq R_0 \eta_{\delta}(\bx) \} \cap (\bbR^d \setminus \Omega_{\sqrt{\delta/\kappa_0}}  )}
			\frac{1}{\eta_{\delta}(\bx)^{d+\alpha}} {\rho} \left( \frac{|\by-\bx|}{\eta_{\delta}(\bx)} \right) u(\by) \, \rmd \by \\
			&\qquad + 		\int_{ \{ |\by-\bx| \leq R_0 \eta_{\delta}(\bx) \} \cap (\bbR^d \setminus \Omega_{\sqrt{\delta/\kappa_0}}  )}
			\frac{1}{\eta_{\delta}(\by)^{d+\alpha}} {\rho} \left( \frac{|\by-\bx|}{\eta_{\delta}(\by)} \right) u(\by) \, \rmd \by \Bigg] \\
			&= \frac{ \delta^{\alpha} }{ \bar{\rho} }
			\int_{ B(\bx,R_0 \delta) }
			\frac{1}{\delta^{d+\alpha}} {\rho} \left( \frac{|\by-\bx|}{\delta} \right) u(\by) \, \rmd \by \\
			&=
			\int_{ B(\bx, \delta) }
			\frac{1}{\delta^{d}} \rho_1 \left( \frac{\by-\bx}{\delta} \right) u(\by) \, \rmd \by \,,\quad\;\forall \delta < \bar{\delta}_0\,,
		\end{split}
	\end{equation*}
	where, for $\bar{\rho}$ given by \eqref{eq:DefnOfL1NormOfRho} below,
 $\rho_1(\bx) := \frac{\rho(|\bx|)}{ \bar{\rho} }$ is a continuous function with support in $B(0,1)$ and satisfies $\Vnorm{\rho_1}_{L^1(\bbR^d)} = 1$. So for $\delta < \bar{\delta}_0$ we have
	\begin{equation*}
		|K_{\delta,\alpha} u(\bx) - u(\bx)| \leq \int_{ B(\bx,\delta) }
		\frac{1}{\delta^{d}} \rho_1 \left( \frac{\by-\bx}{\delta} \right) |u(\by)-u(\bx)| \, \rmd \by \leq C \fint_{B(\bx,\delta)} |u(\by)-u(\bx)| \, \rmd \by \to 0
	\end{equation*}
	as $\delta \to 0$.
	This limit holds for all Lebesgue points $\bx \in \Omega$ of $u$, that is, almost everywhere in $\Omega$, so \eqref{eq:ConvergenceOfConv:Pf1} is established.
	
	The proof of \eqref{eq:ConvergenceOfConvAdj} follows the same argument, with \eqref{eq:ConvAdjEst:L2} in place of \eqref{eq:ConvEst:L2}.
\end{proof}
It is straightforward to prove the following corollary using the same method:
\begin{corollary}\label{cor:ConvergenceOfConv}
    Let $\Omega \subset \bbR^d$ satisfy \eqref{assump:Domain} and $\rho$ satisfy \eqref{Assump:KernelSmoothness}. Let $\alpha \geq 0$ and $u \in H^1(\Omega)$. Then
\begin{equation}\label{eq:ConvergenceOfConv:H1}
		\lim\limits_{\delta \to 0} \Vnorm{K_{\delta,\alpha} u - u}_{H^1(\Omega)} = 0
	\end{equation}
	and
\begin{equation}\label{eq:ConvergenceOfConvAdj:H1}
		\lim\limits_{\delta \to 0} \Vnorm{K_{\delta,\alpha}^* u - u}_{H^1(\Omega)} = 0\,.
	\end{equation}
\end{corollary}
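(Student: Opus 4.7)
The $L^2$ parts of \eqref{eq:ConvergenceOfConv:H1} and \eqref{eq:ConvergenceOfConvAdj:H1} are immediate from \Cref{thm:ConvergenceOfConv}, so the task reduces to showing $\grad K_{\delta,\alpha} u \to \grad u$ in $L^2(\Omega)$ (and the analogous statement for $K_{\delta,\alpha}^*$). The plan is a standard ``uniform bound plus density'' argument. Combining the uniform $H^1 \to H^1$ bound on $K_{\delta,\alpha}$ supplied by \eqref{eq:ConvEst:L2}-\eqref{eq:ConvEst:H1} with the density of $C^\infty(\overline{\Omega})$ in $H^1(\Omega)$ (valid since $\Omega$ is $C^2$), I would first reduce the problem to establishing $\Vnorm{K_{\delta,\alpha} v - v}_{H^1(\Omega)} \to 0$ for $v \in C^\infty(\overline{\Omega})$: given $\veps > 0$ and $v$ with $\Vnorm{u - v}_{H^1(\Omega)} < \veps$, the triangle inequality yields $\Vnorm{K_{\delta,\alpha} u - u}_{H^1(\Omega)} \leq C \veps + \Vnorm{K_{\delta,\alpha} v - v}_{H^1(\Omega)}$ for a constant $C$ independent of $\delta$, whence letting $\delta \to 0$ and then $\veps \to 0$ closes the argument.

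For smooth $v$, the plan is to split $\Omega = \Omega^{\mathrm{int}}_\delta \cup \Omega^{\mathrm{bdy}}_\delta$, where $\Omega^{\mathrm{int}}_\delta := \{ \bx \in \Omega : \kappa_0 \dist(\bx,\p \Omega)^2 > \delta \}$ and $\Omega^{\mathrm{bdy}}_\delta := \Omega \setminus \Omega^{\mathrm{int}}_\delta$; note that $|\Omega^{\mathrm{bdy}}_\delta| = O(\sqrt{\delta})$ by the $C^2$ regularity of $\p \Omega$, and $\eta_\delta \equiv \delta$ on $\Omega^{\mathrm{int}}_\delta$ by condition ii) of \eqref{assump:Localization}. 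On $\Omega^{\mathrm{int}}_\delta$, $K_{\delta,\alpha} v$ reduces to a standard radial mollification of $v$ at bandwidth $\delta$ (up to a constant normalization), so the pointwise argument used in the proof of \Cref{thm:ConvergenceOfConv}, applied now to $\grad v \in C(\overline{\Omega})$, yields $\grad K_{\delta,\alpha} v \to \grad v$ uniformly on $\Omega^{\mathrm{int}}_\delta$. On the boundary strip, I would differentiate $K_{\delta,\alpha} v$ under the integral, Taylor-expand $v(\by) - v(\bx) = O(\Vnorm{\grad v}_{L^\infty} |\by - \bx|)$, and invoke the kernel integral estimates \eqref{eq:KernelIntegralDerivativeEstimates:SqDist} and \eqref{eq:PhiDerivativeBounds} to obtain the $\delta$-uniform pointwise bound $|\grad K_{\delta,\alpha} v(\bx)| \leq C \Vnorm{v}_{C^1(\overline{\Omega})}$; combined with $|\Omega^{\mathrm{bdy}}_\delta| \to 0$, this makes the boundary contribution vanish in $L^2$.

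The main obstacle will be the boundary-strip bound: writing $\grad K_{\delta,\alpha} v$ via the quotient rule produces two terms whose coefficients, $\grad_\bx \rho_{\delta,\alpha}/\Phi_{\delta,\alpha}$ and $\grad \Phi_{\delta,\alpha}/\Phi_{\delta,\alpha}^2$, individually grow like $\eta_\delta(\bx)^{-1} \to \infty$ as $\bx \to \p \Omega$. This apparent singularity must be canceled by the $O(\eta_\delta)$ smallness of $|v(\by) - v(\bx)|$ on the support of $\rho_{\delta,\alpha}(\bx,\cdot)$; concretely, one subtracts the $\by$-constant term $v(\bx)$ inside each integral using the identities $\int_\Omega \rho_{\delta,\alpha}(\bx,\by) \, \rmd \by = \Phi_{\delta,\alpha}(\bx)$ and $\int_\Omega \grad_\bx \rho_{\delta,\alpha}(\bx,\by) \, \rmd \by = \grad \Phi_{\delta,\alpha}(\bx)$, so that the apparent singularity is neutralized by the first-order Taylor expansion of $v$ before the kernel derivative estimates are applied. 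The adjoint statement \eqref{eq:ConvergenceOfConvAdj:H1} follows by the exact same scheme, invoking \eqref{eq:ConvAdjEst:HMinus1}-\eqref{eq:ConvAdjEst:L2} and \eqref{eq:ConvergenceOfConvAdj} at the corresponding steps.
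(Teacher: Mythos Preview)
Your proposal is correct and aligns with the paper's indicated approach: the paper simply says the corollary follows ``using the same method'' as \Cref{thm:ConvergenceOfConv}, i.e., pointwise reduction to a standard mollifier in the interior together with the uniform $H^1$ bound \eqref{eq:ConvEst:H1}, and your density reduction plus interior/boundary split is exactly an explicit realization of this. One cosmetic point: your interior set $\Omega^{\mathrm{int}}_\delta$ should be shrunk slightly (as in the paper's condition \eqref{eq:DistAway}) so that the $\by$-dependent half of $\rho_{\delta,\alpha}(\bx,\by)$ also becomes translation-invariant on the support, but this does not affect the boundary-strip estimate since the complement still has measure $O(\sqrt{\delta})$.
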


\subsection{Application: proofs of the Poincar\'e inequalities}\label{sec:PoincareProofs}

\begin{lemma}\label{lma:PhiConvergence}
    Let $\Omega$ satisfy \eqref{assump:Domain}  and $\rho$ satisfy  \eqref{Assump:KernelSmoothness}. Define the positive constant $\bar{\rho}$ by
\begin{equation}\label{eq:DefnOfL1NormOfRho}
    \bar{\rho} := \int_{B(0,1)} \rho(|\bz|) \, \rmd \bz\,.
\end{equation}
 Then
    \begin{equation*}
        \lim\limits_{\delta \to 0} \Vnorm{ \Phi_{\delta} - \bar{\rho}}_{L^p(\Omega)} = 0\,, \qquad \text{ for any } 1 \leq p < \infty\,.
    \end{equation*}
\end{lemma}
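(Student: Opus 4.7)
The plan is to establish convergence by splitting $\Omega$ into an interior region where $\Phi_{\delta}$ equals $\bar{\rho}$ exactly and a shrinking boundary layer where a uniform pointwise bound suffices. The main observation is that for $\bx$ far enough from $\p\Omega$, the localization function $\eta_{\delta}$ is constant equal to $\delta$ on the entire support of the kernel integrand, so both pieces of $\rho_{\delta,0}(\bx,\by)$ reduce to translation-invariant mollifiers integrating to exactly $\bar{\rho}$.

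More precisely, set $C_\delta := \frac{1+\bar{\kappa}_0 R_0 \sqrt{\delta}}{1 - \bar{\kappa}_0 R_0 \sqrt{\delta}}$ and define the interior set
\begin{equation*}
A_\delta := \left\{ \bx \in \Omega : \dist(\bx,\p\Omega) \geq \frac{C_\delta}{\sqrt{\kappa_0}} \sqrt{\delta} \right\},
\end{equation*}
with $B_\delta := \Omega \setminus A_\delta$. First I would invoke \eqref{eq:XSetOutsideRLayer} and \eqref{eq:YSetOutsideRLayer} with $r = \sqrt{\delta/\kappa_0}$ to show that for $\bx \in A_\delta$, both sets $\{\by : |\by-\bx| \leq R_0 \eta_\delta(\bx)\}$ and $\{\by : |\by-\bx| \leq R_0 \eta_\delta(\by)\}$ lie in $\bbR^d \setminus \Omega_{\sqrt{\delta/\kappa_0}}$. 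By assumption \eqref{assump:Localization}, $\eta_\delta \equiv \delta$ on this set, so on $A_\delta$ a direct computation gives
\begin{equation*}
\Phi_\delta(\bx) = \tfrac{1}{2}\int_{B(\bx,R_0\delta)} \tfrac{1}{\delta^d} \rho\!\left(\tfrac{|\by-\bx|}{\delta}\right) d\by + \tfrac{1}{2}\int_{B(\bx,R_0\delta)} \tfrac{1}{\delta^d} \rho\!\left(\tfrac{|\by-\bx|}{\delta}\right) d\by = \bar{\rho}.
\end{equation*}

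Second, on the boundary layer $B_\delta$, I would use the uniform upper bound from \Cref{cor:boundonPhi} with $\alpha=0$, namely $\Phi_\delta(\bx) \leq \bar{\mu}_0$, so $|\Phi_\delta(\bx) - \bar{\rho}| \leq \bar{\mu}_0 + \bar{\rho}$ pointwise on $B_\delta$. Since $\Omega$ is a $C^2$ (hence Lipschitz) domain, the standard estimate $|\Omega_r| \leq C(\Omega) r$ gives $|B_\delta| \leq C \sqrt{\delta} \to 0$. Combining,
\begin{equation*}
\Vnorm{\Phi_\delta - \bar{\rho}}_{L^p(\Omega)}^p = \int_{B_\delta} |\Phi_\delta(\bx) - \bar{\rho}|^p \, d\bx \leq (\bar{\mu}_0 + \bar{\rho})^p \, |B_\delta| \to 0
\end{equation*}
as $\delta \to 0$, for any $1 \leq p < \infty$.

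No step is genuinely an obstacle here; the only subtlety is verifying that the geometric conditions in \eqref{eq:XSetOutsideRLayer}-\eqref{eq:YSetOutsideRLayer} combine to force $\eta_\delta(\by) = \delta$ for \emph{all} relevant $\by$ simultaneously for both pieces of $\rho_{\delta,0}(\bx,\by)$, which is why $A_\delta$ must be defined with the explicit constant $C_\delta/\sqrt{\kappa_0}$ rather than merely $\sqrt{\delta/\kappa_0}$.
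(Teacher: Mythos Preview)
Your proposal is correct and uses essentially the same ingredients as the paper's proof: both rely on the fact that $\Phi_{\delta}(\bx)=\bar{\rho}$ exactly once $\bx$ satisfies \eqref{eq:DistAway} (via \eqref{eq:XSetOutsideRLayer}--\eqref{eq:YSetOutsideRLayer} with $r=\sqrt{\delta/\kappa_0}$), together with the uniform bound \eqref{eq:PhiBounds}. The only cosmetic difference is that the paper phrases this as pointwise a.e.\ convergence plus dominated convergence, whereas you split $\Omega=A_\delta\cup B_\delta$ and estimate the $L^p$ norm directly---your version has the minor bonus of yielding an explicit rate $\Vnorm{\Phi_\delta-\bar{\rho}}_{L^p(\Omega)}=O(\delta^{1/(2p)})$.
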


\begin{proof}
    By \eqref{eq:PhiBounds} and the dominated convergence theorem it suffices to show that
    \begin{equation*}
        \Phi_{\delta}(\bx) \to \bar{\rho} \text{ almost everywhere in } \Omega\,.
    \end{equation*}
    Fix $\bx \in \Omega$. Choose $\bar{\delta}_0$ depending on $\bx$ 
    such that
\eqref{eq:DistAway} holds.
	Then \eqref{eq:XSetOutsideRLayer} and \eqref{eq:YSetOutsideRLayer} hold 
 and $\bx \in \bbR^d \setminus \Omega_{\sqrt{\delta/\kappa_0}}$ for all $\delta < \bar{\delta}_0$.
	Therefore by definition of $\eta_{\delta}$
	\begin{equation*}
		\Phi_{\delta}(\bx) = \int_{B(\bx,R_0 \delta)} \frac{1}{\delta^{d}} \rho \left( \frac{|\by-\bx|}{\delta} \right) \, \rmd \by = \Vnorm{\rho}_{L^1(B(0,R_0))}\,, \quad\forall \delta < \bar{\delta}_0\,.
	\end{equation*}
    The conclusion follows.
\end{proof}

\begin{proof}[proof of \Cref{thm:PoincareDirichlet}]
    Note that $K_{\delta} u \in H^1(\Omega)$ by \Cref{cor:RegularityOfHSOp} and \eqref{eq:ConvEst:L2}.
    Let $\{ u_n \}$ be a sequence in $C^{\infty}_c(\Omega)$ converging to $u$ in $\frak{W}^{\delta,2}(\Omega)$. Then by the classical trace theorem for Sobolev functions, and again by \Cref{cor:RegularityOfHSOp} and \eqref{eq:ConvEst:L2}
    \begin{equation*}
        \Vnorm{ T(K_{\delta} u_n) -  T(K_{\delta} u) }_{H^{\frac{1}{2}}(\p \Omega)} \leq C \Vnorm{ K_{\delta}u_n - K_{\delta}u}_{H^1(\Omega)} \leq C \Vnorm{u_n - u}_{\frak{W}^{\delta,2}(\Omega)}\,.
    \end{equation*}
    However, by \Cref{lma:SupportOfConv}
    \begin{equation*}
        T(K_{\delta} u_n) = 0 \,,\quad\;\forall n\,.
    \end{equation*}
    It follows that the $H^{\frac{1}{2}}(\p \Omega)$-norm of $T (K_{\delta} u)$ can be made arbitrarily small by choosing $n$ sufficiently large; therefore
    \begin{equation*}
        T (K_{\delta} u) = 0 \text{ on } \p \Omega\,.
    \end{equation*}
    
    Hence $K_{\delta} u \in H^1_0(\Omega)$, and we can apply the classical Poincar\'e inequality:
    \begin{equation}\label{eq:PoincareDirichlet:Pf1}
        \Vnorm{ K_{\delta} u}_{L^2(\Omega)} \leq C(\Omega) \Vnorm{ \grad K_{\delta} u}_{L^2(\Omega)}\,.
    \end{equation}
    By \Cref{cor:RegularityOfHSOp}
    \begin{equation*}
        \Vnorm{ K_{\delta} u}_{L^2(\Omega)} \leq C(d,\rho,\Omega) [u]_{\frak{W}^{\delta,2}(\Omega)}\,.
    \end{equation*}
    Finally by \eqref{eq:KdeltaError}
    \begin{equation*}
        \begin{split}
            \Vnorm{u}_{L^2(\Omega)} &\leq \Vnorm{ K_{\delta} u}_{L^2(\Omega)} + \Vnorm{ u - K_{\delta} u}_{L^2(\Omega)} \\
            &\leq (C + C \min\{ \delta, \diam(\Omega)^2 \}
            ) [u]_{\frak{W}^{\delta,2}(\Omega)}\,.
        \end{split}
    \end{equation*}
\end{proof}

\begin{proof}[proof of \Cref{thm:PoincareNeumann}]
    First, by the local Poincar\'{e} inequality,  there exists a constant $\bar{C}=\bar{C}(d,\rho,\Omega,\delta) >0$ such that
\begin{equation}\label{eq:PoincareNeumann:Pf1}
        \Vnorm{v}_{L^2(\Omega)} \leq \bar{C} \Vnorm{ \grad v }_{L^2(\Omega)}
    \end{equation}
    for all $v \in H^1(\Omega)$ with $(\Phi_{\delta} v)_{\Omega} = 0$.
    
    Next, we show that in fact the constant $\bar{C}$ can be made independent of $\delta$. 
    Define the Rayleigh quotient
    \begin{equation*}
    \begin{split}
        \lambda_{\delta} &:= \min \left\{ \frac{ \int_{\Omega}|\grad u(\bx)|^2 \, \rmd \bx }{ \int_{\Omega} |u(\bx)|^2 \, \rmd \bx }  \, : \, u \in H^1(\Omega) \text{ with } (\Phi_{\delta} u)_{\Omega} =0 \right\}\,.
    \end{split}
    \end{equation*}
    By \eqref{eq:PoincareNeumann:Pf1}, $\lambda_{\delta} > 0$ for all $\delta > 0$.
    We claim that
    \begin{equation}\label{eq:PoincareNeumann:Pf1half}
        \inf_{\delta > 0} \lambda_{\delta} > 0\,.
    \end{equation}
    Suppose to the contrary; then there exists a subsequence (not relabeled) $\delta \to 0$ and a sequence of normalized eigenfunctions $\{ u_{\delta} \}_{\delta}$ for which the minimums $\lambda_{\delta}$ are attained with
    \begin{equation*}
        \Vnorm{u_{\delta}}_{L^2(\Omega)} = 1\,, \quad (\Phi_{\delta} u_{\delta}) = 0\,, \quad \Vnorm{ \grad u_{\delta} }_{L^2(\Omega)} \to 0 \text{ as } \delta \to 0\,.
    \end{equation*}
    Then there exists a function $u \in H^1(\Omega)$ such that $u_{\delta}$ converges strongly to $u$ in $L^2(\Omega)$ and weakly to $u$ in $H^1(\Omega)$. Moreover, $u(\bx) = c$, a constant, in $\Omega$.
    However, by \Cref{lma:PhiConvergence}
    \begin{equation*}
        c \bar{\rho} = \fint_{\Omega} u(\bx) \bar{\rho} \, \rmd \bx = \lim\limits_{\delta \to 0} \fint_{\Omega} u_{\delta}(\bx)  \Phi_{\delta}(\bx) \, \rmd \bx = 0\,.
    \end{equation*}
    Therefore it must be that $c =0$, a contradiction since $\Vnorm{u_{\delta}}_{L^2(\Omega)} =1$. 
    We then conclude from \eqref{eq:PoincareNeumann:Pf1half} that for any $\delta < \delta_0$
    \begin{equation}\label{eq:PoincareNeumann:Pf2}
        \Vnorm{v}_{L^2(\Omega)} \leq C(d,\rho,\Omega) \Vnorm{ \grad v }_{L^2(\Omega)} \,,\quad\;\forall v \in H^1(\Omega) \text{ with } (\Phi_{\delta} v)_{\Omega} = 0\,.
    \end{equation}
    
    Finally, let $u \in \frak{W}^{\delta,2}(\Omega)$ with $(\Phi_{\delta} u)_{\Omega} =0$. \Cref{cor:RegularityOfHSOp} and \eqref{eq:ConvEst:L2} imply that $K_{\delta} u \in H^1(\Omega)$, and we additionally have
    \begin{equation*}
        \int_{\Omega} \Phi_{\delta}(\bx) K_{\delta} u(\bx) \, \rmd \bx = \int_{\Omega} \int_{\Omega} \rho_{\delta}(\bx,\by) u(\by) \, \rmd \by \, \rmd \bx = \int_{\Omega} \Phi_{\delta}(\by) u(\by) \, \rmd \by = 0\,.
    \end{equation*}
    Applying the local, and uniform in $\delta$, Poincar\'e inequality established in \eqref{eq:PoincareNeumann:Pf2} to $K_{\delta} u$, we get:
    \begin{equation*}
        \Vnorm{K_{\delta} u}_{L^2(\Omega)} \leq C(d,\rho,\Omega) \Vnorm{ \grad K_{\delta} u }_{L^2(\Omega)}\,.
    \end{equation*}
    From here the proof proceeds identically to the proof of \Cref{thm:PoincareDirichlet}, \eqref{eq:PoincareDirichlet:Pf1}.
\end{proof}

\subsection{Application: regularity results for nonlocal problems}\label{subsec:Reg}
As an application of the estimates for the boundary-localized convolutions, we obtain some regularity results for the weak solutions of nonlocal problems.
Let us examine the case of the Dirichlet problem:

\begin{theorem}\label{thm:Regularity:FixedDelta}
	Let $\Omega \subset \bbR^d$ satisfy \eqref{assump:Domain}. Suppose that $u \in \frak{W}^{\delta,2}(\Omega)$ is a weak solution of \eqref{eq:Intro:NonlocalEq} with Dirichlet data, i.e. $u$ satisfies \eqref{eq:Dirichlet:Inhomog:NonlocalBC} and \eqref{eq:NonlocalProblem:WeakForm}, where additionally $f \in H^1_{loc}(\Omega)$. Then there exists $C$ depending only on $d$, $\Omega$ and $\rho$ such that
	\begin{equation}\label{eq:H1apriori}
 \Vnorm{u}_{H^1(\Omega)}  
  \leq C \left( \Vnorm{f}_{[\frak{W}^{\delta,2}_0(\Omega)]^*}  + \Vnorm{g}_{H^{\frac{1}{2}}(\p \Omega)} + \Vnorm{\eta_{\delta} f}_{L^2(\Omega)} + \Vnorm{ \eta_{\delta}^2 \grad f}_{L^2(\Omega)} \right)\,.
	\end{equation}
	 In particular, $u \in H^1(\Omega)$ whenever the right-hand side {of 
  \eqref{eq:H1apriori}} is finite.
\end{theorem}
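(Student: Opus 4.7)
\medskip

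\noindent\textbf{Proof proposal.} The plan is to reduce the weak equation to a pointwise fixed-point identity of the form
\begin{equation*}
	u(\bx) = K_{\delta,2} u(\bx) + \frac{f(\bx)}{2 \Phi_{\delta,2}(\bx)} \quad \text{ for a.e. } \bx \in \Omega\,,
\end{equation*}
and then to estimate each term on the right separately in $H^1(\Omega)$. The identity comes from the direct algebraic decomposition $\cL_{\delta} u(\bx) = 2 \Phi_{\delta,2}(\bx) \big( u(\bx) - K_{\delta,2} u(\bx) \big)$ which holds pointwise a.e.\ for $u \in \mathfrak{W}^{\delta,2}(\Omega)$ by the very definitions of $\cL_{\delta}$, $\Phi_{\delta,2}$ and $K_{\delta,2}$. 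To upgrade the weak formulation \eqref{eq:NonlocalProblem:WeakForm} to the pointwise equality $\cL_{\delta} u = f$ a.e., I would test with $v \in C_c^{\infty}(\Omega)$, apply case i) of \Cref{prop:GreensId:GeneralFxns} to rewrite $B_{\rho,\delta}(u,v) = \int_{\Omega} \cL_{\delta}u \cdot v \, \rmd \bx$, and conclude $\cL_{\delta} u = f$ in $\cD'(\Omega)$. Since both sides coincide with measurable functions (by \Cref{thm:NonlocalOpWellDefd:Energy} on one side and the $H^1_{loc}$ hypothesis on the other), the equality is a.e.\ in $\Omega$.

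The first term $K_{\delta,2} u$ is controlled by the nonlocal energy norm of $u$: combining \Cref{cor:RegularityOfHSOp} (for the gradient) with the $L^2$ bound in \eqref{eq:ConvEst:L2} of \Cref{thm:ConvEst} gives $\|K_{\delta,2} u\|_{H^1(\Omega)} \le C \|u\|_{\mathfrak{W}^{\delta,2}(\Omega)}$. The well-posedness estimate \eqref{eq:InhomogDirichlet:EnergyEstimate} from \Cref{thm:WellPosedness:Dirichlet:Inhomog} then dominates the right-hand side by $\|f\|_{[\mathfrak{W}^{\delta,2}_0(\Omega)]^*} + \|g\|_{H^{1/2}(\p\Omega)}$, producing the first two terms in \eqref{eq:H1apriori}.

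The second term $f / (2 \Phi_{\delta,2})$ is handled by exploiting the sharp comparisons $\underline{\mu}_2 \le \eta_{\delta}^2 \Phi_{\delta,2} \le \bar{\mu}_2$ from \Cref{cor:boundonPhi} and $|\grad \Phi_{\delta,2}| \le C \eta_{\delta}^{-3}$ from \eqref{eq:PhiDerivativeBounds}. These give the pointwise weight estimates $\Phi_{\delta,2}^{-1} \le C \eta_{\delta}^{2}$ and $|\grad \Phi_{\delta,2}|/\Phi_{\delta,2}^{2} \le C \eta_{\delta}$. Thus, using $\eta_\delta \le \bar\kappa_0 \delta_0$,
\begin{equation*}
	\left\| \frac{f}{2 \Phi_{\delta,2}} \right\|_{L^2(\Omega)} \le C \Vnorm{\eta_{\delta}^{2} f}_{L^2(\Omega)} \le C \Vnorm{\eta_{\delta} f}_{L^2(\Omega)}\,,
\end{equation*}
and expanding the gradient via the quotient rule,
\begin{equation*}
	\left\| \grad \frac{f}{2 \Phi_{\delta,2}} \right\|_{L^2(\Omega)} \le C \Vnorm{\eta_{\delta}^{2} \grad f}_{L^2(\Omega)} + C \Vnorm{\eta_{\delta} f}_{L^2(\Omega)}\,.
\end{equation*}
Adding these contributions yields \eqref{eq:H1apriori}.

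The main technical step, and the one most likely to require care, is the rigorous justification of the a.e.\ identity $\cL_{\delta} u = f$: the weak form is an equality of elements in $[\mathfrak{W}^{\delta,2}_0(\Omega)]^*$, and one must check that testing against $C^\infty_c(\Omega)$ is enough to conclude the distributional equality (which is where \Cref{prop:GreensId:GeneralFxns} case i) enters), and that the pointwise a.e.\ formula derived from this is compatible with the weighted $L^2_{loc}$ regularity of $\cL_\delta u$ provided by \Cref{thm:NonlocalOpWellDefd:Energy}. The remaining weighted computations for $f/\Phi_{\delta,2}$ are routine once the kernel bounds from \Cref{sec:ParaProp} are invoked.
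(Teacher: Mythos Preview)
Your proposal is correct and follows essentially the same approach as the paper: reduce to the pointwise identity $u = K_{\delta,2}u + f/(2\Phi_{\delta,2})$ via \Cref{prop:GreensId:GeneralFxns} case i), bound $K_{\delta,2}u$ in $H^1$ by \Cref{cor:RegularityOfHSOp} together with the energy estimate \eqref{eq:InhomogDirichlet:EnergyEstimate}, and handle $f/(2\Phi_{\delta,2})$ via the quotient rule and the weight bounds \eqref{eq:PhiBounds}--\eqref{eq:PhiDerivativeBounds}. The only cosmetic difference is that the paper estimates $\Vnorm{u}_{L^2(\Omega)}$ directly from \eqref{eq:InhomogDirichlet:EnergyEstimate} rather than routing through $\Vnorm{K_{\delta,2}u}_{L^2(\Omega)}$, but this is immaterial.
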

\begin{proof}
        Let $\varphi \in C^{\infty}_c(\Omega)$. By \Cref{prop:GreensId:GeneralFxns}, case i),
    we conclude that the nonlocal Green's identity \eqref{eq:GreensIdentity:Intro} holds for $u$ and $\varphi$; that is,
    \begin{equation}\label{eq:DesiredIntByParts:SqDist}
		B_{\rho,\delta}(u,\varphi) = \int_{\Omega} \cL_{\delta}u(\bx) \varphi(\bx) \, \rmd \bx\,,
	\end{equation}
	since the boundary term is $0$.
	Therefore, if $f \in H^1_{loc}(\Omega)$ then by \eqref{eq:NonlocalProblem:WeakForm}
	\begin{equation*}
		\Vint{f, \varphi} = \int_{\Omega} f(\bx) \varphi(\bx) \, \rmd \bx = \int_{\Omega} \cL_{\delta} u(\bx) \varphi(\bx) \, \rmd \bx\,, \qquad\;\forall \varphi \in C^{\infty}_c(\Omega)\,.
	\end{equation*}
	Both $\cL_{\delta} u$ and $f$ are locally integrable, so the nonlocal equation \eqref{eq:Intro:NonlocalEq} holds pointwise
	 for almost every $ \bx \in \Omega$, and it follows that
\begin{equation}\label{eq:uAsAverage:f}
	u(\bx) = K_{\delta,2} u(\bx) + \frac{1}{2 \cdot  \Phi_{\delta,2}(\bx)} f(\bx)\,, \quad \text{ for almost every } \bx \in \Omega\,.
\end{equation}
	Then by \Cref{cor:RegularityOfHSOp} and \eqref{eq:InhomogDirichlet:EnergyEstimate}
	$$
	\Vnorm{\grad K_{\delta,2} u}_{L^2(\Omega)}^2 \leq C [u]_{\frak{W}^{\delta,2}(\Omega)}^2 \leq C \big( \Vnorm{f}_{[\frak{W}^{\delta,2}_0(\Omega)]^*}^2 + \Vnorm{g}_{H^{\frac{1}{2}}(\p \Omega)}^2 \big)\,.
	$$
	Next, we know that
\begin{equation}
\label{eq:Grad:fOverPhi}
	\grad \left[  \frac{1}{2 \cdot \Phi_{\delta,2}(\bx)} f(\bx) \right] =   \frac{1}{2 \cdot  \Phi_{\delta,2}(\bx)} \grad f(\bx) - \frac{\grad \Phi_{\delta,2}(\bx)}{2 \cdot  \Phi_{\delta,2}(\bx)^2} f(\bx)\,,
\end{equation}
	and so taking the $L^2(\Omega)$-norm on both sides and using the bounds on $\Phi_{\delta,2}$, its reciprocal and its derivatives,
\begin{equation}\label{eq:Norm:Grad:fOverPhi}
	\Vnorm{ \grad \left[  \frac{f}{\Phi_{\delta,2}} \right] }_{L^2(\Omega)} \leq C    \Vnorm{ \eta_{\delta}^2 \grad f}_{L^2(\Omega)} + C \Vnorm{\eta_{\delta} f}_{L^2(\Omega)}\,.
	\end{equation}
	Combining the previous three estimates with the pointwise equality \eqref{eq:Intro:NonlocalEq} and using \eqref{eq:InhomogDirichlet:EnergyEstimate} to estimate the $L^2(\Omega)$-norm of $u$,
    we conclude
	$$
\Vnorm{u}_{H^1(\Omega)} \leq C \left( \Vnorm{f}_{[\frak{W}^{\delta,2}_0(\Omega)]^*} +\Vnorm{g}_{H^{\frac{1}{2}}(\p \Omega)} + \Vnorm{ \eta_{\delta}^2 \grad f}_{L^2(\Omega)} + \Vnorm{\eta_{\delta} f}_{L^2(\Omega)} \right)\,.
	$$
\end{proof}
We can prove a statement of local regularity in a similar way:
\begin{theorem}
	Let $\Omega \subset \bbR^d$ satisfy \eqref{assump:Domain}, and let $\bx_0 \in \Omega$. Suppose that $f \in [\frak{W}^{\delta,2}_0(\Omega)]^*$ has the property that for every $\veps \in (0,\dist(\bx_0,\p \Omega))$ it agrees with a function $h \in H^1(\Omega \setminus \overline{B_{\veps}(\bx_0)} )$, i.e.
	\begin{equation*}
		\Vint{f,v} = \intdm{\Omega}{ h(\bx) v(\bx) }{\bx}\,, \quad \forall v \in H^{1}_0(\Omega) \text{ with } \supp v \subset \overline{\Omega} \setminus B_{\veps}(\bx_0)\,.
	\end{equation*}
	Suppose $u$ is a weak solution of the inhomogeneous Dirichlet problem, i.e. $u$ satisfies \eqref{eq:Dirichlet:Inhomog:NonlocalBC} and \eqref{eq:NonlocalProblem:WeakForm} with Poisson data $f$. Then $u \in H^1(\Omega \setminus \overline{B_{\veps}(\bx_0)})$ for every $\veps > 0$.
\end{theorem}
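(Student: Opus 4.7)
The plan is to apply the strategy of \Cref{thm:Regularity:FixedDelta} locally on the exterior region $\Omega \setminus \overline{B_{\veps}(\bx_0)}$, where the data $f$ coincides with an $H^1$ function. Concretely, the aim is to establish the pointwise representation
\[
u(\bx) = K_{\delta,2} u(\bx) + \frac{h(\bx)}{2 \Phi_{\delta,2}(\bx)}
\]
for almost every $\bx \in \Omega \setminus \overline{B_{\veps}(\bx_0)}$, with $h = h_{\veps}$ the $H^1$-representative of $f$ given by the hypothesis, and then to read off the $H^1$ regularity of $u$ on this set from the regularity of each term on the right-hand side.

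For the pointwise identity, I would fix $\veps > 0$ and note that every $v \in C^{\infty}_c(\Omega \setminus \overline{B_\veps(\bx_0)})$ has compact support in $\Omega$ and lies in $\frak{W}^{\delta,2}_0(\Omega)$. Case (i) of \Cref{prop:GreensId:GeneralFxns} therefore applies, giving
\[
\int_{\Omega} \cL_{\delta} u(\bx) v(\bx) \,\rmd \bx = B_{\rho,\delta}(u,v) = \Vint{f,v} = \int_{\Omega} h(\bx) v(\bx) \,\rmd \bx\,.
\]
The distribution $\cL_{\delta} u$ is in fact an $L^2(\Omega)$ function: \Cref{thm:NonlocalOpWellDefd:Lp} gives $\cL_{\delta} u / \Phi_{\delta,2} \in L^2(\Omega)$, and \eqref{eq:PhiBounds} together with condition iv) of \eqref{assump:Localization} bounds $1/\Phi_{\delta,2}$ uniformly on $\Omega$ by a constant depending only on $\delta$. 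Density of $C^{\infty}_c$ test functions on the open set $\Omega \setminus \overline{B_\veps(\bx_0)}$ then yields $\cL_{\delta} u(\bx) = h(\bx)$ almost everywhere on this set, and dividing by $2\Phi_{\delta,2}(\bx)$ produces the desired representation.

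For the gradient estimates, \Cref{cor:RegularityOfHSOp} already gives $K_{\delta,2} u \in H^1(\Omega)$ globally with $\Vnorm{\grad K_{\delta,2}u}_{L^2(\Omega)} \le C [u]_{\frak{W}^{\delta,2}(\Omega)}$. For the second term, the estimates \eqref{eq:PhiBounds} and \eqref{eq:PhiDerivativeBounds} imply that $1/\Phi_{\delta,2} \in C^1(\Omega)$ with both $|1/\Phi_{\delta,2}|$ and $|\grad(1/\Phi_{\delta,2})|$ uniformly bounded on $\Omega$ by a constant $C(\delta)$; the product rule computed exactly as in \eqref{eq:Grad:fOverPhi}--\eqref{eq:Norm:Grad:fOverPhi} then yields
\[
\Vnorm{h/\Phi_{\delta,2}}_{H^1(\Omega \setminus \overline{B_{\veps}(\bx_0)})} \le C(\delta) \Vnorm{h}_{H^1(\Omega \setminus \overline{B_\veps(\bx_0)})}\,.
\]
Combining these two estimates with $u \in L^2(\Omega)$ (from $\frak{W}^{\delta,2}(\Omega) \subset L^2(\Omega)$) yields $u \in H^1(\Omega \setminus \overline{B_\veps(\bx_0)})$, and arbitrariness of $\veps$ closes the argument. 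The only delicate point I anticipate is the justification that the distributional equality $\cL_{\delta} u = h$ on $\Omega \setminus \overline{B_\veps(\bx_0)}$ transfers to a pointwise almost-everywhere identity; this is ensured by the global $L^2$ bound on $\cL_\delta u$ noted above, so no mollification or cutoff argument is required and the localization of the hypothesis transfers cleanly to a localization of the conclusion.
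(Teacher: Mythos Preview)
Your approach is essentially identical to the paper's: establish the pointwise identity $u = K_{\delta,2}u + h/(2\Phi_{\delta,2})$ on $\Omega\setminus\overline{B_\veps(\bx_0)}$ via the Green's identity for compactly supported test functions, then read off $H^1$ regularity term by term using \Cref{cor:RegularityOfHSOp} and the bounds on $\Phi_{\delta,2}$.

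There is one slip in your justification. You claim $\cL_\delta u \in L^2(\Omega)$ because $\cL_\delta u/\Phi_{\delta,2}\in L^2(\Omega)$ and $1/\Phi_{\delta,2}$ is bounded, but this reasoning is backwards: to recover $\cL_\delta u$ you must multiply by $\Phi_{\delta,2}$, not $1/\Phi_{\delta,2}$, and $\Phi_{\delta,2}(\bx)\approx\eta_\delta(\bx)^{-2}$ blows up at $\partial\Omega$. The paper explicitly notes after \Cref{thm:NonlocalOpWellDefd:Lp} that $\cL_\delta u$ need not be globally integrable. Fortunately the slip is harmless for your argument: all you actually need is $\cL_\delta u\in L^1_{loc}(\Omega)$ to pass from the distributional identity to a pointwise-a.e.\ one on the open set $\Omega\setminus\overline{B_\veps(\bx_0)}$, and this is immediate since $\Phi_{\delta,2}$ is locally bounded on $\Omega$. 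The paper's proof records exactly this, writing simply that ``both $\cL_\delta u$ and $h$ are locally integrable.''
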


\begin{proof}
	Let $\varphi \in C^{\infty}_c(\Omega \setminus \overline{B_{\veps}(\bx_0)})$ be arbitrary. Then by \Cref{prop:GreensId:GeneralFxns}, case i), the nonlocal Green's identity \eqref{eq:GreensIdentity:Intro} holds:
	\begin{equation*}
	    \int_{\Omega} h(\bx) \varphi(\bx) \, \rmd \bx = B_{\rho,\delta}(u,\varphi) = \int_{\Omega} \cL_{\delta} u(\bx) \varphi(\bx) \, \rmd \bx\,.
	\end{equation*}
	Therefore
	\begin{equation*}
	    \cL_{\delta} u(\bx) = h(\bx) \qquad \text{ for almost every } \bx \in \Omega \setminus \overline{B_{\veps}(\bx_0)}\,.
	\end{equation*}
	So we see that the formula 
	\begin{equation*}
		u(\bx) =  K_{\delta,2} u(\bx) + \frac{1}{2 \cdot  \Phi_{\delta,2}(\bx)} h(\bx)
	\end{equation*}
	holds for almost every $\bx \in \Omega \setminus \overline{B_{\veps}(\bx_0)}$.
	
     By \Cref{cor:RegularityOfHSOp} and \eqref{eq:InhomogDirichlet:EnergyEstimate}
	$$
	\Vnorm{\grad K_{\delta,2} u}_{L^2(\Omega)}^2 \leq C [u]_{\frak{W}^{\delta,2}(\Omega)}^2 \leq C \big( \Vnorm{f}_{[\frak{W}^{\delta,2}_0(\Omega)]^*}^2 + \Vnorm{g}_{H^{\frac{1}{2}}(\p \Omega)}^2 \big) \,.
	$$
	Next, we know that
	$$
	\grad \left[  \frac{1}{2 \cdot \Phi_{\delta,2}(\bx)} h(\bx) \right] =   \frac{1}{2 \cdot  \Phi_{\delta,2}(\bx)} \grad h(\bx) - \frac{\grad \Phi_{\delta,2}(\bx)}{2 \cdot  \Phi_{\delta,2}(\bx)^2} h(\bx)\,,
	$$
	and so taking the $L^2(\Omega \setminus \overline{B_{\veps}(\bx_0)})$-norm on both sides and using the bounds on $\Phi_{\delta,2}$, its reciprocal and its derivatives,
	$$
	\Vnorm{ \grad \left[  \frac{h}{\Phi_{\delta,2}} \right] }_{L^2(\Omega \setminus \overline{B_{\veps}(\bx_0)})} \leq C    \Vnorm{ \eta_{\delta}^2 \grad h}_{L^2(\Omega \setminus \overline{B_{\veps}(\bx_0)})} + C \Vnorm{\eta_{\delta} h}_{L^2(\Omega \setminus \overline{B_{\veps}(\bx_0)})}\,.
	$$
	Combining the previous three estimates with the pointwise equality, we conclude
	$$
	\Vnorm{\grad u}_{L^2(\Omega \setminus \overline{B_{\veps}(\bx_0)})} \leq 
	C \left( \Vnorm{f}_{[\frak{W}^{\delta,2}_0(\Omega)]^*} 
	+ \Vnorm{ \eta_{\delta}^2 \grad h}_{L^2(\Omega \setminus \overline{B_{\veps}(\bx_0)})} 
	+ \Vnorm{\eta_{\delta} h}_{L^2(\Omega \setminus \overline{B_{\veps}(\bx_0)})} \right)\,.
	$$
\end{proof}

Next, we present a similar regularity result for Neumann problems. To do this, some changes must be made in order to account for the boundary data.

\begin{theorem}\label{thm:Regularity:FixedDelta:Neumann}
	Let $f \in [\frak{W}^{\delta,2}(\Omega)]^*$ and $g \in H^{-\frac{1}{2}}(\p \Omega)$ with $\Vint{f,1} +\Vint{g,1} = 0$. Suppose that $u \in \mathring{\frak{W}}^{\delta,2}_{\rho}(\Omega)$ is a weak solution of the inhomogeneous Neumann problem, i.e. $u$ satisfies \eqref{eq:NonlocalProblem:Neumann:Inhomog:WeakForm} where additionally $f \in H^1_{loc}(\Omega)$. Then there exists $C$ depending only on $\Omega$ and $\rho$ such that
	\begin{equation}\label{eq:H1Bound:Neumann:1}
		\Vnorm{u}_{H^1(\Omega)} \leq C \left( \Vnorm{f}_{[\frak{W}^{\delta,2}(\Omega)]^*} + \Vnorm{g}_{H^{-\frac{1}{2}}(\p \Omega)} + \Vnorm{\eta_{\delta} f}_{L^2(\Omega)} + \Vnorm{ \eta_{\delta}^2 \grad f}_{L^2(\Omega)} \right)\,.
	\end{equation}
	In particular, $u \in H^1(\Omega)$ whenever the right-hand side of the above inequality is finite.
\end{theorem}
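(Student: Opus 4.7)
The strategy is to mirror the Dirichlet-case argument from Theorem~\ref{thm:Regularity:FixedDelta}: use smooth, interior test functions to derive the pointwise equation $\cL_{\delta} u = f$ almost everywhere in $\Omega$, then invert the kernel to express $u$ as a boundary-localized convolution plus an explicit $f$-term, and finally hit both pieces with the estimates from Section~\ref{sec:HSEstimates}.

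The first step is to show that the Neumann weak formulation is rich enough to extract the pointwise equation. Given an arbitrary $\varphi \in C^{\infty}_c(\Omega)$, the function $\varphi$ need not lie in $\mathring{\frak{W}}^{\delta,2}_{\rho}(\Omega)$, so one sets $c_{\varphi} := (\Phi_{\delta} \varphi)_{\Omega}/(\Phi_{\delta})_{\Omega}$ and $\tilde{\varphi} := \varphi - c_{\varphi} \in \mathring{\frak{W}}^{\delta,2}_{\rho}(\Omega)$. Since $B_{\rho,\delta}(u,c_{\varphi}) = 0$ and $T\varphi = 0$, using \eqref{eq:NonlocalProblem:Neumann:Inhomog:WeakForm} with $\tilde{\varphi}$ and the compatibility condition $\Vint{f,1} + \Vint{g,1} = 0$ yields $B_{\rho,\delta}(u,\varphi) = \Vint{f,\varphi}$. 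Now \Cref{prop:GreensId:GeneralFxns}, case i), applies and gives $\int_{\Omega} \cL_{\delta} u\,\varphi = \Vint{f,\varphi}$; since $f \in H^1_{loc}(\Omega) \subset L^1_{loc}(\Omega)$ and $\cL_{\delta} u \in L^2_{loc}(\Omega)$ by \Cref{thm:NonlocalOpWellDefd:Energy}, we conclude the pointwise identity
\begin{equation*}
  \cL_{\delta} u(\bx) = f(\bx) \quad \text{for a.e.\ } \bx \in \Omega\,,
\end{equation*}
which, after dividing by $2\Phi_{\delta,2}(\bx) > 0$ and using \eqref{eq:ConvolutionOperatorAlpha}, rearranges to the key representation
\begin{equation*}
  u(\bx) = K_{\delta,2} u(\bx) + \frac{f(\bx)}{2\,\Phi_{\delta,2}(\bx)} \quad \text{for a.e.\ } \bx \in \Omega\,.
\end{equation*}

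The second step is to estimate each term on the right-hand side in $H^1(\Omega)$. For the convolution term, \Cref{cor:RegularityOfHSOp} gives $\Vnorm{\grad K_{\delta,2} u}_{L^2(\Omega)} \leq C [u]_{\frak{W}^{\delta,2}(\Omega)}$, and \eqref{eq:ConvEst:L2} controls the $L^2$ norm of $K_{\delta,2} u$ by $\Vnorm{u}_{L^2(\Omega)}$. The seminorm and $L^2$ norm of $u$ are in turn controlled by the Neumann well-posedness energy estimate \eqref{eq:HomogNeumann:Inhomog:EnergyEstimate}:
\begin{equation*}
  \Vnorm{u}_{\frak{W}^{\delta,2}(\Omega)} \leq \Vnorm{f}_{[\frak{W}^{\delta,2}(\Omega)]^*} + \Vnorm{g}_{H^{-\frac{1}{2}}(\p \Omega)}\,.
\end{equation*}
For the explicit $f$-term, the same differentiation identity \eqref{eq:Grad:fOverPhi} used in the Dirichlet proof, combined with the pointwise lower bound $\Phi_{\delta,2}(\bx) \geq \underline{\mu}_2 \eta_{\delta}(\bx)^{-2}$ from \eqref{eq:PhiBounds} and the derivative bound $|\grad \Phi_{\delta,2}(\bx)| \leq C\eta_{\delta}(\bx)^{-3}$ from \eqref{eq:PhiDerivativeBounds}, gives exactly
\begin{equation*}
  \Vnorm{\grad[f/(2\Phi_{\delta,2})]}_{L^2(\Omega)} \leq C\bigl( \Vnorm{\eta_{\delta}^2 \grad f}_{L^2(\Omega)} + \Vnorm{\eta_{\delta} f}_{L^2(\Omega)} \bigr)\,,
\end{equation*}
and similarly $\Vnorm{f/(2\Phi_{\delta,2})}_{L^2(\Omega)} \leq C \Vnorm{\eta_{\delta}^2 f}_{L^2(\Omega)} \leq C \Vnorm{\eta_{\delta} f}_{L^2(\Omega)}$ (absorbing a factor of $\eta_{\delta} \leq \bar{\kappa}_0 \delta_0$). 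Adding the two estimates and using the pointwise formula for $u$ then yields \eqref{eq:H1Bound:Neumann:1}.

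The only delicate step is the first one, reducing to the pointwise equation despite the compatibility constraint embedded in $\mathring{\frak{W}}^{\delta,2}_{\rho}(\Omega)$; once that is handled via the constant-shift trick above, the Neumann proof runs in parallel to the Dirichlet proof, with $\Vnorm{g}_{H^{-\frac{1}{2}}(\p\Omega)}$ taking the role played by $\Vnorm{g}_{H^{\frac{1}{2}}(\p\Omega)}$ there and the Neumann energy estimate \eqref{eq:HomogNeumann:Inhomog:EnergyEstimate} replacing \eqref{eq:InhomogDirichlet:EnergyEstimate}.
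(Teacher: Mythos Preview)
Your proposal is correct and follows essentially the same approach as the paper: the constant-shift trick to turn an arbitrary $\varphi\in C^\infty_c(\Omega)$ into an admissible test function, the Green's identity from \Cref{prop:GreensId:GeneralFxns} case i) to pass to the pointwise equation, the representation $u=K_{\delta,2}u+f/(2\Phi_{\delta,2})$, and the estimates of \Cref{cor:RegularityOfHSOp} together with \eqref{eq:PhiBounds}--\eqref{eq:PhiDerivativeBounds} and the Neumann energy estimate \eqref{eq:HomogNeumann:Inhomog:EnergyEstimate}. Your normalization $c_\varphi=(\Phi_\delta\varphi)_\Omega/(\Phi_\delta)_\Omega$ is in fact the correct one for ensuring $\tilde\varphi\in\mathring{\frak W}^{\delta,2}_\rho(\Omega)$; the paper writes $\psi=\varphi-(\Phi_\delta\varphi)_\Omega$, but since the subsequent computation only uses that the subtracted constant multiplied by $\Vint{f,1}+\Vint{g,1}$ vanishes, the argument goes through either way.
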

\begin{proof}
    Let $\varphi \in C^{\infty}_c(\Omega)$.
    By \Cref{prop:GreensId:GeneralFxns}, case i), we conclude that the nonlocal Green's identity \eqref{eq:GreensIdentity:Intro} holds for $u$ and $\varphi$; that is,
	\begin{equation}\label{eq:DesiredIntByParts:SqDist:Neumann}
		B_{\rho,\delta}(u,\varphi) = \int_{\Omega} \cL_{\delta}u(\bx) \varphi(\bx) \, \rmd \bx\,,
	\end{equation}
	since the boundary term is $0$.
	
	Now for $\varphi \in C^{\infty}_c(\Omega)$ arbitrary, define $\psi(\bx) = \varphi(\bx) - (\Phi_{\delta} \varphi)_{\Omega}$. Then we can use $\psi$ as a test function
	in \eqref{eq:NonlocalProblem:Neumann:Inhomog:WeakForm} to get
	\begin{equation*}
		\begin{split}
		\int_{\Omega} \cL_{\delta} u(\bx) \varphi(\bx) \, \rmd \bx &= B_{\rho,\delta}(u,\varphi) = B_{\rho,\delta}(u,\psi)
		=\Vint{f, \psi} + \Vint{g,\psi}
        \\
        &= \Vint{f, \varphi} + \Vint{g,\varphi}
        - (\Phi_{\delta} \varphi)_{\Omega} \left( \Vint{f,1} + \Vint{g,1}
            \right) = \Vint{f, \varphi}\,.
		\end{split}
	\end{equation*}
	So if $f \in H^1_{loc}(\Omega)$ then
	\begin{equation*}
		\int_{\Omega} \cL_{\delta} u(\bx) \varphi(\bx) \, \rmd \bx  = \int_{\Omega} f(\bx) \varphi(\bx) \, \rmd \bx  \qquad\;\forall \varphi \in C^{\infty}_c(\Omega)\,.
	\end{equation*}
	Both $\cL_{\delta} u$ and $f$ are locally integrable, so \eqref{eq:uAsAverage:f} holds almost everywhere in $\Omega$. Proceeding exactly as in the proof of \Cref{thm:Regularity:FixedDelta} but with \eqref{eq:HomogNeumann:Inhomog:EnergyEstimate} in place of \eqref{eq:InhomogDirichlet:EnergyEstimate} we come to
	$$
	\Vnorm{u}_{H^1(\Omega)} \leq C \left( \Vnorm{f}_{[\frak{W}^{\delta,2}(\Omega)]^*} + \Vnorm{g}_{H^{-\frac{1}{2}}(\p \Omega)} + \Vnorm{ \eta_{\delta}^2 \grad f}_{L^2(\Omega)} + \Vnorm{\eta_{\delta} f}_{L^2(\Omega)} \right)\,.
	$$
\end{proof}

\section{Consistency with classical boundary-value problems}\label{sec:LocalLimit}

Our aim is to show the consistency of the nonlocal boundary value problems with their local limits without assuming extra regularity on the data and solutions than those established earlier. To this end, we first examine the local limits of the nonlocal operators and the nonlocal bilinear forms in suitable function spaces.

\subsection{The operator in the localization limit}\label{sec:OpLocalLimit}

\begin{theorem}\label{thm:OperatorLocalization:SqDist}
	Let $\Omega \subset \bbR^d$ satisfy \eqref{assump:Domain}  and $\rho$ satisfy  \eqref{Assump:KernelSmoothness}-\eqref{Assump:KernelNormalization}. Let $u \in W^{2,p}(\Omega)$ for some $p \in [1,\infty)$. Then 
	\begin{equation*}
		\lim\limits_{\delta \to 0} \int_{\Omega} |\cL_{\delta} u(\bx) - ( -\Delta u(\bx))|^p \, \rmd \bx = 0\,.
	\end{equation*}
\end{theorem}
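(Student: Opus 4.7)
\medskip

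\textbf{Proof proposal.} My plan is to reduce the general $W^{2,p}$ case to smooth functions by a density argument, then split the integral over $\Omega$ into an interior region where $\cL_\delta$ reduces to a standard translation-invariant rescaled nonlocal Laplacian, and a thin boundary layer whose small measure absorbs any defect.

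\emph{Step 1: density reduction.} Fix $\veps>0$. By \Cref{thm:OpIsDefined:Inhomog:SqDist} the operator $\cL_\delta$ is bounded $W^{2,p}(\Omega)\to L^p(\Omega)$ uniformly in $\delta<\delta_0$, and $-\Delta$ is likewise bounded $W^{2,p}(\Omega)\to L^p(\Omega)$. Choose $\tilde u\in C^\infty(\overline\Omega)$ with $\Vnorm{u-\tilde u}_{W^{2,p}(\Omega)}<\veps$; then $\Vnorm{\cL_\delta u-\cL_\delta\tilde u}_{L^p(\Omega)}+\Vnorm{\Delta u-\Delta\tilde u}_{L^p(\Omega)}\le C\veps$ uniformly in $\delta$, so it suffices to prove the limit for $u\in C^\infty(\overline\Omega)$ (in fact $C^3(\overline\Omega)$ is enough).

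\emph{Step 2: boundary-layer/interior split.} For smooth $u$, apply \Cref{lma:OpTaylorExp:SqDist} to write $\cL_\delta u(\bx)=2\bF_1(\bx)\cdot\grad u(\bx)+F_2(\bx,u)$. By \eqref{eq:OpTaylorExp:F1Est}, $\bF_1$ is supported in the layer $\Omega_{\bar C\sqrt\delta}$ and bounded uniformly, so
\[
\Vnorm{2\bF_1\cdot\grad u}_{L^p(\Omega)}^p\le C\Vnorm{\grad u}_{L^\infty(\Omega)}^p\,|\Omega_{\bar C\sqrt\delta}|=O(\sqrt\delta)\longrightarrow 0.
\]
Similarly $\Vnorm{\Delta u}_{L^p(\Omega_{\bar C\sqrt\delta})}=O(\delta^{1/(2p)})\to 0$, so it remains to prove
\[
\lim_{\delta\to 0}\int_{\Omega\setminus\Omega_{\bar C\sqrt\delta}}|F_2(\bx,u)-(-\Delta u(\bx))|^p\,\rmd\bx=0
\]
after enlarging $\bar C$ if necessary.

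\emph{Step 3: reduction to the standard rescaled nonlocal Laplacian in the interior.} For $\bx$ with $\dist(\bx,\p\Omega)\ge\bar C\sqrt\delta$ with $\bar C$ chosen (using \Cref{lma:BoundaryRegions:SqDist} applied with $r=\sqrt{\delta/\kappa_0}$) so that both $\{\by:|\by-\bx|\le R_0\eta_\delta(\bx)\}$ and $\{\by:|\by-\bx|\le R_0\eta_\delta(\by)\}$ lie outside $\Omega_{\sqrt{\delta/\kappa_0}}$, property ii) of \eqref{assump:Localization} forces $\eta_\delta\equiv\delta$ on both domains of integration. Hence on this interior set
\[
F_2(\bx,u)=2\int_{\bbR^d}\rho_{\delta,2}(|\bh|)\int_0^1\langle\grad^2 u(\bx+t\bh)\bh,\bh\rangle(1-t)\,\rmd t\,\rmd\bh.
\]
Changing variables $\bh=\delta\bz$, using \eqref{eq:KernelNormalizationConsequence}, and invoking dominated convergence (the integrand is pointwise dominated by $C\Vnorm{\grad^2 u}_{L^\infty}|\bz|^2\rho(|\bz|)$, which is integrable), we get the pointwise limit
\[
F_2(\bx,u)\to \int_{B(0,1)}\rho(|\bz|)\langle\grad^2 u(\bx)\bz,\bz\rangle\,\rmd\bz=\grad^2 u(\bx):\bbI=\Delta u(\bx),
\]
hence $F_2(\bx,u)\to-(-\Delta u(\bx))$... actually the sign here deserves care: returning to the Taylor identity $u(\by)-u(\bx)=\grad u(\bx)\cdot(\by-\bx)+\tfrac12\langle\grad^2 u(\bx)(\by-\bx),(\by-\bx)\rangle+o(|\by-\bx|^2)$ and the defining formula \eqref{eq:Intro:Operator}, one sees $\cL_\delta u(\bx)\to-\Delta u(\bx)$ in the interior, matching the statement.

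\emph{Step 4: dominated convergence to conclude $L^p$-convergence.} Pointwise convergence on $\Omega\setminus\Omega_{\bar C\sqrt\delta}$ combined with the uniform pointwise bound $|F_2(\bx,u)|\le C\Vnorm{\grad^2 u}_{L^\infty(\Omega)}$ (from the proof of \Cref{lma:OpTaylorExp:SqDist} and \Cref{lma:KernelIntegral:SqDist} with $\alpha=0$) and the uniform bound on $|\Delta u|$ lets us apply the dominated convergence theorem on the bounded set $\Omega$ to conclude the $L^p$ convergence in the interior. Combined with Step 2, this yields the theorem.

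\emph{Main obstacle.} The only delicate point is Step 3: the definition of $\rho_{\delta,2}(\bx,\by)$ symmetrizes the kernel in $\bx$ and $\by$, so in principle one must control the interaction in the transition zone where $\eta_\delta(\by)$ differs from $\delta$. This is precisely why I enlarge $\bar C$ by an $O(1)$ factor so that \Cref{lma:BoundaryRegions:SqDist} ensures $\eta_\delta\equiv\delta$ on both $\bx$- and $\by$-based support sets simultaneously, collapsing the symmetrized kernel to the usual translation-invariant one. Once this is in place, the classical local-limit argument for rescaled nonlocal Laplacians applies verbatim, with \eqref{Assump:KernelNormalization} supplying the correct normalization.
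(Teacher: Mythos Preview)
Your proposal is correct and follows essentially the same approach as the paper: reduce to smooth $u$ via the uniform operator bound of \Cref{thm:OpIsDefined:Inhomog:SqDist}, observe that for each fixed interior point the symmetrized kernel collapses to the translation-invariant one once $\delta$ is small (via \Cref{lma:BoundaryRegions:SqDist} with $r=\sqrt{\delta/\kappa_0}$), Taylor-expand to obtain pointwise convergence using \eqref{eq:KernelNormalizationConsequence}, and conclude by dominated convergence with the uniform majorant supplied by \Cref{lma:OpTaylorExp:SqDist}. The only cosmetic difference is that you organize the argument as an explicit boundary-layer/interior split using the $\bF_1$--$F_2$ decomposition from the outset, whereas the paper works pointwise (choosing $\bar\delta_0(\bx)$ for each $\bx$) and invokes \Cref{lma:OpTaylorExp:SqDist} only for the majorant; the mathematical content is the same.
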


\begin{proof}
	By \Cref{thm:OpIsDefined:Inhomog:SqDist} it suffices to prove the theorem for $u \in C^2(\overline{\Omega})$. 

 Fix $\bx \in \Omega$. Choose $\bar{\delta}_0$ depending on $\bx$ such that
 \eqref{eq:DistAway} holds.
	Then for all $\delta < \bar{\delta}_0$, \eqref{eq:XSetOutsideRLayer} and \eqref{eq:YSetOutsideRLayer} hold and $\bx \in \bbR^d \setminus \Omega_{\sqrt{\delta/\kappa_0}}$.
    Therefore by definition of $\eta_{\delta}$
	\begin{equation*}
		\cL_{\delta}u(\bx) = \int_{B(\bx,R_0 \delta)} \frac{2}{ \delta^{d+2} } \rho \left( \frac{|\bx-\by|}{\delta} \right) (u(\bx)-u(\by)) \, \rmd \by \qquad\;\forall \delta < \bar{\delta}_0\,.
	\end{equation*}
	Taylor expanding, we have 
	\begin{equation*}
		\begin{split}
		&	-\cL_{\delta} u(\bx) = \int_{B(\bx,R_0 \delta)} \frac{2}{ \delta^{d+2} } \rho \left( \frac{|\bx-\by|}{\delta} \right) (\by-\bx) \, \rmd \by \cdot \grad u (\bx) \\
			&\qquad + \int_{B(\bx,R_0 \delta)} \frac{2}{ \delta^{d+2} } \rho \left( \frac{|\bx-\by|}{\delta} \right) \int_0^1 \Vint{ \grad^2 u(\by + t(\bx-\by)) (\bx-\by),(\bx-\by)} (1-t) \, \rmd t \, \rmd \by \\
			&\quad =  \int_{B(0,R_0)} 2 \rho \left( |\bz| \right) \int_0^1 \Vint{ \grad^2 u(\bx + (1- t) \delta \bz) \bz,\bz} (1-t) \, \rmd t \, \rmd \bz\,.
		\end{split}
	\end{equation*}
 Noting that
	$$
	2 \int_0^1 (1-t) \, \rmd t \int_{B(0,R_0)} \rho(|\bz|) \Vint{ \grad^2 u(\bx) \bz,\bz } \, \rmd \bz = \Delta u(\bx)\,,
	$$
	we get
	\begin{equation*}
		\begin{split}
			& |\cL_{\delta}u(\bx) - (-\Delta u(\bx))|  \\
			&= \left| \int_{B(0,R_0)} \int_0^1 2 (1-t) \rho \left( |\bz| \right)  \Vint{ \left( \grad^2 u(\bx + (1- t) \delta \bz) - \grad^2 u(\bx) \right)  \bz,\bz} \, \rmd t \, \rmd \bz \right| \,.
		\end{split}
	\end{equation*}
	Since $\grad^2 u$ is continuous on $\overline{\Omega}$, by the dominated convergence theorem, we get as $\delta \to 0$ that $|\cL_{\delta}u(\bx) - (-\Delta u(\bx))| \to 0$ for almost every $\bx \in \Omega$.
	
	By \Cref{lma:OpTaylorExp:SqDist}, $|\cL_{\delta}u(\bx) - (-\Delta u(\bx))|^p$ is majorized independently of $\delta$ by a function belonging to $L^1(\Omega)$, and so we obtain the result by again applying the dominated convergence theorem.
\end{proof}

\subsection{The bilinear form in the localization limit}\label{sec:BiLOcalLimit}

\begin{theorem}\label{thm:BilinearFormLocalization:SqDist}
	Let $\Omega \subset \bbR^d$ satisfy \eqref{assump:Domain}  and $\rho$ satisfy  \eqref{Assump:KernelSmoothness}-\eqref{Assump:KernelNormalization}. Then for any $u$, $v \in H^1(\Omega)$
	\begin{equation*}
		\lim\limits_{\delta \to 0} B_{\rho,\delta}(u,v) = \int_{\Omega} \grad u(\bx) \cdot \grad v(\bx) \, \rmd \bx\,.
	\end{equation*}
\end{theorem}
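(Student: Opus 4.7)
My plan is to reduce the claim to the $C^2(\overline{\Omega})$ case via density, using the nonlocal Green's identity and \Cref{thm:OperatorLocalization:SqDist} to handle the smooth case, and the uniform-in-$\delta$ continuity of $B_{\rho,\delta}$ (from \Cref{thm:Coercivity} and \Cref{thm:Embedding}) to close the density argument.

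\emph{Step 1: smooth case.} Let $u, v \in C^2(\overline{\Omega})$. By the nonlocal Green's identity (\Cref{thm:GreensIdentity:SqDist}),
\begin{equation*}
    B_{\rho,\delta}(u,v) = \int_{\Omega} \cL_{\delta} u(\bx) v(\bx) \, \rmd \bx + \int_{\p \Omega} \frac{\p u}{\p \bsnu}(\bx) v(\bx) \, \rmd \sigma(\bx),
\end{equation*}
while the classical Green's identity gives
\begin{equation*}
    \int_{\Omega} \grad u \cdot \grad v \, \rmd \bx = \int_{\Omega} (-\Delta u)(\bx) v(\bx) \, \rmd \bx + \int_{\p \Omega} \frac{\p u}{\p \bsnu}(\bx) v(\bx) \, \rmd \sigma(\bx).
\end{equation*}
Subtracting, the boundary terms cancel and
\begin{equation*}
    B_{\rho,\delta}(u,v) - \int_{\Omega} \grad u \cdot \grad v \, \rmd \bx = \int_{\Omega} \bigl( \cL_{\delta} u(\bx) - (-\Delta u(\bx)) \bigr) v(\bx) \, \rmd \bx.
\end{equation*}
By Cauchy--Schwarz this is bounded by $\Vnorm{\cL_{\delta} u - (-\Delta u)}_{L^2(\Omega)} \Vnorm{v}_{L^2(\Omega)}$, which tends to $0$ as $\delta \to 0$ by \Cref{thm:OperatorLocalization:SqDist} applied with $p=2$.

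\emph{Step 2: density extension.} For general $u, v \in H^1(\Omega)$, combine \eqref{eq:BilinearForm:Continuity} with \Cref{thm:Embedding} to get the uniform-in-$\delta$ bound
\begin{equation*}
    |B_{\rho,\delta}(u,v)| \leq C \Vnorm{u}_{\frak{W}^{\delta,2}(\Omega)} \Vnorm{v}_{\frak{W}^{\delta,2}(\Omega)} \leq C' \Vnorm{u}_{H^1(\Omega)} \Vnorm{v}_{H^1(\Omega)},
\end{equation*}
with $C'$ independent of $\delta < \delta_0$. Given $\veps > 0$, choose $u_n, v_n \in C^2(\overline{\Omega})$ with $\Vnorm{u-u_n}_{H^1(\Omega)}, \Vnorm{v-v_n}_{H^1(\Omega)} < \veps$. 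Bilinearity and the uniform bound yield
\begin{equation*}
    |B_{\rho,\delta}(u,v) - B_{\rho,\delta}(u_n,v_n)| \leq C' \veps \bigl( \Vnorm{v}_{H^1(\Omega)} + \Vnorm{u_n}_{H^1(\Omega)} \bigr),
\end{equation*}
and similarly the gradient inner product differs from $\int \grad u_n \cdot \grad v_n$ by $O(\veps)$. Applying Step 1 to $(u_n, v_n)$ and sending $\delta \to 0$, then $\veps \to 0$, completes the proof.

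\emph{Main obstacle.} The only delicate point is the exact cancellation of the boundary integrals, which requires our nonlocal Green's identity (\Cref{thm:GreensIdentity:SqDist})—the linchpin that aligns the boundary behavior of $\cL_{\delta}$ with that of $-\Delta$. Everything else is standard once the $L^2$-operator consistency of \Cref{thm:OperatorLocalization:SqDist} and the uniform $H^1$-continuity of $B_{\rho,\delta}$ are in hand.
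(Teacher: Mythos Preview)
Your proof is correct but takes a genuinely different route from the paper in the smooth case. The paper reduces to $u,v \in C^2(\overline{\Omega})$ via the same density argument you use in Step~2, but then treats the smooth case by a direct pointwise computation: for each fixed $\bx \in \Omega$ and $\delta$ small enough (depending on $\bx$), the kernel $\rho_{\delta,2}(\bx,\cdot)$ reduces to the translation-invariant rescaling $\delta^{-d-2}\rho(|\cdot-\bx|/\delta)$, and a Taylor expansion of the product $(u(\bx)-u(\by))(v(\bx)-v(\by))$ combined with \eqref{Assump:KernelNormalization}--\eqref{eq:KernelNormalizationConsequence} gives $\int_{\Omega} \rho_{\delta,2}(\bx,\by)(u(\bx)-u(\by))(v(\bx)-v(\by))\,\rmd\by = \grad u(\bx)\cdot\grad v(\bx) + O(\delta)$ pointwise, after which dominated convergence finishes. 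Your Step~1 instead routes through the nonlocal Green's identity and \Cref{thm:OperatorLocalization:SqDist}, so the Taylor expansion is hidden inside the operator-convergence result rather than redone here. Your approach is more economical given what has already been proved (the boundary terms cancel exactly, which is a pleasant dividend of \Cref{thm:GreensIdentity:SqDist}), while the paper's approach is more self-contained and does not invoke the rather heavy Green's identity machinery for this particular statement.
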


\begin{proof}
	The proof is similar to \Cref{thm:OperatorLocalization:SqDist}.
    By \Cref{thm:Embedding} it suffices to prove the theorem for $u$, $v \in C^2(\overline{\Omega})$.  Fix $\bx \in \Omega$. Choose $\bar{\delta}_0$ depending on $\bx$ such that \eqref{eq:DistAway} holds.
	Then for all $\delta < \bar{\delta}_0$, \eqref{eq:XSetOutsideRLayer} and \eqref{eq:YSetOutsideRLayer} hold and $\bx \in \bbR^d \setminus \Omega_{\sqrt{\delta/\kappa_0}}$.
    Therefore by definition of $\eta_{\delta}$
	\begin{equation*}
		\begin{split}
		\int_{\Omega} &\rho_{\delta,2} \left( \bx,\by \right) (u(\bx)-u(\by)) (v(\bx)-v(\by)) \, \rmd \by \\
		&=\int_{B(\bx,R_0 \delta)} \frac{1}{ \delta^{d+2} } \rho \left( \frac{|\bx-\by|}{\delta} \right) (u(\bx)-u(\by)) (v(\bx)-v(\by)) \, \rmd \by \qquad\;\forall \delta < \bar{\delta}_0\,.
		\end{split}
	\end{equation*}
	Taylor expanding and then using \eqref{Assump:KernelNormalization} and \eqref{eq:KernelNormalizationConsequence}, we have 
	\begin{equation*}
		\begin{split}
			\int_{\Omega} &\rho_{\delta,2} \left( \bx,\by \right) (u(\bx)-u(\by)) (v(\bx)-v(\by)) \, \rmd \by \\
			&= \Vint{ \left( \int_{B(\bx,R_0 \delta)} \frac{1}{ \delta^{d} } \rho \left( \frac{|\bx-\by|}{\delta} \right)  \frac{(\by-\bx) \otimes (\by-\bx)}{\delta^2}   \, \rmd \by \right) \grad u(\bx) , \grad v (\bx) } \\
			&\quad + O \left( \int_{B(\bx,R_0 \delta)} \frac{1}{ \delta^{d+2} } \rho \left( \frac{|\bx-\by|}{\delta} \right) |\by-\bx|^3 \, \rmd \by \right) \\
			&=  \grad u(\bx) \cdot \grad v(\bx) + O(\delta)\,.
		\end{split}
	\end{equation*}
	Since $\grad^2 u$ is continuous on $\overline{\Omega}$, the result follows by the dominated convergence theorem after integrating both sides in $\bx$.
\end{proof}

\begin{theorem}\label{thm:BilinearFormLocalization:Sequence:SqDist}
	Let $\Omega \subset \bbR^d$ satisfy \eqref{assump:Domain}  and $\rho$ satisfy  \eqref{Assump:KernelSmoothness}-\eqref{Assump:KernelNormalization}. Suppose that $v \in H^1(\Omega)$, and suppose that $\{ u_{\delta} \} \subset H^1(\Omega)$ and $u \in H^1(\Omega)$ satisfy $u_{\delta} \rightharpoonup u$ in $H^1(\Omega)$ as $\delta \to 0$. Then
	\begin{equation*}
		\lim\limits_{\delta \to 0} B_{\rho,\delta}(u_{\delta} - u,v) = 0\,.
	\end{equation*}
\end{theorem}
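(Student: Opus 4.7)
\medskip

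\noindent\textbf{Plan.} Since $B_{\rho,\delta}$ depends on $\delta$ and we have only \emph{weak} (not strong) convergence $u_\delta \rightharpoonup u$ in $H^1(\Omega)$, the conclusion cannot follow from the continuity estimate \eqref{eq:BilinearForm:Continuity} alone, as that would require $\Vnorm{u_\delta-u}_{\frak{W}^{\delta,2}(\Omega)} \to 0$. The idea is to approximate $v$ by smooth functions so that the bilinear form can be ``shifted'' onto the smooth side via the nonlocal Green's identity, where strong $L^2$--convergence (provided by Rellich compactness) can be exploited.

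Let $\{v_n\} \subset C^\infty(\overline{\Omega})$ with $v_n \to v$ in $H^1(\Omega)$, and split
\begin{equation*}
 B_{\rho,\delta}(u_\delta - u, v) \;=\; B_{\rho,\delta}(u_\delta - u,\, v - v_n) \;+\; B_{\rho,\delta}(u_\delta - u,\, v_n).
\end{equation*}
For the first piece, the continuity \eqref{eq:BilinearForm:Continuity} together with the embedding of \Cref{thm:Embedding} gives
\[
 \big| B_{\rho,\delta}(u_\delta - u, v-v_n) \big| \;\leq\; C \Vnorm{u_\delta-u}_{\frak{W}^{\delta,2}(\Omega)} \Vnorm{v-v_n}_{\frak{W}^{\delta,2}(\Omega)} \;\leq\; C \Vnorm{u_\delta-u}_{H^1(\Omega)} \Vnorm{v-v_n}_{H^1(\Omega)},
\]
and since $\{u_\delta\}$ is $H^1$-bounded (being weakly convergent), this term is uniformly $O(\Vnorm{v-v_n}_{H^1(\Omega)})$ in $\delta$, hence made arbitrarily small by choosing $n$ large.

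For the second piece, I would use the symmetry of $B_{\rho,\delta}$ and apply case ii) of \Cref{prop:GreensId:GeneralFxns}. Since $v_n \in C^\infty(\overline{\Omega}) \subset \frak{W}^{\delta,2}(\Omega)$ and, by \Cref{lma:OpTaylorExp:SqDist}, $\cL_\delta v_n \in L^\infty(\Omega) \subset L^2(\Omega)$ with bounds depending only on $\Vnorm{v_n}_{C^2(\overline\Omega)}$ (and in particular \emph{uniform in $\delta$}), the Green's identity yields
\begin{equation*}
 B_{\rho,\delta}(u_\delta-u, v_n) \;=\; B_{\rho,\delta}(v_n, u_\delta-u) \;=\; \int_\Omega \cL_\delta v_n(\bx)\, (u_\delta-u)(\bx)\, \rmd\bx \;+\; \int_{\p\Omega} \tfrac{\p v_n}{\p\bsnu}(\bx)\, T(u_\delta-u)(\bx)\, \rmd\sigma(\bx).
\end{equation*}
With $n$ now fixed, $\Vnorm{\cL_\delta v_n}_{L^2(\Omega)}$ and $\Vnorm{\p v_n/\p\bsnu}_{L^2(\p\Omega)}$ are bounded uniformly in $\delta$. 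The weak convergence $u_\delta \rightharpoonup u$ in $H^1(\Omega)$, combined with Rellich compactness and compactness of the trace $H^{1/2}(\p\Omega) \hookrightarrow L^2(\p\Omega)$, delivers $u_\delta \to u$ strongly in $L^2(\Omega)$ and $T u_\delta \to T u$ strongly in $L^2(\p\Omega)$. Both integrals therefore vanish as $\delta \to 0$, and the standard $\veps/2$ argument closes the proof.

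The only delicate point is the uniform-in-$\delta$ control of $\cL_\delta v_n$ on the \emph{whole} of $\Omega$, including the boundary layer where $\eta_\delta \to 0$; this is precisely the content of \Cref{lma:OpTaylorExp:SqDist}, where the potentially singular term $\bF_1$ is shown to stay $L^\infty$-bounded thanks to the superlinear vanishing of $\eta_\delta$. Given that lemma, the argument above is essentially formal.
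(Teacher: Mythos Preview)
Your proof is correct and follows essentially the same approach as the paper: approximate $v$ by smooth functions, transfer the bilinear form onto the smooth factor via the nonlocal Green's identity, and exploit Rellich compactness. The only cosmetic differences are that the paper reduces to $u_\delta\rightharpoonup 0$ and invokes the strong $L^2$ convergence $\cL_\delta v_n \to -\Delta v_n$ from \Cref{thm:OperatorLocalization:SqDist}, whereas you use only the uniform-in-$\delta$ bound on $\Vnorm{\cL_\delta v_n}_{L^2(\Omega)}$ from \Cref{lma:OpTaylorExp:SqDist}---a slight simplification, since strong convergence of $u_\delta-u$ in $L^2$ already suffices once the other factor is merely bounded.
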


\begin{proof}
	Without loss of generality assume $u_{\delta} \rightharpoonup 0$ in $H^1(\Omega)$ as $\delta \to 0$.
    First take $v \in C^2(\overline{\Omega})$. Then by the nonlocal Green's identity \Cref{prop:GreensId:GeneralFxns}, case ii),
    \begin{equation*}
        B_{\rho,\delta}(u_{\delta},v) = \int_{\Omega} \cL_{\delta} v(\bx) u_{\delta}(\bx) \, \rmd \bx + \int_{\p \Omega} u_{\delta}(\bx) \frac{\p v}{\p \bsnu}(\bx) \, \rmd \sigma(\bx)\,.
    \end{equation*}
    By \Cref{thm:OpIsDefined:Inhomog:SqDist} and \Cref{thm:OperatorLocalization:SqDist} we obtain $\cL_{\delta} v \to -\Delta v$ strongly in $L^2(\Omega)$ as $\delta \to 0$, and by the compact embedding of $H^1(\Omega)$ into $L^2(\Omega)$ we have $u_{\delta} \to 0$ strongly in $L^2(\Omega)$. So, 
    the first term on the right-hand side converges to $0$ as $\delta \to 0$. The second term on the right-hand side converges to $0$ as well, by the weak continuity  of traces from $H^1(\Omega)$ to $H^{\frac{1}{2}}(\p \Omega)$. Therefore
    \begin{equation*}
        \lim\limits_{\delta \to 0} B_{\rho,\delta}(u_{\delta},v) = 0 \quad \forall v \in C^2(\overline{\Omega})\,.
    \end{equation*}

    Now let $v \in H^1(\Omega)$. Let $\{v_n \}$ be a sequence in $C^2(\overline{\Omega})$ that converges to $v$ in $H^1(\Omega)$. Then by H\"older's inequality and \Cref{thm:Embedding}
    \begin{equation*}
    \begin{split}
        |B_{\rho,\delta}(u_{\delta},v)| &\leq |B_{\rho,\delta}(u_{\delta},v_n-v)| + |B_{\rho,\delta}(u_{\delta},v_n)| \\
        &\leq C\Vnorm{v_n -v}_{H^1(\Omega)} \Vnorm{u_{\delta}}_{H^1(\Omega)} + |B_{\rho,\delta}(u_{\delta},v_n)|\,.
    \end{split}
    \end{equation*}
    Since $\Vnorm{u_{\delta}}_{H^1(\Omega)}$ is bounded uniformly with respect to $\delta$, we can choose $n$ large enough so that the first term is arbitrarily small independent of $\delta$. Then we use the first part of the proof to let $\delta \to 0$ in the second term and obtain that 
    $$
    \limsup_{\delta \to 0} |B_{\rho,\delta}(u_{\delta},v)| = 0 \qquad \forall v \in H^1(\Omega)\,.
    $$
\end{proof}

\subsection{The Dirichlet problem}\label{sec:HomogDirichlet}

For this section we consider $\delta$ varying, and analyze the case $\delta  \to 0$.
Note that while an element of $H^{-1}(\Omega)$ can serve as the Poisson data $f$ for the local problem \eqref{eq:Intro:LocalEqn}, a properly regularized version, denoted by $f_\delta$, has to be used for a well-posed nonlocal problem.
We first show that the sequence of solutions to the nonlocal problem with the regularized $f_\delta$ converges as $\delta \to 0$ to the unique variational solution of 
\eqref{eq:Intro:LocalEqn}
with the Poisson data $f$, all subject to the same inhomogeneous Dirichlet boundary conditions \eqref{eq:Dirichlet:Inhomog:NonlocalBC}. We then illustrate how the regularization can be obtained. Further, we present the consistency of the outward normal derivative on $\p \Omega$.

\begin{theorem}\label{thm:Regularity:VaryingDelta}
	Let $\Omega \subset \bbR^d$ satisfy \eqref{assump:Domain}. Let $f \in H^{-1}(\Omega)$ and let $g \in H^{\frac{1}{2}}(\p \Omega)$. Suppose for each $\delta$ there exists a distribution $f_{\delta}$ that satisfies
	\begin{equation}\label{eq:ApproximationWeakConvergence}
		\lim\limits_{\delta\to 0^+} \Vint{f_{\delta} - f,v}_{H^{-1}(\Omega),H^1_0(\Omega)} = 0 \qquad\;\forall v \in H^1_0(\Omega)\,,
	\end{equation}
	and that there exists $C_0 > 0$ independent of $\delta$ such that
	\begin{equation}\label{eq:ApproximationInequality}
		\left( \Vnorm{f_{\delta}}_{[\frak{W}^{\delta,2}_0(\Omega)]^*}  + \Vnorm{\eta_{\delta} f_{\delta}}_{L^2(\Omega)} + \Vnorm{ \eta_{\delta}^2 \grad f_{\delta}}_{L^2(\Omega)} \right) \leq C_0 \Vnorm{f}_{H^{-1}(\Omega)}\,.
	\end{equation}
	Then the corresponding solutions $u_{\delta}$ of\begin{equation}\label{eq:NonlocalProblem:WeakForm-Regu}
	B_{\rho,\delta}(u_{\delta},v) = \Vint{f_{\delta},v}\,, \quad \forall v \in \frak{W}^{\delta,2}_0(\Omega)\,,
 \end{equation} with Poisson data $f_{\delta}$ and Dirichlet boundary data $g$ satisfy
	\begin{equation}\label{eq:UniformH1BoundOnSolns}
		\Vnorm{u_{\delta}}_{H^1(\Omega)} \leq C(d,\Omega,\rho) \big(  C_0 \Vnorm{f}_{H^{-1}(\Omega)} + \Vnorm{g}_{H^{\frac{1}{2}}(\p \Omega)} \big)\,.
	\end{equation}
\end{theorem}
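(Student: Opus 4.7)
The estimate follows by combining the fixed-$\delta$ $H^1$-regularity result of \Cref{thm:Regularity:FixedDelta} with the assumed uniform bounds on the regularized data $f_\delta$. Specifically, I would proceed in three short steps.

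First, I would verify that each $f_\delta$ has the local regularity required to invoke \Cref{thm:Regularity:FixedDelta}. The hypothesis $\Vnorm{\eta_\delta^2 \grad f_\delta}_{L^2(\Omega)} < \infty$ combined with the fact that $\eta_\delta$ is bounded strictly below by a positive constant on every compactly contained subset of $\Omega$ (a direct consequence of \eqref{assump:Localization}) implies $f_\delta \in H^1_{loc}(\Omega)$. Next, by \Cref{thm:WellPosedness:Dirichlet:Inhomog}, since $f_\delta \in [\frak{W}^{\delta,2}_0(\Omega)]^*$ by the hypothesis \eqref{eq:ApproximationInequality} and $g \in H^{1/2}(\p\Omega)$, the inhomogeneous Dirichlet problem is well-posed, so a unique weak solution $u_\delta \in \frak{W}^{\delta,2}(\Omega)$ of \eqref{eq:NonlocalProblem:WeakForm-Regu} with trace $g$ on $\p\Omega$ exists.

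Second, I would apply \Cref{thm:Regularity:FixedDelta} to $u_\delta$, yielding
\begin{equation*}
\Vnorm{u_\delta}_{H^1(\Omega)} \leq C \Big( \Vnorm{f_\delta}_{[\frak{W}^{\delta,2}_0(\Omega)]^*} + \Vnorm{g}_{H^{\frac{1}{2}}(\p\Omega)} + \Vnorm{\eta_\delta f_\delta}_{L^2(\Omega)} + \Vnorm{\eta_\delta^2 \grad f_\delta}_{L^2(\Omega)} \Big),
\end{equation*}
where the constant $C$ depends only on $d$, $\rho$, and $\Omega$ — crucially, not on $\delta$. This $\delta$-independence is the essential fact that makes the whole argument work, and it was built into the earlier regularity proof via the $\delta$-uniform bounds \eqref{eq:PhiBounds} and \eqref{eq:PhiDerivativeBounds} on $\Phi_{\delta,2}$ and its gradient.

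Third, substituting the assumed uniform bound \eqref{eq:ApproximationInequality} directly into the right-hand side immediately gives \eqref{eq:UniformH1BoundOnSolns}. There is no substantive obstacle: the theorem is a clean assembly of well-posedness, the fixed-$\delta$ regularity estimate with a $\delta$-independent constant, and the hypothesized uniform data bounds. I note that the weak-convergence hypothesis \eqref{eq:ApproximationWeakConvergence} is not used in establishing \eqref{eq:UniformH1BoundOnSolns} itself; it is retained in the statement to be used in the subsequent passage-to-the-limit argument that identifies the weak $H^1$-limit of $\{u_\delta\}$ as the unique variational solution of the classical Poisson problem with data $f$.
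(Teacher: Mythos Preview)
Your proposal is correct and follows exactly the paper's approach: the paper's proof is a single line stating that the result follows from \eqref{eq:H1apriori} and \eqref{eq:ApproximationInequality}, and you have spelled out precisely that combination, together with the (implicit) verification that $f_\delta \in H^1_{loc}(\Omega)$ so that \Cref{thm:Regularity:FixedDelta} applies. Your observation that \eqref{eq:ApproximationWeakConvergence} is not used here is also correct.
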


\begin{proof} The result follows from \eqref{eq:H1apriori} and \eqref{eq:ApproximationInequality}.
\end{proof}

\begin{theorem}\label{thm:LocalLimit:Dirichlet}
	Let $\Omega \subset \bbR^d$ satisfy \eqref{assump:Domain}. Suppose that $f \in H^{-1}(\Omega)$, and let $f_{\delta}$ be a sequence of functions that satisfy \eqref{eq:ApproximationWeakConvergence}-\eqref{eq:ApproximationInequality}. Then the solutions $u_{\delta} \in H^1_0(\Omega)$ to \eqref{eq:NonlocalProblem:WeakForm-Regu} converge weakly in $H^1(\Omega)$ to a function $u \in H^1_0(\Omega)$, where $u$ is the unique weak solution to the Poisson equation \eqref{eq:Intro:LocalEqn}
with Dirichlet boundary conditions \eqref{eq:Dirichlet:Inhomog:NonlocalBC}.
\end{theorem}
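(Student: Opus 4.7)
The plan is to combine the uniform $H^1$ bound from \Cref{thm:Regularity:VaryingDelta} with the two localization-limit results for the bilinear form. First, I would invoke \Cref{thm:Regularity:VaryingDelta} (with $g=0$) to conclude that $\{u_\delta\}_{0<\delta<\delta_0}$ is bounded in $H^1(\Omega)$ uniformly in $\delta$. Since the variational problem \eqref{eq:NonlocalProblem:WeakForm-Regu} is posed in $\frak{W}^{\delta,2}_0(\Omega)$, \Cref{cor:TraceZero} yields $T u_\delta = 0$; combined with the $H^1$ regularity this gives $u_\delta \in H^1_0(\Omega)$. By reflexivity, after passing to a subsequence (not relabeled), $u_\delta \rightharpoonup u$ weakly in $H^1(\Omega)$ for some $u \in H^1(\Omega)$. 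Weak continuity of the classical trace map $T : H^1(\Omega) \to H^{1/2}(\p\Omega)$ then forces $Tu = 0$, so $u \in H^1_0(\Omega)$.

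The second step is to pass to the limit in the nonlocal weak form to identify $u$. Fix an arbitrary test function $v \in H^1_0(\Omega)$; by \Cref{thm:Embedding}, $v \in \frak{W}^{\delta,2}_0(\Omega)$, so it is admissible in \eqref{eq:NonlocalProblem:WeakForm-Regu}, giving
\begin{equation*}
    B_{\rho,\delta}(u_\delta,v) = \Vint{f_\delta,v}.
\end{equation*}
The right-hand side converges to $\Vint{f,v}$ by hypothesis \eqref{eq:ApproximationWeakConvergence}. For the left-hand side I would split
\begin{equation*}
    B_{\rho,\delta}(u_\delta,v) = B_{\rho,\delta}(u_\delta - u,v) + B_{\rho,\delta}(u,v).
\end{equation*}
The first term tends to zero by \Cref{thm:BilinearFormLocalization:Sequence:SqDist}, applied with the weakly convergent sequence $u_\delta \rightharpoonup u$ in $H^1(\Omega)$, while the second term converges to $\int_\Omega \grad u \cdot \grad v \, \rmd \bx$ by \Cref{thm:BilinearFormLocalization:SqDist}. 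Hence
\begin{equation*}
    \int_\Omega \grad u(\bx) \cdot \grad v(\bx) \, \rmd \bx = \Vint{f,v} \quad \text{for every } v \in H^1_0(\Omega),
\end{equation*}
which is the weak formulation of $-\Delta u = f$ in $\Omega$ with homogeneous Dirichlet boundary data.

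Finally, uniqueness of the weak solution of the classical Poisson problem in $H^1_0(\Omega)$ (a standard consequence of the Lax--Milgram theorem applied to the local form) implies that the limit $u$ is independent of the subsequence, so the whole family converges weakly in $H^1(\Omega)$. I do not foresee a substantive obstacle: every ingredient---the uniform $H^1$ bound, the embedding $H^1 \hookrightarrow \frak{W}^{\delta,2}$ giving admissible test functions, the nonlocal trace-zero property, and the two bilinear-form localization theorems---has been established in earlier sections. The only point requiring mild care is the use of \Cref{thm:BilinearFormLocalization:Sequence:SqDist}, which relies on the weak $H^1$ convergence rather than a mere $\frak{W}^{\delta,2}$ bound; this is precisely why \Cref{thm:Regularity:VaryingDelta} with the approximation hypothesis \eqref{eq:ApproximationInequality} is indispensable to upgrade the natural nonlocal energy bound to the $H^1$ bound needed here.
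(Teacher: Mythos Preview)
Your proposal is correct and follows essentially the same route as the paper: uniform $H^1$ bound from \Cref{thm:Regularity:VaryingDelta}, weak compactness, identification of the trace, and passage to the limit in the weak form via the splitting $B_{\rho,\delta}(u_\delta,v)=B_{\rho,\delta}(u_\delta-u,v)+B_{\rho,\delta}(u,v)$ handled by \Cref{thm:BilinearFormLocalization:Sequence:SqDist} and \Cref{thm:BilinearFormLocalization:SqDist}. Your write-up is in fact slightly more careful than the paper's in two places: you make explicit the subsequence-to-full-sequence upgrade via uniqueness of the local solution, and you justify why $v\in H^1_0(\Omega)$ is an admissible test function in \eqref{eq:NonlocalProblem:WeakForm-Regu}.
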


\begin{proof}
	Since the solutions satisfy the uniform $H^1$ bound \eqref{eq:UniformH1BoundOnSolns}, it follows that they converge weakly in $H^1(\Omega)$ to a function $u$. 
    By weak continuity of traces in $H^1(\Omega)$, since $Tu_{\delta} \equiv g$ for all $\delta$ we have that $T u = g$ as well.

	We now prove that $u$ solves the weak form of the Poisson equation, i.e.
	\begin{equation}\label{eq:LocalProblem:WeakForm:Dirichlet:Inhomog}
		\int_{\Omega} \grad u \cdot \grad v \, \rmd \bx = \Vint{f,v}\,, \quad\;\forall v \in H^1_0(\Omega)\,.
	\end{equation}
 For each $\delta > 0$,
	\begin{equation*}
		\Vint{ f_{\delta},v } = B_{\rho,\delta}(u_{\delta},v)\,.
	\end{equation*}
	On one hand, \eqref{eq:ApproximationWeakConvergence} implies
	\begin{equation*}
		\lim\limits_{\delta \to 0} \Vint{ f_{\delta},v } = \Vint{ f,v }\,,
	\end{equation*}
	and on the other hand, \Cref{thm:BilinearFormLocalization:Sequence:SqDist} and \Cref{thm:BilinearFormLocalization:SqDist} imply that
	\begin{equation*}
		\lim\limits_{\delta \to 0} B_{\rho,\delta}(u_{\delta},v) = \lim\limits_{\delta \to 0} B_{\rho,\delta}(u_{\delta}-u,v) + \lim\limits_{\delta \to 0} B_{\rho,\delta}(u,v) = \int_{\Omega} \grad u \cdot \grad v \, \rmd \bx\,.
	\end{equation*}
	Therefore $u$ satisfies \eqref{eq:LocalProblem:WeakForm:Dirichlet:Inhomog}, and so in summary $u$ is the unique weak solution in  $H^1(\Omega)$ of the Poisson equation \eqref{eq:Intro:LocalEqn}
with Dirichlet boundary conditions \eqref{eq:Dirichlet:Inhomog:NonlocalBC}.
\end{proof}

\subsubsection{Regularizing rough data}
\label{sec:DataMollification}

The next theorem gives an explicit construction of the mollified sequence $f_{\delta}$ that satisfies \eqref{eq:ApproximationWeakConvergence}-\eqref{eq:ApproximationInequality}.

\begin{theorem}\label{thm:DataMollification}
	Let $\Omega \subset \bbR^d$ satisfy \eqref{assump:Domain}  and $\rho$ satisfy  \eqref{Assump:KernelSmoothness}-\eqref{Assump:KernelNormalization}. For $f \in H^{-1}(\Omega)$ and some $\alpha \geq 0$ fixed, define the distribution
	\begin{equation}\label{eq:Dirichlet:MollifiedDataDefn}
		f_{\delta} := K_{\delta,\alpha}^* f\,.
	\end{equation}
	Then $f_{\delta}$ satisfies \eqref{eq:ApproximationWeakConvergence}-\eqref{eq:ApproximationInequality}.
\end{theorem}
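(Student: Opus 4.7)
The plan is to read off both parts of the conclusion as direct consequences of the mapping properties of $K_{\delta,\alpha}^{*}$ and of $K_{\delta,\alpha}$ established in Section 6, together with the adjoint identity $\Vint{K_{\delta,\alpha}^{*}f,\varphi}=\Vint{f,K_{\delta,\alpha}\varphi}$ which connects the nonlocal mollifier acting on data to the nonlocal mollifier acting on test functions.

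First, the two pointwise norm bounds in \eqref{eq:ApproximationInequality} are immediate: since $f_{\delta}=K_{\delta,\alpha}^{*}f$ and $f\in H^{-1}(\Omega)$, estimate \eqref{eq:ConvAdjEst:HMinus1} gives
\begin{equation*}
\Vnorm{\eta_{\delta} f_{\delta}}_{L^{2}(\Omega)}+\Vnorm{\eta_{\delta}^{2}\grad f_{\delta}}_{L^{2}(\Omega)}\leq C\Vnorm{f}_{H^{-1}(\Omega)}.
\end{equation*}

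For the remaining quantity $\Vnorm{f_{\delta}}_{[\frak{W}^{\delta,2}_0(\Omega)]^*}$ I would use duality. For any $\varphi\in C^{\infty}_c(\Omega)$, \Cref{lma:SupportOfConv} shows $K_{\delta,\alpha}\varphi$ is compactly supported in $\Omega$, while \eqref{eq:ConvEst:L2} and \Cref{cor:RegularityOfHSOp} give $\Vnorm{K_{\delta,\alpha}\varphi}_{H^{1}(\Omega)}\leq C\Vnorm{\varphi}_{\frak{W}^{\delta,2}(\Omega)}$. Hence $K_{\delta,\alpha}\varphi\in H^{1}_{0}(\Omega)$ and
\begin{equation*}
|\Vint{f_{\delta},\varphi}|=|\Vint{f,K_{\delta,\alpha}\varphi}|\leq \Vnorm{f}_{H^{-1}(\Omega)}\Vnorm{K_{\delta,\alpha}\varphi}_{H^{1}(\Omega)}\leq C\Vnorm{f}_{H^{-1}(\Omega)}\Vnorm{\varphi}_{\frak{W}^{\delta,2}(\Omega)}.
\end{equation*}
Density of $C^{\infty}_c(\Omega)$ in $\frak{W}^{\delta,2}_0(\Omega)$ extends the bound to all test functions in $\frak{W}^{\delta,2}_0(\Omega)$, which yields the last term in \eqref{eq:ApproximationInequality} with a constant $C_0$ depending only on $d$, $\rho$, $\Omega$ and $\alpha$.

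Finally, for the weak convergence \eqref{eq:ApproximationWeakConvergence}, fix $v\in H^{1}_{0}(\Omega)$. As noted in the argument leading to \eqref{eq:ConvAdjEst:HMinus1:Pf1}, the continuity of $K_{\delta,\alpha}$ on $H^{1}(\Omega)$ together with the continuity of the trace operator ensures that $K_{\delta,\alpha}v\in H^{1}_{0}(\Omega)$, so the pairing $\Vint{f,K_{\delta,\alpha}v}_{H^{-1}(\Omega),H^{1}_{0}(\Omega)}$ is well defined. Writing
\begin{equation*}
\Vint{f_{\delta}-f,v}_{H^{-1}(\Omega),H^{1}_{0}(\Omega)}=\Vint{f,K_{\delta,\alpha}v-v}_{H^{-1}(\Omega),H^{1}_{0}(\Omega)},
\end{equation*}
and applying \Cref{cor:ConvergenceOfConv} (namely $K_{\delta,\alpha}v\to v$ in $H^{1}(\Omega)$), we obtain $|\Vint{f_{\delta}-f,v}|\leq \Vnorm{f}_{H^{-1}(\Omega)}\Vnorm{K_{\delta,\alpha}v-v}_{H^{1}(\Omega)}\to 0$ as $\delta\to 0$, which is precisely \eqref{eq:ApproximationWeakConvergence}.

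No single step is really an obstacle here; the mild technical point to verify is that $K_{\delta,\alpha}$ sends $\frak{W}^{\delta,2}_0(\Omega)$ (resp.\ $H^{1}_{0}(\Omega)$) into $H^{1}_{0}(\Omega)$ so that the duality pairings above are legitimate. Both facts are supplied by \Cref{lma:SupportOfConv} (via density of $C^{\infty}_c$) and by the argument used in \eqref{eq:ConvAdjEst:HMinus1:Pf1}, so the proof is essentially a one-page verification assembling earlier ingredients.
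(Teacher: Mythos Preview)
Your proposal is correct and follows essentially the same route as the paper: the dual-space bound is obtained via the adjoint identity together with \Cref{lma:SupportOfConv} and \Cref{cor:RegularityOfHSOp}/\eqref{eq:ConvEst:L2}, and the weighted $L^2$ bounds come straight from \eqref{eq:ConvAdjEst:HMinus1}. The only (minor) difference is in the weak convergence step: the paper first applies \eqref{eq:ConvergenceOfConv:H1} to $\varphi\in C^\infty_c(\Omega)$ and then passes to general $v\in H^1_0(\Omega)$ by a density argument, whereas you invoke \Cref{cor:ConvergenceOfConv} directly for $v\in H^1_0(\Omega)$---a slightly more streamlined use of the same ingredients.
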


\begin{proof}
	Similarly to the proof of \eqref{eq:ConvAdjEst:HMinus1:Pf1}, 
	it is a consequence of \Cref{lma:SupportOfConv} that
	\begin{equation*}
		T [ K_{\delta,\alpha} \varphi] \equiv 0\,,  \forall \varphi \in C^{\infty}_c(\Omega)\,.
	\end{equation*}
	Thanks to the continuity of the operator $K_{\delta,\alpha} : \frak{W}^{\delta,2}(\Omega) \to H^1(\Omega)$ implied by \eqref{eq:ConvEst:Deriv:Cor}-\eqref{eq:ConvEst:L2}, and thanks to continuity in $\frak{W}^{\delta,2}(\Omega)$ of the trace operator, we conclude that $K_{\delta,\alpha} \varphi \in H^1_0(\Omega)$ whenever $\varphi \in \frak{W}^{\delta,2}_0(\Omega)$. Hence 
	\begin{equation*}
		| \Vint{
  \varphi, K_{\delta,\alpha}^* f
}_{\frak{W}^{\delta,2}_0(\Omega) , [\frak{W}^{\delta,2}(\Omega) ]^* } | = | \Vint{ f, K_{\delta,\alpha} \varphi }_{H^1_0(\Omega),H^{-1}(\Omega)} | \leq C \Vnorm{f}_{H^{-1}(\Omega)} \Vnorm{K_{\delta,\alpha} \varphi }_{H^{1}(\Omega)}\,,
	\end{equation*}
	and so by \eqref{eq:ConvEst:Deriv:Cor}-\eqref{eq:ConvEst:L2} we conclude that
	\begin{equation*}
			\Vnorm{ f_{\delta} }_{[\frak{W}^{\delta,2}_0(\Omega) ]^*}  = \Vnorm{ K_{\delta,\alpha}^* f }_{[\frak{W}^{\delta,2}_0(\Omega) ]^*} \leq C \Vnorm{f}_{H^{-1}(\Omega)}\,.
	\end{equation*}
	The estimate
	\begin{equation*}
		\Vnorm{\eta_{\delta} f_{\delta}}_{L^2(\Omega)} + \Vnorm{ \eta_{\delta}^2 \grad f_{\delta}}_{L^2(\Omega)} \leq C \Vnorm{f}_{H^{-1}(\Omega)}
	\end{equation*}
	follows from \eqref{eq:ConvAdjEst:HMinus1}, and therefore \eqref{eq:ApproximationInequality} is established by the previous two estimates.
	
    To prove \eqref{eq:ApproximationWeakConvergence}, we first note that by \eqref{eq:ConvergenceOfConv:H1}
	\begin{equation}\label{eq:DataMollification:Pf1}
		\lim\limits_{\delta \to 0} \Vnorm{ K_{\delta,\alpha} \varphi - \varphi }_{H^1(\Omega)} = 0\,, \quad\;\forall \varphi \in C^{\infty}_c(\Omega)\,.
	\end{equation}
    Now let $\varphi \in H^1_0(\Omega)$, and let $(\varphi_n)$ be a sequence in $C^{\infty}_c(\Omega)$ that converges to $\varphi$ in $H^1(\Omega)$. Then by \eqref{eq:ConvAdjEst:HMinus1:Pf1}
	\begin{equation*}
		\begin{split}
		|\Vint{f_{\delta} - f, \varphi}| &\leq |\Vint{f_{\delta} - f, \varphi_n}| + C(\rho,\Omega) \Vnorm{ f }_{H^{-1}(\Omega)}  \Vnorm{\varphi_n - \varphi}_{H^1(\Omega)} \\
		&= |\Vint{f, K_{\delta,\alpha} \varphi_n - \varphi_n}| + C(\rho,\Omega) \Vnorm{ f }_{H^{-1}(\Omega)}  \Vnorm{\varphi_n - \varphi}_{H^1(\Omega)} \\
		&\leq C(\rho,\Omega)  \Vnorm{ f }_{H^{-1}(\Omega)}  \left(  \Vnorm{K_{\delta,\alpha} \varphi_n - \varphi_n} +  \Vnorm{\varphi_n - \varphi}_{H^1(\Omega)} \right)\,.
		\end{split}
	\end{equation*}
	The second quantity can be made as small as desired by fixing $n$ large, and so using \eqref{eq:DataMollification:Pf1} upon taking $\delta \to 0$ gives
	\begin{equation*}
		\limsup_{\delta \to 0} |\Vint{f_{\delta} - f, \varphi}| < \veps \text{ for any } \veps > 0\,.
	\end{equation*}
	Thus \eqref{eq:ApproximationWeakConvergence} is proved.
\end{proof}

\subsubsection{Convergence of normal derivatives}\label{sec:NormalDeri}

Since we know $u_{\delta}$, $u \in H^1(\Omega)$, we also know that $\frac{\p u_{\delta}}{\p \bsnu}$ and $\frac{\p u}{\p \bsnu}$ are well-defined distributions in $H^{-\frac{1}{2}}(\p \Omega)$.

\begin{theorem}
	Let $\Omega \subset \bbR^d$ satisfy \eqref{assump:Domain}. Suppose that $f \in L^2(\Omega)$. Let $u_{\delta}$ be the solution of \eqref{eq:NonlocalProblem:WeakForm-Regu} with Dirichlet boundary condition \eqref{eq:Dirichlet:Inhomog:NonlocalBC} and Poisson data $f_{\delta}$, where $f_{\delta}$ is defined in \eqref{eq:Dirichlet:MollifiedDataDefn}, and Dirichlet data $g \in H^{\frac{1}{2}}(\p \Omega)$. Then $\frac{\p u_{\delta}}{\p \bsnu} \in H^{-\frac{1}{2}}(\p \Omega)$ with the uniform estimate
	\begin{equation*}
		\Vnorm{ \frac{\p u_{\delta}}{\p \bsnu} }_{H^{-\frac{1}{2}}(\p \Omega)} \leq C(d,\rho,\Omega) \big( \Vnorm{f}_{L^2(\Omega)} + \Vnorm{g}_{H^{\frac{1}{2}}(\p \Omega)}
        \big)\,.
	\end{equation*}
\end{theorem}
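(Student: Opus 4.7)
The strategy is to reduce the bound to the abstract normal-derivative functional in \Cref{thm:NormalDerivativeFxnal}, which states that for any $u \in \frak{W}^{\delta,2}(\Omega)$ with $\cL_\delta u \in L^2(\Omega)$ one has
\[
\Vnorm{\tfrac{\p u}{\p \bsnu}}_{H^{-\frac{1}{2}}(\p\Omega)} \leq C(\Omega,\rho)\Big([u]_{\frak{W}^{\delta,2}(\Omega)} + \Vnorm{\cL_\delta u}_{L^2(\Omega)}\Big).
\]
Applying this with $u = u_\delta$, the plan reduces to controlling each of the two terms on the right by $\Vnorm{f}_{L^2(\Omega)} + \Vnorm{g}_{H^{1/2}(\p\Omega)}$, with constants independent of $\delta$.

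For the semi-norm term, I would invoke the well-posedness bound \eqref{eq:InhomogDirichlet:EnergyEstimate} of \Cref{thm:WellPosedness:Dirichlet:Inhomog} to obtain
\[
[u_\delta]_{\frak{W}^{\delta,2}(\Omega)} \leq C\big(\Vnorm{f_\delta}_{[\frak{W}^{\delta,2}_0(\Omega)]^*} + \Vnorm{g}_{H^{\frac{1}{2}}(\p\Omega)}\big),
\]
and then bound $\Vnorm{f_\delta}_{[\frak{W}^{\delta,2}_0(\Omega)]^*}$ by $\Vnorm{f}_{H^{-1}(\Omega)} \leq C\Vnorm{f}_{L^2(\Omega)}$ via \Cref{thm:DataMollification} (which shows that $f_\delta = K_{\delta,\alpha}^* f$ satisfies \eqref{eq:ApproximationInequality}). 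For the $L^2$ term, the key observation is that since $f \in L^2(\Omega)$, \eqref{eq:ConvAdjEst:L2} yields $f_\delta = K_{\delta,\alpha}^* f \in L^2(\Omega)$ with $\Vnorm{f_\delta}_{L^2(\Omega)} \leq C\Vnorm{f}_{L^2(\Omega)}$, so $f_\delta$ is in particular locally integrable.

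Next, I would pass from the weak form \eqref{eq:NonlocalProblem:WeakForm-Regu} to a pointwise identity in exactly the manner used at the beginning of the proof of \Cref{thm:Regularity:FixedDelta}: for any $\varphi \in C^\infty_c(\Omega)$, \Cref{prop:GreensId:GeneralFxns}(i) gives the nonlocal Green's identity $B_{\rho,\delta}(u_\delta,\varphi) = \int_\Omega \cL_\delta u_\delta(\bx)\varphi(\bx)\,\rmd\bx$ (here $\cL_\delta u_\delta$ is locally integrable by \Cref{thm:NonlocalOpWellDefd:Energy} and the bound $\Phi_{\delta,2} \approx \eta_\delta^{-2}$), while the weak form yields $B_{\rho,\delta}(u_\delta,\varphi) = \int_\Omega f_\delta(\bx)\varphi(\bx)\,\rmd\bx$ since $f_\delta \in L^2(\Omega) \subset L^1_{loc}(\Omega)$. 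Therefore $\cL_\delta u_\delta = f_\delta$ almost everywhere in $\Omega$, which immediately gives $\Vnorm{\cL_\delta u_\delta}_{L^2(\Omega)} = \Vnorm{f_\delta}_{L^2(\Omega)} \leq C\Vnorm{f}_{L^2(\Omega)}$.

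Combining these bounds in the inequality from \Cref{thm:NormalDerivativeFxnal} produces the desired uniform estimate. There is no real analytic obstacle: the whole argument is a bookkeeping combination of results already established, and the one point that deserves a line of justification is the identification $\cL_\delta u_\delta = f_\delta$ pointwise a.e., which is where the $L^2$ hypothesis on $f$ (rather than merely $f \in H^{-1}$) is actually used.
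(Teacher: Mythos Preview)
Your proposal is correct and follows essentially the same approach as the paper: both establish $\cL_\delta u_\delta = f_\delta$ a.e.\ via the Green's identity against compactly supported test functions, use \eqref{eq:ConvAdjEst:L2} to bound $\Vnorm{f_\delta}_{L^2}$, and control the energy seminorm via \eqref{eq:InhomogDirichlet:EnergyEstimate}. The only cosmetic difference is that you invoke \Cref{thm:NormalDerivativeFxnal} as a black box, whereas the paper re-derives that bound inline by writing $\Vint{\tfrac{\p u_\delta}{\p\bsnu},v} = B_{\rho,\delta}(u_\delta,\bar v) - \int_\Omega f_\delta\,\bar v$ via \Cref{prop:GreensId:GeneralFxns}(ii) and estimating each term directly.
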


\begin{proof}
	It is clear from \eqref{eq:ConvAdjEst:L2} that
	\begin{equation*}
		\Vnorm{f_{\delta}}_{L^2(\Omega)} \leq C(d,\rho,\Omega) \Vnorm{f}_{L^2(\Omega)}\,.
	\end{equation*}
	Since \eqref{eq:Intro:NonlocalEq} holds and since $f_{\delta} \in L^2(\Omega)$, we can now test against a wider class of functions to get
	\begin{equation*}
		\int_{\Omega} \cL_{\delta} u(\bx) \varphi(\bx) \, \rmd \bx = \int_{\Omega}f_{\delta}(\bx) \varphi(\bx) \, \rmd \bx \,,\quad\;\forall \varphi \in C^{\infty}(\overline{\Omega})\,.
	\end{equation*}
	Let $v \in H^{1/2}(\p \Omega)$, and let $\bar{v} \in H^1(\Omega)$ be any extension to $\Omega$.
	Then by the nonlocal Green's identity \eqref{eq:GreensIdentity:Intro} (See case ii) of \Cref{prop:GreensId:GeneralFxns})
	\begin{equation*}
		\begin{split}
		\Vint{ \frac{\p u_{\delta}}{\p \bsnu } , v } &= B_{\rho,\delta}(u_{\delta}, \overline{v}) - \int_{\Omega} \cL_{\delta} u_{\delta} \overline{v} \, \rmd \bx \\
		&= B_{\rho,\delta}(u_{\delta}, \overline{v}) - \int_{\Omega} f_{\delta} \overline{v} \, \rmd \bx\,.
		\end{split}
	\end{equation*}
	Therefore using the energy estimate \eqref{eq:InhomogDirichlet:EnergyEstimate}, the embedding \Cref{thm:Embedding}, and boundedness of the extension $\bar{v}$
	\begin{equation*}
		\begin{split}
			\left| \Vint{ \frac{\p u_{\delta}}{\p \bsnu } , v } \right| &\leq [u_{\delta}]_{\frak{W}^{\delta,2}(\Omega)} [\overline{v}]_{\frak{W}^{\delta,2}(\Omega)} +  \Vnorm{f}_{L^2(\Omega)} \Vnorm{\overline{v}}_{L^2(\Omega)} \\
			&\leq C \big( \Vnorm{f}_{L^2(\Omega)} + \Vnorm{g}_{H^{\frac{1}{2}}(\p \Omega)} \big) \Vnorm{v}_{H^{\frac{1}{2}}(\p \Omega) }\,,
		\end{split}
	\end{equation*}
	which gives the result.
\end{proof}

\begin{theorem}
	Let $\Omega \subset \bbR^d$ satisfy \eqref{assump:Domain}.
	Suppose that $f \in L^2(\Omega)$. Let $u_{\delta}$ be a solution of \eqref{eq:NonlocalProblem:WeakForm-Regu} with Dirichlet data $g$ and Poisson data $f_{\delta}$, where $f_{\delta}$ is defined in \eqref{eq:Dirichlet:MollifiedDataDefn}, and Dirichlet data $g \in H^{\frac{3}{2}}(\p \Omega)$. Let $u$ be the unique variational solution of the Poisson equation \eqref{eq:Intro:LocalEqn}
and Dirichlet boundary condition \eqref{eq:Dirichlet:Inhomog:NonlocalBC} with Poisson data $f$ and Dirichlet data $g$. Then
	\begin{equation*}
		\lim\limits_{\delta \to 0} \Vint{ \frac{\p u_{\delta}}{\p \bsnu}, v } = \Vint{ \frac{\p u}{\p \bsnu}, v }\,, \qquad \forall v \in H^{\frac{1}{2}}(\p \Omega)\,.
	\end{equation*}
\end{theorem}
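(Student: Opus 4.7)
The plan is to derive a workable formula for $\langle \partial u_\delta / \partial \bsnu, v\rangle$ via the nonlocal Green's identity, and then pass to the limit term-by-term, using that $u_\delta \rightharpoonup u$ in $H^1(\Omega)$ (which is exactly the conclusion of \Cref{thm:LocalLimit:Dirichlet}, applicable here since $f_\delta = K_{\delta,\alpha}^* f$ satisfies \eqref{eq:ApproximationWeakConvergence}--\eqref{eq:ApproximationInequality} by \Cref{thm:DataMollification}). Fix $v \in H^{1/2}(\p\Omega)$ and choose any extension $\bar v \in H^1(\Omega)$ with $T\bar v = v$. Since $\cL_{\delta} u_\delta = f_\delta \in L^2(\Omega)$ (by \eqref{eq:ConvAdjEst:L2} applied to $f \in L^2(\Omega)$), case ii) of \Cref{prop:GreensId:GeneralFxns} yields
\begin{equation*}
\Vint{ \frac{\p u_\delta}{\p \bsnu}, v } \;=\; B_{\rho,\delta}(u_\delta, \bar v) \;-\; \int_{\Omega} f_\delta(\bx)\, \bar v(\bx)\, \rmd \bx.
\end{equation*}

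The plan is to analyze each term on the right-hand side separately. For the bilinear form, write $B_{\rho,\delta}(u_\delta,\bar v) = B_{\rho,\delta}(u_\delta - u, \bar v) + B_{\rho,\delta}(u,\bar v)$. The first piece vanishes in the limit by \Cref{thm:BilinearFormLocalization:Sequence:SqDist} (using $u_\delta \rightharpoonup u$ weakly in $H^1(\Omega)$ and $\bar v \in H^1(\Omega)$), and the second piece converges to $\int_\Omega \grad u \cdot \grad \bar v\, \rmd \bx$ by \Cref{thm:BilinearFormLocalization:SqDist}. For the data term, I would use the adjoint identity
\begin{equation*}
\int_\Omega f_\delta(\bx)\, \bar v(\bx)\, \rmd\bx \;=\; \int_\Omega K_{\delta,\alpha}^* f(\bx)\, \bar v(\bx)\, \rmd\bx \;=\; \int_\Omega f(\bx)\, K_{\delta,\alpha} \bar v(\bx)\, \rmd\bx,
\end{equation*}
and then invoke \eqref{eq:ConvergenceOfConv} to conclude $K_{\delta,\alpha}\bar v \to \bar v$ in $L^2(\Omega)$; since $f \in L^2(\Omega)$, this integral tends to $\int_\Omega f\, \bar v\, \rmd\bx$.

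Combining these convergences yields
\begin{equation*}
\lim\limits_{\delta \to 0} \Vint{ \frac{\p u_\delta}{\p \bsnu}, v } \;=\; \int_\Omega \grad u \cdot \grad \bar v\, \rmd \bx \;-\; \int_\Omega f\, \bar v\, \rmd \bx.
\end{equation*}
To identify this limit with $\langle \p u/\p \bsnu, v\rangle$, I would apply the classical Green's identity for the local Poisson problem. Since $f \in L^2(\Omega)$, $g \in H^{3/2}(\p\Omega)$, and $\Omega$ is $C^2$, elliptic regularity gives $u \in H^2(\Omega)$, so $\p u/\p\bsnu \in H^{1/2}(\p\Omega)$ and
\begin{equation*}
\int_\Omega \grad u \cdot \grad \bar v\, \rmd \bx \;=\; \int_\Omega (-\Delta u)\, \bar v\, \rmd\bx + \Vint{\frac{\p u}{\p\bsnu}, v} \;=\; \int_\Omega f\, \bar v\, \rmd\bx + \Vint{\frac{\p u}{\p\bsnu}, v}.
\end{equation*}
Subtracting the two displays gives the claim, and as a by-product the limit is independent of the extension $\bar v$, as it must be.

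The main obstacle is not a deep one—the essential work has already been done in \Cref{thm:BilinearFormLocalization:Sequence:SqDist} and \Cref{thm:ConvergenceOfConv}. The most delicate point is ensuring the correct topology for passing to the limit in the data term: the adjoint rewriting is crucial because \eqref{eq:ApproximationWeakConvergence} only gives weak convergence of $f_\delta$ in $H^{-1}(\Omega)$ against $H^1_0$-test functions, whereas $\bar v$ is merely in $H^1(\Omega)$ (and need not vanish on $\p\Omega$). Exploiting that $f$ itself lies in $L^2(\Omega)$ and shifting the mollifier onto $\bar v$ via the adjoint sidesteps this issue cleanly, and the $L^2$-approximation property \eqref{eq:ConvergenceOfConv} closes the argument.
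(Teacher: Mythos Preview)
Your proof is correct and follows the same overall strategy as the paper: apply the nonlocal Green's identity (case~ii of \Cref{prop:GreensId:GeneralFxns}) to $u_\delta$, then pass to the limit using \Cref{thm:BilinearFormLocalization:Sequence:SqDist} and the $L^2$-convergence of the mollified data. The organization differs slightly. The paper applies the nonlocal Green's identity \emph{twice}---once to $u_\delta$ and once to $u$ (valid since $u\in H^2(\Omega)$ gives $\cL_\delta u\in L^2(\Omega)$ by \Cref{thm:OpIsDefined:Inhomog:SqDist})---then subtracts to obtain
\[
\Vint{\tfrac{\partial u_\delta}{\partial\bsnu},v}-\Vint{\tfrac{\partial u}{\partial\bsnu},v}
= B_{\rho,\delta}(u_\delta-u,\bar v)+\int_\Omega(\cL_\delta u+\Delta u)\bar v-\int_\Omega(f_\delta-f)\bar v,
\]
and concludes via \Cref{thm:OperatorLocalization:SqDist} and \eqref{eq:ConvergenceOfConvAdj}. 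You instead compute the limit of $\Vint{\partial u_\delta/\partial\bsnu,v}$ directly using \Cref{thm:BilinearFormLocalization:SqDist}, then identify it via the \emph{classical} Green's identity for $u\in H^2(\Omega)$. Your route trades the operator-localization lemma for the bilinear-form localization plus a classical step; both rely on the same $H^2$-regularity of $u$ and the same weak $H^1$-convergence of $u_\delta$. For the data term, you shift the mollifier onto $\bar v$ via the adjoint and invoke \eqref{eq:ConvergenceOfConv}, whereas the paper invokes \eqref{eq:ConvergenceOfConvAdj} directly on $f_\delta-f$; these are equivalent manoeuvres.
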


\begin{proof}
	Note that the mollification $f_{\delta}$ is defined pointwise, that is, $f_{\delta}(\bx) = K_{\delta,\alpha}^* f(\bx)$. In addition, by \eqref{eq:ConvAdjEst:L2} $f_{\delta} \in L^2(\Omega)$, with
	\begin{equation*}
		\Vnorm{ f_{\delta}}_{L^2(\Omega)} \leq C(\rho,\Omega) \Vnorm{f}_{L^2(\Omega)}\,.
	\end{equation*}
	
    We have from \eqref{eq:UniformH1BoundOnSolns} that $u_{\delta} \in H^1(\Omega)$ with
        \begin{equation*}
		\Vnorm{u_{\delta}}_{H^1(\Omega)} \leq C(d,\rho,\Omega) \big( \Vnorm{ f }_{H^{-1}(\Omega)} + \Vnorm{g}_{H^{\frac{1}{2}}(\p \Omega)} \big) = C \big( \Vnorm{ f }_{L^{2}(\Omega)} + \Vnorm{g}_{H^{\frac{1}{2}}(\p \Omega)} \big)\,.
	\end{equation*}
	Now, \eqref{eq:Intro:NonlocalEq} holds, hence $\cL_{\delta} u_{\delta} \in L^2(\Omega)$ since $f_{\delta} \in L^2(\Omega)$, and we can test against a wider class of functions to get
	\begin{equation*}
		\int_{\Omega} \cL_{\delta} u(\bx) \varphi(\bx) \, \rmd \bx = \int_{\Omega}f_{\delta}(\bx) \varphi(\bx) \, \rmd \bx \,,\quad\;\forall \varphi \in C^{\infty}(\overline{\Omega})\,.
	\end{equation*}
	
	Let $v \in H^{1/2}(\p \Omega)$, and let $\bar{v} \in H^1(\Omega)$ be any extension of $v$ to $\Omega$.
	Then by the nonlocal Green's identity \eqref{eq:GreensIdentity:Intro} (See case ii) of \Cref{prop:GreensId:GeneralFxns})
	\begin{equation*}
		\begin{split}
			\Vint{ \frac{\p u_{\delta}}{\p \bsnu } , v } &= B_{\rho,\delta}(u_{\delta}, \bar{v}) - \int_{\Omega} \cL_{\delta} u_{\delta}(\bx) \bar{v}(\bx) \, \rmd \bx \\
			&= B_{\rho,\delta}(u_{\delta}, \bar{v}) - \int_{\Omega} f_{\delta}(\bx) \bar{v}(\bx) \, \rmd \bx\,.
		\end{split}
	\end{equation*}
	
	Now, let $u \in H^2(\Omega)$ be the unique variational solution to {\eqref{eq:Intro:LocalEqn}
and \eqref{eq:Dirichlet:Inhomog:NonlocalBC}}
with Poisson data $f$ and Dirichlet data $g$ 
    (global $H^2$ regularity of $u$ for $f \in L^2(\Omega)$ and $g \in H^{\frac{3}{2}}(\p \Omega)$ is proved in \cite[Theorem 4.14]{Giaquinta}).
	Then $u_{\delta}$ converges to $u$ weakly in $H^1(\Omega)$ by \Cref{thm:LocalLimit:Dirichlet}. From the nonlocal Green's identity \eqref{eq:GreensIdentity:Intro} (See case ii) of \Cref{prop:GreensId:GeneralFxns})
	\begin{equation*}
		\begin{split}
			\Vint{ \frac{\p u}{\p \bsnu } , v } &= B_{\rho,\delta}(u, \bar{v}) - \int_{\Omega} \cL_{\delta} u(\bx) \bar{v}(\bx) \, \rmd \bx \\
			&= B_{\rho,\delta}(u, \bar{v}) + \int_{\Omega} ((-\Delta u(\bx))-\cL_{\delta} u(\bx)) \bar{v}(\bx) \, \rmd \bx - \int_{\Omega} - \Delta u(\bx) \bar{v}(\bx) \, \rmd \bx \\
			&= B_{\rho,\delta}(u, \bar{v}) + \int_{\Omega} ((-\Delta u(\bx))-\cL_{\delta} u(\bx)) \bar{v}(\bx) \, \rmd \bx - \int_{\Omega} f(\bx) \bar{v}(\bx) \, \rmd \bx\,.
		\end{split}
	\end{equation*}
	Therefore, we have
	\begin{equation*}
		\begin{split}
			\Vint{ \frac{\p u_{\delta}}{\p \bsnu } , v } - \Vint{ \frac{\p u}{\p \bsnu } , v } &= B_{\rho,\delta}(u_{\delta}-u, \bar{v}) + \int_{\Omega} (\cL_{\delta} u(\bx)-(-\Delta u(\bx))) \bar{v}(\bx) \, \rmd \bx \\
			&\qquad - \int_{\Omega} (f_{\delta}(\bx)-f(\bx)) \bar{v}(\bx) \, \rmd \bx\,.
		\end{split}
	\end{equation*}
	The result then follows from \Cref{thm:BilinearFormLocalization:Sequence:SqDist}, \Cref{thm:OperatorLocalization:SqDist}, and \eqref{eq:ConvergenceOfConvAdj}.
\end{proof}

\begin{remark}
    The assumption $g \in H^{\frac{3}{2}}(\p \Omega)$ was made in order to easily obtain the strong $L^2$ convergence of $\cL_{\delta} u $ to $-\Delta u$. The case $g \in H^{\frac{1}{2}}(\p \Omega)$ can likely be treated by introducing the solution space $\{ u \in L^2(\Omega) \, : \, \Delta u \in L^2(\Omega) \}$ for the classical problem and then redoing the convergence proofs of \Cref{sec:OpLocalLimit} and \Cref{sec:BiLOcalLimit} more carefully; this will be investigated in future works.
\end{remark}

\subsection{The Neumann problem}\label{sec:HomoNeumann}

We now study the Neumann problem with $\delta$ varying, and analyze the case $\delta  \to 0$.

\begin{theorem}\label{thm:Regularity:VaryingDelta:Neumann}
	Let $\Omega \subset \bbR^d$ satisfy \eqref{assump:Domain}. Let $f \in [H^{1}(\Omega)]^*$ and $g \in H^{-\frac{1}{2}}(\p \Omega)$ with $\Vint{f,1} +\Vint{g,1}= 0$. Suppose for each $\delta \in (0,\delta_0)$ there exists a function $f_{\delta} \in L^2_{loc}(\Omega)$ that satisfies
	\begin{equation}\label{eq:ApproximationZeroAve:Neumann}
		\Vint{f_{\delta},1} = \Vint{f,1}\,, \quad\;\forall \delta > 0 
	\end{equation}
	with
	\begin{equation}\label{eq:ApproximationWeakConvergence:Neumann}
		\lim\limits_{\delta\to 0^+} \Vint{f_{\delta} - f,v}_{[H^{1}(\Omega)]^*,H^1(\Omega)} = 0\,, \quad\;\forall v \in H^1(\Omega)\,,
	\end{equation}
	and that	\begin{equation}\label{eq:ApproximationInequality:Neumann}
		\left( \Vnorm{f_{\delta}}_{[\frak{W}^{\delta,2}(\Omega)]^*}  + \Vnorm{\eta_{\delta} f_{\delta}}_{L^2(\Omega)} + \Vnorm{ \eta_{\delta}^2 \grad f_{\delta}}_{L^2(\Omega)} \right) \leq C_0 \Vnorm{f}_{[H^{1}(\Omega)]^*}\,.
	\end{equation}
	Then the corresponding solutions $u_{\delta}$ of \eqref{eq:NonlocalProblem:Neumann:Inhomog:WeakForm} with Poisson data $f_{\delta}$ and Neumann data $g$ satisfy
	\begin{equation}\label{eq:UniformH1BoundOnSolns:Neumann}
		\Vnorm{u_{\delta}}_{H^1(\Omega)} \leq C(\Omega,\rho)  \big( C_0\Vnorm{f}_{[H^{1}(\Omega)]^*} + \Vnorm{g}_{H^{-\frac{1}{2}}(\p \Omega)}\big) \,.
	\end{equation}
\end{theorem}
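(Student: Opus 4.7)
The plan is to derive \eqref{eq:UniformH1BoundOnSolns:Neumann} as a direct consequence of the fixed-$\delta$ regularity result \Cref{thm:Regularity:FixedDelta:Neumann} combined with the uniform assumption \eqref{eq:ApproximationInequality:Neumann}, essentially mirroring the argument used in \Cref{thm:Regularity:VaryingDelta} for the Dirichlet case. The main task is simply to verify that the hypotheses on the data are correctly propagated.

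First, for each fixed $\delta \in (0,\delta_0)$ I would check that the weak formulation \eqref{eq:NonlocalProblem:Neumann:Inhomog:WeakForm} is well-posed with Poisson data $f_\delta$ and Neumann data $g$. The bound \eqref{eq:ApproximationInequality:Neumann} ensures $f_\delta \in [\frak{W}^{\delta,2}(\Omega)]^*$, and the compatibility condition \eqref{eq:ApproximationZeroAve:Neumann} together with $\Vint{f,1}+\Vint{g,1}=0$ gives $\Vint{f_\delta,1}+\Vint{g,1}=0$, so \Cref{thm:WellPosedness:Neumann:Inhomog} produces a unique solution $u_\delta \in \mathring{\frak{W}}^{\delta,2}_{\rho}(\Omega)$.

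Next, I would verify that the additional regularity hypothesis of \Cref{thm:Regularity:FixedDelta:Neumann}, namely $f_\delta \in H^1_{loc}(\Omega)$, is available. This is the one place where some (minimal) care is needed: the assumption $\eta_\delta^2 \grad f_\delta \in L^2(\Omega)$ from \eqref{eq:ApproximationInequality:Neumann} only controls the gradient weighted by $\eta_\delta^2$. However, by property (iv) of \eqref{assump:Localization} together with properties (i)--(ii), $\eta_\delta$ is bounded below by a positive constant on every compact subset $K \Subset \Omega$; combined with $f_\delta \in L^2_{loc}(\Omega)$ by hypothesis, this yields $f_\delta \in H^1_{loc}(\Omega)$ as required. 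Applying \Cref{thm:Regularity:FixedDelta:Neumann} to $u_\delta$ therefore gives
\begin{equation*}
\Vnorm{u_\delta}_{H^1(\Omega)} \leq C(\Omega,\rho) \Big( \Vnorm{f_\delta}_{[\frak{W}^{\delta,2}(\Omega)]^*} + \Vnorm{g}_{H^{-\frac{1}{2}}(\p \Omega)} + \Vnorm{\eta_\delta f_\delta}_{L^2(\Omega)} + \Vnorm{\eta_\delta^2 \grad f_\delta}_{L^2(\Omega)} \Big).
\end{equation*}

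Finally, inserting the uniform estimate \eqref{eq:ApproximationInequality:Neumann} into the right-hand side collapses the three $f_\delta$-dependent terms into $C_0 \Vnorm{f}_{[H^1(\Omega)]^*}$, producing \eqref{eq:UniformH1BoundOnSolns:Neumann}. There is no genuine obstacle here since the heavy lifting (well-posedness, coercivity via the Neumann--Poincar\'e inequality, and the pointwise representation $u_\delta = K_{\delta,2}u_\delta + f_\delta/(2\Phi_{\delta,2})$ used inside \Cref{thm:Regularity:FixedDelta:Neumann}) has already been carried out in earlier sections; the conditions \eqref{eq:ApproximationZeroAve:Neumann} and \eqref{eq:ApproximationWeakConvergence:Neumann} are not needed for this particular estimate but will be invoked in the subsequent localization-limit argument (analogous to \Cref{thm:LocalLimit:Dirichlet}) to identify the weak $H^1$ limit as the variational Neumann solution.
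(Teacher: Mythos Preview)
Your proposal is correct and follows exactly the paper's approach: the paper's proof is a single line invoking \eqref{eq:H1Bound:Neumann:1} (i.e.\ \Cref{thm:Regularity:FixedDelta:Neumann}) together with \eqref{eq:ApproximationInequality:Neumann}. Your additional remarks on verifying well-posedness and the $H^1_{loc}$ hypothesis for $f_\delta$ are accurate elaborations of details the paper leaves implicit.
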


\begin{proof}
    The result follows from \eqref{eq:H1Bound:Neumann:1} and \eqref{eq:ApproximationInequality:Neumann}.
\end{proof}

\begin{theorem}\label{thm:LocalLimit:Neumann}
    Let $\Omega \subset \bbR^d$ satisfy \eqref{assump:Domain}. Suppose that $f \in [H^{1}(\Omega)]^*$ and $g \in H^{-\frac{1}{2}}(\p \Omega)$ with $\Vint{f,1} + \Vint{g,1} = 0$, and let $f_{\delta}$ be a sequence of functions that satisfy \eqref{eq:ApproximationZeroAve:Neumann}-\eqref{eq:ApproximationWeakConvergence:Neumann}-\eqref{eq:ApproximationInequality:Neumann}. Then the solutions $u_{\delta} \in \mathring{H}^1(\Omega)$ to \eqref{eq:NonlocalProblem:Neumann:Inhomog:WeakForm} converge weakly in $H^1(\Omega)$ to a function $u \in \mathring{H}^1(\Omega)$, where $u$ is the variational solution to the Poisson equation with inhomogeneous Neumann boundary conditions \eqref{eq:Intro:LocalEqn}-\eqref{eq:InHomogNeumannBC}.
\end{theorem}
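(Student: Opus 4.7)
The plan is to follow a pattern analogous to the Dirichlet case treated in \Cref{thm:LocalLimit:Dirichlet}, but with three adjustments that arise from the fact that (a) the admissible class $\mathring{\frak{W}}^{\delta,2}_{\rho}(\Omega)$ is cut out by a $\Phi_{\delta}$-weighted mean-zero condition rather than a boundary condition, (b) the target space $\mathring{H}^1(\Omega)$ is cut out by an unweighted mean-zero condition, and (c) the right-hand side of the weak form carries the boundary datum $g$ through the trace pairing.

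First I would apply \Cref{thm:Regularity:VaryingDelta:Neumann} to obtain the uniform $H^1$-bound \eqref{eq:UniformH1BoundOnSolns:Neumann}, which by weak compactness of $H^1(\Omega)$ and the Rellich--Kondrachov theorem produces a subsequence (not relabeled) and a function $u\in H^1(\Omega)$ with $u_{\delta}\rightharpoonup u$ weakly in $H^1(\Omega)$ and strongly in $L^2(\Omega)$. The next step is to verify that $u\in\mathring{H}^1(\Omega)$. Since $(\Phi_{\delta}u_{\delta})_{\Omega}=0$ for every $\delta$, I would invoke \Cref{lma:PhiConvergence} to get $\Phi_{\delta}\to\bar{\rho}$ strongly in $L^2(\Omega)$; combining this with the strong $L^2$-convergence of $u_{\delta}$ yields
\begin{equation*}
0=\lim_{\delta\to 0}\int_{\Omega}\Phi_{\delta}(\bx)u_{\delta}(\bx)\,\rmd\bx=\bar{\rho}\int_{\Omega}u(\bx)\,\rmd\bx,
\end{equation*}
so $(u)_{\Omega}=0$.

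The core of the argument is passing to the limit in the weak form. For an arbitrary test function $v\in H^1(\Omega)$, I would define the shifted test function $v_{\delta}:=v-c_{\delta}$, where $c_{\delta}:=(\Phi_{\delta}v)_{\Omega}/(\Phi_{\delta})_{\Omega}$; this constant is chosen precisely so that $v_{\delta}\in\mathring{\frak{W}}^{\delta,2}_{\rho}(\Omega)$ (using \Cref{thm:Embedding} to ensure $v_{\delta}\in\frak{W}^{\delta,2}(\Omega)$). Since $B_{\rho,\delta}$ annihilates additive constants in each slot, inserting $v_{\delta}$ into \eqref{eq:NonlocalProblem:Neumann:Inhomog:WeakForm} and using the preservation of mean \eqref{eq:ApproximationZeroAve:Neumann} together with the compatibility condition $\Vint{f,1}+\Vint{g,1}=0$ yields the clean identity
\begin{equation*}
B_{\rho,\delta}(u_{\delta},v)=\Vint{f_{\delta},v}+\Vint{g,Tv}.
\end{equation*}
Now I would let $\delta\to 0$ term by term: the left-hand side splits as $B_{\rho,\delta}(u_{\delta}-u,v)+B_{\rho,\delta}(u,v)$, the first vanishing by \Cref{thm:BilinearFormLocalization:Sequence:SqDist} and the second converging to $\int_{\Omega}\grad u\cdot\grad v\,\rmd\bx$ by \Cref{thm:BilinearFormLocalization:SqDist}; the first term on the right converges to $\Vint{f,v}$ by \eqref{eq:ApproximationWeakConvergence:Neumann}; and the boundary pairing is independent of $\delta$. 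Thus $u$ is a weak solution of the classical Neumann problem, and by uniqueness of such solutions in $\mathring{H}^1(\Omega)$ the full sequence converges.

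The main obstacle I anticipate is the bookkeeping for the shifted test function: one must confirm that $c_{\delta}$ stays bounded as $\delta\to 0$ (which follows from the uniform upper and lower bounds \eqref{eq:PhiBounds} on $\Phi_{\delta,0}$ and \Cref{lma:PhiConvergence}, giving $c_{\delta}\to (v)_{\Omega}$), and that the cancellation against $\Vint{f,1}+\Vint{g,1}$ is exact, which is where the assumption \eqref{eq:ApproximationZeroAve:Neumann} is essential — a generic mollification preserving only weak convergence would not suffice.
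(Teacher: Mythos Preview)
Your proposal is correct and follows essentially the same route as the paper's proof: uniform $H^1$ bound from \Cref{thm:Regularity:VaryingDelta:Neumann}, identification of the mean-zero limit via \Cref{lma:PhiConvergence}, a constant shift of the test function to land in $\mathring{\frak{W}}^{\delta,2}_{\rho}(\Omega)$, and passage to the limit via \Cref{thm:BilinearFormLocalization:Sequence:SqDist} and \Cref{thm:BilinearFormLocalization:SqDist}. Your normalization $c_{\delta}=(\Phi_{\delta}v)_{\Omega}/(\Phi_{\delta})_{\Omega}$ is in fact the correct one to place $v_{\delta}$ in $\mathring{\frak{W}}^{\delta,2}_{\rho}(\Omega)$ (the paper subtracts $(\Phi_{\delta}v)_{\Omega}$, which only works after invoking the compatibility condition), and your explicit appeal to uniqueness to upgrade subsequential to full-sequence convergence is a point the paper leaves implicit.
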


\begin{proof}
	Since the solutions satisfy the uniform $H^1$ bound \eqref{eq:UniformH1BoundOnSolns:Neumann}, it follows that they converge weakly in $H^1(\Omega)$ to a function $u$.
	Since $u$ is the weak limit of the $u_{\delta}$, we have by \Cref{lma:PhiConvergence}
	\begin{equation*}
	    \bar{\rho} \int_{\Omega} u(\bx) \, \rmd \bx = \lim\limits_{\delta \to 0} \int_{\Omega} \Phi_{\delta}(\bx) u_{\delta}(\bx) \, \rmd \bx = 0\,
	\end{equation*}
	and so $u \in \mathring{H}^1(\Omega)$.
	
	We now prove that $u$ is the unique weak solution in $\mathring{H}^1(\Omega)$ of the Poisson equation, i.e.
	\begin{equation*}
		\int_{\Omega} \grad u \cdot \grad v \, \rmd \bx = \Vint{f,v} + \Vint{g,Tv}\,, \quad\;\forall v \in \mathring{H}^1(\Omega)\,.
	\end{equation*}
	Let $v \in \mathring{H}^1(\Omega)$ be arbitrary. Then $v - (\Phi_{\delta} v)_{\Omega}$ can be used as a test function in \eqref{eq:NonlocalProblem:Neumann:Inhomog:WeakForm}, and so for each $\delta > 0$
	\begin{equation*}
		\Vint{ f_{\delta},v } + \Vint{g,Tv} = \Vint{ f_{\delta},v - (\Phi_{\delta} v)_{\Omega} } + \Vint{g, T (v - (\Phi_{\delta}v )_{\Omega}) } =  B_{\rho,\delta}(u_{\delta},v - (\Phi_{\delta} v)_{\Omega} ) = B_{\rho,\delta}(u_{\delta},v )\,.
	\end{equation*}
	On one hand, \eqref{eq:ApproximationWeakConvergence:Neumann} implies
	\begin{equation*}
		\lim\limits_{\delta \to 0} \Vint{ f_{\delta},v } = \Vint{ f,v }\,,
	\end{equation*}
	and on the other hand, \Cref{thm:BilinearFormLocalization:Sequence:SqDist} and \Cref{thm:BilinearFormLocalization:SqDist} imply that
	\begin{equation*}
		\lim\limits_{\delta \to 0} B_{\rho,\delta}(u_{\delta},v) = \lim\limits_{\delta \to 0} B_{\rho,\delta}(u_{\delta}-u,v) + \lim\limits_{\delta \to 0} B_{\rho,\delta}(u,v) = \int_{\Omega} \grad u \cdot \grad v \, \rmd \bx\,.
	\end{equation*}
	Therefore $u$ is the unique weak solution in  $\mathring{H}^1(\Omega)$ of the Poisson equation \eqref{eq:Intro:LocalEqn}-\eqref{eq:InHomogNeumannBC}.
\end{proof}

\subsubsection{Regularizing rough data}
\label{sec:RegularizingRoughData}

The next theorem gives an explicit construction of the mollified sequence $f_{\delta}$ that satisfies \eqref{eq:ApproximationZeroAve:Neumann}-\eqref{eq:ApproximationWeakConvergence:Neumann}-\eqref{eq:ApproximationInequality:Neumann} for a subset of $[H^1(\Omega)]^*$.
The subset is defined as follows: for $f_0 \in L^2(\Omega)$ and $\bff_1 \in L^2(\Omega;\bbR^d)$ 
and $\supp |\bff_1| \Subset \Omega$, define a distribution $f$ by
\begin{equation}\label{eq:DistributionsForNeumann}
	\Vint{f, v} := \int_{\Omega} f_0(\bx) v(\bx) \, \rmd \bx + \int_{\Omega} \bff_1(\bx) \cdot \grad v(\bx) \, \rmd \bx \qquad\;\forall v \in H^1(\Omega)\,,
\end{equation}
and define $H^*$ to be the subset of $[H^1(\Omega)]^*$ given by all such pairs $(f_0,\bff_1)$, i.e.
\begin{equation*}
	H^* := \left\{ f \in [H^1(\Omega)]^* \, : \, f \text{ given by } \eqref{eq:DistributionsForNeumann}
 \text{ and } \supp |\bff_1| \Subset \Omega \right\}\,.
\end{equation*}

\begin{theorem}\label{thm:DataMollification:Neumann}
	For $f \in H^*$ given via $f_0 \in L^2(\Omega)$ and $\bff_1 \in L^2(\Omega;\bbR^d)$, define the distribution $f_{\delta} \in [H^1(\Omega)]^*$ by
	\begin{equation}\label{eq:DataMollification:Neumann}
		\Vint{f_{\delta}, v} = \int_{\Omega} (f_0^{\delta}(\bx) + F_1^{\delta}(\bx)) v(\bx) \, \rmd \bx \,, \quad\;\forall v \in H^1(\Omega)\,,
	\end{equation}
	where
	\begin{equation*}
		f_0^{\delta}(\bx) := K_{\delta,\alpha}^* f_0(\bx)
	\end{equation*}
	and
	\begin{equation*}
		F_1^{\delta}(\bx) := \int_{\Omega} \grad_{\by} \rho_{\delta,\alpha}(\bx,\by) \cdot \frac{\bff_1(\by)}{\Phi_{\delta,\alpha}(\by)} \, \rmd \by - \int_{\Omega} \rho_{\delta,\alpha}(\bx,\by) \frac{\grad_{\by} \Phi_{\delta,\alpha}(\by)}{\Phi_{\delta,\alpha}(\by)^2} \cdot \bff_1(\by) \, \rmd \by\,.
	\end{equation*}
	Then $f_{\delta}$ satisfies \eqref{eq:ApproximationZeroAve:Neumann}-\eqref{eq:ApproximationWeakConvergence:Neumann}-\eqref{eq:ApproximationInequality:Neumann}.
\end{theorem}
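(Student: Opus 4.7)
The plan is to identify $f_\delta$ as the boundary-localized convolution $K_{\delta,\alpha}^*$ applied to the distribution $f = f_0 - \bdiv \bff_1$, and to deduce all three properties from this identification together with the estimates established in \Cref{sec:HSEstimates}. The starting point is the duality identity
\[
    \Vint{f_\delta, v} = \Vint{f, K_{\delta,\alpha} v}, \qquad \forall v \in H^1(\Omega).
\]
This follows from Fubini's theorem applied to each piece. The symmetry $\rho_{\delta,\alpha}(\bx,\by) = \rho_{\delta,\alpha}(\by,\bx)$ yields $\int_\Omega f_0^\delta\, v \,\rmd\bx = \int_\Omega f_0 \cdot K_{\delta,\alpha} v \,\rmd\by$, and differentiating the kernel under the integral sign in the definition of $F_1^\delta$ (again invoking symmetry) gives $\int_\Omega F_1^\delta \, v \,\rmd\bx = \int_\Omega \bff_1 \cdot \grad K_{\delta,\alpha} v \,\rmd\by$. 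Since $\bff_1$ is compactly supported in $\Omega$ and $K_{\delta,\alpha} v \in H^1(\Omega)$ by \eqref{eq:ConvEst:L2} and \Cref{cor:RegularityOfHSOp}, integration by parts converts the latter into $\Vint{-\bdiv \bff_1, K_{\delta,\alpha} v}$, completing the identity in view of the decomposition $\Vint{f,w} = \int_\Omega f_0 \, w + \Vint{-\bdiv \bff_1, w}$.

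With the identity in hand, \eqref{eq:ApproximationZeroAve:Neumann} is immediate from $K_{\delta,\alpha}(1) \equiv 1$ (which holds because $\int_\Omega \rho_{\delta,\alpha}(\bx,\by) \,\rmd\by = \Phi_{\delta,\alpha}(\bx)$), so that $\Vint{f_\delta, 1} = \Vint{f, K_{\delta,\alpha} 1} = \Vint{f, 1}$. For \eqref{eq:ApproximationWeakConvergence:Neumann}, given any $v \in H^1(\Omega)$ I write $\Vint{f_\delta - f, v} = \Vint{f, K_{\delta,\alpha} v - v}$; since $K_{\delta,\alpha} v \to v$ strongly in $H^1(\Omega)$ by \Cref{cor:ConvergenceOfConv} and $f \in [H^1(\Omega)]^*$, the pairing tends to zero.

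The main work is the uniform bound \eqref{eq:ApproximationInequality:Neumann}, which I handle term by term. The duality identity gives
\[
    |\Vint{f_\delta, v}| \leq \Vnorm{f}_{[H^1(\Omega)]^*} \Vnorm{K_{\delta,\alpha} v}_{H^1(\Omega)} \leq C \Vnorm{f}_{[H^1(\Omega)]^*} \Vnorm{v}_{\frak{W}^{\delta,2}(\Omega)},
\]
where the last step combines \eqref{eq:ConvEst:L2} with \Cref{cor:RegularityOfHSOp}, controlling $\Vnorm{f_\delta}_{[\frak{W}^{\delta,2}(\Omega)]^*}$. For the weighted $L^2$ norms I split $f_\delta = K_{\delta,\alpha}^* f_0 + F_1^\delta$. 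The piece $K_{\delta,\alpha}^* f_0$ is controlled by $\Vnorm{f_0}_{L^2(\Omega)}$ via \eqref{eq:ConvAdjEst:L2} together with the uniform boundedness of $\eta_\delta$. For $F_1^\delta$, testing against $\varphi \in C^\infty_c(\Omega)$ and using that $K_{\delta,\alpha} \varphi \in H^1_0(\Omega)$ by \Cref{lma:SupportOfConv}, I identify $F_1^\delta = K_{\delta,\alpha}^*(-\bdiv \bff_1)$ in the distributional sense; since $\bdiv \bff_1 \in H^{-1}(\Omega)$ with $\Vnorm{\bdiv \bff_1}_{H^{-1}(\Omega)} \leq \Vnorm{\bff_1}_{L^2(\Omega)}$, estimate \eqref{eq:ConvAdjEst:HMinus1} delivers control of $\Vnorm{\eta_\delta F_1^\delta}_{L^2(\Omega)} + \Vnorm{\eta_\delta^2 \grad F_1^\delta}_{L^2(\Omega)}$ by $\Vnorm{\bff_1}_{L^2(\Omega)}$. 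Combining gives a bound of the form $C(\Vnorm{f_0}_{L^2(\Omega)} + \Vnorm{\bff_1}_{L^2(\Omega)})$, and $C_0$ is taken to absorb the ratio to $\Vnorm{f}_{[H^1(\Omega)]^*}$.

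The main obstacle is the rigorous justification of the identification $F_1^\delta = K_{\delta,\alpha}^*(-\bdiv \bff_1)$ and the associated integration by parts without any boundary contribution; this is precisely why the compact-support condition $\supp |\bff_1| \Subset \Omega$ is imposed. That condition ensures $\bdiv \bff_1$ is a well-defined compactly supported element of $H^{-1}(\Omega)$ and, via \Cref{lma:SupportOfConv}, that $K_{\delta,\alpha} \varphi \in H^1_0(\Omega)$ for compactly supported $\varphi$, so the $H^{-1}(\Omega)$--$H^1_0(\Omega)$ duality pairing applies cleanly and \eqref{eq:ConvAdjEst:HMinus1} may be invoked.
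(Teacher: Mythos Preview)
Your proof is correct and follows essentially the same route as the paper. The paper also establishes the duality identity $\Vint{f_\delta,v}=\int_\Omega f_0\,K_{\delta,\alpha}v + \int_\Omega \bff_1\cdot\grad K_{\delta,\alpha}v$ (see \eqref{eq:NeumannData:Moll:Pf2point5}) and uses it together with \Cref{cor:RegularityOfHSOp}, \eqref{eq:ConvAdjEst:HMinus1}, and the convergence results of \Cref{sec:Localization}; you simply package this more cleanly as $\Vint{f_\delta,v}=\Vint{f,K_{\delta,\alpha}v}$ and invoke $K_{\delta,\alpha}1\equiv 1$ for the mean-preservation, whereas the paper verifies $\int f_0^\delta=\int f_0$ and $\int F_1^\delta=0$ by direct computation. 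One cosmetic point: your final remark that ``$C_0$ is taken to absorb the ratio to $\Vnorm{f}_{[H^1(\Omega)]^*}$'' is exactly what the paper does implicitly---since $f_\delta$ is built from a fixed representation $(f_0,\bff_1)$, the constant $C_0$ in \eqref{eq:ApproximationInequality:Neumann} need only be independent of $\delta$, not of the representation.
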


\begin{proof}
	Using the same argument as in the proof of \Cref{lma:SupportOfConv}, we see that $F_1^{\delta}$ has compact support in $\Omega$, so therefore by \Cref{lma:KernelIntegral:SqDist}, \Cref{thm:KernelDerivativeEstimates:SqDist}, \eqref{eq:PhiBounds} and \eqref{eq:PhiDerivativeBounds}
	\begin{equation*}
		\Vnorm{ F_1^{\delta} }_{L^2(\Omega)} \leq \frac{C(\rho,\Omega)}{\dist( \supp F_1^{\delta}, \Omega )} \Vnorm{\bff_1}_{L^2(\Omega)}\,.
	\end{equation*}
	Thus the expression \eqref{eq:DataMollification:Neumann} is an absolutely convergent integral for any $v \in H^1(\Omega)$.
	
	To see \eqref{eq:ApproximationZeroAve:Neumann} use the definitions of $K_{\delta,\alpha}$ and $\Phi_{\delta,\alpha}$ and the symmetry of $\rho_{\delta,\alpha}(\bx,\by)$:
	\begin{equation*}
		\int_{\Omega} f_0^{\delta}(\bx) \, \rmd \bx = \int_{\Omega} \left( \int_{\Omega} \, \rho_{\delta,\alpha}(\bx,\by) \rmd \bx \right) \frac{f_0(\by)}{\Phi_{\delta,\alpha}(\by)} \, \rmd \by = \int_{\Omega} f_0(\bx) \, \rmd \bx = 0\,,
	\end{equation*}
	\begin{equation*}
		\begin{split}
		\int_{\Omega} F_1^{\delta}(\bx) \, \rmd \bx &= \int_{\Omega} \left( \int_{\Omega} \grad_{\by} \rho_{\delta,\alpha}(\bx,\by) \rmd \bx \right)  \cdot \frac{\bff_1(\by)}{\Phi_{\delta,\alpha}(\by)} \, \rmd \by \, \rmd \by \\
			&\quad - \int_{\Omega} \frac{\grad_{\by} \Phi_{\delta,\alpha}(\by)}{\Phi_{\delta,\alpha}(\by)} \cdot \bff_1(\by) \, \rmd \by\,.
		\end{split}
	\end{equation*}
	
	Now we prove \eqref{eq:ApproximationInequality:Neumann}.
	First, by \eqref{eq:ConvAdjEst:HMinus1}
	\begin{equation}\label{eq:NeumannData:Moll:Pf1}
		\Vnorm{f_0^{\delta}}_{[\frak{W}^{\delta,2}(\Omega)]^*} + \Vnorm{\eta_{\delta} f_0^{\delta}}_{L^2(\Omega)} + \Vnorm{\eta_{\delta}^2 \grad f_0^{\delta}}_{L^2(\Omega)} \leq C \Vnorm{f_0}_{L^{2}(\Omega)}\,.
	\end{equation}
	Second, by \Cref{lma:KernelIntegral:SqDist}, \Cref{thm:KernelDerivativeEstimates:SqDist}, \eqref{eq:PhiBounds} and \eqref{eq:PhiDerivativeBounds}
	\begin{equation}\label{eq:NeumannData:Moll:Pf2}
		\Vnorm{\eta_{\delta} F_1^{\delta}}_{L^2(\Omega)} + \Vnorm{\eta_{\delta}^2 \grad F_1^{\delta}}_{L^2(\Omega)} \leq C(\rho,\Omega) \Vnorm{\bff_1}_{L^{2}(\Omega)}\,.
	\end{equation}
	Third, for any $v \in C^{\infty}(\overline{\Omega})$ the definitions of $K_{\delta,\alpha}$ and $\Phi_{\delta,\alpha}$ and the symmetry of $\rho_{\delta,\alpha}$ gives
	\begin{equation}\label{eq:NeumannData:Moll:Pf2point5}
		\begin{split}
		\int_{\Omega} F_1^{\delta}(\bx) v(\bx) \, \rmd \bx &= \int_{\Omega} \frac{1}{\Phi_{\delta,\alpha}(\by)} \left( \int_{\Omega}  \grad_{\by} \rho_{\delta,\alpha}(\bx,\by) v(\bx) \, \rmd \bx \right)  \bff_1(\by) \, \rmd \by \\
			&\quad - \int_{\Omega} \frac{\grad_{\by} \Phi_{\delta,\alpha}(\by)}{\Phi_{\delta,\alpha}(\by)^2} \left( \int_{\Omega} \rho_{\delta,\alpha}(\bx,\by) v(\bx) \, \rmd \bx \right) \bff_1(\by) \, \rmd \by \\
		&= \int_{\Omega} \grad_{\by} \left[ K_{\delta,\alpha} v(\by) \right] \cdot \bff_1(\by) \, \rmd \by\,;
		\end{split}
	\end{equation}
	Note that interchanging the integrals is justified since $\bff_1$ is compactly supported. Therefore by \Cref{cor:RegularityOfHSOp}
	\begin{equation}\label{eq:NeumannData:Moll:Pf3}
		\left| \Vint{ F_1^{\delta}, v } \right| \leq C(\rho,\Omega) \Vnorm{\bff_1}_{L^2(\Omega)} \Vnorm{ v }_{ \frak{W}^{\delta,2}(\Omega) } \,,\quad\;\forall v \in \frak{W}^{\delta,2}(\Omega)\,.
	\end{equation}
	Then \eqref{eq:ApproximationInequality:Neumann} follows from \eqref{eq:NeumannData:Moll:Pf1}-\eqref{eq:NeumannData:Moll:Pf2}-\eqref{eq:NeumannData:Moll:Pf3}.

	Now let $v \in H^1(\Omega)$ and let $K =  \supp |\bff_1|$, which is compactly contained in $\Omega$ by assumption. Then by \eqref{eq:NeumannData:Moll:Pf2point5}
	\begin{equation*}
		\begin{split}
			\Vint{f_{\delta}, v} &= \int_{\Omega} f_0^{\delta}(\bx) v(\bx) + F_1^{\delta}(\bx) v(\bx) \, \rmd \bx \\
			&=\int_{\Omega} K_{\delta,\alpha}^* f_0 (\bx) \, v(\bx) \, \rmd \bx + \int_{K} \grad \left[ K_{\delta,\alpha} v(\bx) \right] \cdot \bff_1(\bx) \, \rmd \bx\,,
		\end{split}
	\end{equation*}
	and so by H\"older's inequality
	\begin{equation*}
		\begin{split}
			|\Vint{f_{\delta} - f, v}| &\leq \Vnorm{K_{\delta,\alpha}^* f_0 -f_0}_{L^2(\Omega)} \Vnorm{v}_{L^2(\Omega)} + \Vnorm{\grad K_{\delta,\alpha} v - \grad v}_{L^2(K)} \Vnorm{\bff_1}_{L^2(\Omega)}\,.
		\end{split}
	\end{equation*}
	Thus \eqref{eq:ApproximationWeakConvergence:Neumann} follows from \eqref{eq:ConvergenceOfConvAdj} and \eqref{eq:ConvergenceOfConv:H1}. 
\end{proof}

\section{Conclusion}\label{sec:Conclusion}

In this work we presented a systematic treatment of varational problems for nonlocal operators with classical boundary conditions. These models are potentially useful for applications incorporating long-range phenomena in which classical, local boundary conditions are more natural or preferable. They may offer sound alternatives to some \textit{ad hoc} approaches that might produce nonphysical or undesirable artifacts.
Meanwhile, we developed a series of mathematical tools to analyze the nonlocal problems involving heterogeneous localizations and offered new analytical insight. A nonlocal Green's identity was established, which is a key tool for the study here, and will likely be useful in the treatment of other types of nonlocal boundary-value problems as well, such as Robin or oblique boundary conditions. It also paves a path to study issues like Dirichlet-to-Neumann maps in the nonlocal context, and motivates further extensions to time-dependent problems and nonlinear problems.

Concerning the assumptions made in the current work, 
we first remark on the choice of superlinear localization at $\p \Omega$. Instead, if the choice $\eta_{\delta}(\bx) \approx \min \{ \delta, \dist(\bx, \p \Omega) \}$ is made, then the 
constant-to-variable transition layer is necessarily of the same width. The optimal estimate for derivatives is
\begin{equation*}
    |D^{\beta} \eta_{\delta}(\bx)| \leq C \eta_{\delta}(\bx)^{1-|\beta|}\,.
\end{equation*}
As a result, $\cL_{\delta}$ does not satisfy an operator bound uniform in $\delta$, and it is not ``well-matched'' with its local counterpart $-\Delta$.
This is made precise in a statement of a type of Green's identity for such a heterogeneous localization.

\begin{proposition}
	Let $\Omega \subset \bbR^d$ be a bounded $C^2$ domain. Then there exists a function $\eta_{\delta}$ satisfying the following:
	\begin{enumerate}
		\item[i)] $\eta_{\delta}(\bx) = \dist(\bx,\p \Omega)$ for all $\bx \in \Omega$ with $\dist(\bx, \p \Omega) < \kappa_0 \delta$,
		\item[ii)] $\eta_{\delta}(\bx) = \delta$ for all $\bx \in \Omega$ with $\dist(\bx, \p \Omega) > \frac{\delta}{\kappa_0} $,
		\item[iii)] $\eta_{\delta} \in C^2(\overline{\Omega})$ with $|\grad \eta_{\delta}(\bx)| \leq \kappa_1$ and $|\grad^2 \eta_{\delta}(\bx)| \leq \frac{\kappa_2}{\delta}$ for all $\bx \in \Omega$, where $\kappa_1$, $\kappa_2$ are positive constants depending at most on $\kappa_0$, $d$ and $\Omega$.
		\item[iv)] There is a positive constant $ \bar{\kappa}_0 $ depending only on at most $\kappa_0$, $d$ and $\Omega$ such that $\eta_{\delta}(\bx) \leq \bar{\kappa}_0 \min \{ \delta, \dist(\bx,\p \Omega) \}$ for all $\bx \in \Omega$.
	\end{enumerate}
	Suppose $\rho$ satisfies \eqref{Assump:KernelSmoothness}-\eqref{Assump:KernelNormalization}. 
	Then for any $u$, $v \in C^2(\overline{\Omega})$
	\begin{equation*}
		B_{\rho,\delta} (u,v) = \int_{\Omega} \cL_{\delta} u(\bx) \cdot v(\bx) \, \rmd \bx + C_{\rho} \int_{\p \Omega} \frac{\p u}{\p \bsnu}(\bx) \cdot v(\bx) \, \rmd \sigma(\bx) \,,
	\end{equation*}
	where $C_{\rho}$ is a constant defined by 
	\begin{equation*}
		C_{\rho} := \int_{B(0,R_0) \cap \{ z_d > 0 \} } \, z_d  \ln \left( \frac{1+z_d}{1-z_d} \right) \rho(|\bz|) \rmd \bz > 1\,.
	\end{equation*}
\end{proposition}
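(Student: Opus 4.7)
The plan is to mirror the proof of \Cref{thm:GreensIdentity:SqDist}, adjusting the scaling throughout to account for the linear (rather than quadratic) rate of localization. The construction of $\eta_\delta$ follows the template after \eqref{assump:Localization}: choose $\kappa_0 \in (0,1)$ small enough that $\dist(\cdot, \p\Omega) \in C^2$ on a neighborhood of $\p \Omega$ in $\Omega$, construct by mollification a $C^2$ cutoff $\psi_\delta$ with $\psi_\delta \equiv 1$ on $\{\dist > \delta/\kappa_0\}$ and $\psi_\delta \equiv 0$ on $\{\dist < \kappa_0\delta\}$ satisfying $|\grad\psi_\delta| \le C_1/\delta$ and $|\grad^2 \psi_\delta| \le C_2/\delta^2$, and set $\eta_\delta := \psi_\delta\,\delta + (1-\psi_\delta)\,\dist(\cdot,\p\Omega)$. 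Properties i)--ii) and iv) are immediate; for iii), in the transition region $\{\kappa_0\delta \le \dist \le \delta/\kappa_0\}$ one has $|\delta - \dist| = O(\delta)$, so the product rule delivers $|\grad\eta_\delta| = O(1)$ and $|\grad^2\eta_\delta| = O(1/\delta)$, matching the stated bounds.

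The scaffolding of the proof of \Cref{thm:GreensIdentity:SqDist} carries over once the truncation layer $\Omega_{\sqrt\veps}$ is replaced by $\Omega_\veps$. First one verifies analogues of \Cref{lma:GreensId1:SqDist} and \Cref{lma:GreensId2:SqDist} with scale $\veps$ in place of $\sqrt\veps$, then performs the nonlocal integration by parts, and uses the first-order Taylor expansion of $u$ (the cross term involving $v(\bx) - v(\by)$ vanishes thanks to $|\Omega_\veps| \to 0$) to reduce the claim to evaluating
\begin{equation*}
    \lim_{\veps \to 0} \int_{\Omega_\veps} \int_{\Omega \setminus \Omega_\veps} 2\,\rho_{\delta,2}(\bx,\by)\,(\bx - \by) \cdot \grad u(\bx)\, v(\bx)\, \rmd \by\, \rmd \bx = C_\rho \int_{\p \Omega} \frac{\p u}{\p \bsnu} v\, \rmd \sigma.
\end{equation*}
Splitting $2\rho_{\delta,2} = \rho_{\eta_\delta(\bx),2} + \rho_{\eta_\delta(\by),2}$ produces an X-piece and a Y-piece to treat separately.

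The new ingredient is the boundary-layer computation, which parallels Steps 2--3 of the original proof but with the linear expansion of $\dist$ along the inward normal. After localization and the parametrization $\bx = \Psi(\bx',t) = \bx' + t\bn(\bx')$, the X-piece substitution $\by = \bx + t\bw$ (decomposing $\bw = \bz' + \tau\bn(\bx')$ with $\bz' \in \Vint{\bn(\bx')}^\perp$) produces integrand $-t^{-1}\rho(|\bw|)\bw$ with constraint $\dist(\by,\p\Omega) \ge \veps$ expanding as $t(1 + \tau) + O(t^2) \ge \veps$. Rescaling $t = \veps s$, the inner integral becomes $\int_{1/(1+\tau)}^1 s^{-1}\,\rmd s = \ln(1+\tau)$, active only on $\tau > 0$; tangential symmetry in $\bz'$ kills all components of $\bw \cdot \grad u$ except $\tau (\bn \cdot \grad u) = -\tau\, \p u/\p\bsnu$, yielding X-contribution
\begin{equation*}
    \int_{B(0,R_0) \cap \{z_d > 0\}} z_d \ln(1+z_d)\,\rho(|\bz|)\,\rmd \bz \cdot \int_{\p\Omega} \frac{\p u}{\p \bsnu} v\, \rmd \sigma.
\end{equation*}
For the Y-piece, the substitution $\bz = \bg_\bx(\by) = (\by-\bx)/\eta_\delta(\by)$ from \Cref{lma:OpTaylorExp:SqDist} applies; using the expansion $\eta_\delta(\bg_\bx^{-1}(\bz)) = t/(1-\tau) + O(\veps^2)$ (valid because $\grad\eta_\delta(\bx) = \bn(\bx')$ near $\p\Omega$) together with the algebraic cancellation $\eta_\delta(\by)(1 - \bz\cdot\grad\eta_\delta(\by)) = t + O(\veps^2/\delta)$, the integrand simplifies to $-\rho(|\bz|)\bz/t$ and the constraint becomes $t \ge \veps(1-\tau) + O(\veps^2)$. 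The inner integral is then $\int_{1-\tau}^1 s^{-1}\,\rmd s = -\ln(1-\tau)$ for $\tau \in (0,R_0)$, and the same symmetry argument produces Y-contribution
\begin{equation*}
    -\int_{B(0,R_0)\cap\{z_d > 0\}} z_d \ln(1-z_d)\,\rho(|\bz|)\, \rmd \bz \cdot \int_{\p\Omega} \frac{\p u}{\p \bsnu} v\, \rmd \sigma.
\end{equation*}
Summing via $\ln(1+z_d) - \ln(1-z_d) = \ln((1+z_d)/(1-z_d))$ delivers the claimed identity with the stated $C_\rho$. The inequality $C_\rho > 1$ follows from the elementary Taylor expansion $\ln((1+s)/(1-s)) = 2s + 2s^3/3 + 2s^5/5 + \cdots > 2s$ for $s \in (0,1)$, combined with $\int_{B(0,R_0) \cap \{z_d > 0\}} 2 z_d^2\,\rho(|\bz|)\,\rmd \bz = 1$ (a consequence of \eqref{Assump:KernelNormalization} and symmetry as in \eqref{eq:KernelNormalizationConsequence}).

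The main obstacle will be the careful accounting of $O(\veps)$ error terms after rescaling. In \Cref{thm:GreensIdentity:SqDist} the quadratic localization was matched precisely to the $\sqrt\veps$ truncation so that secondary terms in the constraint expansions decayed faster than the leading contribution; in the present linear setting one must verify that the $O(\veps^2/\delta)$ perturbations of the integration endpoints $1/(1+\tau)$ and $1-\tau$ produce negligible corrections to the logarithmic $s$-integrals after integrating against the Lipschitz functions $\grad u$, $v$, and $\det \grad \Psi$. Additionally, invertibility of $\bg_\bx$ on the relevant region (which was enforced in the original proof by $\delta < \delta_0$ via $\kappa_1\sqrt{\delta_0} < 1$) must now be secured by choosing $R_0$ small enough that $\kappa_1 R_0 < 1$; this restriction can be absorbed into the hypotheses on $\rho$. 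Once these technicalities are verified through the Lipschitz control of the relevant objects together with dominated convergence, the claimed identity with constant $C_\rho$ follows exactly as outlined.
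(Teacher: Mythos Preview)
The paper does not include a proof of this proposition; it is stated in the Conclusion as an illustrative observation on why the quadratic localization was chosen, with no accompanying argument. Your proposal---to transplant the proof of \Cref{thm:GreensIdentity:SqDist} to the linear-localization setting, replacing the $\sqrt{\veps}$ truncation by $\veps$ and recomputing the boundary-layer integrals---is exactly the intended route, and your boundary computation is carried out correctly: the X-piece yields $\int_{\{z_d>0\}} z_d\ln(1+z_d)\rho(|\bz|)\,\rmd\bz$, the Y-piece yields $-\int_{\{z_d>0\}} z_d\ln(1-z_d)\rho(|\bz|)\,\rmd\bz$, and the strict inequality $C_\rho>1$ follows from $\ln\frac{1+s}{1-s}>2s$ together with the normalization \eqref{Assump:KernelNormalization}. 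You have also correctly identified the two technical adjustments needed (the invertibility of $\bg_{\bx}$ now requires $\kappa_1 R_0<1$ rather than $\kappa_1 R_0\sqrt{\delta}<1$, and the second-derivative bound $|\grad^2\eta_\delta|\le\kappa_2/\delta$ means $\cL_\delta u$ is bounded only with a $\delta$-dependent constant, which is harmless at fixed $\delta$).
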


Immediately we can see one difficulty: some of the boundary information is still carried within the term $\cL_{\delta} u$.
Indeed,
	\begin{equation*}
		\lim\limits_{\delta \to 0} B_{\rho,\delta}(u,v) = \int_{\Omega} \grad u \cdot \grad v \, \rmd \bx\,,
	\end{equation*}
	while
	\begin{equation*}
		\lim\limits_{\delta \to 0} \cL_{\delta}u(\bx) = (1-C_{\rho}) \frac{\p u}{\p \bsnu}(\bx) \scH^{d-1} \llcorner \p \Omega - \Delta u (\bx)
	\end{equation*}
	in the sense of measures.
	That is,
	\begin{equation*}
		\lim\limits_{\delta \to 0} \int_{\Omega} \cL_{\delta} u(\bx) \cdot v(\bx) \, \rmd \bx = - \int_{\Omega} \Delta u(\bx) v(\bx) \, \rmd \bx + (1-C_{\rho}) \int_{\p \Omega} \frac{\p u}{\p \bsnu}(\bx) \cdot v(\bx) \, \rmd \sigma(\bx)\,.
	\end{equation*}

We also remark on the possibility of proving \eqref{eq:GreensIdentity:Intro} for domains $\Omega$ that are not $C^2$. Such a result is desirable in applications, for example applications in continuum mechanics that polygonal domains. If $\eta_{\delta}$ is similarly defined for a $C^{1,\beta}$ domain for $0 < \beta < 1$ then $\eta_{\delta}$ is only $C^{1,\beta}$, and so then $\cL_{\delta} u$ does not define a function in $L^1(\Omega)$ even if $u \in C^{\infty}(\overline{\Omega})$. A more suitable definition of $\eta_{\delta}$ for domains with less regularity remains to be investigated.
For bounded Lipschitz domain
$\Omega$, the function $\eta_{\delta}$ is $C^1$ in $\Omega$ but not $C^2$.
Thus, the heterogeneous localization parameter should be replaced with a smooth approximation comparable to $\dist(\bx,\p \Omega)^2$ near $\p \Omega$ and satisfying the appropriate estimates (indeed such functions exist; see \cite{Stein}). At the same time, it seems necessary to assure that the regularization would not violate condition i) in order to prove the nonlocal Green's identity. We leave such a systematic investigation for a future work.

Lastly, there are  many  other interesting open questions left. For example, although our focus here is on rough data, one may also study problems with more regular data and how they further impact the solution regularity. 
As another example, it is also natural to ask whether the current study can be extended to more general variational problems. On the latter, let us point out that the weak formulations of the nonlocal problems with local boundary conditions can also be formulated as energy minimization problems. For instance, for \eqref{eq:NonlocalProblem:WeakForm}, we have an equivalent form 
\begin{equation}
\label{eq:min-linear}
\min_{u\in \frak{W}^{\delta,2}_0(\Omega)
} B_{\rho,\delta}(u,u) - \Vint{f,u}\,,
\end{equation}
for the nonlocal problem with homogeneous Dirichlet boundary conditions. 
In the spirit of
\eqref{eq:min-linear}, via direct methods of the calculus of variations, we can further extend the earlier discussions on the linear nonlocal problems to certain nonlinear cases as well.
While more extensive studies will be left to future works, we state the following as an illustration.

\begin{theorem}
    Let $f \in H^{-1}(\Omega)$, and let $f_{\delta}$ be a sequence satisfying \eqref{eq:ApproximationWeakConvergence}-\eqref{eq:ApproximationInequality}. Define $2^* := \frac{2d}{d-2}$, $2_* := (2^*)' = \frac{2d}{d+2}$, and fix $p \in [2,2^*)$.
    Then there exists a unique $u_\delta \in \frak{W}^{\delta,2}(\Omega) \cap L^p(\Omega)$ that satisfies
    \begin{equation*}
        u_{\delta} = \argmin_{ v \in \frak{W}^{\delta,2}_0(\Omega) \cap L^p(\Omega) } B_{\rho,\delta}(v,v) + \frac{1}{p} \int_{\Omega}| K_{\delta,\alpha}^* v(\bx)|^p \, \rmd \bx - \Vint{f_{\delta},v}\,.
    \end{equation*}
    Further, $u_\delta \in H^1_{0} (\Omega)$.   
    Moreover, as $\delta \to 0$,
    $\{u_\delta\}$ is uniformly bounded in $H^1_0(\Omega)$ and so has a weakly convergent subsequence in $H^1(\Omega)$. The weak limit $u$ satisfies
    \begin{equation}\label{eq:NonlinearLocal}
        u = \argmin_{v \in H^1_0(\Omega)} 
        \int_{\Omega} |\grad v(\bx)|^2 \, \rmd \bx + \frac{1}{p} \int_{\Omega} |v(\bx)|^p \, \rmd \bx - \Vint{f, v}\,.
    \end{equation}
\end{theorem}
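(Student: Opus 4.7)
The plan is to carry out a direct-method argument in three main steps, using the nonlocal machinery already in place.

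\textbf{Step 1 (existence, uniqueness, and nonlocal a priori bounds).} The reflexive Banach space $\frak{W}^{\delta,2}_0(\Omega) \cap L^p(\Omega)$ carries the functional $J_\delta(v) := B_{\rho,\delta}(v,v) + \tfrac{1}{p}\Vnorm{K_{\delta,\alpha}^* v}_{L^p(\Omega)}^p - \Vint{f_\delta, v}$. Each summand is convex (the quadratic form; the norm-power composed with the linear operator $K_{\delta,\alpha}^*$; the linear functional), and strict convexity of $B_{\rho,\delta}$ on $\frak{W}^{\delta,2}_0$ by \Cref{thm:PoincareDirichlet} and \eqref{eq:KernelEquivalence} gives strict convexity of $J_\delta$. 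Coercivity in the $\frak{W}^{\delta,2}$-direction follows from \eqref{eq:BilinearForm:Coercivity} and Young's inequality absorbing $\Vint{f_\delta, v}$; coercivity in the $L^p$-direction follows from an $L^p$ analogue of \eqref{eq:ConvAdjEst:L2} (see the remark on the main obstacle below). The direct method then produces a unique minimizer $u_\delta$. Testing the Euler--Lagrange equation
\[
B_{\rho,\delta}(u_\delta, \varphi) + \tfrac{1}{2}\Vint{K_{\delta,\alpha}\bigl( |K_{\delta,\alpha}^* u_\delta|^{p-2} K_{\delta,\alpha}^* u_\delta \bigr), \varphi} = \tfrac{1}{2}\Vint{f_\delta, \varphi}, \qquad \varphi \in \frak{W}^{\delta,2}_0 \cap L^p,
\]
against $\varphi = u_\delta$ and using \eqref{eq:ApproximationInequality} yields $\Vnorm{u_\delta}_{\frak{W}^{\delta,2}}^2 + \Vnorm{K_{\delta,\alpha}^* u_\delta}_{L^p}^p \leq C\Vnorm{f}_{H^{-1}}^2$ uniformly in $\delta$.

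\textbf{Step 2 ($H^1_0$ regularity and uniform $H^1$ bound).} Writing the Euler--Lagrange equation as $B_{\rho,\delta}(u_\delta, \varphi) = \Vint{\tilde{f}_\delta, \varphi}$ with $\tilde{f}_\delta := \tfrac{1}{2} f_\delta - \tfrac{1}{2} K_{\delta,\alpha}\bigl( |K_{\delta,\alpha}^* u_\delta|^{p-2} K_{\delta,\alpha}^* u_\delta \bigr)$, the plan is to verify that $\tilde{f}_\delta$ meets the hypotheses of \Cref{thm:Regularity:FixedDelta}. For $f_\delta = K_{\delta,\alpha}^* f$ given by \eqref{eq:Dirichlet:MollifiedDataDefn}, \Cref{thm:DataMollification} controls the required weighted norms. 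For the nonlinear piece I would combine the uniform $L^p$ bound on $K_{\delta,\alpha}^* u_\delta$ from Step 1 with $L^p$-extensions of \eqref{eq:ConvEst:L2}--\eqref{eq:ConvEst:H1} to bound $\Vnorm{\eta_\delta K_{\delta,\alpha}(\cdot)}_{L^2}$ and $\Vnorm{\eta_\delta^2 \grad K_{\delta,\alpha}(\cdot)}_{L^2}$. Thus $u_\delta \in H^1(\Omega)$ with $\Vnorm{u_\delta}_{H^1(\Omega)} \leq C \Vnorm{f}_{H^{-1}(\Omega)}$ uniformly in $\delta$; since $u_\delta \in \frak{W}^{\delta,2}_0(\Omega)$, \Cref{cor:TraceZero} forces $T u_\delta = 0$, so $u_\delta \in H^1_0(\Omega)$.

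\textbf{Step 3 (localization limit).} A subsequence satisfies $u_\delta \rightharpoonup u$ weakly in $H^1_0(\Omega)$, and by Rellich--Kondrachov (this is where the hypothesis $p < 2^*$ enters decisively) $u_\delta \to u$ strongly in $L^p(\Omega)$. A triangle-inequality argument combined with $L^p$-continuity of $K_{\delta,\alpha}^*$ and \Cref{thm:ConvergenceOfConv} upgrades this to strong $L^p$ convergence $K_{\delta,\alpha}^* u_\delta \to u$, and analogously $K_{\delta,\alpha}^* v \to v$ strongly in $L^{p'}(\Omega)$ for each $v \in C^\infty_c(\Omega)$. Adjointness rewrites the nonlinear term in the Euler--Lagrange equation as $\int |K_{\delta,\alpha}^* u_\delta|^{p-2} K_{\delta,\alpha}^* u_\delta \cdot K_{\delta,\alpha}^* v \, \rmd\bx$, which tends to $\int |u|^{p-2} u \cdot v \, \rmd\bx$ by continuity of $w \mapsto |w|^{p-2} w$ from $L^p$ to $L^{p'}$. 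Passage to the limit in the bilinear-form term is handled by \Cref{thm:BilinearFormLocalization:Sequence:SqDist}, and in the data term by \eqref{eq:ApproximationWeakConvergence}. Altogether $u$ satisfies the Euler--Lagrange equation for \eqref{eq:NonlinearLocal}; strict convexity of that local functional on $H^1_0(\Omega)$ identifies $u$ as the unique minimizer, which in turn promotes subsequential convergence to convergence of the full family.

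\textbf{Main obstacle.} The delicate point is that the boundary-localized convolution estimates of \Cref{sec:HSEstimates} are cast in an $L^2$ framework, whereas both the coercivity in Step 1 and the passage to the limit in Step 3 require $L^p$--$L^{p'}$ continuity of $K_{\delta,\alpha}^*$ and the norm convergence $K_{\delta,\alpha}^* v \to v$ in $L^{p'}$. These should follow from the pointwise kernel bounds of \Cref{lma:KernelIntegral:SqDist} together with Riesz--Thorin interpolation between the $L^2$ estimates and an $L^\infty$ estimate implicit in the kernel structure, but care must be taken to keep the constants uniform in $\delta$.
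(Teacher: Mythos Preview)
Your overall architecture matches the paper's: direct method for existence, pointwise rewriting of the Euler--Lagrange equation to bootstrap into $H^1$, then pass to the limit. Step~1 and Step~3 are fine; in Step~3 you go through the Euler--Lagrange equation and strong $L^p$ convergence from Rellich--Kondrachov, whereas the paper simply appeals to weak lower semicontinuity of the functionals together with the continuity of $B_{\rho,\delta}$ and $K_{\delta,\alpha}$. Your route is more explicit and equally valid.

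The gap is in Step~2, and it is exactly the ``main obstacle'' you did \emph{not} flag. You propose to control $\Vnorm{\eta_\delta K_{\delta,\alpha}(w_\delta)}_{L^2}$ and $\Vnorm{\eta_\delta^2 \grad K_{\delta,\alpha}(w_\delta)}_{L^2}$, with $w_\delta := |K_{\delta,\alpha}^* u_\delta|^{p-2} K_{\delta,\alpha}^* u_\delta$, via ``$L^p$-extensions of \eqref{eq:ConvEst:L2}--\eqref{eq:ConvEst:H1}''. But the energy estimate from Step~1 places $w_\delta$ only in $L^{p'}$ with $p' = p/(p-1) \le 2$, and no $L^{p'}$ analogue of \eqref{eq:ConvEst:L2} can give $\delta$-uniform $L^2$ control of $K_{\delta,\alpha} w_\delta$: as $\delta \to 0$ the operator approximates the identity, so it does not improve integrability, and the weight $\eta_\delta \le \bar\kappa_0 \delta$ does not help on compact subsets of $\Omega$.

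The paper's fix is to go through $H^{-1}$ rather than $L^{p'}$. Since $p < 2^*$ one has $(p-1)\,2_* \le p$, so by H\"older on the bounded domain $\Vnorm{w_\delta}_{L^{2_*}} \le C \Vnorm{K_{\delta,\alpha}^* u_\delta}_{L^p}^{p-1}$, and the Sobolev dual embedding $L^{2_*}(\Omega) \hookrightarrow H^{-1}(\Omega)$ gives $w_\delta \in H^{-1}$ uniformly. Then the $H^{-1}$ estimate \eqref{eq:ConvEst:HMinus1} --- not an $L^p$ extension of \eqref{eq:ConvEst:L2}--\eqref{eq:ConvEst:H1} --- yields exactly the weighted bounds $\Vnorm{\eta_\delta K_{\delta,\alpha} w_\delta}_{L^2} + \Vnorm{\eta_\delta^2 \grad K_{\delta,\alpha} w_\delta}_{L^2} \le C \Vnorm{w_\delta}_{H^{-1}}$ needed in \Cref{thm:Regularity:FixedDelta}. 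Note that this is where the restriction $p < 2^*$ enters the uniform $H^1$ bound, not only in the compactness argument of Step~3 as you suggested. The $L^p$--$L^{p'}$ continuity of $K_{\delta,\alpha}^*$ that you singled out as the main obstacle is comparatively routine.
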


\begin{proof}
    First, using direct methods of the calculus of variations one can show that there exists a unique minimizer $u_{\delta} \in \frak{W}^{\delta,2}_0(\Omega) \cap L^p(\Omega)$ since the nonlocal functional is coercive, weakly lower semicontinuous, and convex with strongly convex principal part.

Next, we have an energy estimate; using the Poincar\'e inequality \Cref{thm:PoincareDirichlet}
    \begin{equation*}
    \begin{split}
 [u_{\delta}]_{\frak{W}^{\delta,2}(\Omega)}^2 + \frac{1}{p}
        \Vnorm{
K_{\delta,\alpha}^*
    u_{\delta}}_{L^p(\Omega)}^p &\leq \Vnorm{f_{\delta}}_{[\frak{W}^{\delta,2}(\Omega)]^*} \Vnorm{u_{\delta}}_{\frak{W}^{\delta,2}(\Omega)} \\
        &\leq C \Vnorm{f_{\delta}}_{[\frak{W}^{\delta,2}(\Omega)]^*} \left(
[u_{\delta}]_{\frak{W}^{\delta,2}(\Omega)}
        ^2 + \frac{1}{p}\Vnorm{K_{\delta,\alpha}^* u_{\delta}}_{L^p(\Omega)}^p \right)^{1/2}
        \,.
    \end{split}
    \end{equation*}
    Then the uniform bound 
    $$
 [u_{\delta}]_{\frak{W}^{\delta,2}(\Omega)}^2 +  \frac{1}{p}
        \Vnorm{
K_{\delta,\alpha}^*
    u_{\delta}}_{L^p(\Omega)}^p  \leq C \Vnorm{f_{\delta}}_{[\frak{W}^{\delta,2}(\Omega)]^*} \leq C \Vnorm{f}_{H^{-1}(\Omega)}^2
    $$
    follows from \eqref{eq:ApproximationInequality}.

    Now, $u_{\delta}$ satisfies the Euler-Lagrange equation
    \begin{equation*}
        \cL_{\delta} u_\delta(\bx) =  f_\delta(\bx) - K_{\delta,\alpha} [|K_{\delta,\alpha}^* u_\delta|^{p-2} K_{\delta,\alpha}^* u_{\delta}](\bx)
    \end{equation*}
    almost everywhere in $\Omega$.
    Then we have
    \begin{equation*}
        u_{\delta}(\bx)  = K_{\delta,2} u_{\delta}(\bx) + \frac{f_{\delta}(\bx)}{2 \Phi_{\delta,2}(\bx)} - \frac{ K_{\delta,\alpha} [|K_{\delta,\alpha}^* u_\delta|^{p-2} K_{\delta,\alpha}^* u_{\delta}](\bx) }{2 \Phi_{\delta,2}(\bx)}\,.
    \end{equation*}
    By 
    \eqref{eq:ConvEst:Deriv:Cor},
    \eqref{eq:ConvEst:HMinus1}, and \eqref{eq:ApproximationInequality},
    \begin{equation*}
        \begin{split}
            \Vnorm{u_{\delta}}_{H^1(\Omega)} &\leq  C \Vnorm{u}_{\frak{W}^{\delta,2}(\Omega)} + C \Vnorm{f}_{H^{-1}(\Omega)} + C \Vnorm{|K_{\delta,\alpha}^* u_{\delta}|^{p-2} K_{\delta,\alpha}^* u_{\delta} }_{H^{-1}(\Omega)} \\
            &\leq  C \Vnorm{f}_{H^{-1}(\Omega)} + C \Vnorm{|K_{\delta,\alpha}^* u_{\delta}|^{p-2} K_{\delta,\alpha}^* u_{\delta} }_{H^{-1}(\Omega)}\,.
        \end{split}
    \end{equation*}
    Now, for any function $\varphi \in L^p(\Omega)$ for $p \in [2,2^*)$, one can use H\"older's inequality and Sobolev embedding to obtain
    \begin{equation*}
        \Vnorm{\varphi^{p-1}}_{H^{-1}(\Omega)} \leq \Vnorm{\varphi^{p-1} }_{L^{2_*}(\Omega)} \leq C(\Omega,p) \Vnorm{\varphi}_{L^p(\Omega)}\,.
    \end{equation*}
    Therefore by using \eqref{eq:ConvAdjEst:L2} adapted for general Lebesgue spaces and the energy estimate
    \begin{equation*}
    \begin{split}
        \Vnorm{|K_{\delta,\alpha}^* u_{\delta}|^{p-2} K_{\delta,\alpha}^* u_{\delta} }_{H^{-1}(\Omega)}
            &= \Vnorm{K_{\delta,\alpha}^* u_{\delta} }_{L^{p}(\Omega)} \leq C   \Vnorm{f}_{H^{-1}(\Omega)}\,.
    \end{split}
    \end{equation*}
    Then the uniform bound follows. Hence a subsequence of $u_{\delta}$ converges weakly in $H^1(\Omega)$ to a function $u$. Using standard techniques from direct methods it is not difficult to show that $u$ satisfies \eqref{eq:NonlinearLocal} thanks to the weak lower semicontinuity of the functionals as well as the continuity properties of $B_{\rho,\delta}$ and $K_{\delta,\alpha}$.
\end{proof}

To end the discussion, we note that while we have focused on scalar equations up to now,  the techniques used here can facilitate future investigations on coupled systems of linear and nonlinear problems, such as nonlocal peridynamics with nonlinear constitutive laws \cite{Silling2000} and continuum descriptions of Smoothed Particle Hydrodynamics \cite{Du-Tian2020}, subject to the conventional local boundary conditions from classical mechanics.

\bibliography{References}
\bibliographystyle{plain}

\end{document}